\documentclass[11pt,english]{article}
\usepackage[margin=1in]{geometry}

\usepackage[utf8]{inputenc}
\usepackage[T1]{fontenc} 
\usepackage{amsmath,amssymb,amsthm}
\usepackage{mathtools}
\usepackage{hyperref}
\hypersetup{pdfusetitle,colorlinks,linkcolor={red!50!black},citecolor={blue!50!black},urlcolor={blue!80!black}}
\usepackage{cleveref}
\usepackage[inline]{enumitem}
\usepackage{fourier}
\usepackage[textsize=small]{todonotes}
\usepackage{xcolor}
\usepackage{thm-restate}
\setuptodonotes{inline}

\usepackage{tikz}
\usetikzlibrary{graphs}
\usetikzlibrary{decorations.markings,calc, shapes, arrows.meta, positioning,shapes.multipart}
\tikzstyle{every picture}=[line width=.65pt,minimum size=3pt,every label/.append style={font=\small},label distance=-2pt]
\tikzstyle{every node}=[font=\small]
\tikzset{>=stealth}
\tikzstyle{vtx}=[circle,draw,thick,inner sep = 4pt]

\setlist[enumerate,1]{label=(\roman*)}
\setlist[enumerate,2]{label=(\alph*)}
\setlist{nosep}

\graphicspath{{../}}

\newtheorem{theorem}{Theorem}
\newtheorem{proposition}[theorem]{Proposition}
\newtheorem{lemma}[theorem]{Lemma}
\newtheorem{corollary}[theorem]{Corollary}
\newtheorem{definition}[theorem]{Definition}
\newtheorem{claim}[theorem]{Claim}
\newtheorem{remark}[theorem]{Remark}

\newtheorem{problem}{Problem}
\newtheorem{observation}[theorem]{Observation}

\newcommand{\circuits}{\mathcal{C}}
\newcommand{\eps}{\varepsilon}
\newcommand{\dimension}{\operatorname{dim}}
\newcommand{\identity}{\mathbf{I}}
\newcommand{\R}{\mathbb{R}}
\newcommand{\rank}{\operatorname{rank}}
\newcommand{\smallmat}[1]{\left[ \begin{smallmatrix} #1 \end{smallmatrix} \right]}
\newcommand{\supp}{\operatorname{supp}}
\newcommand{\Z}{\mathbb{Z}}
\newcommand{\N}{\mathbb{N}}
\newcommand{\Q}{\mathbb{Q}}
\newcommand{\zero}{\mathbf{0}}
\newcommand{\one}{\mathbf{1}}
\renewcommand{\t}{^\intercal}
\newcommand{\poly}{\mathrm{poly}}

\newcommand{\contract}{/}

\DeclareMathOperator{\adj}{adj}
\DeclareMathOperator{\colsp}{colsp}
\DeclareMathOperator{\mathspan}{span}
\DeclareMathOperator{\OPT}{OPT}

\newcommand{\surf}{\Sigma}
\newcommand{\eg}{\operatorname{eg}}
\newcommand{\bd}{\operatorname{bd}}
\newcommand{\inter}{\operatorname{int}}
\newcommand{\adh}[1]{X_{#1}}

\title{Integer programs with nearly totally unimodular matrices:\\ the cographic case\thanks{An extended abstract of this work, without proofs, was published in the proceedings of the {\em 2025 ACM-SIAM Symposium on Discrete Algorithms (SODA)} \cite{aprile2025integer}.}}

\author{Manuel Aprile\thanks{University of Padova} \and Samuel Fiorini\thanks{Université libre de Bruxelles} \and Gwena\"el Joret\footnotemark[3]  \and Stefan Kober\footnotemark[3] \and Michał T.\ Seweryn\footnotemark[3]  \and Stefan Weltge\thanks{Technical University of Munich} \and Yelena Yuditsky\footnotemark[3] \thanks{University of Leeds}}

\date{\today}

\begin{document}

\maketitle

\begin{abstract}
It is a notorious open question whether integer programs (IPs) with an integer coefficient matrix $M$ whose subdeterminants are all bounded by a constant $\Delta$ in absolute value can be solved in polynomial time. 
We answer this question in the affirmative if we further require that, by removing a constant number of rows and columns from $M$, one obtains a submatrix $A$ that is the transpose of a network matrix.

Our approach focuses on the case where $A$ arises from $M$ after removing $k$ rows only, where $k$ is a constant.
We achieve our result in two main steps, the first related to the theory of IPs and the second related to graph minor theory.

First, we derive a strong proximity result for the case where $A$ is a general totally unimodular matrix: Given an optimal solution of the linear programming relaxation, an optimal solution to the IP can be obtained by finding a constant number of augmentations by circuits of $\begin{bmatrix} A& \identity \end{bmatrix}$.

Second, for the case where $A$ is transpose of a network matrix, we reformulate the problem as a maximum constrained integer potential problem on a graph $G$. We observe that if $G$ is $2$-connected, then it has no rooted $K_{2,t}$-minor for $t = \Omega(k \Delta)$. We leverage this to obtain a tree-decomposition of $G$ into highly structured graphs for which we can solve the problem locally. This allows us to solve the global problem via dynamic programming.
\end{abstract}

\section{Introduction}\label{sec:intro}

As for most computational problems that are NP-hard, the mere input size of an integer program (IP) does not seem to capture its difficulty.
Instead, several works have identified additional parameters that significantly influence the complexity of solving IPs.
These include the number of integer variables (Lenstra~\cite{Lenstra}, see also~\cite{Kannan,Dadush,reis2023subspace}), the number of inequalities for IPs in inequality form (Lenstra~\cite{Lenstra}), the number of equations for IPs in equality form (Papadimitriou~\cite{Papadimitriou}, see also~\cite{EisenbrandWeismantel}), and features capturing the block structure of coefficient matrices (see for instance \cite{cslovjecsek_2021a,cslovjecsek_2021b,eisenbrand_2022,brianski_2024,cslovjecsek_2024}).

Another parameter that has received particular interest is the \emph{largest subdeterminant} of the coefficient matrix, which already appears in several works concerning the complexity of linear programs (LPs) and the geometry of their underlying polyhedra~\cite{Tardos86,DF94,BDEHN14,EV17} as well as proximity results relating optimal solutions of IPs and their LP relaxations~\cite{cook_1986,paat2020distances,celaya_2022}.
Consider an IP of the form
\begin{equation}
    \label{eqIPgeneral}
    \max \, \left\{ p\t x : Mx \le b, \, x \in \Z^n \right\}, \tag{IP}
\end{equation}
where $M$ is an integer matrix that is \emph{totally $\Delta$-modular}, i.e., the determinants of square submatrices of $M$ are all in $\{-\Delta,\dots,\Delta\}$.
It is a basic fact that if $M$ is totally unimodular ($\Delta = 1$), then the optimum value of~\eqref{eqIPgeneral} is equal to the optimum value of its LP relaxation, implying that~\eqref{eqIPgeneral} can be solved in polynomial time.
In a seminal paper by Artmann, Weismantel \& Zenklusen~\cite{artmann_2017}, it is shown that~\eqref{eqIPgeneral} is still polynomial-time solvable if $\Delta = 2$, leading to the conjecture that this may hold for every constant $\Delta$. Below, we refer to this conjecture as the \emph{totally $\Delta$-modular IP conjecture}.
Recently, Fiorini, Joret, Yuditsky \& Weltge~\cite{FJWY25} answered this in the affirmative under the further restriction that $M$ has only two nonzeros per row or column.
In particular, they showed that in this setting, \eqref{eqIPgeneral} can be reduced to the stable set problem in graphs with bounded odd cycle packing number~\cite{BFMR14,ocp1,ocpgenus}.

We remark that the algorithm of \cite{artmann_2017} even applies to full column rank matrices $M \in \Z^{m \times n}$ for which only the $(n \times n)$-subdeterminants are required to be in $\{-\Delta,\dotsc,\Delta\}$ for $\Delta = 2$.
Further results supporting the conjecture have been recently obtained by N\"agele, Santiago \& Zenklusen~\cite{naegele_2022} and N\"agele, N\"obel, Santiago \& Zenklusen~\cite{naegele_2023} who considered the special case where all size-$(n \times n)$ subdeterminants are in $\{-\Delta,0,\Delta\}$.
Interestingly, the results of \cite{artmann_2017,naegele_2022,naegele_2023} are crucially centered around a reformulation of~\eqref{eqIPgeneral} where $M$ becomes \emph{totally unimodular up to removing a constant number of rows}, where the additional constraints capture a constant number of congruency constraints.

\subsection{Main contribution}

In an effort to provide more evidence for the totally $\Delta$-modular IP conjecture, we initiate the study of IPs in which $M$ is totally $\Delta$-modular and \emph{nearly totally unimodular}, i.e., $M$ becomes totally unimodular after removing a constant number of rows and columns.
Note that without requirements on the subdeterminants, IPs with nearly totally unimodular coefficient matrices are still NP-hard.
A famous example is the densest $k$-subgraph problem~\cite{BCCFV10,Khot06,Manurangsi17}, which can be seen as an IP defined by a totally unimodular matrix with two extra rows (modeling a single equality constraint). A closely related example is the partially ordered knapsack problem~\cite{KS02}, which is also strongly NP-hard. Another famous example is the exact matching (or red-blue matching) problem~\cite{maalouly2022exact, mulmuley_1987}, for which no deterministic polynomial-time algorithm is known (yet). We remark that our notion of nearly totally unimodular matrices differs from the one introduced in \cite{gijswijt2005integer}.

As we will see, it still does not seem to be an easy task to prove the totally $\Delta$-modular IP conjecture for the full class of nearly totally unimodular matrices.
However, in our main result, we resolve the conjecture for a main building block of such matrices:

\begin{theorem} \label{thm:main_IP}
    There is a strongly polynomial-time algorithm for solving the integer program~\eqref{eqIPgeneral} for the case where $M$ is totally $\Delta$-modular for some constant $\Delta$ and becomes the transpose of a network matrix after removing a constant number of rows and columns.
\end{theorem}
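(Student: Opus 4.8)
The plan is to follow the two-stage strategy announced in the abstract: a proximity-and-augmentation stage that is valid for an arbitrary totally unimodular ``core'' matrix, followed by a graph-structural stage that exploits the network structure. Write $M = \smallmat{A & B\\ C & D}$, where $A$ is the transpose of a network matrix, $B$ has a constant number $d$ of columns, and $C$ has a constant number $k$ of rows. Since $A$ is totally unimodular and $M$ is totally $\Delta$-modular, all entries of $B$ and $D$ lie in $\{-\Delta,\dots,\Delta\}$, so the $d$ exceptional variables act like $d$ global integer parameters perturbing an otherwise totally unimodular subsystem in a bounded-dimensional way, exactly as the $k$ exceptional rows do; handling them is analogous (if slightly more technical), so for clarity I present the case $M = \smallmat{A\\ C}$ with $A \in \Z^{m\times n}$ the transpose of a network matrix and $C \in \Z^{k\times n}$ for a constant $k$. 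Solve the LP relaxation of~\eqref{eqIPgeneral} in strongly polynomial time (Tardos~\cite{Tardos86}) to obtain an optimal vertex $x^\star$; by the classical proximity bound~\cite{cook_1986}, some optimal solution $z^\star$ of~\eqref{eqIPgeneral} satisfies $\|z^\star - x^\star\|_\infty = O(n\Delta)$.

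For the first main step I would establish the sharper fact that such a $z^\star$ can be written as $z^\star = x^\star + \gamma_1 + \dots + \gamma_t$, where each $\gamma_i$ is a scaled circuit of $\smallmat{A & \identity}$, $t \le g(\Delta,k)$ for some function $g$, and every partial sum is feasible with nondecreasing objective. The idea is to pick an optimal $z^\star$ minimising $\|z^\star-x^\star\|_1$, decompose $z^\star-x^\star$ conformally into circuits of $\smallmat{A & \identity}$ (a Graver-type decomposition), and combine minimality with the observation that, modulo the $k$ side constraints $Cx \le b_C$, the subsystem $Ax \le b_A$ is totally unimodular, so its LP and IP optima coincide; hence the augmentations only have to repair the bounded-dimensional discrepancy toward the constraints $C$, which costs only $O_{\Delta,k}(1)$ circuit directions, each possibly used with large multiplicity. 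Translating circuits of $\smallmat{A & \identity}$ back to the digraph underlying the network matrix $A\t$, the residual task --- find the best point reachable from $x^\star$ by such augmentations --- becomes a \emph{maximum constrained integer potential problem} on a graph $G$: maximise a linear potential-difference objective subject to the difference constraints encoded by $A$, the $k$ side constraints encoded by $C$, and the box constraints confining the solution to the $O(n\Delta)$-proximity window around $x^\star$.

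We may assume $G$ is $2$-connected, splitting at cut vertices and recursing otherwise. Let $S \subseteq V(G)$ be the constant-size set of vertices touched by the $k$ side constraints, by the objective, and by $x^\star$. The crucial observation is that such a $2$-connected $G$ cannot contain a $K_{2,t}$-minor rooted at $S$ once $t = \Omega(k\Delta)$: a rooted $K_{2,t}$-minor would provide $t$ internally disjoint connections between two fixed branch sets drawn from $S$, which can be assembled into a square submatrix of $M$ whose determinant grows with $t$, contradicting total $\Delta$-modularity. Feeding this into the (rooted) excluded-$K_{2,t}$-minor structure theory then shows that $G$ admits a tree-decomposition of bounded adhesion whose bags are highly structured --- essentially path-like away from $S$ --- and on each bag the constrained integer potential problem can be solved directly, since the bag's matrix is totally unimodular up to its bounded interaction with $S$ and the local problem reduces to shortest-path-type computations. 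Finally I would run a dynamic program over this tree-decomposition: the state at a bag records the (normalised) restriction of a candidate potential to the adhesion together with the partial contributions of that part to the $k$ side-constraint functions; both range over a polynomially bounded set thanks to the proximity window, so the state space is polynomial and, combined with strongly polynomial local computations and Tardos's algorithm for the LP, the overall running time is strongly polynomial.

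The first stage is a careful but essentially standard proximity argument; the crux is the second stage, and within it the passage from ``no rooted $K_{2,t}$-minor'' to a usable tree-decomposition. The difficulty is to obtain bags that are simultaneously simple enough for the local constrained integer potential problem to be tractable and adhesions small enough that the $k$ global constraints can be threaded through a dynamic program whose state space stays polynomial, all while keeping every quantitative bound a function of $k$ and $\Delta$ only. That is where the real work lies.
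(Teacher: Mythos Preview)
Your overall architecture matches the paper's: eliminate the extra columns via Cook--Gerards--Schrijver--Tardos proximity, prove a bounded-circuit-augmentation proximity result for $\smallmat{A\\W}$, reformulate the cographic case as a constrained integer potential problem on a graph $G$, exclude a rooted $K_{2,t}$-minor, decompose $G$, and run a DP. However, there is a genuine gap at the heart of the second stage.

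You write ``Let $S \subseteq V(G)$ be the \emph{constant-size} set of vertices touched by the $k$ side constraints, by the objective, and by $x^\star$.'' This is false, and the error propagates. After the change of variables, the $k$ side constraints become $Wy=d$ with $W\in\Z^{k\times V(G)}$ obtained essentially as $CM^\intercal$; a vertex $v$ is a root whenever $W(\cdot,v)\neq 0$, and this can hold for a linear number of vertices (likewise the objective $p$ touches all of $V(G)$). If the root set really were of constant size, the whole second stage would be trivial: guess $y$ on the roots and solve the residual TU system. The entire point of the structural work is that the root set is large but, thanks to the excluded rooted $K_{2,t}$-minor, its interaction with docsets is controlled. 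Concretely, the paper shows that for each bag $B_u$ of the tree-decomposition there is a \emph{polynomial-size} collection $\mathcal{X}_u$ (a ``docset superprofile'') containing every possible intersection of a docset of $G$ with the roots in $B_u$; the DP then enumerates, for each bag, a guess from $\mathcal{X}_u^{f(k,\Delta)}$ in addition to the adhesion values and partial weight contributions. Your DP description omits this ingredient entirely, and without it the state space is not polynomial.

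Two smaller points. First, your justification for the absence of a rooted $K_{2,t}$-minor (``assembled into a square submatrix of $M$ whose determinant grows with $t$'') is not how the argument goes. The paper proves instead that a rooted $K_{2,t}$-model yields a docset $S$ with $|a(S)|\ge t/4$ for some row $a$ of $W$ (\Cref{lemma:pumpkin_bis}), contradicting $\|W\chi^S\|_\infty\le\Delta$; the link to subdeterminants is indirect, via \Cref{lemCharTotalDelta}. Second, the tree-decomposition is not ``essentially path-like away from $S$'': bags can be large genus-bounded pieces whose roots are covered by boundedly many facial cycles, and establishing this requires substantial graph-minor machinery (\Cref{P1-P6}). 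Your final paragraph correctly flags this as ``where the real work lies,'' but the preceding misconception about $|S|$ means you have not identified what that work actually has to accomplish.
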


Recall that, to any given (weakly) connected directed graph $G$ and spanning tree $T$ of $G$, one associates the \emph{network matrix} $A\in\{0,\pm1\}^{E(T)\times E(G) - E(T)}$ such that $A_{e,(v,w)}$ is equal to $1$ if $e$ is traversed in the forward direction on the unique $v$-$w$-path in $T$, is equal to $-1$ if it is traversed in the backward direction, and is equal to $0$ otherwise.

By Seymour's celebrated decomposition theorem for regular matroids \cite{seymour_1980}, network matrices and their transposes are the main building blocks of totally unimodular matrices.
While we have not been able to prove an analogous result for network matrices (instead of their transposes) yet, we believe that studying the totally $\Delta$-modular IP conjecture for the above case yields several new insights that are relevant for resolving the general conjecture.
It is known that $\Delta$-modular matrices give rise to a proper minor-closed class of $\mathbb{F}_p$-representable matroids for $p > \Delta$, see for instance \cite{geelen_2021}.
A key result in the matroid minors project \cite{geelen_2015} states that, in each such family of matroids, every matroid whose vertical connectivity is sufficiently high is a low-rank perturbation\footnote{If $M$ and $N$ are matroids generated by matrices $A$ and $B$ respectively, where $\rank(A - B) \le t$, then $M$ is said to be a rank-$(\le t)$ perturbation of $N$.} of a frame\footnote{In this context, a matroid is said to be \emph{frame} if it can be represented over $\mathbb{F}_p$ by a matrix with at most two nonzeros per column, and \emph{coframe} if it is the dual of a frame matroid.} or coframe matroid.
The structure of such matroids is highly governed by a graph, and so it is to be expected that questions about graphs become a central aspect of resolving the general totally $\Delta$-modular IP conjecture.
In fact, our approach for proving \Cref{thm:main_IP} strongly builds upon results from the theory of graph minors that are complementary to those that have been used in the work in~\cite{FJWY25}.
Thus, we believe that our work significantly enhances our understanding of which questions about graphs are relevant to resolve the general totally $\Delta$-modular IP conjecture.

Moreover, note that nearly totally unimodular matrices \emph{are} low-rank perturbations of well-behaved matrices, an aspect that has not been studied yet in the context of the conjecture.
Finally, our insights also lead to a new proximity result (see \Cref{thmProximity}) in integer programming that generalizes a result by Eisenbrand \& Weismantel~\cite{EisenbrandWeismantel}, yielding another motivation for studying the class of nearly totally unimodular matrices.

\subsection{Approach}

As indicated above, we achieve our result in two main steps, one related to the theory of integer programming and one related to graph minor theory.
More specifically, the first step is concerned with bounds on distances between optimal solutions of IPs and their LP relaxations.
A classic result of this type was established by Cook, Gerards, Schrijver, \& Tardos~\cite{cook_1986} who showed that if $M$ is totally $\Delta$-modular, \eqref{eqIPgeneral} is feasible, and $x^*$ is an optimal solution of the LP relaxation, then there exists an optimal solution $z^*$ of~\eqref{eqIPgeneral} with $\|x^* - z^*\|_\infty \le n \Delta$.
It is still open whether this bound can be replaced with a function in $\Delta$ only, see~\cite{celaya_2022}.

A convenient consequence of this result is that, given $x^*$, one can efficiently enumerate the possible values of $z^*$ for a constant number of variables.
In particular, if we are given the integer program~\eqref{eqIPgeneral} with a totally $\Delta$-modular coefficient matrix $M$ that becomes totally unimodular after removing $k$ rows and $\ell$ columns, we may simply guess the values of the variables corresponding to the $\ell$ columns and solve a smaller IP for each guess.

Thus, we may assume that $M$ (is totally $\Delta$-modular and) is of the form $M = \smallmat{A \\ W}$, where $A$ is totally unimodular and $W$ is an integer matrix with only $k$ rows.
For this class of IPs, we derive a considerably strengthened proximity result:
Given an optimal solution $x^*$ of the corresponding LP relaxation, there is an optimal solution $z^*$ of~\eqref{eqIPgeneral} where $\|x^* - z^*\|_\infty \le f(k, \Delta)$ for some function $f$ that depends only on $k$ and $\Delta$ (see \Cref{thmProximity}), again provided that~\eqref{eqIPgeneral} is feasible.
In fact, by bringing~\eqref{eqIPgeneral} into equality form, we show that $x^*$ can be rounded to a closeby integer point from which $z^*$ can be reached by adding a number of conformal \emph{circuits} of $\begin{bmatrix} A & \identity \end{bmatrix}$, where $\identity$ is the identity matrix, that can be bounded in terms of $k$ and $\Delta$ only.
Moreover, we observe that the fact that $M$ is totally $\Delta$-modular implies that every circuit $c$ satisfies $\|Wc\|_\infty \le \Delta$.
(See the next section for definitions and a more precise statement.)

While these findings are valid for all totally unimodular matrices $A$, we will see that they can be crucially exploited for the case where $A$ is the transpose of a network matrix, which we refer to as the \emph{cographic case}.
For these instances, it is convenient to reformulate the original problem~\eqref{eqIPgeneral} as a particular instance of a \emph{maximum constrained integer potential problem}
\begin{equation}
    \label{eqMCIPP}
    \max \left\{ p\t y : \ell(v,w) \le y(v) - y(w) \le u(v,w) \text{ for all } (v,w) \in E(G), \, Wy = d, \, y \in \Z^{V(G)} \right\},
    \tag{MCIPP}
\end{equation}
where $G$ is a directed graph, $p \in \Z^{V(G)}$, $\ell, u \in \Z^{E(G)}$, $W \in \Z^{[k] \times V(G)}$ and $d \in \Z^{k}$, and moreover each row of $W$ sums up to zero and so does $p^\intercal$ (we refer to Section \ref{sec:MCIPP} for details on this reformulation). 
Notice that the first constraints are still given by a totally unimodular matrix, and hence we may regard $Wy = d$ as extra (or complicating) constraints.
With this reformulation, and assuming that $G$ is (weakly) connected (see Section \ref{sec:outline_conn} for further details) the circuits of $\begin{bmatrix} A & \identity \end{bmatrix}$ turn into vertex subsets $S \subseteq V(G)$ with the property that both induced subgraphs $G[S]$ and $G[\overline{S}]$ are (weakly) connected, where $\overline{S} := V(G) \setminus S$.
We call such sets \emph{doubly connected sets} or \emph{docsets}.
Using this notion, our previous findings translate to two strong properties of the instances of~\eqref{eqMCIPP} we have to solve:
First, every feasible instance has an optimal solution that is the sum of at most $f(k,\Delta)$ incidence vectors $\chi^S \in \{0,1\}^{V(G)}$, where $S$ is a docset.
Second, every docset $S$ satisfies $\|W \chi^S\|_{\infty} \le \Delta$.

Referring to the vertices whose variables appear with a nonzero coefficient in at least one of the extra constraints as \emph{roots}, the second property above implies that roots cannot be arbitrarily distributed in the input graph.
Roughly speaking, by carefully exploiting the structure of the instance, we will be able to guess $y(v)$ for each root $v$. Note that once all of these variables are fixed, the resulting IP becomes easy since its constraint matrix is totally unimodular. In fact, the guessing cannot be done for the whole graph at once and we will have to do it locally, and then combine the local optimal solutions via dynamic programming.

\begin{figure}[ht!]
\centering
\includegraphics[width=.4\textwidth]{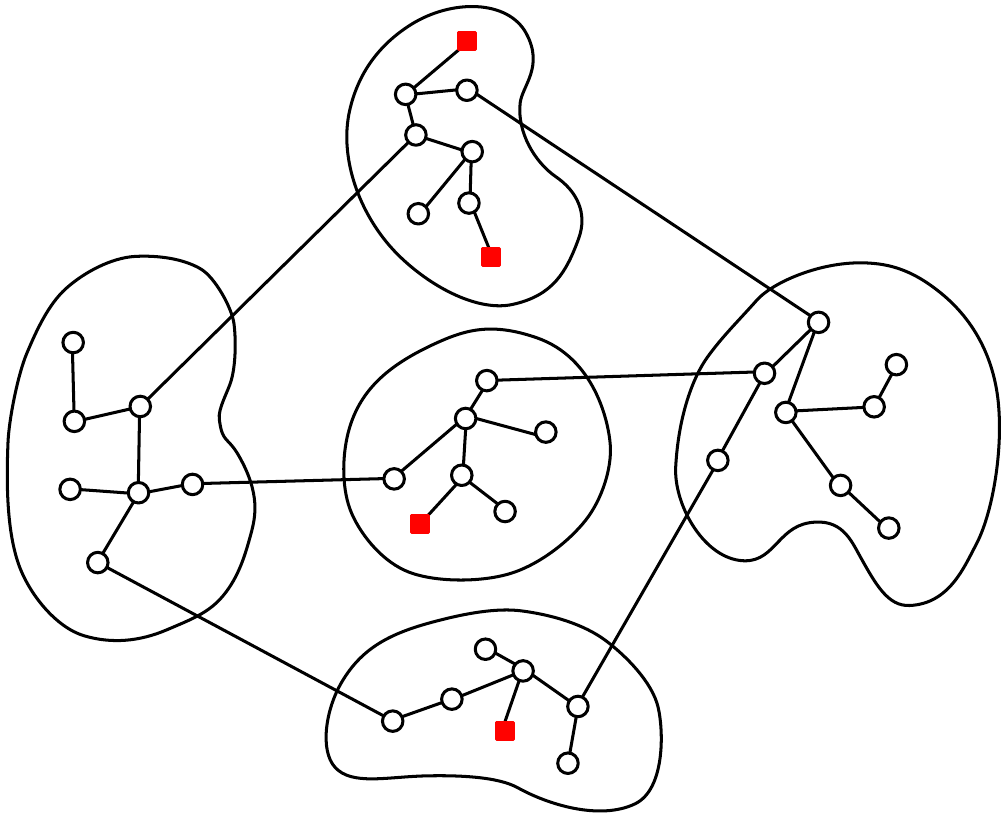}
\caption{Subgraph containing a rooted $K_{2,3}$-minor. Roots are indicated with the red squares. Contracting all the edges in each of the five \emph{branch sets} produces a properly rooted $K_{2,3}$.} \label{fig:K23}
\end{figure}

Our structural insights are based on the observation that our input graphs do not contain a \emph{rooted $K_{2,t}$-minor}, where $t = 4k\Delta+1$, provided that they are $2$-connected. Here, the minors of a rooted graph (graph with a distinguished set of vertices called roots) are defined similarly as for usual graphs, with two differences: whenever some edge $e$ is contracted we declare the resulting vertex as a root if and only if at least one of its ends is a root, and we have the possibility to remove a vertex from the set of roots. A rooted $K_{2,t}$ is said to be \emph{properly rooted} if each one of the $t$ vertices in the ``large'' side is a root. For the sake of simplicity, we say that a rooted graph contains a \emph{rooted $K_{2,t}$-minor} if it has a rooted minor that is a properly rooted $K_{2,t}$, see \Cref{fig:K23}.

Our main structural result is a decomposition theorem for rooted graphs without a rooted $K_{2,t}$-minor, see \Cref{thm:P1-P6_informal} below. It relies partly on several works about the structure of graphs excluding a minor, extending the original results of Robertson \& Seymour within the graph minors project, more specifically on works by Diestel, Kawarabayashi, M\"uller \& Wollan~\cite{DiestelKMW12}, Kawarabayashi, Thomas \& Wollan~\cite{KTW20}, and Thilikos \& Wiederrecht \cite{TW_FOCS_2022,TW_arxiv_2022}. Furthermore, we use results of B\"ohme \& Mohar~\cite{boehme_2002} and B\"ohme, Kawarabayashi, Maharry \& Mohar~\cite{boehme_2008} (see also~\cite{FKSSY}) to control the distribution of the roots in surface-embedded rooted graphs without rooted $K_{2,t}$-minors. 

Our decomposition theorem is formulated in terms of a tree-decomposition of graph $G$. Recall that a \emph{tree-decomposition} is a pair $(T, \mathcal{B})$ where $T$ is a rooted tree (tree with a unique \emph{root node}) and $\mathcal{B} = \{B_u : u \in V(T)\}$ is a collection of vertex subsets of $G$, called \emph{bags}, such that for every vertex $v$ of $G$ the set of bags containing $v$ induces a non-empty subtree of $T$, and for every edge $e$ of $G$ there is a bag that contains both ends of $e$. We define the \emph{weak torso} of a bag $B_u$ as the graph obtained from the induced subgraph $G[B_u]$ by adding a clique on $B_u \cap B_{u'}$ for each node $u' \in V(T)$ that is a child of $u$. Having stated these definitions, we are ready to state the decomposition theorem. See \Cref{fig:P1-P6} for an illustration.


\begin{theorem}[simplified version of \Cref{P1-P6}] \label{thm:P1-P6_informal}
    For every $t \in \Z_{\ge 1}$ there exists a constant $\ell = \ell(t)$ such that every $3$-connected rooted graph $G$ without a rooted $K_{2,t}$-minor admits a tree-decomposition $(T,\mathcal{B})$, where $\mathcal{B} = \{B_u : u \in V(T)\}$, with the following properties:
    \begin{enumerate}
        \item the bags $B_u$ and $B_{u'}$ of two adjacent nodes $u, u' \in V(T)$ have at most $\ell$ vertices in common, and
        \label{item:i_thm:P1-P6_informal}
        \item for every node $u \in V(T)$, all but at most \(\ell\) children $u' \in V(T)$ of \(u\) are leaves with the property that the roots in the bag $B_{u'}$ are contained in $B_u$, and
        \label{item:ii_thm:P1-P6_informal}
        \item every node $u \in V(T)$ satisfies one of the following:
        \label{item:iii_thm:P1-P6_informal}
        \begin{enumerate}
            \item bag $B_u$ has at most $\ell$ vertices, or
            \label{item:iii_a_thm:P1-P6_informal}
            \item $u$ is a leaf and $B_{u}$ has at most $\ell$ roots, all contained in the bag of the parent of $u$, or
            \label{item:iii_b_thm:P1-P6_informal}
            \item after removing at most $\ell$ vertices of $B_u$, the weak torso of $B_u$ becomes a $3$-connected rooted graph that does not contain a rooted $K_{2,t}$-minor and has an embedding in a surface of Euler genus at most $\ell$ such that every face is bounded by a cycle, and all its roots can be covered by at most $\ell$ facial cycles.
            \label{item:iii_c_thm:P1-P6_informal}
        \end{enumerate}
    \end{enumerate}
\end{theorem}

\begin{figure}[ht!]
\centering
\includegraphics[width=0.6\textwidth]{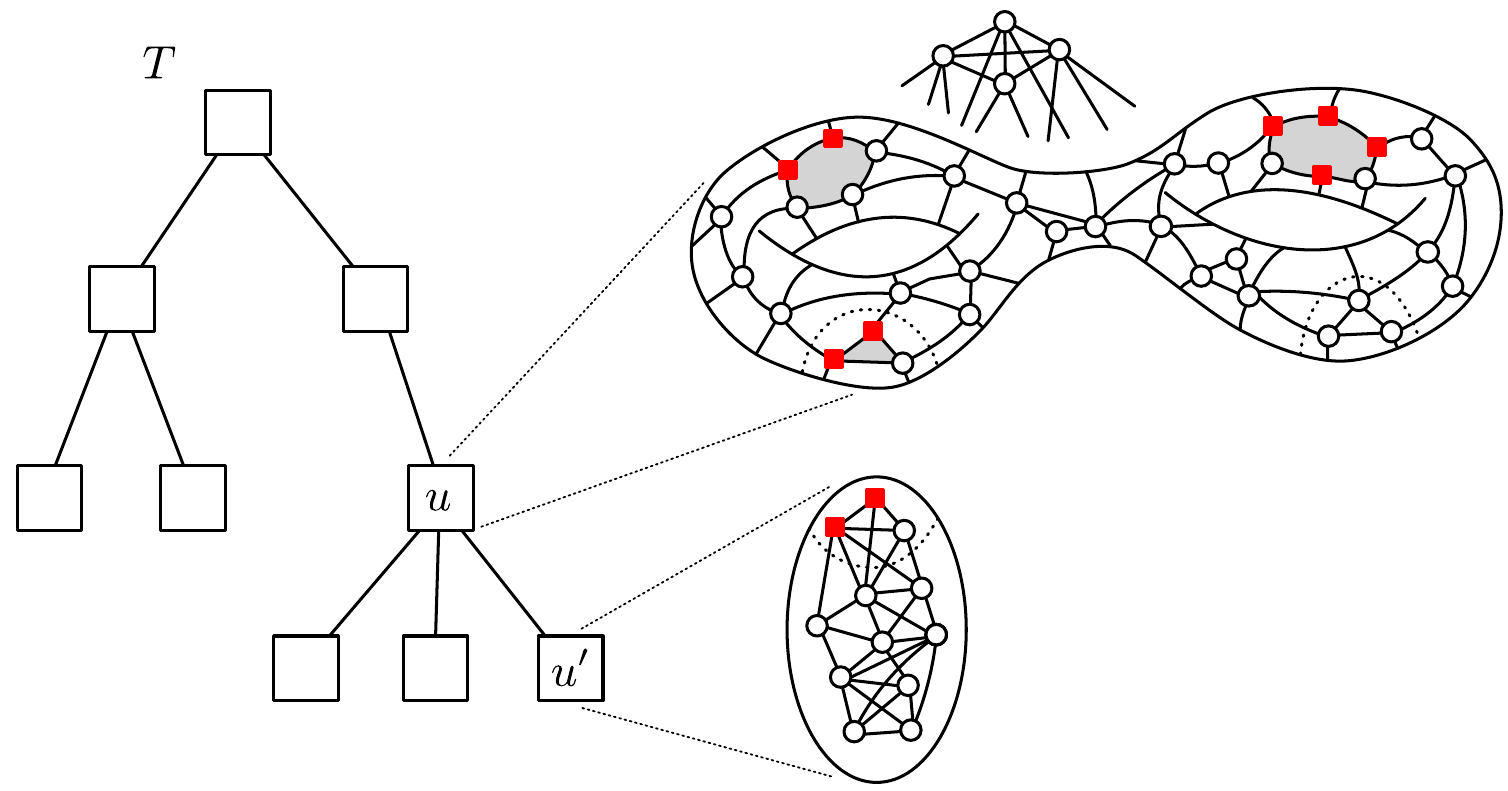}
\caption{Illustrating the decomposition of \Cref{thm:P1-P6_informal}. The decomposition tree $T$ is shown on the left. The graphs corresponding to the weak torsos of the two bags $B_u$ and $B_{u'}$ are shown on the right. The top one satisfies \ref{item:iii_thm:P1-P6_informal}.\ref{item:iii_c_thm:P1-P6_informal}. The vertices above the embedded graph indicate a set of at most $\ell$ vertices, removal of which leaves us with a graph embedded on a surface of bounded genus. The roots are indicated with red squares and are covered by $3$ faces which are drawn in grey. The vertices within the dotted regions in the top and the bottom graphs are the vertices in the intersection of those two graphs. The bottom graph satisfies \ref{item:iii_thm:P1-P6_informal}.\ref{item:iii_b_thm:P1-P6_informal}.  } \label{fig:P1-P6}
\end{figure}

As we show, there is a polynomial-time algorithm that finds the tree-decomposition of \Cref{thm:P1-P6_informal} together with a polynomial-size collection $\mathcal{X}_u$ for each node $u \in V(T)$, containing all the possible intersections of a docset of $G$ with the roots contained in bag $B_u$. This yields an efficient dynamic programming algorithm to solve the instances of~\eqref{eqMCIPP} we are interested in, which proves \Cref{thm:main_IP}.

In the next section, we give a more detailed overview of our approach together with important intermediate results, as well as an outline of the paper. The proofs of these results, as well as the missing definitions, can be found in the subsequent sections.
\section{Overview}\label{sec:outline}

Suppose we are given an instance of~\eqref{eqIPgeneral}, where $M$ is totally $\Delta$-modular and becomes totally unimodular (TU) after removing a bounded number of rows and columns. 
(Here and in the following discussion, `bounded' means `bounded by a constant'.) 
Using Tardos' algorithm~\cite{Tardos86}, we can solve its LP relaxation in strongly polynomial time.
We may assume that the latter has an optimal solution.\footnote{If the LP relaxation is infeasible, so is~\eqref{eqIPgeneral}. If the LP is unbounded, we may first replace the objective by the all-zero vector and run our algorithm.
If we find a feasible integer solution, we can conclude that the original instance is also unbounded. Otherwise, the original problem is also infeasible.}
Thus, by using the proximity result of~\cite{cook_1986}, we may eliminate a bounded number of variables such that remaining problem is of the form $\max \{ p^\intercal x : Ax \le b, \, Wx \le d, \, x \in \Z^n \}$, where $A \in \Z^{m \times n}$ is TU, $\smallmat{A \\ W}$ is totally $\Delta$-modular, and $W \in \Z^{k \times n} $ has a bounded number of rows.
Now that our initial reduction is established, we proceed by explaining how we exploit the fact that $\smallmat{A \\ W}$ is totally $\Delta$-modular.
To this end, we consider the circuits of matrices, which are defined as follows.

\subsection{Circuits}

For a subset $U \subseteq \R^n$, we say that a vector $x \in U \setminus \{\zero\}$ is a \emph{support-minimal} vector of $U$ if there is no other vector in $U \setminus \{\zero\}$ whose support is strictly contained in the support of $x$.
A \emph{circuit} of a rational matrix $B \in \Q^{m \times n}$ is a support-minimal (nonzero) vector of the kernel of $B$ that is integer and reduced (that is, the gcd of its components is $1$).
The collection of all the circuits of $B$ is denoted by $\circuits(B)$.
If $B \in \{0,\pm 1\}^{m \times n}$ is a TU matrix, then it is a basic fact that every circuit of $B$ is in $\{0,\pm 1\}^n$, see for instance Onn~\cite[\S 2.3.4]{onn_2010}.
We observe the following.\footnote{Recall that all proofs, as well as some definitions, are deferred to later sections.}

\begin{lemma}
    \label{lemCharTotalDelta}
    Let $A \in \{0,\pm 1\}^{m \times n}$ be totally unimodular, $w \in \Z^n$, and $\Delta \ge 1$.
    Then $\smallmat{A \\ w^\intercal}$ is totally $\Delta$-modular if and only if every circuit $\smallmat{x \\ y}$ of $\begin{bmatrix} A & \identity \end{bmatrix}$ satisfies $|w^\intercal x| \le \Delta$.
\end{lemma}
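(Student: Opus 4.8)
The plan is to match the subdeterminants of $\smallmat{A \\ w^\intercal}$ that involve the last row with the quantities $w^\intercal x$ as $\smallmat{x \\ y}$ ranges over the circuits of $\begin{bmatrix} A & \identity \end{bmatrix}$, via the classical cofactor description of the kernel of a corank-one matrix. Two standard facts are used throughout. First, since $A$ is TU, so is $\begin{bmatrix} A & \identity \end{bmatrix}$; hence every circuit $\smallmat{x \\ y}$ of $\begin{bmatrix} A & \identity \end{bmatrix}$ satisfies $y = -Ax$ and lies in $\{0,\pm 1\}^{n+m}$, and, being a support-minimal vector of the kernel, every nonzero kernel vector of $\begin{bmatrix} A & \identity \end{bmatrix}$ whose support is contained in that of $\smallmat{x \\ y}$ is a scalar multiple of $\smallmat{x \\ y}$. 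Second, for a $(k-1)\times k$ matrix $N$ of rank $k-1$, the signed-cofactor vector $z \in \Z^k$ with $z_j = (-1)^j \det N^{(j)}$, where $N^{(j)}$ is $N$ with its $j$-th column removed, is a nonzero generator of $\ker N$, it lies in $\{0,\pm 1\}^k$ when $N$ is TU, and $\det \smallmat{N \\ c^\intercal} = (-1)^k \, c^\intercal z$ for every row vector $c^\intercal \in \Q^k$.

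For the ``only if'' direction, assume $\smallmat{A \\ w^\intercal}$ is totally $\Delta$-modular and let $\smallmat{x \\ y}$ be a circuit of $\begin{bmatrix} A & \identity \end{bmatrix}$; put $C = \supp(x)$, $D = \supp(y)$, and $k = |C|$. Writing $N$ for the submatrix of $A$ with row set $[m] \setminus D$ and column set $C$, the identity $y = -Ax$ together with $\supp(y) \subseteq D$ gives $N (x|_C) = \zero$ with $x|_C \neq \zero$, so $\rank N \le k-1$. The key step is that $\rank N = k-1$: otherwise $\ker N$ would contain a vector independent from $x|_C$, and zero-extending it to $\R^n$ and appending the corresponding $-A(\cdot)$ would produce a nonzero kernel vector of $\begin{bmatrix} A & \identity \end{bmatrix}$ supported within $C \cup D$ but not proportional to $\smallmat{x \\ y}$, contradicting the first fact. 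Picking $R \subseteq [m] \setminus D$ with $|R| = k-1$ and $\rank A[R,C] = k-1$ (where $A[R,C]$ denotes the corresponding submatrix of $A$), the vector $x|_C$ generates $\ker A[R,C]$, hence equals $\pm z$ for the signed-cofactor vector $z$ of $A[R,C]$ --- both have support exactly $C$, so the scalar is a unit. By the second fact, $\bigl| \det \smallmat{A[R,C] \\ w[C]^\intercal} \bigr| = |w[C]^\intercal z| = |w^\intercal x|$, and the left-hand side is at most $\Delta$ since $\smallmat{A \\ w^\intercal}$ is totally $\Delta$-modular.

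For the ``if'' direction, assume every circuit $\smallmat{x \\ y}$ satisfies $|w^\intercal x| \le \Delta$ and consider an arbitrary square submatrix $B$ of $\smallmat{A \\ w^\intercal}$. If $B$ avoids the last row it is a submatrix of the TU matrix $A$, so $|\det B| \le 1 \le \Delta$. Otherwise $B = \smallmat{A[R,C] \\ w[C]^\intercal}$ with $|R| = |C| - 1 =: k - 1$; if $\rank A[R,C] < k-1$ then $\det B = 0$, so assume $\rank A[R,C] = k-1$, let $z$ be its signed-cofactor vector, and let $\bar z \in \R^n$ be the zero-extension of $z$. The main step is to show that $v := \smallmat{\bar z \\ -A\bar z}$ is a circuit of $\begin{bmatrix} A & \identity \end{bmatrix}$: it is a nonzero kernel vector, the support of its $\identity$-part $-A\bar z$ is disjoint from $R$ (because $A[R,C]\,z = \zero$), and any nonzero kernel vector $\smallmat{x' \\ y'}$ whose support lies inside that of $v$ has $\supp(y')$ disjoint from $R$, which forces $A[R,C]\,(x'|_C) = \zero$, hence $x'|_C \in \mathspan(z)$ and then $\smallmat{x' \\ y'} \in \mathspan(v)$; this establishes support-minimality, and since TU matrices have $\{0,\pm 1\}$-valued circuits, $v$ is reduced and $\{0,\pm 1\}$-valued, i.e.\ a genuine circuit. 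By hypothesis $|w^\intercal \bar z| \le \Delta$, and by the second fact $|w^\intercal \bar z| = |w[C]^\intercal z| = |\det B|$.

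The only genuine obstacle, common to both directions, is the corank-one bookkeeping: checking that the relevant submatrix $A[R,C]$ has nullity exactly one, and that the circuit (respectively, the signed-cofactor vector) generates this one-dimensional kernel with support equal to $C$, so that the scalar relating the two vectors is forced to be $\pm 1$. Everything else reduces to cofactor expansion along the last row combined with the two already-available facts: that TU matrices have $\{0,\pm 1\}$-valued circuits, and that $\begin{bmatrix} A & \identity \end{bmatrix}$ is TU whenever $A$ is.
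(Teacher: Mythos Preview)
Your proof is correct and follows essentially the same approach as the paper's. Both arguments hinge on the same corank-one/cofactor identity linking $|w^\intercal x|$ for a circuit to the determinant of a square submatrix $\smallmat{A[R,C]\\ w[C]^\intercal}$; the paper packages the ``$\rank = k-1$'' bookkeeping into a separate remark (\Cref{remCircuitWitness}) and works via the adjugate of a submatrix of $\smallmat{A & \identity\\ w^\intercal & \zero^\intercal}$, whereas you inline the same argument and work directly with submatrices of $\smallmat{A\\ w^\intercal}$ using the signed-cofactor vector, but the content is the same.
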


This implies that, for our instance, every circuit $\smallmat{x \\ y}$ of the matrix $\begin{bmatrix}A & \identity\end{bmatrix}$ satisfies $\|Wx\|_\infty \le \Delta$.

\subsection{Equality Form}\label{overviewEquality}

For the next considerations, it is convenient to bring our integer program into equality form.
By adding slack variables $y$ and $z$ (note that this transformation preserves total unimodularity of $A$ and total $\Delta$-modularity of $\smallmat{A\\W}$), we obtain the equivalent problem
\[
    \max \left\{ p^\intercal x : Ax + y = b, \, Wx + z = d, \, x \in \Z^n, \, y \in \Z^m_{\ge 0}, \, z \in \Z^k_{\ge 0} \right\}.
\]
By again using the proximity result of~\cite{cook_1986}, we obtain finite lower and upper variable bounds that are satisfied by at least one optimal solution of this integer program.
Thus, by setting $\bar{p} = \begin{bmatrix} p^\intercal & \zero & \zero \end{bmatrix}^\intercal$, $\bar{A} = \begin{bmatrix} A & \identity & \zero \end{bmatrix}$, $\bar{W} = \begin{bmatrix} W & \zero & \identity \end{bmatrix}$, and $\bar{n} = n + m + k$, our integer program turns into
\[
    \max \left\{ \bar{p}^\intercal \bar{x} : \bar{A} \bar{x} = b, \, \bar{W} \bar{x} = d, \, \bar{x} \in \Z^{\bar{n}}, \, \ell \le \bar{x} \le u \right\}
\]
for some $\ell, u \in \Z^{\bar{n}}$.
Now, consider any circuit $\bar{x}$ of $\bar{A}$.
Note that it is of the form $\bar{x} = \begin{bmatrix} x^\intercal & y^\intercal & \zero \end{bmatrix}^\intercal$, where $\smallmat{x \\ y}$ is a circuit of $\begin{bmatrix} A & \identity \end{bmatrix}$.
By \Cref{lemCharTotalDelta}, we obtain $\|\bar{W} \bar{x}\|_\infty = \|Wx\|_\infty \le \Delta$.
Thus, we want to solve, i.e., find the optimal value and a solution attaining it, a problem of the following form.

\begin{problem}
    \label{problemEquality}
    Let $k,\Delta \in \Z_{\ge 1}$ be fixed.
    Given $p,\ell,u \in \Z^n$, $A \in \Z^{m \times n}$, $b \in \Z^m$, $W \in \Z^{k \times n}$, and $d \in \Z^k$ such that $A$ is totally unimodular and $\max \{ \|Wx\|_\infty : x \in \circuits(A) \} \le \Delta$, solve
    \begin{equation}
        \label{eqIPequality}
        \max \left\{ p^\intercal x : Ax = b, \, Wx = d, \, \ell \le x \le u, \, x \in \Z^n \right\}. \tag{$\mathrm{IP}_1$}
    \end{equation}
\end{problem}

Moreover, in the above transformation, note that if $A$ is the transpose of a network matrix, so is $\bar{A}$. 

Thus, in order to prove \Cref{thm:main_IP}, we need to give a strongly polynomial-time algorithm for instances of \Cref{problemEquality} where $A$ is the transpose of a network matrix.
Before we focus on such matrices, we derive a strengthened proximity result that holds for all totally unimodular matrices $A$.

\subsection{Proximity}

For \Cref{problemEquality}, we will prove a strengthened proximity result on distances between optimal solutions of \eqref{eqIPequality} and its LP relaxation
\begin{equation}
    \label{eqLPequality}
    \max \left\{ p^\intercal x : Ax = b, \, Wx = d, \, \ell \le x \le u \right\}\,. \tag{$\mathrm{LP}_1$}
\end{equation}
Our first result is the following.

\begin{theorem}
    \label{thmProximity}
    If $x^*$ is an optimal solution of \eqref{eqLPequality} and \eqref{eqIPequality} is feasible, then there is an optimal solution $z^*$ of \eqref{eqIPequality} with $\|x^* - z^*\|_\infty \le f_{\ref{thmProximity}}(k,\Delta)$.
    Here, $f_{\ref{thmProximity}}(k,\Delta)$ is a function\footnote{All functions of $k$ and $\Delta$ defined in this paper are in fact computable.} of $k$ and $\Delta$ only.
\end{theorem}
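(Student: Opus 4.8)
The plan is to reduce to a situation where the LP optimum $x^*$ can be rounded to a nearby integer point $x_0$ satisfying $Ax_0 = b$ and $\ell \le x_0 \le u$, and then to show that $x_0$ can be modified into an optimal integer solution using only a bounded number of conformal circuits of $A$. Concretely, first I would consider the fractional part of $x^*$ and use the fact that $A$ is totally unimodular: the system $A\tilde x = A x^*$, $\ell' \le \tilde x \le u'$ (where $\ell', u'$ are the componentwise floor/ceiling of $x^*$) defines an integral polytope, so we may pick an integral $x_0$ in it with $\|x^* - x_0\|_\infty < 1$. This $x_0$ need not satisfy $Wx_0 = d$, but since each component moved by less than $1$ and $W$ has $k$ rows, $\|W x_0 - d\|_\infty = \|W(x_0 - x^*)\|_\infty$ is bounded in terms of $\|W\|_\infty$ — which is a priori not bounded. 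The key trick, as the overview hints, is instead to write $x^* - z^*$ (for an as-yet-unknown optimal integer $z^*$, which exists with $\|x^*-z^*\|_\infty \le n\Delta$ by Cook et al.) or rather $z^* - x_0$ as a conformal sum of circuits of $A$, and argue that almost all of them can be cancelled.

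The heart of the argument is a Carathéodory/sign-compatible decomposition combined with an exchange argument. Take an optimal integral $z^*$ guaranteed by~\cite{cook_1986}. Then $z^* - x_0 \in \ker A$ (both satisfy $Ax = b$... actually $Ax_0 = Ax^* = b$ once we choose $x_0$ from the integral polytope above, using $Ax^*=b$), and $z^* - x_0$ is an integer vector in $\ker A$, hence decomposes as a conformal sum $z^* - x_0 = \sum_{i=1}^{N} \lambda_i c_i$ with $c_i \in \circuits(A)$, $\lambda_i \in \Z_{>0}$, all $c_i$ conformal to $z^* - x_0$ (this is the standard fact that integer kernel vectors of any matrix are conformal nonnegative-integer combinations of circuits; for TU $A$ the $c_i$ lie in $\{0,\pm1\}^n$). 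Because the combination is conformal, $x_0 + \sum_{i \in S}\lambda_i c_i$ stays within $[\ell,u]$ for every subset $S$, and adding circuits never violates $Ax=b$. Now use the objective and the $W$-constraints: partition or group the circuits by their "type" $(\,\mathrm{sign\ pattern\ of}\ p^\intercal c_i,\ W c_i) $; note $Wc_i \in \{-\Delta,\dots,\Delta\}^k$ by the hypothesis of \Cref{problemEquality}, so there are at most $(2\Delta+1)^k$ possible values of $Wc_i$, and at most $3$ sign values of $p^\intercal c_i$. If some type occurs with total multiplicity exceeding a threshold, I would show we can remove a sub-multiset of those circuits summing to a $W$-neutral, objective-non-decreasing adjustment, contradicting optimality or allowing us to shrink the decomposition. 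A cleaner way: among the $\le (2\Delta+1)^k$ vectors $W c_i$, an integer combination $\sum \mu_i (Wc_i) = Wz^* - Wx_0 = d - Wx_0$ exists; by an integer Carathéodory-type bound (e.g. the Eisenbrand–Shmonin bound, or a direct pigeonhole on partial sums), there is another solution $\mu'$ with support and $\ell_1$-norm bounded by a function of $k$ and $\Delta$ that achieves the same $W$-value and at least the same objective value. Replacing the circuit multiset accordingly yields $z^{**} = x_0 + \sum \mu'_i c_i$ which is feasible for \eqref{eqIPequality}, optimal, and satisfies $\|z^{**} - x_0\|_\infty \le \sum \mu'_i \le g(k,\Delta)$, whence $\|x^* - z^{**}\|_\infty \le 1 + g(k,\Delta) =: f_{\ref{thmProximity}}(k,\Delta)$.

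In more detail, the replacement step works as follows. We have nonnegative integers $\lambda_i$ indexed by the circuits, with $\sum_i \lambda_i W c_i = d - Wx_0 =: \delta$ (a fixed integer vector, though not a priori bounded) and $\sum_i \lambda_i p^\intercal c_i = p^\intercal(z^* - x_0) =: \pi \ge 0$. Group the circuits into classes $\mathcal{T}_\tau$ according to the pair $\tau = (Wc_i, \operatorname{sign}(p^\intercal c_i))$; within a class the vectors $Wc_i$ are identical and the objective contributions have a fixed sign. What we actually want is an optimal $z^*$, so we may as well \emph{choose} $z^*$ to minimize $\sum_i \lambda_i$ over all optimal solutions reachable from $x_0$ by a conformal circuit sum — such a minimizing choice exists. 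I claim $\sum_i \lambda_i \le h(k,\Delta)$ for this choice. Suppose not. Consider the partial sums of the multiset of vectors $(Wc_i) \in \{-\Delta,\dots,\Delta\}^k$ obtained by listing each $c_i$ with multiplicity $\lambda_i$ in an order that groups equal $W$-values together; there are more than $h(k,\Delta)$ of them, but each nonzero $Wc_i$ contributes a bounded "step". If too many have $Wc_i = 0$, those circuits can be dropped entirely without changing $Wz^*$, and they cannot strictly decrease the objective all at once in a way that — wait, individually dropping a circuit with $Wc_i=0$ changes the objective by $-p^\intercal c_i$; if $p^\intercal c_i \le 0$ dropping it keeps optimality and reduces $\sum\lambda_i$, contradiction; so all remaining $W$-neutral circuits have $p^\intercal c_i > 0$, but then $p^\intercal c_i \ge 1$ and there can be only... no, there can be many. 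Instead, pair a $W$-neutral circuit with $p^\intercal c_i > 0$ against the LP-optimality of $x^*$: hmm, this needs care. The cleanest fix is to bound the number of \emph{distinct} circuit directions used (by the integer Carathéodory theorem applied in the lattice generated by the $(Wc_i, p^\intercal c_i) \in \Z^{k+1}$, a bounded set, giving at most $k+1$ of them, or a $2(k+1)\log$-type bound) and then bound each multiplicity by a potential/exchange argument using that a repeated circuit direction with $Wc_i = 0$ and $p^\intercal c_i = 0$ is pointless while one with $Wc_i=0, p^\intercal c_i\ne 0$ contradicts optimality of $z^*$ in one direction and of $x^*$ (via complementary slackness / the LP being optimal) in the other. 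I expect the main obstacle to be precisely this last point — turning "$W$ has unbounded entries" into something harmless — which is handled by never rounding against $W$ directly but only ever working inside $\ker A$ with circuits whose $W$-image is controlled by \Cref{lemCharTotalDelta}, and then invoking an integer Carathéodory bound so that only $\mathrm{poly}(k)$ circuit \emph{types} and bounded multiplicities survive.
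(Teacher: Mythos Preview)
Your skeleton --- round $x^*$ to a nearby integer $x_0$ with $Ax_0 = b$, decompose $z^* - x_0$ conformally into circuits of $A$, then shrink the decomposition --- matches the paper's approach via \Cref{propProximity}, but two key ingredients are missing. First, to extract a removable block of circuits with total $W$-image zero, the paper invokes the Steinitz lemma to \emph{reorder} the circuit list so that every partial $W$-sum stays in $[-k\Delta,k\Delta]^k$; pigeonhole then produces repeated partial sums, hence disjoint blocks summing to zero. Your ordering ``group equal $W$-values together'' does not bound partial sums (take $N/2$ copies of $e_1$ followed by $N/2$ copies of $-e_1$), and your integer-Carath\'eodory detour fails outright: a short $\mu'$ with the same $W$-image need not satisfy $\mu'_i \le \lambda_i$, so $x_0 + \sum_i \mu'_i c_i$ can violate the box constraints. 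Moreover, Steinitz needs $\|W(z^*-x_0)\|_\infty$ bounded, which follows only if $x_0$ lies in the minimal face $F$ of $P=\{Ax=b,\ \ell\le x\le u\}$ containing $x^*$ and $\dim F\le k$; the latter requires $x^*$ to be a \emph{vertex} of the full LP, a preliminary reduction the paper performs but you omit.

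Second, once a block $\bar{c}$ with $W\bar{c}=0$ is in hand, you need $p^\intercal\bar{c}\le 0$ so that dropping it preserves optimality of $z^*$. You correctly flag this as the main obstacle and gesture at ``LP-optimality of $x^*$'', but $x^*+\bar{c}$ need not lie in $P$, so the step does not close as stated. The paper's resolution is again the minimal face: since $x_0\in F$, $x_0+\bar{c}\in P$ (by conformality), and $x^*$ lies in the relative interior of $F$, one has $x^*+\varepsilon\bar{c}\in P$ for some $\varepsilon>0$; then $W(x^*+\varepsilon\bar{c})=d$, so LP-optimality forces $p^\intercal\bar{c}\le 0$. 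Without this step, neither your ``$Wc_i=0,\ p^\intercal c_i\neq 0$ contradicts $x^*$ or $z^*$'' line nor the exchange argument goes through.
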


The proof of \Cref{thmProximity} is based on the following result, which we prove following the strategy by Eisenbrand and Weismantel~\cite{EisenbrandWeismantel}.
We remark that our proof of \Cref{propReductionIP} extends the proximity result in~\cite{EisenbrandWeismantel} to general totally unimodular matrices with a constant number of additional equalities, and recovers the proximity bound for their setting, that is, an equality system with a constant number of constraints, and entries bounded by a constant in absolute value.

Here, we say that vectors $c_1,\dots,c_t \in \R^n$ are \emph{conformal} if, in every coordinate, they have the same sign, that is, $c_1,\dots,c_t$ are contained in the same orthant (see~\cite[\S 3.1]{onn_2010}).

\begin{proposition}
    \label{propReductionIP}
    Given an instance of \Cref{problemEquality}, in strongly polynomial time we can compute an integer point $z \in \Z^n$ satisfying $Az = b$, $\ell \le z \le u$ such that there is an optimal solution to \eqref{eqIPequality} of the form $z + \sum_{j=1}^t c_j$ where $c_1,\dots,c_t \in \circuits(A)$ are conformal and $t \le f_{\ref{propReductionIP}}(k, \Delta)$, or conclude that \eqref{eqIPequality} is infeasible.
\end{proposition}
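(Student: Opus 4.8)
The plan is to follow the Eisenbrand--Weismantel template for proving proximity via a greedy circuit-decomposition argument, but to carry it out relative to the totally unimodular matrix $A$ rather than relative to the identity. First I would compute, in strongly polynomial time (using Tardos' algorithm), an optimal vertex solution $x^*$ of the LP relaxation~\eqref{eqLPequality}; since $A$ is TU, after guessing nothing we can round $x^*$ to a nearby integer point $z$ with $Az = b$ and $\ell \le z \le u$ — concretely, $x^*$ lies in a face of $\{x : Ax=b,\ \ell\le x\le u\}$ on which the extra constraints $Wx=d$ are active, and the nearest integer point of the TU system $\{Ax=b,\ \ell \le x \le u\}$ can be found by an LP over a TU matrix, giving $\|x^* - z\|_\infty \le 1$ in the free coordinates (the slack/identity blocks are handled automatically). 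If $z$ cannot be produced, i.e.\ the TU polyhedron is empty, then~\eqref{eqIPequality} is infeasible and we stop.

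Next, given any optimal integer solution $z^\star$ of~\eqref{eqIPequality}, the difference $z^\star - z$ lies in $\ker A$ and is an integer vector, so it decomposes into a conformal sum of circuits of $A$: $z^\star - z = \sum_{j=1}^{N} c_j$ with all $c_j \in \circuits(A)$ pairwise conformal (this is the standard integer Carath\'eodory / conformal decomposition for the kernel of an integer matrix, using that $A$ is TU so circuits are $\{0,\pm1\}$-valued). The point of the argument is to show that we do not need all $N$ of these: we will replace $z^\star$ by $z + \sum_{j\in J} c_j$ for a subset $J$ of bounded size. Order the circuits and consider the partial sums; the key observation is that $\|W c_j\|_\infty \le \Delta$ for every $j$ by hypothesis, so along the conformal sum the vector $W(z^\star - z) = \sum_j W c_j$ changes by at most $\Delta$ in each of the $k$ coordinates per circuit. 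Group the circuits by their ``$W$-type'', i.e.\ by the value of $Wc_j \in \{-\Delta,\dots,\Delta\}^k$; there are at most $(2\Delta+1)^k$ types. Within each type the circuits $c_j$ are conformal, $p^\intercal c_j$ has a fixed sign pattern summing to a total, and — here is where optimality of $x^*$ enters — one shows via an exchange/averaging argument against the LP optimum that if a single type contributes more than a bounded number of circuits, one could either improve $z^\star$ (contradicting optimality) or shift mass back onto $z$ while staying feasible, because the only ``global'' constraints coupling distant circuits are the $k$ rows of $W$ and they are already pinned down once we know the $W$-type totals. This is exactly the mechanism by which Eisenbrand and Weismantel bound the number of needed circuits by a function of the number of rows and the entry bound; the TU structure of $A$ is what lets the remaining (unbounded) part of the system be absorbed for free.

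Carrying this out yields $t \le f_{\ref{propReductionIP}}(k,\Delta) := g\big(k,(2\Delta+1)^k\big)$ for the Eisenbrand--Weismantel bound $g$, and the point $z$ together with the retained conformal circuits $c_1,\dots,c_t$ gives the claimed optimal solution $z + \sum_{j=1}^t c_j$; strong polynomiality of producing $z$ follows from Tardos' algorithm plus one TU-LP rounding step, and no enumeration over the unbounded part is ever performed. I expect the main obstacle to be the exchange argument in the previous paragraph: one must show that capping the multiplicity of each $W$-type at a constant can be done \emph{simultaneously} for all types without violating $Wx=d$ and without decreasing $p^\intercal x$, and the delicate point is that the circuits of $A$ (unlike in the pure equality-system setting, where they are the columns of the identity glued to $W$) can interact non-trivially in the $A$-coordinates — one has to argue that any such interaction is invisible to feasibility because $A(z^\star - z) = 0$ is preserved coordinatewise within a conformal class, so only the $W$-coordinates and the box constraints $\ell \le x \le u$ can be binding. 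Making the box constraints cooperate (they are the reason one works with a specific rounded $z$ rather than an arbitrary one, and the reason $\|x^*-z^\star\|_\infty$ rather than just $t$ is ultimately bounded in \Cref{thmProximity}) is the part that needs the most care.
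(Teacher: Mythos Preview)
Your high-level plan (round $x^*$ to an integer $z$ in the TU polytope, conformally decompose $z^\star - z$ into circuits of $A$, then shorten the decomposition using LP optimality and the bound $\|Wc\|_\infty \le \Delta$) matches the paper, but two of the steps you leave vague are exactly where the work lies, and as sketched they do not go through.

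First, the choice of $z$. It is not enough that $z$ be an arbitrary integer point of $P = \{x : Ax=b,\ \ell \le x \le u\}$ close to $x^*$. The paper places $z$ in the \emph{minimal face} $F$ of $P$ containing $x^*$: since $\dim F \le k$, one takes a vertex $z'$ of $F$, writes $z' - x^* = \sum_{j=1}^k \lambda_j c'_j$ conformally, and sets $z := x^* + \sum_j (\lambda_j - \lfloor \lambda_j \rfloor) c'_j \in F$. This is what guarantees the key implication $z + v \in P \Rightarrow x^* + \varepsilon v \in P$ for some $\varepsilon > 0$, which is precisely what lets LP optimality of $x^*$ bite later. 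A generic rounding of $x^*$ need not satisfy this, and without it your ``exchange/averaging against the LP optimum'' has no leverage.

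Second, the bound on $t$. Grouping circuits by their $W$-type $Wc_j \in \{-\Delta,\dots,\Delta\}^k$ does not by itself bound the number of circuits: dropping circuits from a single nonzero type destroys $Wz^\star = d$, and you give no mechanism to compensate across types. The paper does not group by type at all; instead it applies the \emph{Steinitz lemma} to the vectors $Wc_1,\dots,Wc_t$ (padded by at most $k$ dummy vectors of norm $\le \Delta$ to make the total zero) to reorder them so that every partial sum has $\ell_\infty$-norm at most $k\Delta$. If $t > k(2k\Delta+1)^k$, pigeonhole on the partial sums yields $k+1$ coinciding values, hence $k+1$ disjoint index sets each with zero $W$-sum; one of them, say $S'$, avoids all dummies. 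Letting $\bar c := \sum_{i \in S'} c_i$, the point $z^\star - \bar c$ is still IP-feasible (since $W\bar c = \zero$), and because $z + \bar c \in P$ the minimal-face property gives $x^* + \varepsilon \bar c$ LP-feasible, whence $p^\intercal \bar c \le 0$ and $z^\star - \bar c$ is IP-optimal with a strictly shorter conformal decomposition. The Steinitz step is the missing ingredient your sketch does not supply.
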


Notice that after applying \Cref{propReductionIP}, we can perform a translation of the solution space mapping the integer point $z$ to the origin. This allows us to reduce to the special case of \Cref{problemEquality} where $b = \zero$, and look for an optimal solution $x \in \Z^n$ that is a conformal sum of at most $f_{\ref{propReductionIP}}(k, \Delta)$ circuits of $A$.

\subsection{Increasing the connectivity of the TU matrix}\label{sec:outline_conn}

In order to deal with \Cref{problemEquality}, it is convenient to ensure some connectivity properties of $A$.
Suppose that, possibly after permuting the columns of $A$, we can write $A = \begin{bmatrix} A_1 & A_2 \end{bmatrix}$ in such a way that the column space of $A_1 \in \{0,\pm 1\}^{m \times n_1}$ and the column space of $A_2 \in \{0,\pm 1\}^{m \times n_2}$ intersect in a linear subspace of dimension at most $q-1$, for some $q \in \Z_{\ge 1}$. Provided that $\min\{n_1,n_2\} \ge q$, the corresponding partition of the columns of $A$ is a \emph{separation of order $q$}, or shortly a \emph{$q$-separation}. We define the \emph{connectivity} of $A$ as the minimum $q$ such that $A$ has a $q$-separation, and call $A$ \emph{$q$-connected} if its connectivity is at least $q$. 

In case the connectivity of $A$ is low, it is natural to split \eqref{eqIPequality} into two IPs with bounded interaction. We show how to do this for separations of order $q \le 2$, and give a (strongly) polynomial time, black-box reduction of instances of \Cref{problemEquality} to instances such that $A$ is \emph{almost $3$-connected}, in the sense that $A$ becomes $3$-connected when we delete from $A$ every column that is a nonzero multiple of another column. We point out that special care should be taken because of the extra constraints $Wx = d$.

Connectivity is a fundamental concept for studying TU matrices and the corresponding matroids, which are known as \emph{regular} matroids. By Seymour's decomposition theorem~\cite{seymour_1980}, if $A$ is $4$-connected\footnote{ If $A$ is 3-connected but not 4-connected, then it has a $3$-separation: we do not know if our reduction can be extended to 3-separations, however we do not need this in order to prove \Cref{thm:main_IP}. See Section \ref{sec:discussion} for further discussions.} then there exists a network matrix $D \in \{0,\pm 1\}^{r \times (n-r)}$ such that system $Ax = \zero$ is equivalent either to $\begin{bmatrix} D &\identity_r \end{bmatrix} x = \zero$ or to $\begin{bmatrix} \identity_{n-r} &-D^\intercal \end{bmatrix} x = \zero$. The two cases are dual of each other. The first case corresponds to the graphic case (i.e. the linear matroid of $A$ is a graphic matroid) and we do not treat it in this paper, see Section \ref{sec:discussion}.  
In order to prove \Cref{thm:main_IP}, we now focus on the latter case, the cographic case.

\subsection{Reducing to the maximum constrained integer potential problem}\label{sec:MCIPP}

Assume that $Ax = \zero$ is equivalent to $\begin{bmatrix} \identity_{n-r} &-D^\intercal \end{bmatrix} x = \zero$ where $D \in \{0,\pm 1\}^{(n-r) \times r}$ is a network matrix, and $b = \zero$. We perform a change of variables that transforms any such instance of \Cref{problemEquality} into an instance of the maximum constrained integer potential problem. 

Let $G$ denote a (weakly) connected directed graph such that $D = B^{-1} N$ where $M = \begin{bmatrix} N &B \end{bmatrix}$ is the vertex-edge incidence matrix of $G$ with one row deleted, and $B$ is a basis of $M$. Writing $x \in \R^n$ as $x = \smallmat{x_N\\ x_B}$, we have
$$
A x = \zero \iff x_N - D^\intercal x_B = \zero \iff x_N = D^\intercal x_B \iff x_N = N^\intercal B^{-\intercal} x_B\,.
$$

Now let $y := B^{-\intercal} x_B$. Then $x = \smallmat{N^\intercal y\\ B^\intercal y} = M^\intercal y$ and \eqref{eqIPequality} can be rewritten as
\begin{equation}
\label{eq:IP1-cographic} 
\max \left\{ p^\intercal M^\intercal y : \ell \leq M^\intercal y \leq u,\ W M^\intercal y = d,\ y \in \Z^{r} \right\}\,.
\tag{$\mathrm{IP}_1$-cographic}
\end{equation}
Note that $x = \smallmat{x_N\\ x_B} \in \Z^n$ holds if and only if $y \in \Z^r$, since $B$ is unimodular and $x_N = N^\intercal B^{-\intercal} x_B$. We claim that \eqref{eq:IP1-cographic} is equivalent to \eqref{eqMCIPP}. In order to see this, let $v_0 \in V(G)$ denote the vertex whose row is missing from $M$. Notice that $r = |V(G-v_0)| = |V(G)| - 1$. Now append to $M$ the missing row for $v_0$, and similarly append a row to $y$ for $v_0$. After performing this modification and renaming the profit vector and weight matrix, \eqref{eq:IP1-cographic} transforms precisely into \eqref{eqMCIPP}. (Now that $M$ is the full incidence matrix of $G$, notice that we have $M^\intercal \one = \zero$, hence each row of $p^\intercal M^\intercal$ or $W M^\intercal$ in \eqref{eq:IP1-cographic} sums up to zero.)

Consider the resulting instance of the maximum constrained integer potential problem, see \eqref{eqMCIPP}, where each vertex $v\in V(G)$ has weights $W(i,v)$ for $i\in [k]$. Let $R := \{v \in V(G) : \exists i \in [k] : W(i,v) \neq 0\}$ denote the set of roots of $G$. Below, we denote the resulting rooted graph as $(G,R)$. (Also, we regard $G$ as an undirected graph most of the time, since the edge directions are relevant only when we go back to solving \eqref{eqMCIPP}.)

By applying \Cref{lemCharTotalDelta} and \Cref{propReductionIP}, we infer the two properties of the resulting MCIPP instance mentioned in the previous section: First, provided that the instance is feasible, it has an optimal solution that is the sum of at most $f_{\ref{propReductionIP}}(k, \Delta)$ incidence vectors of docsets. Second, the weight vector $W \chi^S$ of every docset $S$ has all its components in $[-\Delta,\Delta] \cap \Z$. We in fact obtain two further properties. Third, since $A$ is (without loss of generality) almost $3$-connected, $G$ is a $3$-connected graph some of whose edges are subdivided. Fourth, $(G,R)$ does not contain a rooted $K_{2,t}$-minor for $t = \Omega(k \Delta)$. This fourth property, which follows from the second, is the starting point of our structural analysis. The precise result we show is as follows.

\begin{lemma} \label{lem:4kDelta+1}
Let $k, \Delta \in \Z_{\ge 1}$ be fixed, and let $t := 4k\Delta + 1$. Consider an instance of the maximum constrained integer potential problem, see \eqref{eqMCIPP}. If $G$ is $2$-connected and $W \in \Z^{[k] \times V(G)}$ satisfies $||W \chi^S||_\infty \le \Delta$ for all docsets $S$, then $(G,R)$ does not contain a rooted $K_{2,t}$-minor.
\end{lemma}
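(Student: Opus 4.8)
The plan is to argue by contraposition: assuming that $(G,R)$ contains a rooted $K_{2,t}$-minor with $t = 4k\Delta+1$, I will exhibit a docset $S$ for which some row of $W$ violates the bound $\|W\chi^S\|_\infty \le \Delta$. First I would fix a rooted $K_{2,t}$-model: two ``hub'' branch sets $U_1, U_2$ and $t$ further branch sets $V_1,\dots,V_t$, each $V_j$ containing at least one root $r_j \in R$, such that each $G[V_j]$, $G[U_i]$ is connected, and each $V_j$ is adjacent to both $U_1$ and $U_2$ but the $V_j$'s are pairwise non-adjacent and non-adjacent to each other except through the hubs. Since each $r_j$ is a root, there is a row index $i(j) \in [k]$ with $W(i(j), r_j) \ne 0$; by pigeonhole, since $t = 4k\Delta+1 > 4\Delta k$, some fixed row $i^\star \in [k]$ receives at least $\lceil t/k \rceil \ge 4\Delta+1$ of the indices $j$. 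Restricting attention to those $j$, and further splitting according to the sign of $W(i^\star, r_j)$, I get a set $J$ of at least $2\Delta+1$ indices all with $W(i^\star, r_j)$ of the same sign.

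The next step is to build a docset by combining the branch sets. Put $S := U_1 \cup \bigcup_{j \in J} V_j$. I claim $S$ is a docset, i.e.\ $G[S]$ and $G[\overline S]$ are both connected. For $G[S]$: $G[U_1]$ is connected, each $G[V_j]$ is connected, and each $V_j$ has an edge to $U_1$, so $S$ induces a connected subgraph. For $G[\overline S]$: here I need that the rest of the graph, together with $U_2$, hangs together. The complement contains $U_2$, all $V_j$ with $j \notin J$, and everything outside the model; since every $V_j$ (including those with $j \notin J$) is adjacent to $U_2$, and $U_2$ is in the model too, connectivity of $G[\overline S]$ reduces to knowing that $U_2$ and the non-model part form a connected set — and this is exactly where I would invoke $2$-connectedness of $G$ (property (iii) of the informal statement / the hypothesis of the lemma). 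Concretely, a clean way is to instead contract: since $(G,R)$ has the rooted $K_{2,t}$ as a \emph{minor}, one may assume after contracting each branch set that $G$ itself is (close to) a properly rooted $K_{2,t}$ plus possibly extra vertices/edges; contractions preserve docsets in the sense that a docset of the minor pulls back to a docset of $G$ when the branch sets are connected, so it suffices to find a bad docset in $K_{2,t}$ itself. In $K_{2,t}$ with hubs $a_1,a_2$ and leaves $\ell_1,\dots,\ell_t$ (all leaves rooted), the set $S = \{a_1\} \cup \{\ell_j : j \in J\}$ induces a connected star, and its complement $\{a_2\} \cup \{\ell_j : j \notin J\}$ also induces a connected star (using $|\overline S| \ge 2$, which holds since $a_2 \notin S$ and we may assume not all leaves are in $J$ — if they are, swap the roles), so $S$ is a docset.

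Finally I estimate $\|W\chi^S\|_\infty$. Only the roots contribute, and among the roots in $S$ we have $\{r_j : j \in J\}$ plus possibly roots inside $U_1$; to keep things clean I would choose the $K_{2,t}$-model (or rather pass to a minor-minimal one) so that $U_1$ contains no roots, or absorb any root contribution from $U_1$ into the constant by working with $t$ slightly larger — the paper's $t = 4k\Delta+1$ has exactly the slack for the factor $4$ (two for sign-splitting, two more for the $\pm\Delta$ swing) rather than $2$, so the intended argument must handle roots on the hub side as well. Assuming the $r_j$, $j \in J$, are the only roots in $S$ with nonzero entry in row $i^\star$ (or that hub roots are controlled), $(W\chi^S)_{i^\star} = \sum_{j \in J} W(i^\star, r_j)$ is a sum of at least $2\Delta+1$ integers of the same sign, each of absolute value $\ge 1$, hence has absolute value $\ge 2\Delta+1 > \Delta$ — contradicting the hypothesis. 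Thus no rooted $K_{2,t}$-minor exists.

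The main obstacle I anticipate is the bookkeeping around \textbf{which roots actually lie in the docset} and \textbf{how contraction of branch sets interacts with the docset / rootedness conventions}: a branch set $V_j$ may contain several roots, roots may appear in the hub sets, and the rooted-minor operations (contract an edge $\Rightarrow$ new vertex is a root iff an endpoint was; allowed to delete roots) must be tracked so that the pulled-back set is genuinely doubly connected and so that the row-$i^\star$ sum is genuinely a same-sign sum of $\ge 2\Delta+1$ nonzero terms. Getting the constant to come out as exactly $4k\Delta+1$ (and not, say, $2k\Delta+1$ or $4k\Delta+3$) will require being careful that the two independent halvings — one for the row index via pigeonhole over $[k]$, one for the sign — and the doubling needed to overpower $\Delta$ in both directions are each accounted for exactly once.
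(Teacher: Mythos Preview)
Your high-level strategy --- pigeonhole to a single row $i^\star$, then build a docset from one hub plus a selection of central branch sets --- matches the paper. But the ``obstacle'' you flag at the end is not bookkeeping; it is the whole difficulty, and your argument does not close it. The quantity $(W\chi^S)_{i^\star}$ is $\sum_{v\in S} W(i^\star,v)$, the total row-$i^\star$ weight over \emph{all} vertices in $S$, not just the chosen representatives $r_j$. A branch set $V_j$ can contain a root $r_j$ with $W(i^\star,r_j)>0$ and still have $\sum_{v\in V_j} W(i^\star,v)\le 0$ (other roots in $V_j$ may cancel it), so selecting $J$ by the sign of $W(i^\star,r_j)$ gives no control over the docset weight. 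Passing to a minor-minimal model or assuming the hub is root-free does not touch this; nor does the contraction trick, since contracting a branch set to a point with weight equal to the \emph{sum} of its vertex weights leads you right back to the same problem.

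The paper works with total branch-set weights $a(B_i):=\sum_{v\in B_i} W(i^\star,v)$ from the start. After pigeonholing to $4\Delta+1$ central branch sets each containing some vertex with nonzero row-$i^\star$ entry, it first extends the model to a \emph{full} one covering all of $V(G)$ (this, not bare $2$-connectedness, is what makes the complement of your $S$ connected). It then runs an extremal argument: among all full models, take one maximizing the number of central branch sets with $a(B_i)\ne 0$, and use a docset-extension lemma (in a $2$-connected graph, a docset strictly inside a larger docset can be grown one vertex at a time) to split any zero-weight branch set into two nonzero-weight pieces, forcing at least half --- hence $\ge 2\Delta+1$ --- of the central branch sets to have $a(B_i)\ne 0$. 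Only then does sign-splitting work: with $X^\pm=B_{-1}\cup\bigcup_{\pm a(B_i)>0} B_i$ one gets $a(X^+)-a(X^-)\ge 2\Delta+1$, so one of them exceeds $\Delta$. The factor $4$ in $4k\Delta$ is thus (half have nonzero total weight)$\times$(sign split), not a ``$\pm\Delta$ swing''.
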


\subsection{Decomposing graphs forbidding a rooted $K_{2,t}$-minor}

\Cref{lem:4kDelta+1} motivates the structural investigation of rooted graphs $(G,R)$ with no rooted $K_{2,t}$-minor. (We mention in passing that
rooted $K_{2,t}$-minors were studied before in the literature, for instance in connection to the problem of computing the genus of apex graphs, see Mohar~\cite{Mohar01}.) On this front, the main result we prove is \Cref{thm:P1-P6_informal}. We start here with a general discussion of the theorem and its context, follow this with an examination of its algorithmic consequences in \Cref{sec:algo_consequences}, and conclude by outlining its proof in \Cref{sec:proving_decomp}.

We point out that \Cref{thm:P1-P6_informal} assumes $G$ to be $3$-connected. 
This is justified by our results on $1$- and $2$-separations. In case $G$ is a $3$-connected graph some of whose edges are subdivided, we may suppress the degree-$2$ vertices, and apply the theorem to the resulting $3$-connected graph. For the rest of the discussion, we assume that $G$ is $3$-connected. 

There are a number of basic cases for which \Cref{thm:P1-P6_informal} holds with a trivial or almost trivial tree-decomposition $(T,\mathcal{B})$. 

First, if the number of roots of $(G,R)$ is bounded by some constant $\ell$, then \ref{item:i_thm:P1-P6_informal}, \ref{item:ii_thm:P1-P6_informal} and \ref{item:iii_thm:P1-P6_informal}.\ref{item:iii_a_thm:P1-P6_informal} always hold in \Cref{thm:P1-P6_informal} for $T := (\{r,u\},\{ru\})$, $B_r := R$ and $B_u := V(G)$. Hence, we may assume without loss of generality that $(G,R)$ has many roots.


Second, it is easy to see that a $3$-connected graph $G$ with at least two roots always has a rooted $K_{2,2}$-minor, hence the case $t = 2$ of \Cref{thm:P1-P6_informal} is trivial. Robertson and Seymour~\cite{RS90} characterized the $3$-connected rooted graphs $(G,R)$ with no rooted $K_{2,3}$-minor: either $G$ has at most $2$ roots, or $G$ is planar and has all its roots incident to a common face. This settles the case $t = 3$ of \Cref{thm:P1-P6_informal}: we let $\ell := 2$ and $(T,\mathcal{B})$ be the \emph{trivial} tree-decomposition with $T := (\{u\},\varnothing)$ and $B_u := V(G)$. Then \ref{item:i_thm:P1-P6_informal} and \ref{item:ii_thm:P1-P6_informal} trivially hold, and either \ref{item:iii_thm:P1-P6_informal}.\ref{item:iii_a_thm:P1-P6_informal} or \ref{item:iii_thm:P1-P6_informal}.\ref{item:iii_c_thm:P1-P6_informal} holds. Hence we may assume without loss of generality that $t \ge 4$.

Third, assume that $G$ is a planar $3$-connected graph without a rooted $K_{2,t}$-minor, where $t$ is any fixed integer, and consider again \Cref{thm:P1-P6_informal}. We let once more $(T,\mathcal{B})$ be the trivial tree-decomposition, which in particular implies \ref{item:i_thm:P1-P6_informal} and \ref{item:ii_thm:P1-P6_informal}. It turns out that \ref{item:iii_thm:P1-P6_informal}.\ref{item:iii_c_thm:P1-P6_informal} always holds, in virtue of the following result of B\"ohme \& Mohar. 

\begin{theorem}[Theorem 1.2 of \cite{boehme_2002}]\label{thmBoehme}
There is a function $f_{\ref{thmBoehme}} : \Z_{\ge 1} \rightarrow \Z_{\ge 1}$ such that the following holds. Let $G$ be a 3-connected, planar graph and let $R \subseteq V(G)$ be a set of roots. Assume that $(G, R)$ does not have a rooted $K_{2,t}$-minor. Then $G$ has a collection of at most $f_{\ref{thmBoehme}}(t)$ facial cycles such that each root is contained in at least one of these cycles.
\end{theorem}

Fourth, assume that $G$ is a $3$-connected graph embedded in a surface of Euler genus $g \ge 1$, where $g$ is bounded.\footnote{See \Cref{sec:structure} for the missing definitions.} 
It turns out that \Cref{thmBoehme} extends to this case, assuming that the facewidth of $G$ is large enough:

\begin{theorem}\label{BKMM}
There is a function $f_{\ref{BKMM}}: \Z_{\ge 0} \times \Z_{\ge 0} \to \Z_{\ge 2}$,  such that the following holds. Let $(G, R)$ be a $3$-connected rooted graph without a rooted \(K_{2,t}\)-minor, embedded in a surface of Euler genus $g$ with facewidth at least $f_{\ref{BKMM}}(g, t)$. Then $G$ has a collection of at most $f_{\ref{BKMM}}(g, t)$ facial cycles covering all the roots.
\end{theorem}

This result was stated in B\"ohme, Kawarabayashi, Maharry \& Mohar~\cite{boehme_2008} as a remark, without a proof (a new proof can be found in~\cite{FKSSY}). 
\Cref{BKMM} implies that \Cref{thm:P1-P6_informal} holds with a trivial tree-decomposition whenever $G$ can be embedded in a surface of bounded genus, with large facewidth. In case the facewidth of the embedding is not large enough, then several things might happen. For instance, it might be that there exists a small vertex subset $Z$ such that $G - Z$ is $3$-connected and has large facewidth. The vertices in $Z$ are commonly known as \emph{apices}. In this case, \ref{item:iii_thm:P1-P6_informal}.\ref{item:iii_c_thm:P1-P6_informal} of \Cref{thm:P1-P6_informal} still holds. Beyond this case, we have to decompose the graph in a non-trivial way.

Roughly speaking, \Cref{thm:P1-P6_informal} states that every $3$-connected rooted graph forbidding a rooted $K_{2,t}$-minor can be obtained by gluing in a tree-like fashion the graphs that we examined so far: graphs with a bounded number of roots, and graphs with bounded genus whose roots can be covered with a bounded number of face boundaries, plus a bounded number of apices. 

\subsection{Algorithmic consequences of the decomposition theorem} \label{sec:algo_consequences}

We discuss here how \Cref{thm:P1-P6_informal} leads to an efficient algorithm for solving the instances of the maximum constrained integer potential problem resulting from cographic instances of \Cref{problemEquality}.

Consider a node $u \in V(T)$ of the decomposition tree and the corresponding bag $B_u \subseteq V(G)$. We claim that every docset of $G$ intersects the roots in $B_u$ in a polynomial number of ways.
If \ref{item:iii_thm:P1-P6_informal}.\ref{item:iii_a_thm:P1-P6_informal} or \ref{item:iii_thm:P1-P6_informal}.\ref{item:iii_b_thm:P1-P6_informal} holds, there are at most $\ell$ roots in $B_u$. Hence, any docset of $G$ can intersect the roots in $B_u$ in at most $2^\ell$ ways, which is a constant. (Recall that $t = 4k\Delta + 1$ and $\ell = \ell(t)$.)

From now on, assume that \ref{item:iii_thm:P1-P6_informal}.\ref{item:iii_c_thm:P1-P6_informal} holds. Let $G^{\#}[B_u]$ denote the weak torso of $B_u$ and $Z \subseteq V(G^{\#}[B_u])$ denote the set of apices. That is, $|Z| \le \ell$, and $G^{\#}[B_u] - Z$ is a $3$-connected graph without a rooted $K_{2,t}$-minor that has an embedding in a surface of Euler genus at most $\ell$ such that every face is bounded by a cycle, and admits a collection $\mathcal{C}$ of at most $\ell$ facial cycles covering all its roots.

Consider any docset $S$ of $G$, any facial cycle $C$ in $\mathcal{C}$, and the roots covered by $C$. Observe that the roots in $C$ are cyclically ordered. Our strategy to establish the claim is to prove that the intersection of $S$ with the roots in $C$ can be partitioned into a bounded number of blocks of consecutive elements in the cyclic ordering, say at most $\kappa = \kappa(t)$ intervals. Since every root in $B_u$ is either in $Z$ or in one of the cycles in $\mathcal{C}$, this directly yields a $|V(G)|^{O(\kappa \ell)}$ bound on the number of possible intersections of $S$ with the roots in $B_u$.
We elaborate briefly on the reasons why the above strategy is sound. 

First, consider the case where there are no apices, that is $Z = \varnothing$. In $G^{\#}[B_u]$, we add a clique on $B_u \cap B_{u'}$ where $u'$ is the parent of $u$. The resulting graph is the (full) \emph{torso} of $B_u$. Since $B_u \cap B_{u'}$ has at most $\ell$ vertices, the torso of $B_u$ does not contain a rooted $K_{2,t'}$-minor for $t' := 2^{\poly(\ell)} t$. Notice that the intersection of docset $S$ with the torso of $B_u$ is in fact a docset of the latter. We show that if the intersection of $S$ with the roots in some $C \in \mathcal{C}$ cannot be partitioned into a small number of blocks of consecutive elements, we find a $K_{3,s}$-minor in $G^{\#}[B_u]$ for some $s = s(\kappa,\ell)$ that is increasing in $\kappa$ and decreasing in $\ell$. 
If we choose $\kappa$ large enough, then the Euler genus of $K_{3,s}$ (which is linear in $s$) can be made larger than $\ell$. This contradicts the fact that the weak torso $G^{\#}[B_u]$ is embedded in a surface of Euler genus at most $\ell$.


Second, in the presence of apices, the argument gets slightly more involved and in one case we obtain a rooted $K_{2,t'}$-minor in the torso of $B_u$, which is also a contradiction.

Finally, we state a result that formalizes the way in which we use our decomposition theorem algorithmically. It is also stated in terms of tree-decompositions. However, we slightly simplify the tree-decomposition of \Cref{thm:P1-P6_informal} by merging some leaf bags into their parents. This has some advantages, in particular for optimization purposes. 

Let $\ell \in \Z_{\ge 1}$ be fixed. We say that a (rooted) tree-decomposition $(T, \mathcal{B})$ of $G$ is \emph{$\ell$-special} if (i) for each $tt' \in E(T)$ we have $|B_t \cap B_{t'}| \le \ell$ and (ii) every node $t \in V(T)$ has at most $\ell$ children. 

Let $\mathcal{S} = \mathcal{S}(G)$ denote the collection of all docsets in $G$. For $R' \subseteq R$, the \emph{docset profile} of $R'$ is the collection of sets $\mathcal{P}_G(R') := \{R'\cap S : S \in \mathcal{S}\}$. A \emph{docset superprofile} of $R'$ is a collection of subsets of $R'$ that is a superset of the docset profile $\mathcal{P}_G(R')$.

We prove the following result, which is the centerpiece of our algorithm.

\begin{restatable}{theorem}{SpecialDecomposition}\label{SpecialDecomposition}
    There exists a function \(f_{\ref{SpecialDecomposition}} : \Z_{\ge 1} \to \Z_{\ge 1}\) such that 
    for every fixed \(t \in \Z_{\ge 1}\), there is a polynomial-time algorithm that given a rooted \(3\)-connected 
    graph $(G,R)$ having no rooted \(K_{2,t}\)-minor, outputs an \(f_{\ref{SpecialDecomposition}}(t)\)-special 
    tree-decomposition \((T, \mathcal{B})\) of \(G\) and a collection \(\{\mathcal{X}_u : u \in V(T)\}\) where 
    each \(\mathcal{X}_u\) is a docset superprofile of \(R \cap B_u\) in \(G\) of size polynomial in $|V(G)|$.
\end{restatable}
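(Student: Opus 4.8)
The plan is to bootstrap from the algorithmic form of \Cref{thm:P1-P6_informal} (the full version \Cref{P1-P6}): in polynomial time we obtain, for the given $3$-connected rooted graph $(G,R)$ with no rooted $K_{2,t}$-minor, a tree-decomposition $(T,\mathcal{B})$ satisfying (i)--(iii) for $\ell=\ell(t)$, together with, for every node $u$ of type (iii).(c), the witnessing data: an apex set $Z_u$ with $|Z_u|\le\ell$, an embedding of the weak torso $G^{\#}[B_u]-Z_u$ in a surface of Euler genus at most $\ell$ with every face bounded by a cycle, and a family $\mathcal{C}_u$ of at most $\ell$ facial cycles whose union covers all roots of $G^{\#}[B_u]-Z_u$ (computability of these witnesses follows from the decomposition algorithm together with the algorithmic forms of \Cref{thmBoehme} and \Cref{BKMM}). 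I then make $(T,\mathcal{B})$ into an $\ell$-special decomposition by a single \emph{leaf-merge}: for each node $u$, absorb into $B_u$ the bag of every child $u'$ of $u$ that is a leaf with $R\cap B_{u'}\subseteq B_u$, deleting those leaves from $T$. By (ii) at most $\ell$ children of each node survive, so property (ii) of $\ell$-specialness holds; by a short tree-decomposition argument (any vertex shared by an absorbed leaf of $u$ and a bag lying in another branch at $u$ already belongs to $B_u$) the surviving adhesions are unchanged and hence still of size at most $\ell$, giving property (i); one checks the result $(T',\mathcal{B}')$ is still a tree-decomposition. Since every absorbed leaf $u'$ has $R\cap B_{u'}\subseteq B_u$, we get $R\cap B'_u=R\cap B_u$ for every surviving node $u$, so a docset superprofile of $R\cap B_u$ in $G$ is one of $R\cap B'_u$. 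We set $f_{\ref{SpecialDecomposition}}(t):=\ell(t)$.

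It remains to construct, in polynomial time, a polynomial-size docset superprofile $\mathcal{X}_u$ of $R\cap B'_u = R\cap B_u$ for each surviving node $u$. Every surviving node has type (iii).(a) or (iii).(c): a type-(iii).(b) node is a leaf all of whose roots lie in its parent's bag, hence an absorbed leaf; a type-(iii).(a) node has $|B_u|\le\ell$, hence $|R\cap B_u|\le\ell$. For type-(iii).(a) nodes we let $\mathcal{X}_u$ be the set of all subsets of $R\cap B_u$, of size at most $2^\ell$. For a type-(iii).(c) node $u$ write $R':=R\cap B_u\subseteq Z_u\cup\bigcup_{C\in\mathcal{C}_u}V(C)$, and for each $C\in\mathcal{C}_u$ list the roots of $R'$ on $C$ as $r^C_1,\dots,r^C_{m_C}$ in the cyclic order in which they occur on $C$ in the embedding. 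Fixing a parameter $\kappa=\kappa(t)$ to be chosen below, set
\[
 \mathcal{X}_u \ :=\ \Bigl\{\, Y\cup\textstyle\bigcup_{C\in\mathcal{C}_u} I_C \ :\ Y\subseteq Z_u\cap R',\ \ I_C\subseteq\{r^C_1,\dots,r^C_{m_C}\}\ \text{is a union of at most}\ \kappa\ \text{cyclic intervals}\Bigr\}.
\]
There are at most $2^{\ell}$ choices of $Y$ and at most $m_C^{2\kappa}\le |V(G)|^{2\kappa}$ choices of each $I_C$, so $|\mathcal{X}_u|\le 2^{\ell}|V(G)|^{2\kappa\ell}$, which is polynomial for fixed $t$, and $\mathcal{X}_u$ is plainly computable in polynomial time from the embedding.

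Correctness reduces to the \emph{interval lemma}: there is $\kappa=\kappa(t)$ such that for every surviving type-(iii).(c) node $u$, every $C\in\mathcal{C}_u$, and every docset $S$ of $G$, the set $S\cap(R'\cap V(C))$ is a union of at most $\kappa$ cyclic intervals of $(r^C_1,\dots,r^C_{m_C})$. Granting this, for any docset $S$ the set $S\cap R' = (S\cap Z_u\cap R')\cup\bigcup_{C\in\mathcal{C}_u}(S\cap(R'\cap V(C)))$ has exactly the shape enumerated in $\mathcal{X}_u$, so $\mathcal{X}_u\supseteq\mathcal{P}_G(R\cap B_u)$, as required. The interval lemma is proved by contradiction: if $S\cap(R'\cap V(C))$ is a union of $b>\kappa$ maximal cyclic intervals, then the roots on $C$ alternate through $b$ blocks inside $S$ and $b$ blocks outside $S$; using that $G[S]$ and $G[\overline{S}]$ are connected, that $C$ bounds a face of the embedded graph $H:=G^{\#}[B_u]-Z_u$, and that each ``fake'' edge of the weak torso on $C$ can be realised by a path of $G$ through the corresponding bounded-size adhesion (paths through distinct children being automatically disjoint, paths through a common child controlled by Menger applied to that adhesion), one extracts from $G$ a model of either a rooted $K_{2,s}$-minor of $G$, or — when $Z_u$ or the parent adhesion interferes — a $K_{3,s}$-minor lying inside $H$, with $s$ a function of $b,\ell,t$ tending to infinity with $b$. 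A rooted $K_{2,s}$-minor of $G$ with $s\ge t$ contradicts the hypothesis on $(G,R)$; a $K_{3,s}$-minor of $H$ with $s$ larger than a suitable linear function of $\ell$ contradicts $\eg(H)\le\ell$, since $\eg(K_{3,s})=\Omega(s)$. Choosing $\kappa(t)$ large enough rules out both outcomes.

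The bookkeeping parts — the leaf-merge and the adhesion/degree checks, and the counting bound on $|\mathcal{X}_u|$ — are routine. The main obstacle is making the minor-extraction in the interval lemma rigorous: one must track exactly how the apex set $Z_u$ and the parent-adhesion clique prevent $S\cap(B_u\setminus Z_u)$ from being a genuine docset of $H$, route pairwise-disjoint $G$-paths realising the relevant weak-torso edges along $C$, and organise the resulting minor model into the correct dichotomy between a rooted $K_{2,s}$-minor of $G$ and a $K_{3,s}$-minor of $H$. This is the step that fixes the constant $\kappa(t)$ and is where essentially all of the difficulty lies.
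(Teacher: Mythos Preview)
Your proposal is correct and follows essentially the same approach as the paper: apply \Cref{P1-P6}, merge the tame leaves into their parents to obtain an $\ell$-special decomposition, take all subsets when $|R\cap B_u|\le\ell$, and for type-(c) bags enumerate all sets that hit each facial cycle of $\mathcal{C}_u$ in a bounded number of cyclic intervals; the correctness dichotomy (large rooted $K_{2,s}$ versus large $K_{3,s}$ in the embedded graph) is exactly the paper's.

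The one place where the paper's execution differs from your sketch is the proof of the interval lemma. Rather than routing disjoint $G$-paths through the child adhesions to realise the virtual edges of the weak torso (your ``paths through a common child controlled by Menger'' line), the paper sidesteps this entirely: it adds cliques on $Z_u$, on the parent adhesion, and on each child adhesion to $G[B_u]$, notes via \Cref{adding_an_edge} that the resulting graph $H$ still forbids a rooted $K_{2,t'}$-minor for $t'=2^{\ell^3}t$, and uses that a docset of $G$ restricts to a docset of $H$. The argument then proceeds with an ``interlacing'' count on the facial cycle: removing the parent-adhesion edges breaks each side of the docset into at most $\ell^2$ components, and after removing $Z_u$ into at most a bounded further number; either one side shatters into many components (yielding a rooted $K_{2,t'}$-model in $(H,R_u)$ using the $Z_u$-clique as one side), or two surviving components remain highly interlaced on the facial cycle, which via a direct embedding argument (\Cref{interlacing_intervals}) forces a $K_{3,2m}$-minor and contradicts the genus bound. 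This clique trick is cleaner than your path-routing and is worth adopting; otherwise your outline matches the paper's proof.
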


Given \Cref{SpecialDecomposition}, it is easy to design a strongly polynomial-time algorithm to solve the instances of the maximum constrained integer potential problem that originate from cographic instances of \Cref{problemEquality}, see \eqref{eqMCIPP}. Roughly speaking, the idea is to first compute polynomially many optimal local solutions for each bag $B_u$, namely, one for each guess on the variables $y(v)$ where $v \in B_u$ is a root or belongs to an adjacent bag $B_{u'}$. For each fixed bag and guess, an optimal local solution can be found in (strongly) polynomial time by solving a single IP on a TU constraint matrix. Next, we use a dynamic programming approach to find optimal ways to combine the optimal local solutions. In virtue of \Cref{propReductionIP} and \Cref{SpecialDecomposition}, the whole dynamic programming algorithm runs in (strongly) polynomial time.

\subsection{Proving the decomposition theorem} \label{sec:proving_decomp}

The purpose of this section is to describe our proof strategy for \Cref{thm:P1-P6_informal}. Before this, we briefly discuss how to deduce \Cref{SpecialDecomposition} from \Cref{thm:P1-P6_informal}. Let $(G,R)$ be a rooted $3$-connected graph without a rooted $K_{2,t}$-minor, and let $(T, \mathcal{B})$ denote the tree-decomposition of $G$ that we get as the output of \Cref{thm:P1-P6_informal}. Consider any node $u \in V(T)$. We say that a child $u'$ of $u$ is \emph{tame} if $u'$ is a leaf and there is no root in $B_{u'} \setminus B_u$. We turn $(T, \mathcal{B})$ into an $\ell$-special tree-decomposition $(T', \mathcal{B}')$ satisfying the statement of \Cref{SpecialDecomposition}, as follows: For each node $u \in V(T)$, we contract every edge connecting $u$ to a tame child $u'$, adding all the vertices of $B_{u'}$ to $B_u$. After observing that the original tree-decomposition $(T, \mathcal{B})$ can be computed in polynomial time, we prove that the docset profile $\mathcal{P}_G(R \cap B'_u)$ is of polynomial size for each $u \in V(T')$, and that we can furthermore compute a docset superprofile $\mathcal{X}_u \supseteq \mathcal{P}_G(R \cap B'_u)$ for each $u \in V(T')$ in polynomial time.

Now, we turn to the proof of \Cref{thm:P1-P6_informal}. We build the required tree-decomposition recursively. Assume we are sitting at some node $u \in V(T)$. The decomposition process defines for us a corresponding induced subgraph $H$ of $G$. Let $R_H=R\cap V(H)$. We consider two cases for $H$. We say that $H$ is \emph{$k$-interesting} (for an integer $k=k(t)$ chosen large enough for the rest of our arguments) if $H$ contains a set of vertices $X$ such that $|X|=k$ and $X$ is well-connected with respect to the vertices in it and is well-connected to $R_H$.
If $H$ is not $k$-interesting then we call it \emph{$k$-boring}. It is easy to deal with the $k$-boring graphs. In each such graph, we find $3$ subsets $B_0,B_1,B_2\subseteq V(H)$ such that we can set $B_0$ to be a child bag of $u$ in our decomposition and either further decompose $H[B_1]$ and $H[B_2]$ or just decompose one of them and the other becomes a leaf bag.

\begin{figure}[ht!]
\centering
\includegraphics[width=0.5\textwidth]{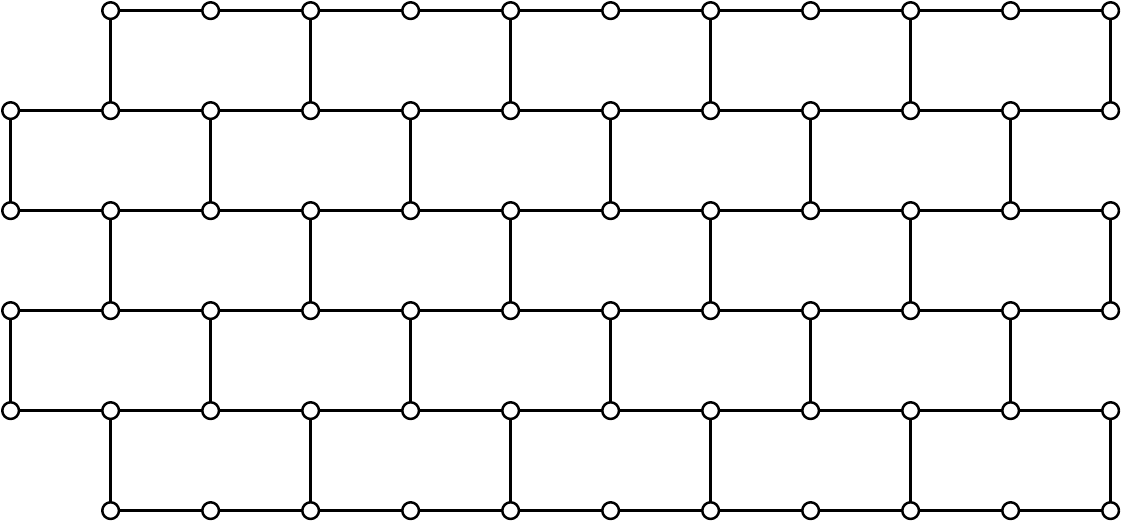}
\caption{An elementary wall of height $6$. Walls of height $h$ are defined as subdivisions of elementary walls of the same height $h$.} \label{fig:wall}
\end{figure} 

In what follows, we focus on $k$-interesting subgraphs $H$. For an integer $h$, if $k$ is large enough as a function of $h$ then it is possible to find a \emph{wall} $W$ of height of $h$ in $H$ (see \Cref{fig:wall}) such that there are $h$ vertex-disjoint paths between the set $X$ and any \emph{transversal set} of $W$. A transversal set of a wall is a maximum collection of vertices of degree $3$ in $W$ such that no two vertices are chosen from the same column or the same row of $W$. Moreover, there are $h$ vertex-disjoint paths between $R_H$ and any transversal set of $W$. We show that because $H$ does not contain a rooted $K_{2,t}$-minor, $H$ cannot contain a $K_{t'}$-minor \emph{grasped} by the wall $W$ for $t'=t'(t)$ and $h\ge t'$. 

Using the results in \cite{KTW20} and \cite{DiestelKMW12} we derive that graphs $H$ have a restricted structure: There is a bounded-size set $Z\subseteq V(H)$ such that $H - Z$ has an embedding on a bounded genus surface $\surf$, with a bounded number of \emph{large vortices} and a possibly unbounded number of \emph{small vortices}. 
(See~\cref{sec:structure of graphs excluding a minor} for the definition of vortices and related notions.) 
Moreover $H - Z$ contains a large wall $W'$ which can be essentially found as a minor of $W$. The wall $W'$ is drawn inside a disk on $\surf$ and is far enough from any large vortex. We handle the large vortices using the results in \cite{TW_arxiv_2022}. We again use the fact that $H$ does not contain a rooted $K_{2,t}$-minor to show that after removal of a bounded number of vertices (which are added to $Z$), each large vortex can be either drawn on $\surf$ or turned into a small vortex. 

The graph we obtain after the removal of the new set $Z$ is not necessarily $3$-connected, but after additional steps, including considering the SPQR tree for $H-Z$, we can ensure $3$-connectivity while keeping large facewidth. This allows us to apply \Cref{BKMM} and find a small collection of facial cycles covering all the roots in $R_H$ that are not in $Z$.

\subsection{Outline}

We begin the main part of our paper in \Cref{sec:proximity} with an in-depth treatment of circuits and proximity, including proofs for \Cref{lemCharTotalDelta}, \Cref{thmProximity} and \Cref{propReductionIP}. We follow this in \Cref{sec:1sum2sum} by designing black-box reductions in the presence of $1$- or $2$-separations in the TU matrix $A$. These first two sections consider general TU matrices. Starting from \Cref{sec:cographic}, we focus on the cographic case. We prove that none of our input graphs contains a rooted $K_{2,t}$-minor, see \Cref{lem:4kDelta+1}, and give a polynomial-time dynamic programming algorithm solving the cographic instances of \Cref{problemEquality}, assuming \Cref{SpecialDecomposition}. This proves \Cref{thm:main_IP}. \Cref{sec:structure} establishes our structural results on rooted graphs forbidding a rooted $K_{2,t}$-minor, including \Cref{thm:P1-P6_informal} and \Cref{SpecialDecomposition}. We close our paper in \Cref{sec:discussion} with remarks on possible approaches to generalize \Cref{thm:main_IP} to all nearly TU matrices $M$, and a discussion of related research questions.

\section{Circuits and Proximity}\label{sec:proximity}

In this section, we collect some basic facts about circuits of totally unimodular matrices, including a proof of \Cref{lemCharTotalDelta}, and provide proofs for our main results on proximity, \Cref{thmProximity} and \Cref{propReductionIP}.

\subsection{Decompositions into Circuits}
Let us start by proving \Cref{lemCharTotalDelta}, which states that $\smallmat{A \\ w^\intercal}$ is totally $\Delta$-modular if and only if every circuit $\smallmat{x \\ y}$ of $\begin{bmatrix} A & \identity \end{bmatrix}$ satisfies $|w^\intercal x| \le \Delta$.
Our proof is based on the following observation.
Here, for a matrix $A \in \R^{m \times n}$ and $I \subseteq [m]$, $J \subseteq [n]$, the submatrix of $A$ whose rows correspond to $I$ and whose columns correspond to $J$ is denoted by $A_{I,J}$.
Similarly, the restriction of a vector $x \in \R^n$ to the entries indexed by $J$ is denoted by $x_J$.

\begin{remark} \label{remCircuitWitness}
Let $A \in \R^{m \times n}$, $w \in \R^n$, and $x \in \ker(A)$ such that $w^\intercal x \neq 0$.
Then $x$ is support-minimal in $\ker(A)$ if and only if $\smallmat{A\\ w^\intercal}$ has an invertible (hence square) submatrix of the form $\smallmat{A_{I,J}\\ w^\intercal_J}$ with $\supp(x) \subseteq J$.
\end{remark}
\begin{proof}
    Let $x$ be support-minimal in $\ker(A)$ and set $J = \supp(x)$.
    Then $\ker(A_{[m],J})$ is the $1$-dimensional subspace spanned by $x_J$.
    Indeed, given any two linearly independent vectors $y$ and $y'$ in $\ker(A_{[m],J})$, there is a nonzero linear combination of $y$ and $y'$ whose support is strictly contained in $J$, contradicting the support-minimality of $x$.
    Thus, $\rank(A_{[m],J}) = |J| - 1$ and hence there is a subset $I \subseteq [m]$ with $\rank(A_{I,J}) = |J| - 1$.
    Consequently, $\smallmat{A_{I,J}\\ w^\intercal_J}$ is invertible.

    Let $I \subseteq [m]$, $J \subseteq [n]$ such that $\smallmat{A_{I,J}\\ w^\intercal_J}$ is invertible and $\supp(x) \subseteq J$.
    Since $\rank(A_{I,J}) = |J| - 1$, we see that $\ker(A_{I,J})$ is the $1$-dimensional subspace spanned by $x_J$.
    Note that for every $y \in \ker(A) \setminus \{\zero\}$ with $\supp(y) \subseteq \supp(x) \subseteq J$ we have $y_J \in \ker(A_{I,J})$ and hence $y$ is a scalar multiple of $x$, in particular, $\supp(y) = \supp(x)$.
\end{proof}

\begin{proof}[Proof of \Cref{lemCharTotalDelta}]
Foremost, notice that $A$ is TU if and only if $\begin{bmatrix} A & \identity \end{bmatrix}$ is TU, and
$\smallmat{A\\ w^\intercal}$ is totally $\Delta$-modular if and only if $\smallmat{A& \identity\\ w^\intercal &\zero^\intercal}$ is totally $\Delta$-modular.

Suppose first that $\smallmat{A \\ w^\intercal}$ is totally $\Delta$-modular and consider any circuit $\smallmat{x\\ y} \in \{0,\pm 1\}^{n + m}$ of $\begin{bmatrix} A & \identity \end{bmatrix}$.
If $w^\intercal x \ne 0$, then by \Cref{remCircuitWitness}, there exist $I \subseteq [m]$, $J \subseteq [n]$, $K \subseteq [m]$ with $\supp(x) \subseteq J$ and $\supp(y) \subseteq K$ such that $B := \smallmat{A_{I,J} & \identity_{I,K}\\ w^\intercal_J & \zero^\intercal}$ is invertible.
Consider the adjugate matrix $\adj(B)$ of $B$ and let $\smallmat{\bar{x}_J\\ \bar{y}_{K}} \in \{0,\pm 1\}^{J \cup K}$ denote the last column of $\adj(B)$. Since $B \adj(B) = \det(B) \identity$, we have $\smallmat{A_{I,J} \bar{x}_J + \identity_{I,K} \bar{y}_K\\ w^\intercal_J \bar{x}_J} = 
B \smallmat{\bar{x}_J\\ \bar{y}_{K}} = \smallmat{\zero\\ \det(B)}$, which implies $\smallmat{\bar{x}_{J}\\ \bar{y}_{K}} = \pm \smallmat{x_{J}\\ y_{K}}$ and $|w^\intercal x| = |w^\intercal_J x_J| = |w^\intercal_J \bar{x}_J| = |\det(B)| \le \Delta$.

Suppose now that every circuit $\smallmat{x \\ y}$ of $\begin{bmatrix} A & \identity \end{bmatrix}$ satisfies $|w^\intercal x| \le \Delta$.
Consider any invertible square submatrix $B$ of $\smallmat{A& \identity\\ w^\intercal &\zero^\intercal}$.
We have to show that $|\det(B)| \le \Delta$.
Notice that if $|\det(B)| \ge 2$, then $B = \smallmat{A_{I,J} & \identity_{I,K}\\ w^\intercal_J & \zero^\intercal}$ for some $I \subseteq [m]$, $J \subseteq [n]$, $K \subseteq [m]$. Notice that $K \subseteq I$ since otherwise $B$ has a zero column.
By adding every index $i \in \overline{I} = [m] \setminus I$ both to $I$ and $K$, we may even assume $B = \smallmat{A_{[m],J} & \identity_{[m],K}\\ w^\intercal_J & \zero^\intercal}$.
Pick $\smallmat{\bar{x}_J\\ \bar{y}_{K}} \in \{0,\pm 1\}^{J \cup K}$ in the kernel of $\smallmat{A_{[m],J} & \identity_{[m],K}}$ such that $w^\intercal_J \bar{x}_J \ne 0$.
Extend $\bar{x}$ and $\bar{y}$ to $x \in \{0,\pm 1\}^n$ and $y \in \{0,\pm 1\}^m$ by adding zeros, respectively.
By \Cref{remCircuitWitness}, $\smallmat{x \\ y}$ is a circuit of $\begin{bmatrix} A & \identity \end{bmatrix}$.
As above, we conclude $|\det(B)| = |w^\intercal x| \le \Delta$.
\end{proof}

Next, we recall that circuits generalize edge directions of polyhedra.
For instance, every edge direction of the polyhedron $\{x \in \R^n : Ax = b, \, \ell \le x \le u\}$ is a scalar multiple of a support-minimal vector of $\ker(A)$, see \cite[Lem. 2.18]{onn_2010}.
More importantly, we will crucially exploit the fact that nonzero vectors in kernels of totally unimodular matrices can be decomposed into circuits in a conformal way:

\begin{lemma}[{see~\cite[Lemma 7]{naegele_2022}}]
    \label{lemDecomposition}
    Let $P = \{x \in \R^n : Ax = b, \, \ell \le x \le u\}$ where $A \in \{0,\pm 1\}^{m \times n}$ is totally unimodular, $b \in \R^m$, and $d = \dimension(P)$, $\ell \in \Z^n$, $u \in \Z^n$, and let $x,x' \in P$.
    In strongly polynomial time, we can find conformal $c_1,\dots,c_d \in \circuits(A)$ and $\lambda_1,\dots,\lambda_d \ge 0$ with $x' = x + \sum_{j=1}^d \lambda_j c_j$.
    If $x,x'$ are integer vectors, then $\lambda_1,\dots,\lambda_d$ can be chosen to be nonnegative integers.
\end{lemma}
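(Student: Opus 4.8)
To prove \Cref{lemDecomposition}, set $g := x' - x$; since $Ax = Ax' = b$ we have $g \in \ker(A)$, so it suffices to produce pairwise conformal circuits $c_1,\dots,c_t \in \circuits(A)$ and reals $\lambda_1,\dots,\lambda_t \ge 0$ with $g = \sum_{i=1}^t \lambda_i c_i$ and $t \le d$, with the $\lambda_i$ integral when $g \in \Z^n$; one then pads the list to exactly $d$ terms by repeating an arbitrary circuit with coefficient $0$ and writes $x' = x + \sum_i \lambda_i c_i$. The object driving the argument is, for a nonzero $h \in \ker(A)$, the pointed polyhedral cone
\[
    K_h \;:=\; \{\, z \in \ker(A) : z \text{ conformal to } h \,\} \;=\; \ker(A)\cap\{\, z : \operatorname{sign}(h_j)\,z_j \ge 0 \ \forall j,\ z_j = 0 \text{ whenever } h_j = 0 \,\}\,,
\]
which is pointed because $z,-z \in K_h$ forces $z = \zero$. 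I would first observe that every extreme ray of $K_h$ is spanned by a circuit of $A$: if a generator $c$ of an extreme ray were not support-minimal in $\ker(A)$, pick $c' \in \ker(A)\setminus\{\zero\}$ with $\supp(c')\subsetneq\supp(c)$; then for small $\eps>0$ both $c\pm\eps c'$ lie in $K_h$ (only coordinates of $\supp(c)$ get perturbed, and there $c$ already carries the strict sign of $h$), exhibiting $c$ as the midpoint of two non-collinear rays of $K_h$ -- a contradiction. Hence the primitive generator of each extreme ray is support-minimal in $\ker(A)$ and, since $A$ is totally unimodular, lies in $\{0,\pm1\}^n$, i.e.\ it is a circuit, automatically conformal to $h$.

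The second ingredient is the dimension bound $\dimension K_g \le d$. Consider $\hat x := \tfrac12(x+x') = x + \tfrac12 g$. For $j \in \supp(g)$ one checks $\ell_j < \hat x_j < u_j$, since $\hat x_j$ lies strictly between $x_j \ge \ell_j$ and $x'_j \le u_j$; for $j \notin \supp(g)$ one has $\hat x_j = x_j = x'_j$. Therefore, for every $z \in \ker(A)$ supported within $\supp(g)$ -- in particular for every $z \in K_g$ -- the point $\hat x + \eps z$ belongs to $P$ for all sufficiently small $\eps$. Thus $K_g$ is contained in the linear subspace parallel to the affine hull of $P$, which has dimension $d$, and so $\dimension K_g \le d$.

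The decomposition is then produced by a conformal peeling loop. Starting from $h := g$, while $h \ne \zero$ I would select an extreme-ray generator $c$ of $K_h$ -- a circuit of $A$ conformal to $h$ by the first step -- set $\lambda := \min\{\, |h_j|/|c_j| : j \in \supp(c)\,\} > 0$, which is the largest $\mu \ge 0$ for which $h - \mu c$ is still conformal to $h$, replace $h$ by $h - \lambda c$, and record $(c,\lambda)$. Since $c$ carries the strict sign of $h$ on $\supp(c) \subseteq \supp(g)$, at least one coordinate leaves the support at $\mu = \lambda$, so $\supp(h)$ strictly shrinks; moreover $K_{h-\lambda c}$ is obtained from $K_h$ by additionally forcing to zero exactly the coordinates that just left the support, hence is a proper face of $K_h$ of strictly smaller dimension. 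As $\dimension K_g \le d$, the loop halts after at most $d$ iterations with $h = \zero$, giving $g = \sum_{i=1}^t \lambda_i c_i$ with $t \le d$; every $c_i$ is conformal to the $h$ current at its step, which is conformal to $g$, so the $c_i$ are pairwise conformal. When $g \in \Z^n$, each circuit of the totally unimodular $A$ is a $\{0,\pm1\}$-vector, so $|c_j| = 1$ on $\supp(c)$ and $\lambda = \min\{|h_j| : j \in \supp(c)\} \in \Z_{\ge 1}$; hence $h$ stays integral throughout and all $\lambda_i$ are nonnegative integers. Each iteration only requires an extreme ray of the pointed cone $K_h$ -- equivalently a support-minimal vector of $\ker(A)$ inside a prescribed orthant -- which can be found in strongly polynomial time by iterated Gaussian elimination; with $O(d)$ iterations this makes the whole procedure strongly polynomial.

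The step I expect to be most delicate is pinning the number of circuits down to $d$ rather than merely $|\supp(g)| \le n$, especially in the integral case, where one might naively fear an obstruction in the spirit of the integer Carath\'eodory theorem. What makes it go through is the interplay of two features: total unimodularity forces every circuit into $\{0,\pm1\}^n$, so the maximal conformal step length is automatically an integer and integrality is preserved; and the face-nesting $K_g \supsetneq K_{h^{(1)}} \supsetneq \cdots$ produced by the peeling makes the cone dimension drop by at least one per step, capping the number of steps by $\dimension K_g \le d$. A secondary point to get right is that an extreme ray of $K_h$ exists whenever $h \ne \zero$, which holds because $K_h$ is then a nontrivial pointed polyhedral cone.
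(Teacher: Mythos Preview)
The paper does not give its own proof of this lemma; it is imported verbatim from \cite[Lemma~7]{naegele_2022}. Your argument is correct and is essentially the standard conformal circuit decomposition (cf.\ Onn~\cite[\S3.1]{onn_2010}): peel off extreme rays of the sign-compatible cone $K_h$, each of which is a circuit conformal to $g$, and bound the number of iterations by the chain of proper faces $K_g \supsetneq K_{h^{(1)}} \supsetneq \cdots$, whose dimension drops at every step. Your midpoint trick showing $\dim K_g \le \dim P = d$ is exactly what pins the count to $d$ rather than to $\dim\ker(A)$, and integrality of the step lengths follows immediately from circuits of a TU matrix being $\{0,\pm1\}$-vectors.

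The only place where the write-up is slightly loose is the phrase ``by iterated Gaussian elimination'' for producing an extreme ray of $K_h$. Gaussian elimination alone yields a circuit with support inside $\supp(h)$ but not necessarily one conformal to $h$. A clean strongly-polynomial fix is: after sign-flipping so that $h\ge 0$, start with $c:=h$; while $c$ is not support-minimal, find any nonzero $c'\in\ker(A)$ with $\supp(c')\subsetneq\supp(c)$ (a rank computation on a column submatrix of $A$), and replace $c$ by $c-\mu c'$ for the largest $\mu$ keeping $c-\mu c'\ge 0$. Each step strictly shrinks the support while preserving nonnegativity and membership in $\ker(A)$, so after at most $n$ steps one obtains a nonnegative circuit, i.e.\ a conformal one after undoing the sign flip.
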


\begin{remark}
    \label{remarkProximity}
    If $P$, $x$, $x'$, $c_1,\dots,c_d$, and $\lambda_1,\dots,\lambda_d$ are as above, then for every $\mu_1,\dots,\mu_d$ with $0 \le \mu_i \le \lambda_i$ for all $i \in [d]$, the vector $x + \sum_{j=1}^d \mu_j c_j$ is also contained in $P$.
\end{remark}
\begin{proof}
    Let $x'' = x + \sum_{j=1}^d \mu_j c_j$.
    Since $c_1,\dots,c_d$ are circuits of $A$, we have $Ax'' = Ax = b$.
    To see that $\ell \le x \le u$ holds, consider any coordinate index $i \in [n]$.
    Suppose first that $x_i \le x_i'$.
    Since $c_1,\dots,c_d$ are conformal, this implies $(c_j)_i \ge 0$ for all $j \in [d]$, and hence
    \[
        \ell_i
        \le x_i
        \le x_i + \sum_{j=1}^d \mu_j (c_j)_i
        = x_i''
        \le x_i + \sum_{j=1}^d \lambda_j (c_j)_i
        = x_i' \le u_i.
    \]
    The case $x_i \ge x_i'$ follows analogously.
\end{proof}

\subsection{An Intermediate Result}\label{sec:intermediate_result}

In general, our proof of the proximity result follows the same framework as the proximity proof by Eisenbrand and Weismantel~\cite{EisenbrandWeismantel}, extending their results to totally unimodular inequality systems with a constant number of additional equality constraints.
A similar proximity framework has recently been applied for the exact weight basis problem in matroids~\cite{ERW24}.

Our proofs of \Cref{thmProximity} and \Cref{propReductionIP} are based on the following result.

\begin{proposition}
    \label{propProximity}
    Let $k,\Delta \in \Z_{\ge 1}$ be fixed, and let $p$, $A$, $b$, $W$, $d$, $\ell$, $u$ define an instance of \Cref{problemEquality}.
    Given an optimal vertex solution $x^* \in \R^n$ of \eqref{eqLPequality}, in strongly polynomial time we can compute an integer point $z \in \Z^n$ with
    \begin{enumerate}
        \item \label{itemPropProximity1} $Az = b$, $\ell \le z \le u$,
        \item \label{itemPropProximity2} $\|x^* - z\|_\infty < k$, and
        \item \label{itemPropProximity3} if \eqref{eqIPequality} is feasible, then it has an optimal solution $z^* = z + \sum_{j=1}^t c_j$ where $c_1,\dots,c_t \in \circuits(A)$ are conformal and $t \le k(2k \Delta + 1)^k =: f_{\ref{propProximity}}(k, \Delta)$.
    \end{enumerate}
\end{proposition}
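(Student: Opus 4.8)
The plan is to follow the Eisenbrand--Weismantel proximity strategy, adapted to the presence of the totally unimodular block $A$ and the $k$ complicating equalities $Wx=d$. Start from an optimal vertex solution $x^*$ of \eqref{eqLPequality}. Since $A$ is TU and the only extra constraints are the $k$ rows of $W$ together with the box $\ell\le x\le u$, the vertex $x^*$ has at most $k$ fractional coordinates: indeed, $x^*$ is determined by a system in which $n$ linearly independent constraints are tight, and all but at most $k$ of these come from rows of $\smallmat{A}{}$ or from the box, so after fixing the coordinates tight at their bounds we are left with a face on which an $A$-submatrix (TU, hence with integral vertices) plus $\le k$ equalities cut out $x^*$; a standard argument then bounds the number of non-integral coordinates by $k$. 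Rounding $x^*$ coordinatewise toward the interior of the box, or more carefully toward a nearby lattice point in $\{x: Ax=b,\ \ell\le x\le u\}$ using the integrality of that polyhedron restricted to the fractional coordinates, yields an integer point $z$ with $Az=b$, $\ell\le z\le u$, and $\|x^*-z\|_\infty<k$. This establishes \ref{itemPropProximity1} and \ref{itemPropProximity2}; getting $z$ genuinely in $\{Ax=b,\ \ell\le x\le u\}\cap\Z^n$ (not just close) is where the total unimodularity of $A$ is used, via \Cref{lemDecomposition} or a direct rounding of the restricted face.

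For \ref{itemPropProximity3}, assume \eqref{eqIPequality} is feasible and let $z^\circ$ be an optimal solution minimizing $\|z^\circ-z\|_1$ (or some suitable conformal distance). Apply \Cref{lemDecomposition} to $x=z$ and $x'=z^\circ$ inside the polyhedron $\{x: Ax=b,\ \ell\le x\le u\}$: we obtain conformal circuits $c_1,\dots,c_d\in\circuits(A)$ and nonnegative integers $\lambda_1,\dots,\lambda_d$ with $z^\circ=z+\sum_{j}\lambda_j c_j$. Group the circuits by their \emph{$W$-type}: the vector $Wc_j\in\Z^k$ lies in $\{-\Delta,\dots,\Delta\}^k$ by the hypothesis of \Cref{problemEquality}, so there are at most $(2\Delta+1)^k$ distinct types. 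The key reduction step is: if for some type $\tau\in\{-\Delta,\dots,\Delta\}^k$ the total multiplicity $\sum_{j:\,Wc_j=\tau}\lambda_j$ is at least $2k\Delta+1$, then we can remove a sub-multiset of these circuits whose $W$-contributions cancel, i.e. sum to $\zero$, without destroying feasibility (by \Cref{remarkProximity}, partial conformal sums stay in the box) and without changing $Wx$.

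The cancellation argument is the heart of the proof and the step I expect to be the main obstacle. Here is the intended mechanism: group the circuits into at most $(2\Delta+1)^k$ types, and think of choosing, for each type, how many copies to keep. We want a nonzero sub-multiset summing to $\zero$ under $W$. But a single type $\tau$ summed $m$ times contributes $m\tau$, which is zero only if $\tau=\zero$; so one must instead pair up ``opposite'' contributions across types, or argue more cleverly. The right statement is a Steinitz/Carathéodory-type bound: among the multiset $\{Wc_j\ \text{with multiplicity }\lambda_j\}$ of vectors in $\{-\Delta,\dots,\Delta\}^k$ summing to $d-Wz$ (a fixed vector), if the total multiplicity exceeds $k(2\Delta+1)^k$ then there is a nonempty sub-multiset summing to $\zero$ — this follows by a pigeonhole on partial sums restricted to each of the $\le(2\Delta+1)^k$ types once a type's multiplicity exceeds $2k\Delta+1$, using that its partial sums $0,\tau,2\tau,\dots$ have all coordinates trapped in an interval of length $2k\Delta$, forcing a repeat, hence a zero-sum block of copies of $\tau$; but since copies of a single nonzero $\tau$ never sum to zero, one actually argues: the partial sums $j\tau$ for $j=0,\dots,2k\Delta+1$ can repeat modulo nothing, so instead we must use that $z^\circ$ optimal and conformal-minimal forces each type-$\tau$ block to have bounded multiplicity directly — because any sub-multiset with zero $W$-sum is itself a feasible conformal move from $z$, and subtracting it from $z^\circ$ gives a feasible solution of equal objective (since $p^\intercal$ has a row-sum-zero / circuit-orthogonality property) but smaller distance, contradicting minimality. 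Making this contradiction precise — in particular verifying that subtracting such a zero-$W$-sum block keeps us feasible, integral, and of no-worse objective, and that this forces the total circuit count $t=\sum_j\lambda_j\le k(2k\Delta+1)^k$ — is the crux, and I would write it as: at most $(2\Delta+1)^k$ types, each appearing with total multiplicity at most $k(2\Delta+1)^k/(2\Delta+1)^{k}$... more carefully, bound the number of \emph{distinct} circuits used after minimality by a Carathéodory argument in the $\le k$-dimensional space where $W$ acts nontrivially and the box is tight, getting $t\le k(2k\Delta+1)^k$. The remaining steps — that the resulting $z^*=z+\sum_{j=1}^t c_j$ is optimal (objective unchanged under removing zero-$W$-sum conformal blocks, because along any circuit $c$ of $A$ contained in the recession cone of the LP, $p^\intercal c\le 0$, and minimality of the conformal decomposition rules out $p^\intercal c<0$) and that everything is computable in strongly polynomial time (via \Cref{lemDecomposition}) — are routine once the counting is in place.
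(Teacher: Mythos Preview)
Your high-level strategy is right, but the core counting step has a genuine gap that you yourself notice and never close. Grouping circuits by their $W$-type $\tau\in\{-\Delta,\ldots,\Delta\}^k$ and doing pigeonhole within a single type cannot produce a nonempty zero-$W$-sum sub-multiset, because $m\tau=\zero$ forces $\tau=\zero$ — exactly as you observe. You then gesture at a ``Steinitz/Carath\'eodory-type bound'' but the argument you sketch (partial sums $0,\tau,2\tau,\ldots$ trapped in an interval) does not repair this. The paper's actual mechanism is different: it applies the Steinitz lemma to the full sequence $Wc_1,\ldots,Wc_t$ (padded by $k$ dummy vectors so the total sum is $\zero$), obtaining a reordering whose every prefix sum lies in $[-k\Delta,k\Delta]^k$. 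If $t>k(2k\Delta+1)^k$, pigeonhole on the prefix sums (at most $(2k\Delta+1)^k$ possible values) yields $k+1$ equal prefix sums, hence $k+1$ pairwise disjoint consecutive blocks each with $W$-sum $\zero$; one of them avoids all $k$ dummies, giving your $\bar c$. This is the missing idea.

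There is a second, subtler gap. Once you have $\bar c$ with $W\bar c=\zero$, you need $p^\intercal\bar c\le 0$ so that $z^*-\bar c$ is still optimal. Your justification (``circuits in the recession cone'', or ``minimality rules out $p^\intercal c<0$'') does not work: the polyhedron is bounded, and minimality of $t$ gives nothing about sign. The paper arranges this by choosing $z$ not merely close to $x^*$ but \emph{inside the minimal face $F$ of $\{Ax=b,\ \ell\le x\le u\}$ containing $x^*$}: take a vertex $z'$ of $F$, decompose $z'-x^*$ as $\sum\lambda_j c'_j$ via \Cref{lemDecomposition}, and set $z=x^*+\sum(\lambda_j-\lfloor\lambda_j\rfloor)c'_j\in F$. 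Then $z+\bar c\in P$ (by \Cref{remarkProximity}) implies $x^*+\varepsilon\bar c\in P$ for some $\varepsilon>0$, and since $W(x^*+\varepsilon\bar c)=d$, LP-optimality of $x^*$ forces $p^\intercal\bar c\le 0$. Your rounding of $x^*$ based on ``at most $k$ fractional coordinates'' does not place $z$ in $F$, so this implication is unavailable to you.
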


Note that \Cref{propReductionIP} follows directly from \Cref{propProximity} by taking $f_{\ref{propReductionIP}}(k,\Delta) = f_{\ref{propProximity}}(k, \Delta)$. To see how \Cref{propProximity} implies \Cref{thmProximity}, consider any instance of \Cref{problemEquality}.
Let $x^*$ be any optimal solution of \eqref{eqLPequality}, and assume that \eqref{eqIPequality} is feasible.
Pick a vertex $x'$ of $\{x \in \R^n : Ax = b, \, Wx = d, \, \lfloor x^* \rfloor \le x \le \lceil x^* \rceil\}$ with $p^\intercal x' = p^\intercal x^*$.
Here, the vector $\lfloor x^* \rfloor$ arises from $x^*$ by applying $\lfloor \cdot \rfloor$ to each entry of $x^*$, and $\lceil x^* \rceil$ is defined analogously.
Let $I = \{i \in [n] : x'_i \in \Z\}$.
Note that, for the sake of proving \Cref{thmProximity}, we can assume that we know the optimal solution to \eqref{eqIPequality}, hence we may partition $I$ into $I_\le$ and $I_\ge$ such that the optimum value of
\begin{equation}
    \label{eqIPbla}
    \max \left \{ p^\intercal x : Ax = b, \, Wx = d, \, \ell \le x \le u,
    x_i \le x_i' \text{ for all } i \in I_\le, \, x_i \ge x_i' \text{ for all } i \in I_\ge, \, x \in \Z^n \right \}
\end{equation}
is equal to \eqref{eqIPequality}.
Since $x'$ is an optimal vertex solution of the LP relaxation of \eqref{eqIPbla}, we may apply \Cref{propProximity} to obtain a point $z \in \Z^n$ with $\|x' - z\|_\infty < k$ such that \eqref{eqIPbla} has an optimal solution $z^* = z + \sum_{j=1}^t c_j$ where $c_1,\dots,c_t \in \circuits(A)$ and $t \le f_{\ref{propProximity}}(k, \Delta)$.
Note that $z^*$ is also an optimal solution of \eqref{eqIPequality} and observe that we have
\[
    \|x^* - z^*\|_\infty
    \le \|x^* - x'\|_\infty + \|x' - z\|_\infty + \|z - z^*\|_\infty
    \le 1 + k + \|z - z^*\|_\infty
    \le 1 + k + \sum \nolimits_{j=1}^t \|c_j\|_\infty
    = 1 + k + t.
\]
The result follows by taking $f_{\ref{thmProximity}}(k,\Delta) := 1 + k + f_{\ref{propProximity}}(k, \Delta)$.

The remainder of this section is devoted to the proof of \Cref{propProximity}.
To this end, let $p$, $A$, $b$, $W$, $d$, $\ell$, $u$ define an instance of \Cref{problemEquality} and denote $P := \{x \in \R^n : Ax = b,\, \ell \le x \le u\}$.
Moreover, let $x^*$ be an optimal vertex solution of \eqref{eqLPequality}.

\subsection{Proof of \Cref{propProximity}}

Let us first describe how the integer point $z$ can be computed, and then show that it satisfies \ref{itemPropProximity1}--\ref{itemPropProximity3}.
To this end, consider the inclusionwise smallest face $F$ of $P$ that contains $x^*$.
Recall that $Wx = d$ consists of only $k$ equations, and hence $\dimension(F) \le k$.
Notice also that $F = \{x \in \R^n : Ax = b,\, \ell' \le x \le u'\}$ for some appropriately chosen $\ell',u' \in \Z^n$.
Since $A$ is totally unimodular, we can compute a vertex $z'$ of $F$ in strongly polynomial time.
As $P$ is an integer polytope, so is $F$ and hence $z'$ is integer-valued.
Moreover, we can invoke \Cref{lemDecomposition} to find circuits $c'_1,\dots,c'_k$ of $A$ and $\lambda_1,\dots,\lambda_k \in \R_{\ge 0}$ with $z' = x^* + \sum_{j=1}^k \lambda_j c'_j$.
We define
\[
    z = x^* + \sum_{j=1}^k (\lambda_j - \lfloor \lambda_j \rfloor) c'_j
\]
and note that, by \Cref{remarkProximity}, it is contained in $F$.
In particular, we see that $z$ satisfies~\ref{itemPropProximity1}.
Note also that $z$ is an integer vector since
\[
    z = x^* + \sum_{j=1}^k (\lambda_j - \lfloor \lambda_j \rfloor) c'_j
    = \underbrace{z'}_{\in \Z^n} - \sum_{j=1}^k \underbrace{\lfloor \lambda_j \rfloor}_{\in \Z} \underbrace{c'_j}_{\in \Z^n}.
\]
Moreover, it satisfies \ref{itemPropProximity2} since
\[
    \|x^* - z\|_\infty
    = \left\| \sum_{j=1}^k (\lambda_j - \lfloor \lambda_j \rfloor) c'_j \right\|_\infty
    \le \sum_{j=1}^k (\lambda_j - \lfloor \lambda_j \rfloor) \|c'_j\|_\infty
    < \sum_{j=1}^k \|c'_j\|_\infty
    \le k.
\]
Thus, it remains to show \ref{itemPropProximity3}.
To this end, we first observe that since $F$ is the minimal face of $P$ containing $x^*$, we have
\begin{equation}
    \label{eqMinFace}
    z + v \in P \implies x^* + \eps v \in P \text{ for some } \eps > 0
\end{equation}
for every vector $v \in \R^n$.

Now, take any optimal solution $z^*$ of the integer program, and decompose
\[
    z^* - z = c_1+\dotsb+c_t
\]
where $c_1,\dots,c_t \in \circuits(A)$ are conformal (such a decomposition exists due to \Cref{lemDecomposition}), and choose $z^*$ and this decomposition such that $t$ is smallest possible.
Recall that for every $I \subseteq [t]$, the vector $z + \sum_{i \in I} c_i$ is also contained in $P$.

\subsubsection{Reordering Circuits}

To obtain a bound on the number $t$ of circuit vectors, let us first reorder $c_1,\dots,c_t$ such that all partial sums $\sum_{i=1}^s Wc_i$ have small norm.
For this purpose, we make use of the Steinitz lemma~\cite{steinitz_1913}:

\begin{lemma}[{see \cite{sevast1978approximate,sevast1997steinitz}}]
    \label{lemSteinitz}
    Let $\|\cdot\|$ be any norm on $\R^k$.
    If $y_1,\dots,y_{t'} \in \R^k$ satisfy $y_1+\dots+y_{t'} = \zero$, then there exists a permutation $\pi$ on $[t']$ such that
    \[
        \left\| \sum \nolimits_{i=1}^s y_{\pi(i)} \right\| \le k \cdot \max \{\|y_1\|,\dots,\|y_{t'}\|\}
    \]
    holds for all $s \in [t']$.
\end{lemma}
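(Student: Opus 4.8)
The plan is to prove the Steinitz lemma with the sharp constant $k$, following the classical argument of Grinberg and Sevastyanov. Write $n := t'$ and $M := \max_i \|y_i\|$; if $M = 0$ the claim is trivial, and after rescaling we may assume $M = 1$, so that the task is to produce a permutation $\pi$ with $\bigl\|\sum_{i=1}^s y_{\pi(i)}\bigr\| \le k$ for every $s \in [n]$. For $s \le k$ this holds automatically, since $\bigl\|\sum_{i=1}^s y_{\pi(i)}\bigr\| \le \sum_{i=1}^s \|y_{\pi(i)}\| \le k$; and because $\sum_{i=1}^n y_i = \zero$, the prefix sum at position $s$ is the negative of the suffix sum, so positions $s \ge n-k$ are equally automatic. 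In particular we may assume $n > 2k$.

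I would construct $\pi$ one position at a time, fixing $\pi(n), \pi(n-1), \dots, \pi(1)$ in this order; before fixing $\pi(j)$ the set of not-yet-assigned indices is some $U \subseteq [n]$ with $|U| = j$ (they will occupy positions $1, \dots, j$), and the prefix sum at position $j$ equals $\sigma_U := \sum_{i \in U} y_i$. The invariant to carry, as long as $|U| > k$, is a \emph{fractional certificate}: a vector $\lambda \in [0,1]^U$ with $\sum_{i \in U} \lambda_i = |U|-k$ and $\sum_{i \in U} \lambda_i y_i = \zero$. Such a $\lambda$ certifies $\|\sigma_U\| \le k$, because $\sigma_U = \sum_{i \in U}(1-\lambda_i) y_i$ while $\sum_{i \in U}(1-\lambda_i) = k$. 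The construction starts from $U = [n]$ with $\lambda_i := 1 - k/n$ (here $\sum_i (1-k/n) = n-k$ and $\sum_i (1-k/n) y_i = \zero$), and stops once $|U| = k$; the remaining $k$ indices are then placed in any order, since every remaining prefix sum involves at most $k$ of the $y_i$ and so has norm at most $k$.

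The heart of the proof -- and the step I expect to be the main obstacle -- is the transition: given $U$ with $|U| > k$ and a certificate, choose $i_0 \in U$ to place into position $|U|$ so that $\sigma_{U \setminus \{i_0\}} = \sigma_U - y_{i_0}$ still has norm at most $k$, and re-establish a certificate for $U' := U \setminus \{i_0\}$. One works with the polytope $P_U := \{\lambda \in [0,1]^U : \sum_{i \in U}\lambda_i = |U|-k,\ \sum_{i \in U}\lambda_i y_i = \zero\}$, which is non-empty (the invariant) and bounded, and takes a vertex $\lambda^*$. Since $P_U$ is cut from the cube $[0,1]^U$ by only $k+1$ linear equalities, at a vertex at least $|U|-(k+1)$ of the box constraints are tight, so $\lambda^*$ has at most $k+1$ coordinates in $(0,1)$. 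Guided by this structure one selects $i_0$ -- typically a coordinate where $\lambda^*$ is integral -- and rebuilds a certificate for $U'$ from $\lambda^*|_{U'}$, correcting the bounded discrepancy in the two defining equations of $P_{U'}$ by redistributing a bounded amount of fractional mass among the surviving coordinates, the needed slack being furnished by the bound on the fractional support. This careful bookkeeping is what yields the sharp constant $k$; a cruder argument (e.g.\ greedily minimising the current prefix-sum norm) would give only a weaker bound of the same shape. This is the Grinberg--Sevastyanov argument; see \cite{sevast1978approximate,sevast1997steinitz}. For our application, any bound of the form $f(k) \cdot \max_i \|y_i\|$ with $f$ depending on $k$ alone would already suffice.
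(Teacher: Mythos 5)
The paper itself does not prove this lemma; it only states it with a citation to Sevastyanov and Grinberg--Sevastyanov, so there is no paper proof to compare against. Your sketch follows the correct overall strategy (the Grinberg--Sevastyanov fractional-certificate argument, which is indeed the route to the sharp constant $k$): the preliminary reductions, the invariant, the certificate-implies-bound observation, and the base case at $U=[n]$ are all sound.

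The transition step, however, is where the sketch goes wrong, and the misstep is structural rather than a detail left to the reader. You take a vertex $\lambda^*$ of $P_U = \{\lambda \in [0,1]^U : \sum_{i \in U} \lambda_i = |U|-k,\ \sum_{i \in U} \lambda_i y_i = \zero\}$, remove a coordinate $i_0$ where $\lambda^*$ is integral, and then propose to ``correct the discrepancy'' by redistributing fractional mass. If $\lambda^*_{i_0}=1$, the restriction to $U'$ meets the new sum constraint exactly but $\sum_{i\in U'}\lambda^*_i y_i = -y_{i_0}\neq\zero$, and there is no reason this vector defect can be absorbed by adjustments in $[0,1]^{U'}$ while keeping the sum fixed; if $\lambda^*_{i_0}=0$, the vector constraint survives but the sum is off by one, and again no correction is guaranteed. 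The correct move is to shrink the sum target \emph{before} choosing a vertex: consider $Q_U := \{\lambda \in [0,1]^U : \sum_{i\in U}\lambda_i = |U|-k-1,\ \sum_{i\in U}\lambda_i y_i = \zero\}$, which is nonempty because scaling the current certificate by $\tfrac{|U|-k-1}{|U|-k}$ lands in it. A vertex $\lambda^*$ of $Q_U$ still has at most $k+1$ fractional coordinates, and it must have a zero coordinate: if all coordinates were positive, at least $|U|-(k+1)$ of them would equal $1$, forcing $\sum_{i\in U}\lambda^*_i \ge |U|-k-1$ with equality only if the strictly positive fractional coordinates sum to zero, a contradiction. Removing such an $i_0$ directly produces a certificate for $U' = U\setminus\{i_0\}$, with no correction step needed. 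With this fix your sketch becomes a complete proof.
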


To apply the above lemma, we first set $y_i = Wc_i$ for $i=1,\dots,t$.
Recall that we consider an instance of \Cref{problemEquality}, and hence
\[
    \|Wc_i\|_\infty \le \Delta
\]
holds for all $i \in [t]$.
Moreover, we have
\[
    \|Wc_1 + \dotsb + Wc_t\|_\infty
    = \|W(z^* - z)\|_\infty
    \le \|W(z^* - x^*)\|_\infty + \|W(x^* - z)\|_\infty
    = \|W(x^* - z)\|_\infty
    \le k\Delta,
\]
where the equality holds since we have $Wz^* = d$ and $Wx^* = d$.
This means that we can find integer vectors $y_{t+1},\dots,y_{t+k} \in \Z^k$ such that $y_{t+1} + \dots + y_{t+k} = -(Wc_1 + \dotsb + Wc_t)$ and $\|y_{t+j}\|_\infty \le \Delta$ holds for all $j \in [k]$.
By construction, we have $y_1 + \dots + y_{t+k} = \zero$ and $\|y_i\|_\infty \le \Delta$ for all $i \in [t+k]$.
Thus, by \Cref{lemSteinitz} there exists a permutation $\pi$ on $[t+k]$ such that 
\begin{equation}\label{eqh9f2fhufwe}
    \left\| \sum_{i=1}^s y_{\pi(i)} \right\|_\infty \le k \Delta
\end{equation}
holds for all $s \in [t+k]$.

We now claim that
\begin{equation}
    \label{eqBoundNumberOfCircuitVectors}
    t \le k(2k \Delta + 1)^k
\end{equation}
holds, which yields \ref{itemPropProximity3}.
For the sake of contradiction, suppose that the claim does \emph{not} hold.
Since there are $(2k\Delta+1)^k$ integer points of infinity norm at most $k\Delta$ in $\Z^k$, we can apply the pigeonhole principle and~\eqref{eqh9f2fhufwe} to obtain $k+1$ indices $1\le s_1<s_2,\dots<s_{k+1}\le t+k$ such that $\sum_{i=1}^{s_1} y_{\pi(i)} = \sum_{i=1}^{s_j} y_{\pi(i)}$ for all $j\in[k+1]$.
Thus, the (nonempty) sets
\begin{equation*}
    S_{i} := \{\pi(s_i+1),\dots,\pi(s_{i+1})\}\text{ for } i\in[k], 
\end{equation*}
and
\begin{equation*}
    S_{k+1} := \{\pi(1),\dots,\pi(s_1)\} \cup \{\pi(s_{k+1}+1),\dots,\pi(t+k)\}
\end{equation*}
yield $k+1$ disjoint partial sums, summing up to zero each.
By the pigenohole principle, one of the $S_i$ does not contain any $j\in\{t+1,\dots,t+k\}$, say $S'$.
Setting $\bar{c} = \sum_{i \in S'} c_i$, we have $W\bar{c} = \zero$.


\subsubsection{An Optimal Solution with a Shorter Decomposition}

Consider the vector $\tilde z := z^* - \bar{c} = z + \sum_{i \in [t] \setminus S'} c_i \in P$.
Since $W\tilde z = Wz^* - W\bar{c} = d$, we see that $\tilde z$ is feasible for the integer program.
We claim that
\begin{equation}
    \label{eqashd89ahds}
    p^\intercal \bar{c} \le 0
\end{equation}
holds.
Note that the latter claim implies $p^\intercal \tilde z = p^\intercal z^* - p^\intercal \bar{c} \ge p^\intercal z^*$.
This shows that $\tilde z$ is also an optimal solution for the integer program with a shorter decomposition, a contradiction.
Thus, our assumption (that $t$ does not satisfy the bound in~\eqref{eqBoundNumberOfCircuitVectors}) was wrong and we are done.

To show~\eqref{eqashd89ahds}, note that $z + \bar{c} = z + \sum_{i \in I} c_i \in P$.
Thus, by~\eqref{eqMinFace} there exists some $\eps > 0$ such that $x^* + \eps \bar{c} \in P$.
Moreover, since $W(x^* + \eps \bar{c}) = Wx^* = d$ we see that $x^* + \eps \bar{c}$ is a feasible solution for the LP relaxation.
By the optimality of $x^*$, this implies $p^\intercal \bar{c} \le 0$.


\section{Dealing with $1$-sums and $2$-sums}\label{sec:1sum2sum}

In \Cref{sec:configs}, we define $1$-sums and $2$-sums for vector configurations. Applied to the vector configurations arising from the columns of a TU matrix, we recover the standard notions of $1$-sum and $2$-sum of TU matrices (see for instance~\cite{schrijver_1998}), and regular matroids (see for instance~\cite{oxley_2006}). In \Cref{sec:MCICP_bis}, we state the actual version of \Cref{problemEquality} that we solve. Next, in \Cref{sec:reduction_2-connected_bis}, we explain how to reduce \Cref{problemEquality} to the $2$-connected case. Finally, in \Cref{sec:reduction_3-connected_bis}, we explain how to further reduce to the almost $3$-connected case. We point out that our reductions are all black-box. 

\subsection{Vector configurations} \label{sec:configs}

We regard matrices $A \in \R^{m \times n}$ as \emph{(vector) configurations} in $\R^m$. In other words, we see $A \in \R^{m \times n}$ as an ordered multiset of $n$ vectors $\{a_1,\ldots,a_n\}$ in $\R^m$, namely, the $n$ columns of $A$. The \emph{size} of a configuration is defined as its number of vectors, taking into account multiplicities. Slightly abusing notation, we write $v \in A$ to mean that $v$ is a column of $A$. For $v \in A$, we let $A-v$ denote the configuration obtained from $A$ by deleting the column for $v$, hence decreasing the multiplicity of $v$ by $1$. If $A_1 \in \R^{m \times n_1}$ and $A_2 \in \R^{m \times n_2}$ are configurations in $\R^m$, we let $A_1 \uplus A_2 := \begin{bmatrix} A_1 &A_2 \end{bmatrix}$ denote the configuration obtained by concatenating the two configurations. Below, we let $\colsp(A) := \mathspan(\{a_1,\ldots,a_n\})$ denote the column space of $A$.

We point out that configurations are, basically, representations of real matroids. Most of the notions we discuss here such as connectivity, $1$-sum and $2$-sum are the counterparts of standard matroid notions. The reader who has some knowledge on matroids should ``feel at home''.
We remark that our definition of configurations can be replaced by matrices with appropriately defined block operations, as for instance done in~\cite{schrijver_1998}. For our purpose here, it is more convenient to work with configurations in order to avoid dealing with block matrices and their respective dimensions.

\begin{definition}[regular configuration]
We call a configuration $A \in \R^{m \times n}$ \emph{regular} whenever every circuit of $A$ is in $\{0,\pm 1\}^n$. (This terminology is inspired by Tutte's notion of a regular subspace, see~\cite{Tutte65}.)
\end{definition}

Every TU matrix $A \in \R^{m \times n}$, seen as a vector configuration, is a regular configuration. Conversely, if $A \in \R^{m \times n}$ is a regular configuration then there exists a TU matrix $A' \in \{0,\pm 1\}^{m \times n}$ such that $\ker A = \ker A'$. Assume that $A$ is a regular configuration. We say that a circuit $c \in \circuits(A)$ \emph{uses} a vector $v \in A$ if $c(v) \neq 0$. Circuit $c$ uses $v$ \emph{positively} if $c(v) = 1$, and \emph{negatively} if $c(v) = -1$.

We recall the definition of connectivity given in the introduction.

\begin{definition}[separation, connectivity]
A configuration $A \in \R^{m \times n}$ has a \emph{$q$-separation} (for $q \in \Z_{\ge 1}$) if it can be written, possibly after permuting its columns, as $A = \begin{bmatrix} A_1 & A_2 \end{bmatrix}$, with $\dim (\colsp(A_1) \cap \colsp(A_2)) \le q-1$ where both $A_1$ and $A_2$ have at least $q$ columns. The \emph{connectivity} of $A$ is defined as the smallest order of a separation of $A$. We say that $A$ is \emph{$q$-connected} if its connectivity is at least $q$. A configuration $A$ is said to be \emph{almost $3$-connected} if it becomes $3$-connected when one deletes from $A$ every column that is a nonzero multiple of another column.
\end{definition}

We remark that in particular, under this definition a configuration consisting of a single column has no separation, and hence is $q$-connected for any $q \ge 1$.

Next, for $q = 1, 2$, we relate these definitions to the notion of a $q$-sum.

\begin{definition}[$1$-sum]
Let $A_1 \in \R^{m \times n_1}$ and $A_2 \in \R^{m \times n_2}$ be configurations such that $\colsp(A_1) \cap \colsp(A_2) = \{\zero\}$. The \emph{$1$-sum} of $A_1$ and $A_2$ is the configuration $A_1 \oplus_1 A_2 := A_1 \uplus A_2 \in \R^{m \times (n_1 + n_2)}$. 
\end{definition}

The circuits of a $1$-sum can be characterized as follows:
\begin{equation}
\nonumber
\circuits(A_1 \oplus_1 A_2) = \left\{ \begin{bmatrix} c_1\\ \zero\end{bmatrix} : c_1 \in \circuits(A_1) \right\} \cup \left\{ \begin{bmatrix} \zero\\ c_2\end{bmatrix} : c_2 \in \circuits(A_2) \right\}\,. 
\end{equation}

A configuration $A$ of size at least $2$, that contains the zero vector is never $2$-connected. We say that two vectors $v, v' \in \R^m$ are \emph{parallel} if one is a nonzero multiple of the other. Hence the zero vector is only parallel to itself.

\begin{definition}[$2$-sum]\label{def:2sum}
Let $A_1 \in \R^{m \times (n_1 + 1)}$ and $A_2 \in \R^{m \times (n_2 + 1)}$ be configurations with a common nonzero vector $v$, and such that $\colsp(A_1) \cap \colsp(A_2) = \mathspan(\{v\})$. The \emph{$2$-sum} of $A_1$ and $A_2$ is the configuration $A_1 \oplus_2 A_2 := (A_1 - v) \uplus (A_2 - v) \in \R^{m \times (n_1 + n_2)}$.
\end{definition}

We give an example of a $2$-sum of two configurations in \Cref{fig:2-sum}.

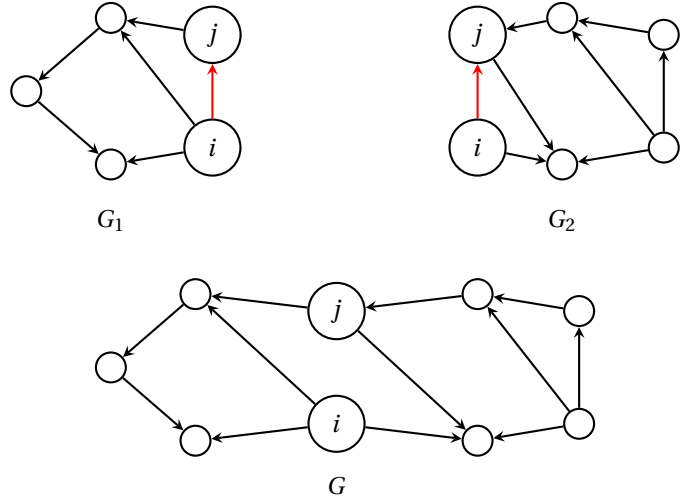
\begin{figure} 
   \centering
      \begin{tikzpicture}[inner sep=2.5pt,scale=.75]

 \pgfmathsetmacro{\l}{2} 

\node[vtx, inner sep = 4.8pt] (a1) at (1.8,0) {\small $i$};
\node[vtx] (a2) at (1.8,\l) {\small $j$};
\node[vtx] (a3) at (0,\l+0.3) {};
\node[vtx] (a4) at (0,-0.3) {};
\node[vtx] (a5) at (-1.5,0.5*\l) {};

\node at  ([yshift=-1cm] a4) {$G_1$};

\draw[ thick,->,color=red] (a1) -> (a2);
\draw[ thick,->,color=black] (a2) -> (a3);
\draw[ thick,->,color=black] (a3) -> (a5);
\draw[ thick,->,color=black] (a5) -> (a4);
\draw[ thick,->,color=black] (a1) -> (a4);
\draw[thick,->,color=black] (a1) -> (a3);

 \pgfmathsetmacro{\d}{8}

    \node[vtx] (b1) at (1.8+\d,0) {};
    \node[vtx] (b2) at (1.8+\d,\l) {};
    \node[vtx] (b3) at (0+\d,\l+0.3) {};
    \node[vtx] (b4) at (0+\d,-0.3) {};
    \node[vtx] (b5) at (-1.5+\d,\l) {\small $j$};
    \node[vtx, inner sep = 4.8pt] (b6) at (-1.5+\d,0) {\small $i$};

    \node at ([yshift=-1cm] b4) {$G_2$};

    \draw[thick,->,color=black] (b1) -- (b2);
    \draw[thick,->,color=black] (b2) -- (b3);
    \draw[thick,->,color=black] (b3) -- (b5);
    \draw[thick,->,color=black] (b5) -- (b4);
    \draw[thick,->,color=black] (b6) -- (b4);
    \draw[thick,->,color=black] (b1) -- (b4);
    \draw[thick,->,color=black] (b1) -- (b3);
      \draw[thick,->,color=red] (b6) -- (b5);
\end{tikzpicture}

\vspace{0.5cm}
\begin{tikzpicture}[inner sep=2.5pt,scale=.75]

 \pgfmathsetmacro{\l}{2} 
  \pgfmathsetmacro{\s}{0.7} 

\node[vtx, inner sep = 4.8pt] (a1) at (1.8+\s,0) {\small $i$};
\node[vtx] (a2) at (1.8+\s,\l) {\small $j$};
\node[vtx] (a3) at (0,\l+0.3) {};
\node[vtx] (a4) at (0,-0.3) {};
\node[vtx] (a5) at (-1.5,0.5*\l) {};

\draw[ thick,->,color=black] (a2) -> (a3);
\draw[ thick,->,color=black] (a3) -> (a5);
\draw[ thick,->,color=black] (a5) -> (a4);
\draw[ thick,->,color=black] (a1) -> (a4);
\draw[thick,->,color=black] (a1) -> (a3);

 \pgfmathsetmacro{\d}{5} 

\node at ([yshift=-1.1cm,xshift=0 cm]  a1) {$G$};

    \node[vtx] (b1) at (1.8+\d,0) {};
    \node[vtx] (b2) at (1.8+\d,\l) {};
    \node[vtx] (b3) at (0+\d,\l+0.3) {};
    \node[vtx] (b4) at (0+\d,-0.3) {};

    \draw[thick,->,color=black] (b1) -- (b2);
    \draw[thick,->,color=black] (b2) -- (b3);
    \draw[thick,->,color=black] (b3) -- (a2);
    \draw[thick,->,color=black] (a2) -- (b4);
    \draw[thick,->,color=black] (a1) -- (b4);
    \draw[thick,->,color=black] (b1) -- (b4);
    \draw[thick,->,color=black] (b1) -- (b3);
\end{tikzpicture}

\caption{An example of 2-sum of incidence matrices, represented as directed graphs. In order to fit Definition \ref{def:2sum}, $A_1$ should be the incidence matrix of $G_1$ with four extra zero rows for the vertices in $V(G_2) \setminus V(G_1)$, and $A_2$ should be defined similarly, so that $A_1$ and $A_2$ have as many rows as the number of vertices of $G$ and share a column $v$ corresponding to arc $(i,j)$. Then it is easy to check that $A_1\oplus_2 A_2$ is the incidence matrix of $G$.}
\label{fig:2-sum}
\end{figure}

Assuming that configurations $A_1$ and $A_2$ are regular, and that the common vector $v$ is the last vector of $A_1$ and the first vector of $A_2$, we have the following characterization of the circuits of a $2$-sum:
\begin{align}
\circuits(A_1 \oplus_2 A_2) &= \left\{ \begin{bmatrix} c_1\\ \zero\end{bmatrix} : \begin{bmatrix} c_1\\ 0 \end{bmatrix} \in \circuits(A_1) \right\} \cup \left\{ \begin{bmatrix} \zero\\ c_2\end{bmatrix} :  \begin{bmatrix} 0\\ c_2 \end{bmatrix} \in \circuits(A_2) \right\} \nonumber\\
&\mbox{} \quad \cup \left\{ \begin{bmatrix} c_1\\ c_2 \end{bmatrix} : \begin{bmatrix} c_1\\ \pm 1 \end{bmatrix} \in \circuits(A_1),\ \begin{bmatrix} \mp 1\\ c_2 \end{bmatrix} \in \circuits(A_2) \right\}\,.\nonumber
\end{align}
In other words, each circuit of $A_1 \oplus_2 A_2$ is either a circuit of $A_i$ that does not use the common vector $v$, for some $i \in [2]$, or obtained by combining a circuit of $A_i$ that uses $v$ positively together with a circuit of $A_{3-i}$ that uses $v$ negatively, for some $i \in [2]$. We say that the circuits of the second type \emph{cross} $v$.

We call a \emph{circulation} of $A$ any vector $x \in \R^n$ such that $Ax = \zero$, that is, any vector in $\ker(A)$. The above characterization of the circuits of a $2$-sum naturally extends to circulations: $\smallmat{x_1\\ x_2}$ is a circulation of $A_1 \oplus_2 A_2$ if and only if there exists a scalar $\varphi$ such that $\smallmat{x_1\\ -\varphi}$ is a circulation of $A_1$ and $\smallmat{\varphi\\ x_2}$ is a circulation of $A_2$. We say that $\smallmat{x_1\\ -\varphi}$ and $\smallmat{\varphi\\ x_2}$ are obtained by \emph{splitting} $\smallmat{x_1\\ x_2}$. If $\smallmat{x_1\\ x_2}$ is an integer circulation splitting into $\smallmat{x_1\\ -\varphi}$ and $\smallmat{\varphi\\ x_2}$, implying that $\varphi$ is integer, then it can be written as a conformal sum of circuits in $\circuits(A_1 \oplus_2 A_2)$.
If $A$ is a regular configuration, then we can assume that exactly $|\varphi|$ circuits cross $v$.

Notice that a configuration is $2$-connected if and only if it cannot be written as the $1$-sum of two smaller configurations, possibly changing the ordering of its vectors, and $3$-connected if and only if it cannot be written as the $2$-sum of two configurations (possibly changing the ordering of its vectors), each with at least $3$ vectors.

\begin{definition}[decomposition tree]
A tree $T$ is a \emph{decomposition tree} of a $2$-connected configuration $A \in \R^{m \times n}$ of size at least $3$ if it satisfies the following conditions:\medskip
\begin{enumerate}
\item each node $t$ of $T$ has a corresponding configuration $A(t)$ in $\R^m$ of size at least $3$,
\item each edge $tt'$ of $T$ has a corresponding nonzero vector\footnote{The vector $\overline{v} = \overline{v}(tt')$ is considered as being \emph{private} to $A(t)$ and $A(t')$. Formally, we would need to use labels for the vectors of the configurations under consideration in order to distinguish the different copies of the same vector. Then, $\overline{v}$ could possibly appear as a vector in $A(t'')$ for some node $t''$ distinct from $t$ and $t'$, but not with the same label. For the sake of simplicity, we will however not do this explicitly.} $\overline{v} = \overline{v}(tt') \in \R^m$ that belongs to both $A(t)$ and $A(t')$, and such that $\colsp(A(t)) \cap \colsp(A(t')) = \mathspan(\{\overline{v}\})$,
\item if $t$ and $t'$ are distinct nodes of $T$ then $\dim(\colsp(A(t)) \cap \colsp(A(t'))) \le 1$, and if $t''$ is a node of the $t$--$t'$ path in $T$ then $\colsp(A(t)) \cap \colsp(A(t')) \subseteq \colsp(A(t''))$, 
\item \label{cond:merge} the configuration obtained by iteratively performing (in any order) the $2$-sum corresponding to each edge of $T$, and then possibly reordering the vectors, equals $A$.
\end{enumerate}
\end{definition}

\begin{figure}[h] 
 \centering
 \begin{tikzpicture}[inner sep=2.5pt,scale=.75]

 \pgfmathsetmacro{\l}{2} 

\node[vtx] (a1) at (2.8,0) {};
\node[vtx] (a2) at (2.8,\l) {};
\node[vtx] (a3) at (0,\l+0.3) {};
\node[vtx] (a4) at (0,-0.3) {};
\node[vtx] (a5) at (-1.5,0.5*\l) {};

\draw[ dashed,->,color=red] (a1) -> (a2);
\draw[ thick,->,color=black] (a2) -> (a3);
\draw[ thick,->,color=black] (a3) -> (a5);
\draw[ thick,->,color=black] (a5) -> (a4);
\draw[ thick,->,color=black] (a1) -> (a4);
\draw[thick,->,color=black] (a1) -> (a3);

 \pgfmathsetmacro{\d}{6}

    \node[vtx] (b1) at (2.8+\d,0) {};
    \node[vtx] (b2) at (2.8+\d,\l) {};
    \node[vtx] (b3) at (0+\d,\l+0.3) {};
    \node[vtx] (b4) at (0+\d,-0.3) {};

    \node at ([yshift=-2cm] b4) {$G$}; \node at ([yshift=-3cm] b4) {};
\draw[thick,->,color=black] (a1) -- (b3);
    \draw[dashed,->,color=blue] (b1) -- (b2);
    \draw[thick,->,color=black] (b2) -- (b3);
    \draw[thick,->,color=black] (b3) -- (a2);
    \draw[thick,->,color=black] (a2) -- (b4);
    \draw[dashed,->,color=green] (a1) -- (b4);
    \draw[thick,->,color=black] (b1) -- (b4);
    \draw[thick,->,color=black] (b1) -- (b3);

        \node[vtx] (c1) at (1+2*\d,-0.3) {};
    \node[vtx] (c2) at (1+2*\d,0.5*\l) {};
    \node[vtx] (c3) at (1+2*\d,\l+0.3) {};

    \draw[thick,->,color=black] (c1) -- (c2);
    \draw[thick,->,color=black] (c2) -- (c3);
    \draw[thick,->,color=black] (c2) -- (b1);
    \draw[thick,->,color=black] (c3) -- (b2);
     \draw[thick,->,color=black] (b2) -- (c2);
    \draw[thick,->,color=black] (c1) -- (b1);
 \pgfmathsetmacro{\b}{1.5} 
 
\node[vtx] (d1) at (1+0,-\b) {};
\node[vtx] (d2) at (1+\l,-\b) {};
\node[vtx] (d3) at (1+2*\l,-\b) {};

           \draw[thick,->,color=black] (a1) -- (d1);
     \draw[thick,->,color=black] (d1) -- (d2);
    \draw[thick,->,color=black] (d3) -- (d2);
       \draw[thick,->,color=black] (d3) -- (b4);
   \draw[thick,->,color=black] (d2) -- (b4);
   \draw[thick,->,color=black] (a1) -- (d2);

\end{tikzpicture}

      \begin{tikzpicture}[inner sep=2.5pt,scale=.75]

 \pgfmathsetmacro{\l}{2} 

\node[vtx] (a1) at (1.8,0) {};
\node[vtx] (a2) at (1.8,\l) {};
\node[vtx] (a3) at (0,\l+0.3) {};
\node[vtx] (a4) at (0,-0.3) {};
\node[vtx] (a5) at (-1.5,0.5*\l) {};

\node at  ([yshift=-1cm] a4) {$G_1$};

\draw[ thick,->,color=red] (a1) -> (a2);
\draw[ thick,->,color=black] (a2) -> (a3);
\draw[ thick,->,color=black] (a3) -> (a5);
\draw[ thick,->,color=black] (a5) -> (a4);
\draw[ thick,->,color=black] (a1) -> (a4);
\draw[thick,->,color=black] (a1) -> (a3);

 \pgfmathsetmacro{\d}{6}

    \node[vtx] (b1) at (1.8+\d,0) {};
    \node[vtx] (b2) at (1.8+\d,\l) {};
    \node[vtx] (b3) at (0+\d,\l+0.3) {};
    \node[vtx] (b4) at (0+\d,-0.3) {};
    \node[vtx] (b5) at (-1.5+\d,\l) {};
    \node[vtx] (b6) at (-1.5+\d,0) {};

    \node at ([yshift=-1cm] b4) {$G_2$};
\draw[thick,->,color=black] (b6) -- (b3);
    \draw[thick,->,color=blue] (b1) -- (b2);
    \draw[thick,->,color=black] (b2) -- (b3);
    \draw[thick,->,color=black] (b3) -- (b5);
    \draw[thick,->,color=black] (b5) -- (b4);
    \draw[thick,->,color=green] (b6) -- (b4);
    \draw[thick,->,color=black] (b1) -- (b4);
    \draw[thick,->,color=black] (b1) -- (b3);
      \draw[thick,->,color=red] (b6) -- (b5);

        \node[vtx] (c1) at (1+2*\d,-0.3) {};
    \node[vtx] (c2) at (1+2*\d,0.5*\l) {};
    \node[vtx] (c3) at (1+2*\d,\l+0.3) {};
    \node[vtx] (c5) at (-1.5+2*\d,\l) {};
    \node[vtx] (c6) at (-1.5+2*\d,0) {};

    \node at ([xshift = 1.1cm,yshift=-1cm] c6) {$G_3$};

    \draw[thick,->,color=black] (c1) -- (c2);
    \draw[thick,->,color=black] (c2) -- (c3);
    \draw[thick,->,color=blue] (c6) -- (c5);
    \draw[thick,->,color=black] (c2) -- (c6);
    \draw[thick,->,color=black] (c3) -- (c5);
     \draw[thick,->,color=black] (c5) -- (c2);
    \draw[thick,->,color=black] (c1) -- (c6);
\end{tikzpicture}

\vspace{0.5cm}
\begin{tikzpicture}[inner sep=2.5pt,scale=.75]

 \pgfmathsetmacro{\d}{4} 
 \pgfmathsetmacro{\l}{2} 

\node[vtx] (a1) at (0-\d,0) {};
\node[vtx] (a2) at (\l-\d,0) {};
\node[vtx] (a3) at (2*\l-\d,0) {};

 \node[vtx] (b1) at (1.5*\l-\d,0+\l) {};
   \node[vtx] (b4) at (0.5*\l-\d,\l) {};
    
 \node at ([xshift = 0cm,yshift=-1cm] a2) {$G_4$};
    
       \draw[thick,->,color=green] (b4) -- (b1);

           \draw[thick,->,color=black] (b4) -- (a1);
     \draw[thick,->,color=black] (a1) -- (a2);
    \draw[thick,->,color=black] (a3) -- (a2);
       \draw[thick,->,color=black] (a3) -- (b1);
   \draw[thick,->,color=black] (a2) -- (b1);
   \draw[thick,->,color=black] (b4) -- (a2);

   \node[vtx] (t1) at (+\d,\l) { $G_1$};
\node[vtx] (t2) at (\l+\d,\l) {$G_2$};
\node[vtx] (t3) at (2*\l+\d,\l) {$G_3$};

 \node[vtx] (t4) at (\l+\d,0) {$G_4$};

  \node at ([xshift = 0cm,yshift=-1cm] t4) {$T$};

\draw[very thick,color=red] (t1) -- (t2);
\draw[very thick,color=blue] (t3) -- (t2);
\draw[very thick,color=green] (t4) -- (t2);
 
\end{tikzpicture}
\caption{An example of a 2-sum decomposition of an incidence matrix into four matrices, represented as directed graphs as in Figure \ref{fig:2-sum}, with corresponding decomposition tree $T$. Arcs with the same color represent the common vector in the span of two incidence matrices. Given a node $t$ of $T$, $\mathrm{colsp}(A(t))$ is generated by the vectors corresponding to a spanning tree of the corresponding graph, and contains all possible arcs on the respective vertex set.}
\label{fig:2-sumtree}
\end{figure}

We give an example for such a decomposition tree in \Cref{fig:2-sumtree}.
It follows directly from \cite{cunningham_1980,seymour_1981} that every $2$-connected configuration of at least $3$ vectors admits a decomposition tree in which $A(t)$ is $3$-connected for each node $t \in V(T)$. 
Moreover, if $A$ is regular (in particular, when $A$ is a TU matrix) then we may choose the configurations $A(t)$ in such a way that $A(t)$ is regular for every node $t \in V(T)$. We will always assume this.



\subsection{The maximum constrained integer circulation problem} \label{sec:MCICP_bis}

Consider any (feasible) instance of \Cref{problemEquality}. Recall that by \Cref{propReductionIP}, we may reduce to the case where $b = \zero$ by performing a translation. We now define, for better reference, a new problem which is essentially the restriction of Problem \ref{problemEquality} to the instances that have $b = \zero$. This is the version we will need in our reduction. 

\begin{problem}[Maximum constrained integer circulation problem (MCICP)]
    \label{problemMCC}
    Let $k,\Delta \in \Z_{\ge 1}$ be constants.    
    Given $p \in \Z^n$, a regular configuration $A \in \{0,\pm 1\}^{m \times n}$, $W \in \Z^{k \times n}$ such that $\|Wc\|_\infty \le \Delta$ for all circuits $c \in \circuits(A)$, $d \in \Z^k$ and $\ell, u \in \Z^n$ such that $\ell \le u$, solve
    \begin{equation}
        \label{eqMCC}
        \max \left\{ p^\intercal x : Ax = \zero, \, Wx = d, \, \ell \le x \le u, \, x \in \Z^n \right\}\,.\tag{$\mathrm{IP}_2$}
    \end{equation}
Letting $I := (p,A,W,d,\ell,u)$, we define $\OPT(I)$ as the optimum value of \eqref{eqMCC}. If $I$ is infeasible, then $\OPT(I) = -\infty$.
\end{problem}

Here, we call a circulation $x \in \ker(A)$ \emph{feasible} if $\ell \le x \le u$. This terminology is meant to generalize to any regular configuration the usual notion of a circulation in a directed graph, which arises when $A$ is an incidence matrix. 

Recall that \Cref{propReductionIP} implies the following: \eqref{eqMCC} either is infeasible, or has an optimal solution $x$ that is the conformal sum of at most $f_{\ref{propReductionIP}}(k,\Delta)$ circuits of $A$. Also, thanks to the proof of \Cref{propReductionIP} (in particular see \eqref{eqMinFace}), we may assume that there is no \emph{feasible} $c \in \circuits(A)$ such that $Wc = \zero$ and $p^\intercal c > 0$. We will always assume this.

Before discussing $1$-sums and $2$-sums further, we establish some useful notation and terminology relative to the MCICP. For $x \in \R^n$, it will be convenient to denote by $x(v)$ the coordinate of $x$ corresponding to vector $v \in A$. For instance, using this notation, the equation $Ax = \zero$ becomes $\sum_{v \in A} x(v) \cdot v = \zero$. We will refer to $x(v)$ as the \emph{flow} along vector $v$. 

In the next section we show how to reduce any MCICP instance to a constant number of MCICP instances that are $2$-connected in the sense that their corresponding vector configuration is $2$-connected. The next reduction to the almost $3$-connected case is more involved, and is discussed in the section after the next.

\subsection{Reduction to the $2$-connected case} \label{sec:reduction_2-connected_bis}

Consider any MCICP instance $I = (p,A,W,d,\ell,u)$ that is not $2$-connected. After permuting its $n$ columns, $A$ decomposes as a $1$-sum $A = A_1 \oplus_1 \cdots \oplus_1 A_q$ of $2$-connected configurations, where $2 \le q \le n$. We extend the decomposition of $A$ to a corresponding decomposition of vectors $p$, $\ell$, $u$ and matrix $W$. For $j \in \{1,\ldots,q\}$, we write
$$
p_{\le j} := \begin{bmatrix}p_1\\ \vdots\\ p_j\end{bmatrix}, \quad W_{\le j} := \begin{bmatrix} W_1 & \cdots & W_j \end{bmatrix},
\quad \ell_{\le j} := \begin{bmatrix}\ell_1\\ \vdots\\ \ell_j\end{bmatrix} \quad \text{and} \quad u_{\le j} := \begin{bmatrix}u_1\\ \vdots\\ u_j\end{bmatrix}\,.
$$



We use a straightforward dynamic program (DP) to solve \eqref{eqMCC}. Let $B := [-\Delta f_{\ref{propReductionIP}}(k,\Delta), \Delta f_{\ref{propReductionIP}}(k,\Delta)]^k \cap \Z^k$ be an integer box containing all target vectors that can be attained by solutions that are sums of at most $f_{\ref{propReductionIP}}(k,\Delta)$ circuits. For each $j \in [q]$ and $d_{\le j} \in B$ we let 
$$
F_j(d_{\le j}) := \OPT ( p_{\le j}, A_1 \oplus_1 \cdots \oplus_1 A_j, W_{\le j}, d_{\le j}, \ell_{\le j}, u_{\le j})\,.
$$
These optimum values can be easily computed through the following DP equation, whose proof is left to the reader.

\begin{observation} \label{obs:DP_1-sum}
For all $j > 1$ and $d_{\le j} \in B$ we have
$$
F_j(d_{\le j}) = \max \left\{ F_{j-1}(d_{\le j-1}) + \OPT (p_j,A_j,W_j,d_j,\ell_j,u_j) : 
d_{\le j-1} \in B,\ d_j \in B,\ d_{\le j-1} + d_{j} = d_{\le j} \right\}\,.
$$
\end{observation}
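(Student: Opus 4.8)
The plan is to prove the observation by reducing everything to the single structural fact about $1$-sums that we actually need, namely that the kernel of a $1$-sum is the Cartesian product of the kernels of the summands. Indeed, the circuit characterization of $1$-sums quoted above shows that every circuit of $A_1 \oplus_1 \cdots \oplus_1 A_j$ is the zero-padding of a circuit of exactly one summand $A_i$; since every element of the kernel of a regular configuration is a conformal sum of its circuits (\Cref{lemDecomposition}), this yields $\ker(A_1 \oplus_1 \cdots \oplus_1 A_j) = \ker(A_1) \times \cdots \times \ker(A_j)$. Consequently, the box constraints $\ell_{\le j} \le x \le u_{\le j}$, the objective $p_{\le j}^\intercal x$, and the target constraint $W_{\le j}x = d_{\le j}$ all decouple across the blocks: writing a feasible integer circulation of the $j$-th prefix instance as $x = (x_1,\dots,x_j)$ with $x_i \in \ker(A_i)$, $\ell_i \le x_i \le u_i$, and putting $d_i := W_i x_i$, we get on one hand $p_{\le j}^\intercal x = \sum_{i\le j} p_i^\intercal x_i \le \sum_{i\le j}\OPT(p_i,A_i,W_i,d_i,\ell_i,u_i)$ with $\sum_i d_i = d_{\le j}$, and on the other hand gluing optimal solutions of the individual blocks (with any targets summing to $d_{\le j}$) produces a feasible circulation of the prefix instance. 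Hence, as an exact identity over \emph{all} integer splittings, and with the convention $-\infty + c = -\infty$ (if the $j$-th prefix instance is infeasible then for every splitting one of the two pieces is infeasible, so both sides are $-\infty$),
\[
F_j(d_{\le j}) = \max\left\{ F_{j-1}(d_{\le j-1}) + \OPT(p_j,A_j,W_j,d_j,\ell_j,u_j) : d_{\le j-1} + d_j = d_{\le j} \right\}.
\]

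It then remains to show that restricting the maximum to $d_{\le j-1}, d_j \in B$ leaves its value unchanged. Here is the point I would be careful about, and the only genuine subtlety: the $j$-th prefix instance $(p_{\le j}, A_1 \oplus_1 \cdots \oplus_1 A_j, W_{\le j}, d_{\le j}, \ell_{\le j}, u_{\le j})$ is \emph{itself} an instance of \Cref{problemMCC} — a $1$-sum of regular configurations is regular, and every circuit $c$ of $A_1 \oplus_1 \cdots \oplus_1 A_j$ is a circuit of some $A_i$ supported on block $i$, so $\|W_{\le j}c\|_\infty = \|W_i c_i\|_\infty \le \Delta$. Therefore, by \Cref{propReductionIP}, whenever this prefix instance is feasible it has an optimal solution $x$ that is a conformal sum of at most $f_{\ref{propReductionIP}}(k,\Delta)$ circuits of $A_1 \oplus_1 \cdots \oplus_1 A_j$. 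Grouping these circuits according to the block on which they are supported, block $i$ of $x$ is a conformal sum of $t_i$ circuits of $A_i$, with $\sum_i t_i \le f_{\ref{propReductionIP}}(k,\Delta)$; hence $\|W_i x_i\|_\infty \le t_i\Delta$ and so $\|W_{\le i'} x_{\le i'}\|_\infty \le \sum_{i\le i'} t_i \Delta \le \Delta f_{\ref{propReductionIP}}(k,\Delta)$ for every $i'$.

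Combining the two previous paragraphs finishes the proof: for the splitting $d_i := W_i x_i$ realized by this particular optimal solution we have $d_j \in B$ and $d_{\le j-1} = \sum_{i<j} d_i \in B$, and by the exact identity it already attains $F_j(d_{\le j})$ via the term $F_{j-1}(d_{\le j-1}) + \OPT(p_j,A_j,W_j,d_j,\ell_j,u_j)$; since restricting to $B$ can only shrink the maximum, equality holds. The infeasible case is handled exactly as before (with $-\infty$ on both sides). I expect the writing to be entirely routine once this observation — that each prefix of the $1$-sum decomposition again satisfies the hypotheses of \Cref{problemMCC}, which is what licenses the use of \Cref{propReductionIP} and hence the bounded search space $B$ — is made explicit.
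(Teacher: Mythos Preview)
The paper leaves this proof to the reader, and your argument is exactly the one intended: the $1$-sum completely decouples the kernel, the bounds, the objective, and the target constraint, giving the unrestricted identity; then the restriction of the splitting to $B$ loses nothing because the $j$-th prefix instance is again an instance of \Cref{problemMCC} (as you carefully verify), and hence admits an optimal solution that is a conformal sum of at most $f_{\ref{propReductionIP}}(k,\Delta)$ circuits, whose blockwise weights and partial sums therefore lie in $B$.

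One wording nitpick: what you invoke at that step is not \Cref{propReductionIP} verbatim---on its own it only yields an optimal solution of the form $z+\sum_j c_j$ for \emph{some} feasible circulation $z$---but rather the paper's standing assumption on MCICP instances stated immediately after \Cref{problemMCC} (which is \Cref{propReductionIP} together with the translation $z\mapsto\zero$). With that reference adjusted, your write-up is correct and more detailed than the paper's own treatment.
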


Since $\OPT(I) = F_q(d)$, and since it is straightforward to extend the DP approach to also recover corresponding optimal solutions, \Cref{obs:DP_1-sum} implies the following result.

\begin{theorem}\label{thm:2-connected_red}
Let $k,\Delta \in \Z_{\ge 1}$ be constants. Any instance of the MCICP on a matrix with $n$ columns can be solved after solving at most $(2\Delta f_{\ref{propReductionIP}}(k,\Delta)+1)^k n$ instances of the MCICP that are $2$-connected and on matrices with at most $n$ columns, and performing extra work in time $(\Delta f_{\ref{propReductionIP}}(k,\Delta))^{O(k)} n$.
\end{theorem}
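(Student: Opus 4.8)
The plan is to reduce a general MCICP instance to its $2$-connected "pieces" via the $1$-sum decomposition and to glue the pieces back together using the dynamic program of \Cref{obs:DP_1-sum}. If $A$ is already $2$-connected the claim is trivial (solve the single instance $I$), so assume it is not. First I would invoke \cite{cunningham_1980,seymour_1981} to compute, in polynomial time, a reordering of the columns of $A$ together with a $1$-sum decomposition $A = A_1 \oplus_1 \cdots \oplus_1 A_q$ into $2$-connected configurations $A_j \in \{0,\pm 1\}^{m \times n_j}$ (with the convention that pieces too small to admit any separation, in particular single columns and zero columns, count as $2$-connected), where $2 \le q \le n$ and $\sum_j n_j = n$. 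This induces the splittings $p = (p_1,\dots,p_q)$, $W = [W_1 \mid \cdots \mid W_q]$, $\ell = (\ell_1,\dots,\ell_q)$, $u = (u_1,\dots,u_q)$ of \Cref{sec:reduction_2-connected_bis}, and, since (iterating the circuit characterization of a $1$-sum) every circuit of $A$ is supported inside a single block, every circulation $x$ of $A$ splits as $x = (x_1,\dots,x_q)$ with $x_j \in \ker(A_j)$ and conversely, with $p^\intercal x = \sum_j p_j^\intercal x_j$, $Wx = \sum_j W_j x_j$, and $\ell \le x \le u$ iff $\ell_j \le x_j \le u_j$ for all $j$. Writing $I_j := (p_j, A_j, W_j, d_j, \ell_j, u_j)$, each $\OPT(I_j)$ is the optimum of a $2$-connected MCICP instance on at most $n$ columns, and $\OPT(I) = F_q(d)$ with $F_j$ as defined in \Cref{sec:reduction_2-connected_bis}.

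The crucial step is to argue that it suffices to track partial target vectors inside the box $B := [-\Delta f_{\ref{propReductionIP}}(k,\Delta), \Delta f_{\ref{propReductionIP}}(k,\Delta)]^k \cap \Z^k$, so that the recursion of \Cref{obs:DP_1-sum} is well-founded and loses no optimal solution. Here I would use \Cref{propReductionIP}: if $I$ is feasible, it has an optimal solution $x^* = c_1 + \dots + c_t$ with $t \le f_{\ref{propReductionIP}}(k,\Delta)$ and $c_1,\dots,c_t \in \circuits(A)$. Since each $c_i$ is supported in a single block (and, conversely, a circuit of $A_j$ padded with zeros is a circuit of $A$, so the MCICP hypothesis $\|Wc\|_\infty \le \Delta$ applies to it), grouping the $c_i$ by block gives, for every $j$, that the partial contribution $x^*_{\le j} := (x^*_1,\dots,x^*_j)$ satisfies $W_{\le j}\, x^*_{\le j} = \sum_{i :\, c_i \text{ in blocks} \le j} W c_i$, a sum of at most $t \le f_{\ref{propReductionIP}}(k,\Delta)$ vectors of $\infty$-norm at most $\Delta$; hence $W_{\le j}\, x^*_{\le j} \in B$, and likewise $W_j x^*_j \in B$ for each single block $j$. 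Therefore, in \Cref{obs:DP_1-sum}, restricting $d_{\le j-1}$ and $d_j$ to $B$ never discards an optimal solution.

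Finally, I would run the dynamic program itself: evaluate $F_1(d_1) = \OPT(I_1)$ for all $d_1 \in B$, and for each $j = 2,\dots,q$ first evaluate $\OPT(I_j)$ for all $d_j \in B$, then fill in $F_j(d_{\le j})$ for all $d_{\le j} \in B$ via \Cref{obs:DP_1-sum}; the answer is $F_q(d)$, and an optimal circulation is recovered by standard backtracking, concatenating the optimal solutions of the relevant $I_j$ (an entry equal to $-\infty$ simply propagates infeasibility). This queries a $2$-connected MCICP oracle on a matrix with at most $n$ columns at most $q\,|B| \le n\,(2\Delta f_{\ref{propReductionIP}}(k,\Delta)+1)^k$ times; each entry $F_j(d_{\le j})$ is a maximum over the at most $|B|$ pairs $(d_{\le j-1},\, d_{\le j} - d_{\le j-1})$ with $d_{\le j-1} \in B$, so the combination work is $O(q\,|B|^2) = (\Delta f_{\ref{propReductionIP}}(k,\Delta))^{O(k)} n$, and the one-time $1$-sum decomposition is polynomial-time. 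I expect the only genuinely delicate point to be the second paragraph — certifying that all relevant target vectors lie in $B$ via \Cref{propReductionIP} — together with the routine but necessary care in handling degenerate blocks (zero columns, single columns) and infeasible sub-instances; the rest is bookkeeping already encapsulated in \Cref{obs:DP_1-sum}.
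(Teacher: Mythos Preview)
Your proposal is correct and follows essentially the same approach as the paper: the paper derives the theorem directly from \Cref{obs:DP_1-sum} and the definition of the box $B$, and your write-up simply spells out the bookkeeping (the $1$-sum decomposition, the justification via \Cref{propReductionIP} that all relevant partial target vectors lie in $B$, and the oracle-call/extra-work counts). Your argument that $W_{\le j}x^*_{\le j}\in B$ and $W_jx^*_j\in B$ for every $j$ is exactly the reason the paper defines $B$ the way it does, and your cost analysis matches the claimed bounds.
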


Note that the extra work in \Cref{thm:2-connected_red} refers to finding the aggregated maxima, and composing the final combined solution of all subproblems.
For the remainder of this section, we assume $A$ to be $2$-connected\footnote{Based on our definition of connectivity for regular configurations, a single column is also $2$-connected. We remark that due to our results in \Cref{sec:proximity}, we can efficiently solve problems on a bounded number of variables.}.
This can be done without loss of generality, due to \Cref{thm:2-connected_red}.

\subsection{Reduction to the almost $3$-connected case} \label{sec:reduction_3-connected_bis}

In this section we consider a variant of \Cref{problemMCC}, which we call the \emph{rooted} MCICP. This problem is defined exactly as \Cref{problemMCC}, with two additional input data: a \emph{root vector} $\overline{v} \in A$ and a prescribed flow value $\varphi \in \Z$ for $\overline{v}$. We let $\ell(\overline{v}) = u(\overline{v}) := \varphi$, which forces $x(\overline{v}) = \varphi$ for all feasible circulations $x$. 

It is easy to convert any MCICP instance into an equivalent rooted MCICP instance. For example, we may duplicate corresponding columns of $A$ and $W$, name the new columns $\overline{v}$ and let $\varphi := 0$ (by convention, we set all weights and profit of the column to $0$). On the other hand, the rooted MCICP is a just special case of MCICP, hence the two problems are actually equivalent. For convenience, we focus on the 
rooted MCICP throughout this section.

\begin{definition}[rooted MCICP instances for subtrees]\label{rootedMCICP}
Consider a rooted MCICP instance defined by $I = (p,A,W,d,\ell,u,\overline{v},\varphi)$, and consider a decomposition tree $T$ for $A$.
Recall that we can assume that $A(t)$ is regular and $3$-connected for all nodes $t \in V(T)$. 
There exists a unique \emph{root node} $r \in V(T)$ such that $\overline{v}$ is a column of $A(r)$. 
This turns $T$ into a rooted tree.
Let $B := [-\Delta f_{\ref{propReductionIP}}(k,\Delta), \Delta f_{\ref{propReductionIP}}(k,\Delta)]^k \cap \Z^k$. Also, let $\Phi := [-f_{\ref{propReductionIP}}(k,\Delta),f_{\ref{propReductionIP}}(k,\Delta)] \cap \Z$.

For $t \in V(T)$, we let $A_t$ denote the configuration obtained by performing all the $2$-sums corresponding to the edges of the subtree of $T$ rooted at $t$. Let $\overline{v}_t \in A_t$ denote the vector associated to the edge between $t$ and its parent. If $t$ is the root of $T$, then we let $\overline{v}_t := \overline{v}$. For $d_t \in B$ and $\varphi_t \in \Phi$ we let $I_t = I_t(d_t,\varphi_t)$ denote the rooted MCICP instance $I_t := (p_t,A_t,W_t,d_t,\ell_t,u_t,\overline{v}_t,\varphi_t)$, where $\overline{v}_t\notin A$ is the auxiliary vector associated to the edge between $t$ and its parent.
Further $p_t$, $\ell_t$ and $u_t$ are found by restricting the corresponding vectors in the original instance $I$ to the columns of $A_t\setminus\{\overline{v}_t\}$ and letting $p_t(\overline{v}_t) := 0$ and $\ell_t(\overline{v}_t) = u_t(\overline{v}_t) := \varphi_t$. $W_t$ is found by restricting $W$ to the columns of $A_t\setminus\{\overline{v}_t\}$ and defining $W_t(\cdot,\overline{v}_t)$ in such a way that $||W_t c_t||_\infty \leq \Delta$ for all circuits $c_t$ of $A_t$; this is possible by \Cref{lem:splitting_weight} below.
We call $I_t$ a \emph{rooted} instance at node $t$. We let
$$
F_t(d_t,\varphi_t) := \OPT(I_t) = \OPT(p_t,A_t,W_t,d_t,\ell_t,u_t,\overline{v}_t,\varphi_t)
$$
denote the optimum value of the rooted MCICP instance corresponding to node $t \in V(T)$ and to the choices of target vector $d_t \in B$ and prescribed flow value $\varphi_t \in \Phi$ for the root vector $\overline{v_t}$.
\end{definition}

\begin{lemma} \label{lem:splitting_weight}
    Let $t\in V(T)$.
    There exist integers $W_t(i,\overline{v}_t)$ for each $i \in [k]$ such that $||W_t c_t||_\infty \leq \Delta$ for all circuits $c_t$ of $A_t$.
\end{lemma}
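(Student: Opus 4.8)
The statement to prove is \Cref{lem:splitting_weight}: given the configuration $A_t$ obtained by performing the $2$-sums in the subtree rooted at $t$, together with its distinguished "boundary" vector $\overline{v}_t$, we must choose integer values $W_t(i,\overline{v}_t)$ for $i\in[k]$ so that every circuit $c_t$ of $A_t$ satisfies $\|W_t c_t\|_\infty\le\Delta$. The key structural fact is the circuit characterization of a $2$-sum recalled in \Cref{sec:configs}: if $A = A_1\oplus_2 A_2$ along a common vector $v$, then every circuit of $A$ is either a circuit of $A_1$ or of $A_2$ avoiding $v$ (which we may regard, by deleting $v$, as a circuit of $A_t$ inside one of the two pieces), or it \emph{crosses} $v$, arising as the concatenation of a circuit $\smallmat{c_1\\ \pm1}$ of $A_1$ with a circuit $\smallmat{\mp1\\ c_2}$ of $A_2$. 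We want to pick $W_t(i,\overline{v}_t)$ so that, when we glue $A_t$ back into the original $A$ along $\overline{v}_t$, the crossing circuits of the glued configuration inherit the $\Delta$-bound that $W$ already provides for $\circuits(A)$.

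**The construction.** For each $i\in[k]$, among all circuits $c$ of the \emph{original} configuration $A$ that cross $\overline{v}_t$ with $c(\overline{v}_t)=+1$, consider the value $W(i,\cdot)$ applied to the "$A_t$-side part" of $c$ — that is, the restriction $c'$ of $c$ to the columns of $A_t$, extended by zeros. Following the hint in \Cref{rootedMCICP}, I would set $W_t(i,\overline{v}_t)$ to be the \emph{upper weight} of such a circuit: more precisely, pick a circuit $c^{(i)}$ of $A$ crossing $\overline{v}_t$ positively that maximizes $W(i,\cdot)$ evaluated on the complementary side $A\setminus A_t$ (equivalently, minimizes it on the $A_t$-side), and define $W_t(i,\overline{v}_t) := d_i - W(i,\cdot)\big|_{A\setminus A_t}(c^{(i)})$ where $d_i$ is chosen so that the definition is consistent; the cleanest phrasing is simply to let $W_t(i,\overline{v}_t)$ equal the value that $W(i,\cdot)$ "would assign" to $\overline{v}_t$ if $\overline{v}_t$ carried the flow that the deleted side of $c^{(i)}$ contributes. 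Since $A$ is regular, all circuits lie in $\{0,\pm1\}$, so these are finitely many integers, and the extremum is attained; hence $W_t(i,\overline{v}_t)\in\Z$. (One must also check that at least one crossing circuit exists, i.e. that $\overline{v}_t$ is not a coloop of $A_t$; this holds because $A_t$ is a proper $2$-sum piece with $\overline{v}_t$ genuinely shared, so $\overline{v}_t$ lies in a circuit of $A_t$.)

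**Verifying the bound.** Take any circuit $c_t$ of $A_t$. If $c_t(\overline{v}_t)=0$, then $c_t$ is a circuit of $A$ restricted to columns in $A_t$ (extended by zeros it is a circuit of $A$, since circuits not using a $2$-sum vector persist), so $\|W_t c_t\|_\infty = \|W c_t\|_\infty \le \Delta$ by the hypothesis on the original instance, noting $W_t$ agrees with $W$ off $\overline{v}_t$. If $c_t(\overline{v}_t)=\pm1$, then (up to sign) $c_t = \smallmat{c_1\\ +1}$ for a circuit structure on the $A_t$-side; glue it with the complementary side of the chosen witness circuit $c^{(i)}$ — more carefully, one shows that the concatenation of $c_t$ (minus its $\overline{v}_t$-entry) with the $(A\setminus A_t)$-part of $c^{(i)}$ is itself a circulation of $A$ that decomposes into circuits of $A$ each crossing $\overline{v}_t$, and by the choice of $c^{(i)}$ as the extremal one, the $i$-th weight of this circulation under $W$ bounds $W_t(i,\cdot)c_t$ from the relevant side. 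By symmetry (doing the same with crossing circuits of sign $-1$, or equivalently negating) one gets the two-sided bound $|W_t(i,\cdot)c_t|\le \Delta$ for every $i$, hence $\|W_t c_t\|_\infty\le\Delta$.

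**Main obstacle.** The routine part is the circuit bookkeeping; the delicate point is making the "upper weight of a crossing circuit" definition simultaneously (a) well-defined — i.e. a crossing circuit exists and the set of candidate values is finite and bounded, which needs that $\overline{v}_t$ is non-trivially shared and that $A$ is regular — and (b) \emph{tight enough} that the gluing argument closes from \emph{both} sides: choosing the maximizing crossing circuit controls one direction, but one must argue that this single choice of $W_t(i,\overline{v}_t)$ also handles circuits $c_t$ that cross $\overline{v}_t$ with the opposite sign. The resolution is that negating a circuit negates the $\overline{v}_t$-entry, so the "$+1$-crossing" and "$-1$-crossing" families are exchanged under $c\mapsto -c$, and the extremal value for one family is exactly the extremal value for the other up to sign; this symmetry is what lets a single scalar $W_t(i,\overline{v}_t)$ serve for all circuits. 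Formalizing this symmetry cleanly — and handling the edge case where $A_t$ itself is just the single vector $\overline{v}_t$ plus parallels, where one sets $W_t(i,\overline{v}_t)$ arbitrarily, say $0$ — is where the proof needs the most care.
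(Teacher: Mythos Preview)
Your overall approach is right --- define $W_t(i,\overline{v}_t)$ via a crossing circuit and verify by gluing --- but you have overcomplicated it in two places, and the paper's argument is much shorter.

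First, the extremal choice is unnecessary: \emph{any} single circuit $\overline{c}\in\circuits(A)$ crossing $\overline{v}_t$ suffices, and the same $\overline{c}$ works for all rows $i\in[k]$ simultaneously. The paper fixes one such $\overline{c}$, oriented so that its $A_t$-piece uses $\overline{v}_t$ positively, and sets $W_t(i,\overline{v}_t)$ equal to the $W(i,\cdot)$-weight of the $A^t$-piece of $\overline{c}$.

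Second --- and this is the step you missed --- when $c_t\in\circuits(A_t)$ has $c_t(\overline{v}_t)=1$, concatenating the $A^t$-piece of $\overline{c}$ with $c_t$ (dropping the $\overline{v}_t$-entry) yields not merely a circulation of $A$ requiring decomposition, but directly a \emph{single circuit} $c'\in\circuits(A)$, by the $2$-sum circuit characterization stated just after \Cref{def:2sum}. Hence one gets the exact equality $W_t(i,\cdot)\,c_t = W(i,\cdot)\,c'$ for every $i$, so $\|W_t c_t\|_\infty = \|Wc'\|_\infty\le\Delta$ immediately, with no one-sided bound to patch. The case $c_t(\overline{v}_t)=-1$ follows by negating $c_t$, and $c_t(\overline{v}_t)=0$ is as you wrote. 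Once you see this equality, your ``main obstacle'' evaporates, as does the need for extremality. (Incidentally, your parenthetical that maximizing the upper weight is ``equivalently, minimiz[ing] it on the $A_t$-side'' is not correct: the upper and lower pieces of crossing circuits vary independently.) The root case is handled by setting $W_t(i,\overline{v}):=0$, since then $\overline{v}_t=\overline{v}$ was a fresh column with zero weight.
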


\begin{proof}
If $t$ is the root of $T$, we simply let $W_t(i,\overline{v}_t) = W_t(i,\overline{v}) := 0$ for all $i \in [k]$. From now on, assume that $t$ is not the root of $T$. Let $t'$ denote the parent of $t$. Consider the configuration $A^t$ obtained by performing each $2$-sum corresponding to the edges of $T - tt'$ that are \emph{not} in the subtree rooted at $t$. We get the decomposition $A = A^t \oplus_2 A_t$. Let $\overline{c} \in \circuits(A)$ be any circuit crossing $\overline{v}_t$. By negating $\overline{c}$ if necessary, we may assume that $\overline{c} = \smallmat{\overline{c}_1\\ \overline{c}_2}$ with $\smallmat{\overline{c}_1\\ -1} \in \circuits(A^t)$ and $\smallmat{1\\ \overline{c}_2} \in \circuits(A_t)$. For each $i \in [k]$, we let
$$
W_t(i,\overline{v}_t) := \sum_{v \in A^t \atop v \neq \overline{v}_t} W(i,v) \overline{c}(v)
$$ 
denote the \emph{upper weight} of $\overline{c}$ with respect to the $i$-th row of $W$.

Now consider any circuit $c_t$ of $A_t$. If $c_t(\overline{v}_t) = 0$ then $c_t$ yields a circuit of $A$, say $c \in \circuits(A)$, such that $||W_t c_t||_\infty = ||W c||_\infty \leq \Delta$, by setting $c_v=0$ for all $v\notin A_t$.
Otherwise, we may assume that $c_t(\overline{v}_t) = 1$. Then $c_t$ combines with $\smallmat{\overline{c}_1\\ -1}$ into a circuit of $A$, say $c' \in \circuits(A)$. By our choice of $W_t(\cdot,\overline{v}_t)$, we get $||W_t c_t||_\infty = ||W c'||_\infty \leq \Delta$.
\end{proof}

If $t$ is a leaf of $T$ then we can compute $F_t(d_t,\varphi_t)$ for every fixed choices of $d_t \in B$ and $\varphi_t \in \Phi$ by solving a single $3$-connected MCICP instance. Now consider a node $t$ that is not a leaf, let $t_1$, \ldots, $t_q$ denote the children of $t$. Hence, $\overline{v}_{t_1}$, \ldots, $\overline{v}_{t_q}$ are the vectors of $A(t)$ respectively associated to the edges $tt_1$, \ldots, $tt_q$.

Consider a choice of $d^0_{t}, d_{t_1}, \ldots, d_{t_q} \in B$ and $\varphi_{t_1}, \ldots, \varphi_{t_q} \in \Phi$. Let $I^0_t = I^0_t(d^0_{t},\varphi_{t},-\varphi_{t_1},\ldots,-\varphi_{t_q})$ denote the rooted MCICP instance obtained from $I_t = I_t(d_t,\varphi_{t})$ by restricting to the sole vectors of $A(t)$, redefining the target vector to $d^0_{t}$ and setting both bounds for each $\overline{v}_{t_i}$ to $-\varphi_{t_i}$ for $i \in [q]$. Moreover, we define the weight $W^0_t(i,\overline{v}_{t_j}) \in \Z$ for each $i \in [k]$ and $j \in [q]$ in such a way that $||W^0_t c^0_t||_\infty \leq \Delta$ for all $c^0_t \in \circuits(A(t))$. This is done by using the ``lower weight'' of any circuit crossing $\overline{v}_{t_j}$ with respect to $W(i,\cdot)$, similarly as in the proof of \Cref{lem:splitting_weight}. 
To be precise, let $\overline c\in\circuits(A)$ be a circuit crossing $\overline{v}_{t_j}$, such that $\overline c = \smallmat{\overline c_1\\ \overline c_2}$ with $\smallmat{\overline c_1\\ -1} \in \circuits(A^{t_j})$ and $\smallmat{1\\ \overline c_2} \in \circuits(A_{t_j})$. 
Then, for each $i \in [k]$ and $j \in [q]$, we let 
\begin{equation*}
    W_t^0(i,v_{t_j}):=\sum_{v\in A_{t_j}\atop v\neq \overline{v}_{t_j}} W(i,v)\overline c(v).
\end{equation*}
We call $I^0_t$ a \emph{local} instance at node $t$. Next, let $F^0_t(d^0_{t},\varphi_{t},-\varphi_{t_1},\ldots,-\varphi_{t_q})$ denote the optimum value of instance $I^0_t$. The following result generalizing \Cref{obs:DP_1-sum} can be used to compute $F_t(d_t,\varphi_t)$ when the number of children of $t$ is small (say, logarithmic in $n$). 

\begin{lemma} \label{lem:2-sum_basic_DP}
With the above notation, we have
\begin{align}\label{eq:DP_2-sum}
F_t(d_t,\varphi_t) = \max \Bigg\{ F^0_t(d^0_{t},\varphi_{t},-\varphi_{t_1},\ldots,-\varphi_{t_q}) + \sum_{i=1}^q F_{t_i}(d_{t_i},\varphi_{t_i}) :\, & d^0_{t}, d_{t_1}, \ldots, d_{t_q} \in B,\ \varphi_{t_1},\ldots,\varphi_{t_q} \in \Phi,
\\
&d^0_{t} + \sum_{i=1}^q \left( d_{t_i} - \left(W_t(\cdot,\overline{v}_{t_i}) - W^0_t(\cdot,\overline{v}_{t_i})\right) \varphi_{t_i} \right) = d_t\nonumber\Bigg\}
\end{align}
for every $d_t \in B$ and $\varphi_t \in \Phi$.
\end{lemma}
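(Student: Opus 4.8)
The plan is to prove the identity by showing that each side is at least the other. The central structural fact is the splitting characterization of circulations across a $2$-sum, iterated over the children of $t$: a circulation $x_t$ of $A_t$ (with $x_t(\overline{v}_t) = \varphi_t$) decomposes uniquely into a circulation $x^0_t$ of $A(t)$ together with circulations $x_{t_i}$ of $A_{t_i}$ for $i \in [q]$, where the gluing is governed by scalars $\varphi_{t_i} := x_{t_i}(\overline{v}_{t_i})$, and conversely any such collection glues back to a feasible circulation $x_t$. Feasibility of $x_t$ with respect to $\ell_t \le x_t \le u_t$ is equivalent to the conjunction of feasibility of $x^0_t$ in $I^0_t$ (with the bounds on $\overline{v}_{t_i}$ set to $-\varphi_{t_i}$, matching the sign convention in the splitting) and feasibility of each $x_{t_i}$ in $I_{t_i}(d_{t_i},\varphi_{t_i})$; here one uses that the bounds $\ell,u$ on the original columns are inherited unchanged in both subproblems, and the root column $\overline{v}_{t_i}$ is ``cut'' and re-pinned on each side. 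The objective splits additively as $p^\intercal x_t = (p^0_t)^\intercal x^0_t + \sum_i p_{t_i}^\intercal x_{t_i}$, because the profit on $\overline{v}_{t_i}$ is set to $0$ on both sides and every other column lies in exactly one part.

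The remaining point, and the only genuinely delicate one, is bookkeeping the weight vector $W x_t$. By \Cref{lem:splitting_weight} and its ``lower weight'' analogue, the weight $W_t(\cdot,\overline{v}_{t_i})$ used in $I_{t_i}$ and the weight $W^0_t(\cdot,\overline{v}_{t_i})$ used in $I^0_t$ are, by construction, the upper- and lower-weight of a fixed circuit $\overline{c}^{(i)}$ crossing $\overline{v}_{t_i}$, relative to the columns of $A^{t_i}$ and of $A_{t_i}$ respectively. Consequently, when $x^0_t$ and $x_{t_i}$ are glued, the true weight contribution $W x_t$ equals $W^0_t x^0_t + \sum_i W_{t_i} x_{t_i}$ \emph{plus a correction} of $\sum_i \bigl(W_t(\cdot,\overline{v}_{t_i}) - W^0_t(\cdot,\overline{v}_{t_i})\bigr)\varphi_{t_i}$: intuitively, the glued circulation along $\overline{v}_{t_i}$, of value $\varphi_{t_i}$, has its weight double-counted in a controlled way between the two sides, and the difference of the two chosen pinning-weights records exactly this discrepancy per unit of flow. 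Writing $d^0_t := W^0_t x^0_t$ and $d_{t_i} := W_{t_i} x_{t_i}$, the constraint $W x_t = d_t$ becomes precisely $d^0_t + \sum_i \bigl( d_{t_i} - (W_t(\cdot,\overline{v}_{t_i}) - W^0_t(\cdot,\overline{v}_{t_i}))\varphi_{t_i}\bigr) = d_t$, which is the linking equation in~\eqref{eq:DP_2-sum}.

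With these correspondences in hand, the two inequalities follow by routine argument. For ``$\le$'': take an optimal feasible $x_t$ for $I_t(d_t,\varphi_t)$ (if $I_t$ is infeasible both sides are $-\infty$ and there is nothing to prove; if the max on the right is over an empty set the same holds), split it as above, read off $d^0_t,\varphi_{t_i},d_{t_i}$, check they lie in $B$ and $\Phi$ — using that $x_t$ may be taken to be a conformal sum of at most $f_{\ref{propReductionIP}}(k,\Delta)$ circuits, so that each crossing flow $\varphi_{t_i}$ has absolute value at most $f_{\ref{propReductionIP}}(k,\Delta)$ and each partial weight has infinity norm at most $\Delta f_{\ref{propReductionIP}}(k,\Delta)$ — and conclude that the right-hand side is at least $p^\intercal x_t = \OPT(I_t)$. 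For ``$\ge$'': given a choice of $d^0_t, d_{t_i} \in B$, $\varphi_{t_i} \in \Phi$ satisfying the linking equation together with optimal circulations realizing $F^0_t$ and the $F_{t_i}$, glue them into a feasible $x_t$ for $I_t(d_t,\varphi_t)$ with $p^\intercal x_t$ equal to the displayed sum, so $F_t(d_t,\varphi_t)$ dominates it. The main obstacle is getting the sign conventions and the upper/lower-weight correction term exactly right; once \Cref{lem:splitting_weight} and its lower-weight twin are phrased carefully, everything else is a direct translation of the circulation-splitting lemma.
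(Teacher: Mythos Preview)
Your plan is correct and mirrors the paper's proof: for ``$\le$'' take an optimal $x_t$ that is a conformal sum of at most $f_{\ref{propReductionIP}}(k,\Delta)$ circuits of $A_t$, split it across the $2$-sums to read off $\varphi_{t_i}\in\Phi$ and $d^0_t,d_{t_i}\in B$, and note that the pieces are feasible for the subproblems with the same total profit; for ``$\ge$'' glue optimal subproblem solutions back into a feasible $x_t$.

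One slip in the part you flag as delicate: you write that $W_t x_t$ equals $d^0_t+\sum_i d_{t_i}$ \emph{plus} the correction term, but in fact the sum $d^0_t+\sum_i d_{t_i}$ \emph{overcounts} $W_t x_t$ by exactly that term, since the artificial column $\overline{v}_{t_i}$ contributes $-W^0_t(\cdot,\overline{v}_{t_i})\varphi_{t_i}$ to $d^0_t$ (via $x^0_t(\overline{v}_{t_i})=-\varphi_{t_i}$) and $+W_{t_i}(\cdot,\overline{v}_{t_i})\varphi_{t_i}$ to $d_{t_i}$, while contributing nothing to $W_t x_t$. Thus $W_t x_t = d^0_t+\sum_i d_{t_i} - \sum_i\bigl(W_t(\cdot,\overline{v}_{t_i})-W^0_t(\cdot,\overline{v}_{t_i})\bigr)\varphi_{t_i}$, which is what makes your displayed linking equation correct; only the informal ``plus'' is inverted.
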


\begin{proof} 
Let $x_t$ denote an optimum solution of $I_t = I_t(d_t,\varphi_t)$. Consider a conformal decomposition $x_t = c_{t,1} + \cdots + c_{t,\ell}$ as a sum of $\ell \le f_{\ref{propReductionIP}}(k,\Delta)$ circuits of $A_t$. We split $x_t$ into circulations $x^0_t$ in $A(t)$ and $x_{t_1}$, \ldots, $x_{t_q}$ in $A_{t_1}$, \ldots, $A_{t_q}$ respectively, by restricting to the corresponding space $A_{t_i}$ and setting the flow on $\overline v_{t_i}$ to the unique value such that $x_{t_i}$ is a circulation for all $i\in[q]$. Further, we let $\varphi_{t_i} := x_{t_i}(\overline{v}_{t_i}) = - x^0_t(\overline{v}_{t_i})$. Hence, $|\varphi_{t_i}| \le \ell \le f_{\ref{propReductionIP}}(k,\Delta)$ which implies $\varphi_{t_i} \in \Phi$. Next, let $d^0_t$ and $d_{t_1}$, \ldots, $d_{t_q}$ denote the weight vectors of $x^0_t$ and $x_{t_1}$, \ldots, $x_{t_q}$ respectively. That is, let $d^0_t := W^0_t x^0_t$ and $d_{t_i} := W_{t_i} x_{t_i}$ for $i \in [q]$ and $j \in [k]$. By our choice of $W^0_t(\cdot,\overline{v}_{t_i})$ and $W_{t_i}(\cdot,\overline{v}_{t_i})$ for $i \in [q]$, all these weight vectors are in $B$. The sum 
$d^0_{t} + d_{t_1} + \cdots + d_{t_q}$ and the weight vector of $x_t$ are related as follows:
$$
d^0_{t} + d_{t_1} + \cdots + d_{t_q} = W_t x_t + \sum_{i=1}^q (W_t(\cdot,\overline{v}_{t_i}) - W^0_t(\cdot,\overline{v}_{t_i})) \varphi_{t_i}\,.
$$
This shows that, in \eqref{eq:DP_2-sum}, the left hand side is at most the right hand side. 

In order to show the converse inequality, fix some valid choice of $d^0_{t}, d_{t_1}, \ldots, d_{t_q} \in B$ and $\varphi_{t_1},\ldots,\varphi_{t_q} \in \Phi$. Let $x^0_t$ be an optimal solution for $I_t^0(d^0_{t},\varphi_{t},-\varphi_{t_1},\ldots,-\varphi_{t_q})$ and, for $i \in [q]$, let $x_{t_i}$ be an optimal solution for $I_{t_i}(d_{t_i},\varphi_{t_i})$. By what precedes, these optimal solutions combine into a feasible solution $x_t$ for $I_t(d_t,\varphi_t)$, proving that the left hand side of \eqref{eq:DP_2-sum} is at least the right hand side.
\end{proof}

In case $q$ is large (super-logarithmic in $n$), \Cref{lem:2-sum_basic_DP} cannot be used as is since the number of cases to consider might be super-polynomial. For this reason, we have to delve deeper in the properties of the rooted MCICP instances arising from node $t$ and its children $t_1$, \ldots, $t_q$.

First, remark that if the weight matrix $W_{t_i}$ is zero for some child $t_i$, then the corresponding rooted MCICP instances $I_{t_i} = I_{t_i}(d_{t_i},\varphi_{t_i})$ are easy to solve since either $d_{t_i} = \zero$ and the $k$ complicating constraints $W_{t_i} x_{t_i} = d_{t_i}$ are trivially satisfied, or the instance is trivially infeasible. This holds more generally when $t_i$ is ``tame'' in the following sense.

\begin{definition}[tame and wild children]
We say that a child $t_i$ of $t$ is \emph{tame} if for all circuits $c, c'$ of $A_{t_i}$ such that $c(\overline{v}_{t_i}) = c'(\overline{v}_{t_i}) = 1$, we have $W_{t_i} c = W_{t_i} c'$. If $t_i$ is not tame, we say that it is \emph{wild}.
\end{definition}

In case $t_i$ is a tame child, as we show below, we can basically get rid of the whole subtree rooted at $t_i$ and modify the configuration $A(t)$ by adding a constant number of copies of $\overline{v}_{t_i}$ with carefully defined profits, see \Cref{lem:gadget}. This directly leads to an improvement in \Cref{lem:2-sum_basic_DP}, since we no longer have to guess $\varphi_{t_i}$ when $t_i$ is a tame child of $t$. A further crucial result that we prove below is that every circuit $c_t \in \circuits(A_t)$ crosses a bounded number of vectors $\overline{v}_{t_i}$ such that $t_i$ is wild, see \Cref{lem:crossing_wild}. Combining these two lemmas, we obtain an efficient algorithm for computing the optimal values $F_t(d_t,\varphi_t)$.

\subsection*{Tame children and gadgets}

We start with a discussion of weight vectors of tame nodes, and follow this with a discussion of gadgets. In order to do this, we need further notions. 

\begin{definition}[linear equivalence, standardized configuration]
Two configurations $A \in \R^{m \times n}$ and $A' \in \R^{m' \times n}$ are said to be \emph{linearly equivalent} if $\ker(A) = \ker(A')$, or equivalently if there exists a bijective linear map $\alpha : \colsp(A) \to \colsp(A')$ that maps the $j$th column of $A$ to the $j$th column of $A'$ for each $j \in [n]$. Let $A \in \R^{m \times n}$ be a rank-$r$  configuration. We say that $A$ is \emph{standardized} if $AP = \begin{bmatrix}D & \identity_r \end{bmatrix}$ for some $D \in \R^{r \times (n-r)}$, and some permutation matrix $P \in \{0,1\}^{n \times n}$.
\end{definition}

Note that any standardized configuration has full row rank.
It is easy to see that every configuration is linearly equivalent to a standardized configuration. This can be achieved by pivoting and deleting zero rows (see for instance \cite{oxley_2006}).

\begin{definition}[dual configuration, cocircuit]
Let $A = \begin{bmatrix}D & \identity_r \end{bmatrix} \in \R^{m \times n}$ denote a standardized rank-$r$ configuration. The \emph{dual} configuration of $A = \begin{bmatrix}D & \identity_r \end{bmatrix}$ is defined as $\begin{bmatrix}\identity_{n-r} & -D^\intercal \end{bmatrix}$. It is known that every configuration has the same connectivity as its dual configuration. A \emph{cocircuit} of A is a circuit of the dual configuration $\begin{bmatrix}\identity_{n-r} & -D^\intercal \end{bmatrix}$. We let
$$
\circuits^*(A) = \circuits^*\left(\begin{bmatrix}D & \identity_r \end{bmatrix}\right) := \circuits\left(\begin{bmatrix}\identity_{n-r} & -D^\intercal \end{bmatrix}\right)
$$
denote the set of cocircuits of $A$. 
\end{definition}

It is easy to check that every circuit of $A$ is orthogonal to every cocircuit of $A$. 

Now consider a regular configuration $A \in \R^{m \times n}$. If $A = \begin{bmatrix}D & \identity_r \end{bmatrix}$, then $D$ is a TU matrix, which implies that the dual configuration $\begin{bmatrix}\identity_{n-r} & -D^\intercal \end{bmatrix}$ is also regular. 

\begin{definition}[equivalence of weight vectors]
We say that two weight vectors $w_1, w_2 \in \Z^n$ are \emph{equivalent} if $w_1^\intercal c = w_2^\intercal c$ for all circuits $c \in \circuits(A)$. Notice that, in a MCICP instance, any row $W(i,\cdot)$ of the weight matrix can be replaced by any equivalent weight vector (transposed) as long as it has polynomial encoding length.
\end{definition}

A \emph{basis} of $A$ is a subconfiguration of $A$ forming a maximally linearly independent set. 

\begin{lemma} \label{lem:zero_on_basis}
Let $B$ denote a basis of $A$. For every weight vector $w_1 \in \Z^n$ there exists an equivalent weight function $w_2 \in \Z^n$ such that $w_2(v) = 0$ for all $v \in B$.
\end{lemma}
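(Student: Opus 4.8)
The plan is to restate the claim in linear-algebraic terms and then exhibit $w_2$ explicitly, the only real issue being integrality --- which is precisely where regularity of $A$ enters. Since $A$ is regular, we may replace it by a totally unimodular matrix with the same kernel; as having the same kernel means having the same linearly independent sets of columns, this preserves both the bases of $A$ (so $B$ is still a basis) and the row space, so from now on assume $A$ is TU. Recall that $\circuits(A)$ spans $\ker(A)$ (e.g.\ by \Cref{lemDecomposition}), hence two weight vectors are equivalent if and only if their difference is orthogonal to $\ker(A)$, i.e.\ lies in the row space $(\ker A)^{\perp}$ of $A$. So it suffices to produce $w_2 \in \Z^n$ with $w_2(v) = 0$ for all $v \in B$ and $w_1 - w_2 \in \mathspan(\text{rows of } A)$.

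Next I would normalize $A$ with respect to the basis $B$. Restricting $A$ to a maximal linearly independent set of rows changes neither the row space nor $\ker(A)$, so we may assume $A$ has full row rank $r = \rank(A) = |B|$. After permuting columns (and the coordinates of weight vectors accordingly), assume the columns of $B$ come first, so the $r \times r$ submatrix $A_B$ of $A$ indexed by $B$ is nonsingular; being a nonsingular square submatrix of a TU matrix, it satisfies $|\det A_B| = 1$, so $A_B^{-1}$ is an integer matrix. Replacing $A$ by $A_B^{-1}A$ --- a left multiplication by an invertible matrix, hence a composition of row operations, so $\ker(A)$ and the row space are unchanged, and all entries stay integral --- we may assume $A = \begin{bmatrix} \identity_r & D \end{bmatrix}$ with $D \in \Z^{r \times (n - r)}$, the first $r$ columns being $B$.

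Finally I would just write down $w_2$. Decompose $w_1 = \smallmat{w_1^B \\ w_1^N}$ with $w_1^B \in \Z^r$ and $w_1^N \in \Z^{n-r}$, and set
\[
  w_2 \;:=\; \begin{bmatrix} \zero \\ w_1^N - D^{\intercal} w_1^B \end{bmatrix} \;\in\; \Z^n,
\]
which is integral because $D$ is. By construction $w_2$ vanishes on $B$, and $w_1 - w_2 = \smallmat{w_1^B \\ D^{\intercal} w_1^B}$ equals $\sum_{i} (w_1^B)_i$ times the $i$-th row of $A = \begin{bmatrix}\identity_r & D\end{bmatrix}$, hence lies in the row space of $A$; therefore $w_1 \equiv w_2$, and we are done. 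No step is really an obstacle here: the entire content is to isolate integrality into the single fact that a nonsingular square submatrix of a TU matrix is unimodular.
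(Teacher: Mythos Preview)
Your proof is correct and is essentially the same argument as the paper's, just phrased in explicit matrix language rather than in the matroid terminology of cocircuits. The paper observes that adding any cocircuit to a weight vector preserves equivalence (since cocircuits are orthogonal to circuits) and subtracts $\sum_{v \in B} w_1(v)\, c^*_v$, where $c^*_v$ is the fundamental cocircuit of $v$ with respect to $B$; in your standardized form $A = \begin{bmatrix} \identity_r & D \end{bmatrix}$, these fundamental cocircuits are precisely the rows of $A$, so your subtraction $\sum_i (w_1^B)_i \cdot \mathrm{row}_i(A)$ is the very same correction.
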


\begin{proof}
If $A$ has no cocircuits, then it has also no circuit, in which case the definition of equivalent weight vectors is vacuous and the thesis is trivially satisfied by the zero vector. Hence we can exclude this case.  Notice that if $c^* \in \circuits^*(A)$ is any cocircuit of $A$, then $w_1$ and $w_1 + c^*$ are equivalent. Every basis element $v \in B$ has a corresponding fundamental cocircuit $c^*_v \in \circuits^*(A)$ that uses $v$ and no other element of $B$. By the remark above, $w_1$ and $w_2 := w_1 - \sum_{v \in B} w_1(v) c^*_v$ are equivalent weight vectors. Moreover, $w_2(v) = 0$ for all $v \in B$, as required.
\end{proof}

\begin{lemma} \label{lem:tame_all_zero}
Let $A \in \R^{m \times n}$ be a $2$-connected regular configuration, and let $\overline{v} \in A$. Let $w \in \Z^{n}$ be a weight vector. If every circuit $c \in \circuits(A)$ using $\overline{v}$ has zero weight, then $w$ is equivalent to the zero weight function.
\end{lemma}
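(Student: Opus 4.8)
The plan is to show that $w$ vanishes on every circuit of $A$ by a connectivity argument, and then invoke the fact that a weight vector orthogonal to all circuits is linearly equivalent to the zero vector (indeed, its transpose lies in $\colsp(A\t)$, which is spanned by cocircuits, so it can be reduced to $\zero$ exactly as in the proof of Lemma~\ref{lem:zero_on_basis}). So the real content is: \emph{if every circuit through $\overline v$ has zero weight, then every circuit has zero weight}. I will prove this by contradiction. Suppose $c \in \circuits(A)$ satisfies $w\t c \neq 0$; in particular $c(\overline v) = 0$ by hypothesis. Let $v_0$ be some vector with $c(v_0) \neq 0$. The idea is to ``route'' from $v_0$ to $\overline v$ using $2$-connectivity and combine $c$ with circuits through $\overline v$ to cancel its weight, contradicting $w\t c \ne 0$.

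Here is the combinatorial heart of the argument. Work with the circuit \emph{space} (the kernel of $A$, equivalently the cycle space of the regular matroid $M$ represented by $A$). Since $A$ is $2$-connected, the matroid $M$ is $2$-connected, so for any two elements $e,f$ of $M$ there is a circuit of $M$ containing both $e$ and $f$ (this is the standard characterization of $2$-connected matroids via circuits through pairs of elements). Apply this with $e = v_0$ and $f = \overline v$: there is a circuit $C_0$ of $M$, i.e.\ (the support of) a circuit $c_0 \in \circuits(A)$, with $c_0(v_0) \ne 0$ and $c_0(\overline v) \ne 0$. By hypothesis $w\t c_0 = 0$. Now I want to eliminate $v_0$ from $c$ by adding a suitable scalar multiple of $c_0$. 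Since $A$ is regular, $c, c_0 \in \{0,\pm1\}^n$, and after replacing $c_0$ by $-c_0$ if needed we may assume $c(v_0) = c_0(v_0)$, so that $c - c_0$ is in $\ker(A)$ and has $(c - c_0)(v_0) = 0$. The vector $c - c_0$ may no longer be a circuit, but by Lemma~\ref{lemDecomposition} it decomposes conformally into circuits $d_1,\dots,d_s \in \circuits(A)$; moreover $w\t(c - c_0) = w\t c - w\t c_0 = w\t c \ne 0$, so at least one $d_j$ has $w\t d_j \ne 0$, and hence $d_j(\overline v) = 0$ by hypothesis. Crucially, $\operatorname{supp}(d_j) \subseteq \operatorname{supp}(c - c_0)$ does not contain $v_0$ (because the $v_0$-coordinate of $c-c_0$ is zero), while $\operatorname{supp}(d_j) \subseteq \operatorname{supp}(c) \cup \operatorname{supp}(c_0)$.

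This suggests an induction on $|\operatorname{supp}(c) \setminus \{\overline v\}|$, or more cleanly on $|\operatorname{supp}(c)|$, over all circuits $c$ with $w\t c \ne 0$ and $c(\overline v) = 0$: choose such a $c$ of minimum support size, pick $v_0 \in \operatorname{supp}(c)$, and run the above to produce a circuit $d_j$ with $w\t d_j \ne 0$, $d_j(\overline v)=0$, and $v_0 \notin \operatorname{supp}(d_j)$. If I can arrange $\operatorname{supp}(d_j) \subsetneq \operatorname{supp}(c)$ I get the desired contradiction with minimality. The main obstacle is precisely this containment: a priori $\operatorname{supp}(d_j)$ could pick up elements of $\operatorname{supp}(c_0) \setminus \operatorname{supp}(c)$ and end up larger than $\operatorname{supp}(c)$. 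I expect this to be handled by choosing $c_0$ more carefully: rather than an arbitrary circuit through $v_0$ and $\overline v$, use the fact that in a connected matroid one can choose $C_0$ so that $C_0 \cap (\operatorname{supp}(c) \cup \{\overline v\})$ is tightly controlled — for instance, work inside the minor of $M$ obtained by contracting/deleting elements outside $\operatorname{supp}(c) \cup \{\overline v\}$, using that $\overline v$ is not a loop or coloop there — or, alternatively, to argue directly with a shortest path in an auxiliary graph. A robust fallback, which I would adopt if the support bookkeeping gets delicate, is to phrase everything in the matroid and invoke the strong circuit elimination axiom repeatedly: given that there is \emph{some} circuit through $\overline v$, strong circuit elimination lets one ``merge'' it with $c$ across a common element to obtain a circuit that agrees with $c$ off a shrinking set; iterating and tracking the weight via conformality yields a circuit through $\overline v$ of nonzero weight, the final contradiction. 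Either way, once every circuit has zero weight, the reduction of $w$ to $\zero$ via fundamental cocircuits (exactly as in Lemma~\ref{lem:zero_on_basis}, but now subtracting off \emph{all} of $w$) finishes the proof.
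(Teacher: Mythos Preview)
Your overall plan matches the paper's: reduce to showing every circuit has zero weight, then finish via Lemma~\ref{lem:zero_on_basis}. The gap is precisely the one you flag yourself and do not close. Your induction on $|\supp(c)|$ does not terminate as stated, because the circuit $d_j$ you extract from a conformal decomposition of $c-c_0$ may well pick up elements of $\supp(c_0)\setminus\supp(c)$ and have strictly larger support than $c$. The remedies you sketch (``choose $c_0$ more carefully'', ``work in a minor'', ``use strong circuit elimination repeatedly'') are not precise enough to be an argument, and strong circuit elimination alone does not track the weight $w$.

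The paper resolves exactly this point, and it is worth seeing how. Instead of inducting on $|\supp(c)|$, it makes one careful choice and shows in one shot that the sum is a \emph{single} circuit. Namely, among all pairs $(v,c_2)$ with $v\in\supp(c_1)$ and $c_2\in\circuits(A)$ using both $v$ and $\overline v$, it picks one \emph{minimizing} $|\supp(c_1)\cup\supp(c_2)|$; after sign-flipping so that $c_1(v)=-c_2(v)$ and $c_2(\overline v)=1$, it decomposes $c_1+c_2$ conformally as $c_3+\cdots+c_\ell$ with $c_3(\overline v)=1$. Conformality gives $\supp(c_3)\subseteq(\supp(c_1)\cup\supp(c_2))\setminus\{v\}$, and the minimality of the union forces $\supp(c_2)\setminus\supp(c_1)\subseteq\supp(c_3)$ (otherwise $c_3$ would be a better choice for $c_2$). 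Hence $c_3$ agrees with $c_2$ on $\supp(c_2)\setminus\supp(c_1)$, so every remaining $c_i$ with $i\ge 4$ has support contained in $\supp(c_1)\setminus\{v\}$, which is impossible since $\supp(c_1)$ is a circuit. Thus $\ell=3$, $c_1+c_2=c_3$ is a circuit through $\overline v$, and $w^\intercal c_1 = w^\intercal c_3 - w^\intercal c_2 = 0$. The missing idea in your plan is precisely this extremal choice of $c_0$ (minimize the union of supports over \emph{all} common elements $v$) together with the observation that it forces the conformal decomposition to collapse to a single term.
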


\begin{proof}
We claim that every circuit of $A$ has zero weight.

Before proving the claim, we verify that it implies the result. Assuming that the claim holds, pick a basis $B$ and consider any weight vector $w'$ equivalent to $w$ that is zero on all elements of $B$. By \Cref{lem:zero_on_basis}, such a weight function exists. Now consider any $v \in A \setminus B$, and let $c \in \circuits(A)$ denote the unique circuit of $A$ whose support is included in $B \cup \{v\}$ and that uses $v$ positively. We get
$$
w'(v) = \sum_{u \in \supp(c)} w'(u) c(u) = (w')^\intercal c = 0\,.
$$
We conclude that $w'$ is identically zero.

In order to establish the claim, let $c_1 \in \circuits(A)$ be arbitrary. If $c_1$ uses $\overline{v}$, then $w^\intercal c_1 = 0$ by hypothesis. 

From now on, assume that $c_1$ does not use $\overline{v}$, that is, $c_1(\overline{v}) = 0$. Since $A$ is $2$-connected, for each $v$ used by $c_1$, there exists a circuit $c_2 \in \circuits(A)$ using both $\overline{v}$ and $v$. Choose such a vector $v$ and circuit $c_2$ such that the union of the supports of $c_1$ and $c_2$ is inclusionwise minimal. By negating $c_1$ and/or $c_2$ if necessary, we may assume that $c_2(\overline{v}) = 1$, and $c_1(v) = -c_2(v)$.

Now consider the circulation $c_1 + c_2$, and any conformal decomposition $c_1 + c_2 = c_3 + c_4 + \cdots + c_\ell$ as a sum of circuits of $A$. After permuting the indices if necessary, we may assume that $c_3(\overline{v}) = 1$. Let $C_i$ denote the support of $c_i$, for $1 \le i \le \ell$. (The $C_i$'s are circuits in the matroid $M(A)$ represented by $A$.)

Since the decomposition is conformal and $c_1(v) + c_2(v) = 0$, $C_3 \subseteq (C_1 \cup C_2) - v$. In particular, $C_3$ is distinct from $C_2$. Since $C_2$ is a circuit, $C_3 \setminus C_2$ is nonempty. Hence, $C_3$ intersects $C_1$. By choice of $c_2$, we have $C_2 \setminus C_1 \subseteq C_3$. 

Using again the fact that the decomposition of $c_1 + c_2$ is conformal, we have $c_3(u) = c_2(u)$ for all $u \in C_2 \setminus C_1$. This implies that $C_i \subseteq C_1 - v$ for $4 \le i \le \ell$. We conclude that $\ell = 3$ and $c_1 + c_2 = c_3 \in \circuits(A)$. We get $w^\intercal{c_1} = w^\intercal c_3 - w^\intercal c_2 = 0$. The claim is proved.
\end{proof}

We resume the discussion of the rooted MCICP instance $I = (p,A,W,d,\ell,u,\overline{v},\varphi)$ and the corresponding decomposition tree $T$ for $A$. Consider a node $t \in V(T)$. We can resort to the proof of \Cref{lem:tame_all_zero} to check efficiently if $t$ is tame. Notice that being tame or wild does not depend on $W_{t}(\cdot,\overline{v}_t)$. We redefine $W_{t}(i,\overline{v}_t) \in \Z$ for each $i \in [k]$ in such a way that $W_{t} c_t = \zero$, for all $c_t \in \circuits(A_{t})$. By the proof of \Cref{lem:tame_all_zero}, $t$ is tame if and only if the weight vector of every circuit of $A_{t}$ is zero, that is, if and only if $A_{t} x = \zero$ implies $W_{t} x = \zero$ for all vectors $x$ of the appropriate dimension. This can be easily checked by considering any basis $B$ of $A_{t}$ and checking that every fundamental circuit has a zero weight vector relative to $W_{t}$.
Since the fundamental circuits of $A_{t}$ with respect to $B$ form a generating set of the cycle space of $A_{t}$, this is equivalent to checking that $W_{t} c = \zero$ for all circuits $c$ of $A_{t}$.

\begin{lemma} \label{lem:gadget}
Let $t \in V(T)$, and assume that the weight vector of every circuit of $A_t$ with respect to $W_t$ is zero. Consider a new tree $T'$ with a single node $t'$, letting $A(t')$ denote the vector configuration that is formed by the root vector $\overline{v}_t$, $f_{\ref{propReductionIP}}(k,\Delta)$ extra copies of $\overline{v}_t$ and $f_{\ref{propReductionIP}}(k,\Delta)$ further vectors that are all copies of $-\overline{v}_t$, $W_{t'} := \zero$, $\ell_{t'}(v) := 0$ and $u_{t'}(v) := 1$ for every $v \in A_{t'} - \overline{v}_t$. Then for some choice of profits $p_{t'}(v)$ for $v \in A_{t'} - \overline{v}_t$, we have $F_{t'}(d_t,\varphi_t) = F_{t}(d_t,\varphi_t)$ for all $d_t \in B$ and $\varphi_t \in \Phi$.
\end{lemma}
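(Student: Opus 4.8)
The plan is to reduce the statement to a one‑dimensional realization problem and then build the profits by hand. Since every circuit of $A_t$ has zero weight with respect to $W_t$, and every circulation of $A_t$ is a nonnegative combination of circuits of $A_t$ (\Cref{lemDecomposition}), every $x$ with $A_t x = \zero$ satisfies $W_t x = \zero$. Hence $F_t(d_t,\varphi_t) = -\infty$ whenever $d_t \ne \zero$, and for $d_t = \zero$ we have $F_t(\zero,\varphi_t) = g(\varphi_t)$, where
\[
g(\varphi) := \max\{\, p_t^{\intercal} x : A_t x = \zero,\ \ell_t(v) \le x(v) \le u_t(v) \text{ for all } v \ne \overline{v}_t,\ x(\overline{v}_t) = \varphi,\ x \in \Z^n \,\}.
\]
Because $W_{t'} = \zero$, the same dichotomy holds for the gadget instance: $F_{t'}(d_t,\varphi_t) = -\infty$ for $d_t \ne \zero$ and $F_{t'}(\zero,\varphi_t) =: h(\varphi_t)$. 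So it suffices to pick the profits $p_{t'}(v)$, $v \in A(t') - \overline{v}_t$, so that $h$ and $g$ agree on $\Phi$. Since $A_t$ is totally unimodular, fixing the single coordinate $x(\overline{v}_t)=\varphi$ keeps the feasible polytope integral, so $g(\varphi)$ equals the value of the corresponding LP; thus $g$ is concave and piecewise linear with integer breakpoints on the integer interval $[\alpha,\beta]$ where it is finite, and $g \equiv -\infty$ elsewhere. Moreover $\zero$ is feasible (after the translation that made $b=\zero$), so $0 \in [\alpha,\beta]$ and $g(0) \ge 0$; and by the standing assumption that no feasible circuit $c$ of $A$ with $Wc = \zero$ has $p^{\intercal} c > 0$, together with tameness and \Cref{remarkProximity}, any feasible $x$ with $x(\overline{v}_t)=0$ decomposes conformally into feasible circuits of $A_t$ that all avoid $\overline{v}_t$, hence have zero weight and (being circuits of $A$) nonpositive profit, so $g(0) \le 0$. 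Therefore $g(0)=0$.

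Next I make $h$ explicit. A circulation of $A(t')$ with $x(\overline{v}_t)=\varphi$ is exactly a choice of a subset of the $f := f_{\ref{propReductionIP}}(k,\Delta)$ copies of $\overline{v}_t$ and a subset of the $f$ copies of $-\overline{v}_t$ carrying unit flow, subject only to $(\#\text{down-copies used}) - (\#\text{up-copies used}) = \varphi$. Since the copies within each group are interchangeable and $p_{t'}(\overline{v}_t)=0$,
\[
h(\varphi) = \max\{\, g_a(U) + g_b(D) : 0 \le U, D \le f,\ D - U = \varphi \,\},
\]
where $g_a(U)$ (resp. $g_b(D)$) is the sum of the $U$ largest up-profits (resp. $D$ largest down-profits). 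Conversely, choosing $p_{t'}$ is the same as choosing two functions $g_a, g_b : \{0,\dots,f\} \to \Z \cup \{-\infty\}$ that are concave and vanish at $0$ (take the $i$-th largest up-profit to be $g_a(i) - g_a(i-1)$, which is non-increasing in $i$ by concavity).

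Now the construction: set $g_b(D) := g(D)$ for $0 \le D \le \min(\beta,f)$ and continue $g_b$ with a very steep negative slope (symbolically $-\infty$) for $\beta < D \le f$; symmetrically set $g_a(U) := g(-U)$ for $0 \le U \le \min(-\alpha,f)$ with a steep negative tail beyond. Since $g(0)=0$, both functions vanish at $0$, and they are concave since $g$ is concave on $[\alpha,\beta]$ and the tails are even steeper downward; translating back fixes all profits. To check $h=g$ on $\Phi$: for $\varphi \in [\alpha,\beta] \cap \Phi$ the pair $(U,D)=(0,\varphi)$ if $\varphi \ge 0$, or $(-\varphi,0)$ if $\varphi \le 0$, is feasible and attains $g(\varphi)$, while any other feasible pair with $U,D \ge 1$ and $[-U,D] \subseteq [\alpha,\beta]$ attains $g(-U)+g(D) \le g(0)+g(D-U) = g(\varphi)$ by the standard inequality that pulling two points of a concave function inward (here toward $0$ and $\varphi$, which both lie in $[-U,D]$, with the same sum $-U+D$) does not decrease the sum, and any feasible pair straying outside $[\alpha,\beta]$ lands in the steep negative tail. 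For $\varphi \in \Phi \setminus [\alpha,\beta]$ every feasible pair has $D > \beta$ (if $\varphi > \beta$) or $U > -\alpha$ (if $\varphi < \alpha$), so $h(\varphi)$ equals $-\infty$ — matching $F_t(\zero,\varphi) = -\infty$ — or, with the finite realization, is below any value that can occur elsewhere in the dynamic program of \Cref{lem:2-sum_basic_DP}.

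I expect the main obstacle to be exactly this last point: the gadget is a genuine MCICP instance with finite integer data, yet $F_{t'}$ must agree with $F_t$ even where the latter is $-\infty$. This is handled either by allowing $p_{t'}$ to take the value $-\infty$ (a vector that is then never used when its flow is unforced) or by choosing the tail slope larger than any optimum value that can arise in the whole computation, which is polynomial in the input size times the number of nodes of the decomposition tree; with the latter reading, the equality in the lemma is "equal whenever $F_t > -\infty$, and below every relevant threshold otherwise". The remaining ingredients — concavity of $g$ as an LP value function, the identity $g(0)=0$, and the concave-rearrangement inequality verifying $h=g$ — are routine.
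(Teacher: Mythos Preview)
Your approach is essentially the same as the paper's: both reduce to $d_t=\zero$, establish that the one-variable value function $g(\varphi)=F_t(\zero,\varphi)$ is concave with $g(0)=0$, and then set the gadget profits to be the successive differences $g(i)-g(i-1)$ (your $g_b$) and $g(-i)-g(-i+1)$ (your $g_a$). Your treatment is in fact a bit more careful than the paper's---you spell out the concavity verification that the paper leaves to the reader, and you explicitly address the edge case where $g(\varphi)=-\infty$ for some $\varphi\in\Phi$, which the paper glosses over by implicitly assuming finiteness on all of $\Phi$.
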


\begin{proof}
First, we notice that if $d_t \neq \zero$ then $F_{t'}(d_t,\varphi_t) = F_{t}(d_t,\varphi_t)= -\infty$ for all $\varphi_t \in \Phi$, hence we can fix $d_t=\zero$ for the rest of the proof.

Consider the function of one variable $f(\varphi_{t}) := F_{t}(\zero,\varphi_{t})$. Since $A_{t}$ is regular, we can omit the integrality condition in the rooted MCICP, and write
\[
f(\varphi_{t}) = \max \left\{ p_{t}^\intercal x : A_{t} x = \zero, x(\overline{v}_t)= \varphi_{t}, \, \ell_{t} \le x \le u_{t} \right\}\,.
\]

It is easy to see that $f$ is piecewise linear and concave. Indeed, given integers $\varphi$ and $\varphi'$, corresponding feasible circulations $x$ and $x'$ respectively attaining $f(\varphi)$ and $f(\varphi')$, and $\lambda \in [0,1]$ such that $\lambda \varphi + (1-\lambda) \varphi'$, we see that the convex combination $\lambda x + (1-\lambda) x'$ is a feasible circulation giving a lower bound on $f(\lambda \varphi + (1-\lambda) \varphi')$. 

Furthermore, $f(0) = 0$. Indeed, $f(0) \ge 0$ since $x = \zero$ is a feasible solution for $\varphi_{t} = 0$. Moreover, $f(0) > 0$ would imply the existence of a feasible circuit of $A$ with zero weight and positive profit.
Recall that we assume that such a circuit does not exist based on the proof of \Cref{propReductionIP}.

For the sake of simplicity, let $\Gamma = \Gamma(k,\ell) := f_{\ref{propReductionIP}}(k,\Delta)$. Label the copies of $\overline{v}_t$ in $A(t')$ as $v_{-1}$, \ldots, $v_{-\Gamma}$ and the copies of $-\overline{v}_t$ as $v_1$, \ldots, $v_{\Gamma}$. We let $p_{t'}(v_i) := f(i) - f(i-1)$ and $p_{t'}(v_{-i}) := f(-i) - f(-i+1)$ for $i = 1, \ldots, \Gamma$. Since $f$ is concave, we have $p_{t'}(v_1) \ge p_{t'}(v_2) \ge \cdots \ge p_{t'}(v_\Gamma)$ and $p_{t'}(v_{-1}) \ge p_{t'}(v_{-2}) \ge \cdots \ge p_{t'}(v_{-\Gamma})$. 

To show that $F_{t'}(\zero,\varphi_t) = f(\varphi_t) $ for $\varphi_t \in \Phi$, consider an optimal solution $x'$ of the rooted instance for node $t'$, and assume that $\varphi_t \geq 0$, the other case being similar. First, one checks that if $x'(v_{i})=1$ for any $i < 0$, there is some $j > 0$ with $x'(v_{j})=1$ and we can set both to zero without decreasing the objective value. Second, if there is some $0 < i < j$ with $x'(v_{i})=0$, $x'(v_{j})=1$, then we can swap the two values without decreasing the objective value. Hence, given that $x'(\bar v_t)=\varphi_t$ and $A_t' x'=\zero$, we conclude that $x'(v_{i})=1$ for $i=1,\dots,\varphi_t$ and $0$ for all other vectors. This implies that $F_{t'}(\zero,\varphi_t)$, the objective value of $x'$, is equal to $\sum_{i=1}^{\varphi_t} f(i)-f(i-1) = f(\varphi_t)$.
\end{proof}
 
Thanks to \Cref{lem:2-sum_basic_DP} and \Cref{lem:gadget}, we can replace the subtree rooted at any tame node of $V(T)$ by a single node whose corresponding configuration is a multiset of parallel vectors without changing any value function $F_t$ for the other nodes. This allows us to assume that every tame node is a leaf. Next, for each tame leaf node $t \in V(T)$ with parent $u \in V(T)$ we replace $A(u)$ by $A(u) \oplus_2 A_t =  A(u) \oplus_2 A(t)$ and delete node $t$ from the decomposition tree. In this way, we obtain a new rooted MCICP instance $I' = (p',A',W',d',\ell',u',\overline{v}',\varphi')$ and decomposition tree $T'$ of $A'$ that is a subtree of $T$ containing the root, in such a way that $T'$ has no tame node, $A(t)$ is \emph{almost} $3$-connected for every $t \in V(T)$, and the value function $F'_t$ with respect to $I'$ equals the value function $F_t$ with respect to the original instance $I$, for every node $t \in V(T')$. 

\subsection*{Final DP}

Consider again a rooted MCICP instance $I = (p,A,W,d,\ell,u,\overline{v},\varphi)$ and decomposition tree $T$ for $A$. Assume that no node of $T$ is tame, and $A(t)$ is almost $3$-connected for all nodes $t \in V(T)$. By the previous section, we may reduce to this case. Fix some non-leaf node $t \in V(T)$ and denote its children by $t_1$, \ldots, $t_q$. The next result is the final piece in our reduction of MCICP to the almost $3$-connected case.

\begin{lemma} \label{lem:crossing_wild}
Every circuit of $c \in \circuits(A_t)$ crosses at most $2 k \Delta$ vectors $\overline{v}_{t_i}$ such that node $t_i$ is a wild child of $t$.
\end{lemma}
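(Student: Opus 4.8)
The plan is the following. Fix a circuit $c \in \circuits(A_t)$ and let $S \subseteq [q]$ be the set of indices $i$ such that $t_i$ is a wild child of $t$ and $c$ crosses $\overline{v}_{t_i}$; our goal is $|S| \le 2k\Delta$. Since the $2$-sums commute, we may write $A_t = A^{-}_{t_i} \oplus_2 A_{t_i}$, where $A^{-}_{t_i}$ is obtained by performing all the $2$-sums of the subtree rooted at $t$ except the edge $tt_i$ and those within the subtree rooted at $t_i$. Splitting $c$ along this $2$-sum gives a flow value $\varphi_{t_i}$ on $\overline{v}_{t_i}$; as $c$ is a circuit we have $\varphi_{t_i}\in\{-1,+1\}$ for $i\in S$, and by the circuit characterization of a $2$-sum the $A_{t_i}$-part of $c$, call it $c_{t_i}$, is a circuit of $A_{t_i}$ with $c_{t_i}(\overline{v}_{t_i})=\varphi_{t_i}$. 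Because $t_i$ is wild, there are two circuits of $A_{t_i}$ taking value $1$ on $\overline{v}_{t_i}$ with distinct $W_{t_i}$-weights; multiplying both by $\varphi_{t_i}$ (still circuits, as $\varphi_{t_i}=\pm1$) and keeping one whose $W_{t_i}$-weight differs from $W_{t_i}c_{t_i}$ (at least one does), we get a circuit $a_i\in\circuits(A_{t_i})$ with $a_i(\overline{v}_{t_i})=\varphi_{t_i}$ and $W_{t_i}(a_i-c_{t_i})\neq\zero$. Since $a_i-c_{t_i}$ is a circulation of $A_{t_i}$ vanishing on $\overline{v}_{t_i}$, extending it by zeros yields a circulation $g_i$ of $A_t$ (circulation characterization of the $2$-sum with $\varphi=0$); and since $W_t$ and $W_{t_i}$ agree on the columns of $A_{t_i}$ distinct from $\overline{v}_{t_i}$ (both being restrictions of the original $W$), we obtain $w_i := W_t g_i = W_{t_i}(a_i-c_{t_i})$, a nonzero integer vector in $\Z^k$.

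The key intermediate step is to show that $c_T := c + \sum_{i\in T} g_i$ lies in $\circuits(A_t)$ for every $T\subseteq S$, which I would prove by induction on $|T|$, the case $T=\varnothing$ being trivial. Suppose $c_T\in\circuits(A_t)$ and $i\in S\setminus T$. Splitting $c_T$ along $A_t=A^{-}_{t_i}\oplus_2 A_{t_i}$, its $A_{t_i}$-part is still $c_{t_i}$ (each $g_j$ with $j\in T$ is supported on columns of $A^{-}_{t_i}$), so its $A^{-}_{t_i}$-part is a circuit of $A^{-}_{t_i}$ using $\overline{v}_{t_i}$ with value $-\varphi_{t_i}$; replacing $c_{t_i}$ by $a_i$, which is a circuit of $A_{t_i}$ using $\overline{v}_{t_i}$ with value $\varphi_{t_i}$, and invoking the circuit characterization of the $2$-sum shows that the result, namely $c_T+g_i=c_{T\cup\{i\}}$, is again a circuit of $A_t$. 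By \Cref{lem:splitting_weight}, every circuit of $A_t$ has $W_t$-weight of infinity norm at most $\Delta$; hence $\|W_t c + \sum_{i\in T} w_i\|_\infty \le \Delta$ for all $T\subseteq S$.

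It remains to extract the bound coordinatewise. Fix $j\in[k]$, write $a:=(W_t c)_j$, and set $P_j:=\{i\in S:(w_i)_j>0\}$ and $N_j:=\{i\in S:(w_i)_j<0\}$. Applying the inequality above with $T=P_j$ gives $\sum_{i\in P_j}(w_i)_j \le \Delta - a$, and with $T=N_j$ gives $\sum_{i\in N_j}|(w_i)_j| \le \Delta + a$; summing these, $\sum_{i\in P_j\cup N_j}|(w_i)_j|\le 2\Delta$, so $|P_j\cup N_j|\le 2\Delta$ since each summand is a positive integer. As each $w_i$ is a nonzero integer vector, every $i\in S$ belongs to some $P_j\cup N_j$, i.e.\ $S=\bigcup_{j=1}^k (P_j\cup N_j)$; therefore $|S|\le \sum_{j=1}^k |P_j\cup N_j|\le 2k\Delta$, as desired. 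The main obstacle is the inductive claim that all the $c_T$ remain circuits of $A_t$: it relies on the commutativity of the $2$-sum operations along the decomposition tree and repeated use of the circuit characterization of a $2$-sum, but involves no computation.
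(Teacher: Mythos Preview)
Your proof is correct and follows essentially the same approach as the paper: both rest on the key observation that the circuit piece $c_{t_i}$ in each wild subtree can be independently swapped for any other circuit of $A_{t_i}$ with the same value on $\overline{v}_{t_i}$, yielding a new circuit of $A_t$ (your inductive claim about the $c_T$), and then the $\Delta$-bound on all these circuits forces the count. The only cosmetic difference is in the final extraction of the bound: the paper argues by contradiction, using pigeonhole to find a single row $j$ with at least $2\Delta+1$ indices and then constructing two extreme circuits $c^+,c^-$ whose $j$-th weights differ by more than $2\Delta$, whereas you argue directly by bounding $|P_j\cup N_j|\le 2\Delta$ per row and summing over $j\in[k]$.
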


\begin{proof}
After renumbering the children of $t$, we may assume that the indices $i \in [q]$ such that $c$ crosses $\overline{v}_{t_i}$ and $t_i$ is wild are $i = 1, \ldots, q'$ for some $q' \le q$. Then $c$ splits into one circuit $c_0$ of $A(t) \oplus_2 A_{t_{q'+1}} \oplus_2 \cdots \oplus_2 A_{t_{q}}$ and $q'$ circuits $c_1$, \ldots, $c_{q'}$ of $A_{t_1}$, \ldots, $A_{t_{q'}}$ respectively. For each $i \in [q']$, let $c'_i$ and $c''_i$ denote two circuits of $A_{t_i}$ such that $c'_i(\overline{v}_{t_i})= c''_i(\overline{v}_{t_i})=1$ and $W_{t_i} c'_i \neq W_{t_i} c''_i$.

Towards a contradiction, suppose that $q'> 2k\Delta$. By the pigeonhole principle, we can find a row index $j \in [k]$ and a set of $2\Delta +1$ indices $i \in [q']$ such that $W_{t_i}(j,\cdot) c'_i \neq W_{t_i}(j,\cdot) c''_i$. After permuting the indices again, we may assume that these indices are $i = 1, \dots, 2\Delta+1$. By exchanging $c'_i$ and $c''_i$ if necessary, we may assume that $W_{t_i}(j,\cdot) c'_i > W_{t_i}(j,\cdot) c''_i$ for all $i \in [2\Delta+1]$.

Notice that replacing $c_i$ for each $i \in [2\Delta + 1]$ with any other circuit $\tilde c_i \in \{\pm c'_i, \pm c''_i\}$ of $A_{t_i}$ such that $c_i(\overline{v}_{t_i}) = \tilde{c}_i(\overline{v}_{t_i})$ in the decomposition of $c$ results in a new circuit of $A_t$. Consider the circuit $c^+$ obtained from $c$ by replacing $c_i$ with $c'_i$ if $c_i(\overline{v}_{t_i}) = 1$, and with $-c''_i$ if $c_i(\overline{v}_{t_i}) = -1$, for $i \in [2\Delta+1]$. Consider the circuit $c^-$ obtained similarly by exchanging the roles of $c'_i$ and $c''_i$. In this way, $W_t(j,\cdot)c^+$ and $W_t(j,\cdot) c^-$ differ by at least $2\Delta+1$, contradicting the fact that both weights should be at most $\Delta$ in absolute value.
\end{proof}

This last result directly yields an improvement to \Cref{lem:2-sum_basic_DP}. Let $(d^0_{t}, d_{t_1}, \ldots, d_{t_q}, \varphi_{t_1},\ldots,\varphi_{t_q}) \in B^{q+1} \times \Phi^q$ be a \emph{guess} such that
$$
d^0_{t} + \sum_{i=1}^q \left( d_{t_i} - \left(W_t(\cdot,\overline{v}_{t_i}) - W^0_t(\cdot,\overline{v}_{t_i})\right) \varphi_{t_i} \right) = d_t\,,
$$
see \eqref{eq:DP_2-sum}. Recall that every MCICP instance has an optimal solution that is a sum of at most $f_{\ref{propReductionIP}}(k,\Delta)$ circuits. Together with \Cref{lem:crossing_wild}, this implies that we can restrict to the guesses such that
$$
\sum_{i=1}^q |\varphi_{t_i}| \le 2 k \Delta f_{\ref{propReductionIP}}(k,\Delta)\,.
$$
Moreover, we can further require that there are at most $f_{\ref{propReductionIP}}(k,\Delta)$ indices $i$ such that $\varphi_{t_i} = 0$ and $d_{t_i} \neq \zero$. Let $\Gamma_t$ denote the set of all guesses $(d^0_{t}, d_{t_1}, \ldots, d_{t_q}, \varphi_{t_1},\ldots,\varphi_{t_q}) \in B^{q+1} \times \Phi^q$ satisfying the three conditions above. Then we can change \eqref{eq:DP_2-sum} to
\begin{align*}
F_t(d_t,\varphi_t) = \max \Bigg\{ F^0_t(d^0_{t},\varphi_{t},-\varphi_{t_1},\ldots,-\varphi_{t_q}) + \sum_{i=1}^q F_{t_i}(d_{t_i},\varphi_{t_i}) :\, & (d^0_{t}, d_{t_1}, \ldots, d_{t_q}, \varphi_{t_1},\ldots,\varphi_{t_q}) \in \Gamma_t\Bigg\}
\end{align*}
where $d_t \in B$ and $\varphi_t \in \Phi$ are arbitrary. We point out that the DP can be once again extended to store an optimal solution for each table entry $F_t(d_t,\varphi_t)$. Putting everything together, we obtain the following result, which concludes this section.

\begin{theorem} \label{thm:2-sum}
Let $k,\Delta \in \Z_{\ge 1}$ be constants, and let $f_{\ref{thm:2-sum}}(k,\Delta) := k \Delta f_{\ref{propReductionIP}}(k,\Delta)$. Any $2$-connected instance of the MCICP on a matrix with $n$ columns can be solved after solving at most $(2\Delta f_{\ref{propReductionIP}}(k,\Delta)+1)^{O(kf_{\ref{thm:2-sum}}(k,\Delta))}n^{O(f_{\ref{thm:2-sum}}(k,\Delta))}$ instances of the MCICP that are almost $3$-connected and on matrices with at most $n$ columns, and performing extra work in time $(\Delta f_{\ref{propReductionIP}}(k,\Delta)n)^{O(kf_{\ref{thm:2-sum}}(k,\Delta))}$.
\end{theorem}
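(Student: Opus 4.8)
The plan is to assemble the lemmas of this section into a bottom-up dynamic program over a decomposition tree. First I would reduce the given $2$-connected MCICP instance to a rooted MCICP instance by duplicating an arbitrary column, declaring the copy the root vector $\overline{v}$, and fixing its flow to $\varphi:=0$; this changes neither feasibility nor the optimum. Then, invoking the Cunningham--Edmonds/Seymour decomposition recalled after the definition of a decomposition tree (using \cite{cunningham_1980,seymour_1981}), I would compute in polynomial time a decomposition tree $T$ of $A$ all of whose node configurations $A(t)$ are regular and $3$-connected, rooted at the unique node containing $\overline{v}$. For each node $t$ and each $i\in[k]$ I fix $W_t(i,\overline{v}_t)$ via \Cref{lem:splitting_weight} so that $\|W_tc_t\|_\infty\le\Delta$ for every circuit $c_t$ of $A_t$.

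Second, I would carry out the tame-child elimination described just before the ``Final DP'' paragraph. Whether $t$ is tame or wild is independent of $W_t(\cdot,\overline{v}_t)$, and by (the proof of) \Cref{lem:tame_all_zero} tameness can be tested efficiently and, when $t$ is tame, $W_t$ can be chosen so that every circuit of $A_t$ has zero weight. By \Cref{lem:gadget} the whole subtree rooted at a tame node can be replaced, without altering any value function $F_{t'}$ at other nodes, by a single node whose configuration is a multiset of $2f_{\ref{propReductionIP}}(k,\Delta)+1$ vectors parallel to $\overline{v}_t$ with suitable profits; absorbing each such gadget into its parent by a further $2$-sum yields an equivalent instance and a decomposition tree $T'$ (a subtree of $T$ containing the root) with no tame node and with $A(t)$ almost $3$-connected at every node.

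Third, I would set up the DP. With $B:=[-\Delta f_{\ref{propReductionIP}}(k,\Delta),\Delta f_{\ref{propReductionIP}}(k,\Delta)]^k\cap\Z^k$ and $\Phi:=[-f_{\ref{propReductionIP}}(k,\Delta),f_{\ref{propReductionIP}}(k,\Delta)]\cap\Z$, compute $F_t(d_t,\varphi_t)$ for all $t\in V(T')$, $d_t\in B$, $\varphi_t\in\Phi$, bottom-up. At a leaf $t$, $F_t(d_t,\varphi_t)=\OPT(I_t(d_t,\varphi_t))$ is one almost-$3$-connected MCICP instance. At a non-leaf $t$ with children $t_1,\dots,t_q$, use
\[
F_t(d_t,\varphi_t)=\max\Big\{F^0_t(d^0_t,\varphi_t,-\varphi_{t_1},\dots,-\varphi_{t_q})+\textstyle\sum_{i=1}^q F_{t_i}(d_{t_i},\varphi_{t_i})\Big\}
\]
over guesses $(d^0_t,d_{t_1},\dots,d_{t_q},\varphi_{t_1},\dots,\varphi_{t_q})\in\Gamma_t$, where $\Gamma_t$ consists of the guesses satisfying the balance equation of \Cref{lem:2-sum_basic_DP} together with $\sum_i|\varphi_{t_i}|\le 2k\Delta f_{\ref{propReductionIP}}(k,\Delta)$ and having at most $f_{\ref{propReductionIP}}(k,\Delta)$ indices with $\varphi_{t_i}=0$ and $d_{t_i}\ne\zero$. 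Correctness of restricting to $\Gamma_t$ comes from \Cref{propReductionIP} (each MCICP optimum is a conformal sum of at most $f_{\ref{propReductionIP}}(k,\Delta)$ circuits, so each splitting flow is bounded accordingly, and in total these cross at most $2k\Delta$ wild vectors by \Cref{lem:crossing_wild}, while a child with $\varphi_{t_i}=0$ but $d_{t_i}\ne\zero$ must absorb an entire circuit); correctness of the recursion is \Cref{lem:2-sum_basic_DP} together with the gadget replacement. Each $F^0_t(\cdot)$ is the optimum of an almost-$3$-connected MCICP instance supported on $A(t)$, so the almost-$3$-connected subproblems to be solved are one per leaf per $(d_t,\varphi_t)\in B\times\Phi$ and one per local guess at each internal node.

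Finally, the counting: since $|B|=(2\Delta f_{\ref{propReductionIP}}(k,\Delta)+1)^k=(f_{\ref{thm:2-sum}}(k,\Delta))^{O(k)}$ and the nonzero part of $(\varphi_{t_i})_i$ has support of size at most $2f_{\ref{thm:2-sum}}(k,\Delta)$, the number of relevant guesses at a node with $q\le n$ children is $(f_{\ref{thm:2-sum}}(k,\Delta))^{O(k)}\cdot n^{O(f_{\ref{thm:2-sum}}(k,\Delta))}$; summing over the $O(n)$ nodes of $T'$ gives the claimed bound on the number of almost-$3$-connected MCICP subproblems, and filling the $O(n)\cdot|B|\cdot|\Phi|$ table entries, each a maximum over that many guesses evaluated with $O(n)$ arithmetic operations on polynomially bounded numbers, costs $(f_{\ref{thm:2-sum}}(k,\Delta)\,n)^{O(f_{\ref{thm:2-sum}}(k,\Delta))}$ extra work. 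Reading off the optimum of the original instance from the root table and extending the DP to store witnessing solutions completes the proof. The genuine content sits in the already-established \Cref{lem:crossing_wild} and \Cref{lem:gadget}, which are what shrink the guess space from exponential to polynomial; the remaining difficulty is purely bookkeeping --- tracking the weight-correction terms $W_t(\cdot,\overline{v}_{t_i})-W^0_t(\cdot,\overline{v}_{t_i})$ through the recursion, ensuring every intermediate weight function keeps circuit weights within $[-\Delta,\Delta]$, and checking that $B$ and $\Phi$ are large enough to contain every quantity produced by an optimal $f_{\ref{propReductionIP}}(k,\Delta)$-circuit solution.
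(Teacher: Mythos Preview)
Your proposal is correct and follows essentially the same approach as the paper, which simply states ``Putting everything together, we obtain the following result'' after developing \Cref{lem:splitting_weight}, \Cref{lem:tame_all_zero}, \Cref{lem:gadget}, \Cref{lem:2-sum_basic_DP}, and \Cref{lem:crossing_wild}. Your assembly of these pieces---rooting the instance, building the decomposition tree, eliminating tame subtrees via gadgets, and running the bottom-up DP over the restricted guess set $\Gamma_t$---is exactly the intended argument, and your accounting for the number of subproblems and extra work matches the claimed bounds.
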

\begin{proof}
    It remains to prove that the reported number of instances is correct.

    Clearly $q\le n$, so the total number of integer values to guess is in $O(kn)$.
    Further, there are at most $O(f_{\ref{thm:2-sum}}(k,\Delta))$ non-zero values in $\phi$, summing up in absolute value to at most $O(f_{\ref{thm:2-sum}}(k,\Delta))$, which gives $n^{O(f_{\ref{thm:2-sum}}(k,\Delta))}$ options for this part of the vector.
    Each non-zero value in $\phi$ corresponds to a potential non-zero $k$-dimensional vector in $d$, giving us an additional factor of $(2\Delta f_{\ref{propReductionIP}}(k,\Delta)+1)^{O(kf_{\ref{thm:2-sum}}(k,\Delta))}$ guesses.
    Finally, there are at most $f_{\ref{propReductionIP}}(k,\Delta)$ entries where $\phi=0$ and $d\neq0$, leading to an additional factor of $n^{kf_{\ref{propReductionIP}}(k,\Delta)}(2\Delta f_{\ref{propReductionIP}}+1)^{kf_{\ref{propReductionIP}}(k,\Delta)}$.
    The total number of instances is thus bounded by 
    \begin{equation*}
        (2\Delta f_{\ref{propReductionIP}}(k,\Delta)+1)^{O(kf_{\ref{thm:2-sum}}(k,\Delta))}n^{O(f_{\ref{thm:2-sum}}(k,\Delta))}.
    \end{equation*}
\end{proof}


\section{Algorithm for the cographic case}\label{sec:cographic}

This section provides a strongly polynomial-time algorithm for solving cographic instances, relying on structural results that are established in \Cref{sec:structure}. 
In \Cref{sec:cographic_circuit}, we discuss circuit vectors of (co)-graphic instances in terms of the corresponding graphs. 
In \Cref{sec:cov}, we recall how to reformulate any cographic instance in the vertex space through a change of variables that transforms the main part of the coefficient matrix into the transpose of an incidence matrix. 
By this change of variables, the $k$ additional rows of the constraint matrix yield a $k$-dimensional weight vector for each vertex. 
Defining the set of roots as the set of vertices whose weight vector is nonzero, we turn the underlying graph into a rooted graph. 
In \Cref{sec:pumpkins}, we prove that these graphs do not have a rooted $K_{2,t}$-minor for $t := 4k\Delta + 1$. This in turn implies the strong structural properties stated in \Cref{SpecialDecomposition}, which are used in \Cref{sec:algorithm} to prove \Cref{thm:main_IP}.

\subsection{Circuits and connectivity}\label{sec:cographic_circuit}

Let $G$ be any directed graph. We allow parallel and anti-parallel directed edges, but do not allow loops. Seen as a vector configuration, the incidence matrix of $G$ associates to each directed edge $(v,w) \in E(G)$ the vector $\chi^v - \chi^w$ in $\R^{V(G)}$, where $\{\chi^v : v \in V(G)\}$ denotes the canonical basis of $\R^{V(G)}$. 

\begin{definition}[graphic and cographic configurations]
Let $A \in \R^{m \times n}$ be a configuration. We call $A$ \emph{graphic} whenever $A$ is linearly equivalent to the configuration defined by the incidence matrix of some directed graph $G$, and \emph{cographic} whenever $A$ is linearly equivalent to the dual of some graphic configuration.
\end{definition}

Up to a permutation of its columns, every rank-$r$ graphic configuration is linearly equivalent to a standardized configuration of the form $\begin{bmatrix}D & \identity_r \end{bmatrix}$, where $D \in \{0,\pm 1\}^{r \times (n-r)}$ is a network matrix. 
Conversely, every network matrix can be obtained in this way. 
Recall that the dual configuration of $\begin{bmatrix}D & \identity_r \end{bmatrix}$ is $\begin{bmatrix}\identity_{n-r} & -D^\intercal \end{bmatrix}$.
 
We remark that we may restrict to directed graphs $G$ that are (weakly) connected. Indeed, if $G$ has $\ell > 1$ connected components then we can pick any vertex $v_i$ in the $i$th component for each $i \in [\ell]$, and merge $v_1$, \ldots, $v_\ell$ in a single vertex. The resulting directed graph is weakly connected and the corresponding configuration is linearly equivalent to the original one.

In \Cref{lem:circuits_graphic_cographic} below, we give the well-known characterization of circuits of graphic and cographic configurations. Before doing this, we need more terminology regarding directed graphs (we generally follow~\cite{schrijver_2003}).
Note that the lemma follows from~\cite[Proposition~2.3.1]{oxley_2006}.

Let $G$ be a connected directed graph. For $X \subseteq V(G)$, we let $\delta(X)$ denote the set of edges with one end in $X$ and the other end in $\overline{X} := V(G) \setminus X$, $\delta^\mathrm{out}(X)$ denote the set of edges leaving $X$, and $\delta^\mathrm{in}(X)$ denote the set of edges entering $X$. Notice that $\delta(X) = \delta^\mathrm{out}(X) \cup \delta^\mathrm{in}(X)$. The set $\delta(X)$ is called an \emph{undirected cut}, while $\delta^\mathrm{out}(X)$ and $\delta^\mathrm{in}(X)$ are \emph{directed cuts}. If $X$ is a proper and nonempty subset of $V(G)$, we say that $\delta(X)$, $\delta^\mathrm{out}(X)$ and $\delta^\mathrm{in}(X)$ are \emph{nontrivial} cuts. It is easy to see that an inclusionwise minimal nontrivial undirected cut, sometimes called a \emph{bond}, is an undirected cut $\delta(X)$ such that each of $X$ and $\overline{X}$ induces a nonempty connected subgraph of $G$. To every such cut there correspond two opposite \emph{signed incidence vectors} in $\{0,\pm 1\}^{E(G)}$, namely, $\chi^{\delta^\mathrm{out}(X)} - \chi^{\delta^\mathrm{in}(X)}$ and $\chi^{\delta^\mathrm{in}(X)} - \chi^{\delta^\mathrm{out}(X)}$.

An \emph{undirected circuit} of $G$ is a cycle of its underlying undirected graph, that is, a sequence $C = (v_0,e_1,v_1,\ldots,e_k,v_k)$ where $k \ge 1$, $v_0$, $v_1$, \ldots, $v_k$ are vertices, $e_i$ is an edge incident to both $v_{i-1}$ and $v_i$ for each $i \in [k]$, $v_0 = v_k$, $v_1$, \ldots, $v_k$ are pairwise distinct. To every undirected circuit there corresponds a \emph{signed incidence vector} in $\{0,\pm 1\}^{E(G)}$, defined as $\chi^{C^+} - \chi^{C^-}$ where $C^+$ denotes set of edges of the circuit that are traversed forwards and $C^-$ the set of edges traversed backwards.

\begin{lemma} \label{lem:circuits_graphic_cographic}
Let $A \in \R^{m \times n}$ be a graphic configuration, and let $G$ be a corresponding connected directed graph, with $n = |E(G)|$. Then, the rank of $A$ equals $|V(G)| - 1$, the circuits of $A$ are the signed incidence vectors of undirected circuits of $G$, and the cocircuits of $A$ are the signed incidence vectors of minimal nontrivial undirected cuts of $G$.
\end{lemma}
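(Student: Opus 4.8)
The plan is to reduce everything to the incidence matrix $M \in \{0,\pm1\}^{V(G)\times E(G)}$ of $G$ itself, because linear equivalence preserves the kernel, and both the circuits and the cocircuits of a configuration are determined by its kernel — the cocircuits through the kernel of any standardized dual. Since $M$ is totally unimodular it is in particular regular, so every circuit and cocircuit that appears is automatically a $\{0,\pm1\}$ vector; this is exactly what lets us pin down signs at the end of each step.

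First I would settle the rank. One inequality is immediate: each column $\chi^v-\chi^w$ of $M$ sums to zero, so $\one^\intercal M=\zero^\intercal$ and $\rank M\le |V(G)|-1$. For the matching lower bound, fix a spanning tree $T$ and peel leaves of $T$ off one at a time; this exhibits the $|V(G)|-1$ columns indexed by $E(T)$ as linearly independent, so $\rank M=|V(G)|-1$ and hence $\rank A=\dim\colsp(A)=n-\dim\ker(A)=|V(G)|-1$. The same leaf-peeling observation yields the fact I will use repeatedly: the columns of $M$ indexed by any forest are linearly independent.

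Next, the circuits. In one direction, summing the columns of $M$ along an undirected circuit $C$ with the signs coming from a fixed traversal telescopes to $\zero$, so the signed incidence vector of $C$ lies in $\ker(M)$; and it is support-minimal there, since deleting any one edge of $C$ leaves a forest, whose columns are independent, so no nonzero kernel vector can be supported in a proper subset of $E(C)$. Conversely, if $x$ is support-minimal in $\ker(M)\setminus\{\zero\}$, then $\supp(x)$ is not a forest, hence contains the edge set of some undirected circuit $C$; the signed incidence vector of $C$ is then a nonzero kernel vector inside $\supp(x)$, so minimality forces $\supp(x)=E(C)$, the kernel restricted to $E(C)$ is one-dimensional (again because $E(C)$ minus an edge has independent columns), and regularity together with the reducedness of $x$ give $x=\pm(\chi^{C^+}-\chi^{C^-})$.

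The cocircuits are the part I expect to need the most care. Writing $A$ in a standardized form $\begin{bmatrix}D & \identity_r\end{bmatrix}$ with $r=|V(G)|-1$, I would first verify the linear-algebra identity that $\ker\begin{bmatrix}\identity_{n-r} & -D^\intercal\end{bmatrix}$ equals the row space of $\begin{bmatrix}D & \identity_r\end{bmatrix}$, which equals the row space of $M$, i.e.\ the cut space of $G$; so the cocircuits of $A$ are precisely the support-minimal nonzero vectors of the cut space. Here one uses that the row space of $M$ is spanned by the signed stars $\chi^{\delta^{\mathrm{out}}(v)}-\chi^{\delta^{\mathrm{in}}(v)}$, and that summing these over $v\in X$ telescopes to the signed incidence vector of the cut $\delta(X)$. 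Given a support-minimal nonzero $z$ in the cut space, testing $z$ against the fundamental cycles of a spanning tree $T$ forces $z$ to be nonzero on some tree edge, so $G-\supp(z)$ is disconnected; letting $Y$ be the vertex set of one of its components, every edge of $\delta(Y)$ lies in $\supp(z)$, so the signed incidence vector of $\delta(Y)$ is a nonzero cut-space vector inside $\supp(z)$ and minimality gives $\supp(z)=\delta(Y)$. Contracting $Y$ and $\overline{Y}$ each to a single vertex turns $G$ into a two-vertex graph with $|\delta(Y)|$ parallel edges, whose cut space is one-dimensional, and $z$ descends to this contraction, so it is a scalar multiple of — hence, by regularity and reducedness, exactly $\pm$ — the signed incidence vector of $\delta(Y)$. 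Finally, minimality of $\supp(z)=\delta(Y)$ forces $\delta(Y)$ to be a bond: if $G[Y]$ (say) were disconnected, a component $Y'$ would give a strictly smaller nonempty cut $\delta(Y')\subseteq\delta(Y)$; and conversely the signed incidence vector of any bond is support-minimal in the cut space by the same two-vertex contraction. The recurring obstacle is purely bookkeeping the orientations so that every telescoping sum comes out as the claimed signed incidence vector; the rest is the standard spanning-tree and forest-independence toolkit for graphic matroids, essentially the content of \cite[Proposition~2.3.1]{oxley_2006}.
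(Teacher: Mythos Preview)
The paper does not actually prove this lemma; it simply states that it follows from \cite[Proposition~2.3.1]{oxley_2006}. Your proposal is a correct, self-contained proof that unpacks exactly the spanning-tree and cut-space arguments underlying that citation: forest columns are independent, support-minimal kernel vectors are signed cycles, and the dual kernel coincides with the row space (cut space) of the incidence matrix, whose support-minimal vectors are signed bonds.

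One small point worth tightening: the step ``testing $z$ against the fundamental cycles of a spanning tree $T$ forces $z$ to be nonzero on some tree edge, so $G-\supp(z)$ is disconnected'' implicitly quantifies over \emph{all} spanning trees. The orthogonality argument shows that $\supp(z)$ meets every spanning tree, hence $G-\supp(z)$ contains no spanning tree and is therefore disconnected; you may want to make that quantifier explicit. Also, your two-vertex contraction argument for one-dimensionality is correct (the cut space of $G/F$ consists exactly of the cut-space vectors of $G$ supported on $E(G)\setminus F$), but you could shortcut it via support-minimality alone: subtract a suitable multiple of the signed incidence vector of $\delta(Y)$ from $z$ to kill one coordinate, and minimality forces the difference to vanish. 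Either route works.
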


The next lemma relates the connectivity of a graphic or cographic configuration to the (undirected) vertex connectivity of the corresponding graph, see \cite[Chapter~8]{oxley_2006}. Below, we say that directed graph $G$ is \emph{simple} if it contains no parallel or antiparallel directed edges.

\begin{lemma} \label{lem:connectivity} 
Let $A \in \R^{m \times n}$ be a rank-$r$ configuration that is graphic or cographic, and let $G$ be a corresponding (weakly) connected directed graph, with $r = |V(G)| - 1$ and $n = |E(G)|$. Assuming $r \ge 2$, $A$ is $2$-connected if and only if $G$ is $2$-connected. Assuming $n \ge 4$, $A$ is $3$-connected if and only if $G$ is $3$-connected and simple. Now assume that $A$ is a cographic configuration. Then $A$ is almost $3$-connected if and only if $G$ can be obtained from a $3$-connected simple directed graph by subdividing some of its edges.
\end{lemma}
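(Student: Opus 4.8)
The plan is to reduce all three statements to the classical dictionary between the \emph{Tutte connectivity} of a matroid and the vertex-connectivity of a graph representing it graphically or cographically, as developed in \cite[Chapter~8]{oxley_2006}. The first point is that the connectivity of a configuration, as defined in \Cref{sec:configs}, is nothing but the Tutte connectivity of the matroid it represents. Indeed, if the columns of a configuration $A \in \R^{m \times n}$ are split as $A = \begin{bmatrix} A_1 & A_2 \end{bmatrix}$ with column-index sets $X$ and $Y$, and $\rho$ denotes the rank function of the matroid $M(A)$ represented by $A$, then
\[
\dim\bigl(\colsp(A_1) \cap \colsp(A_2)\bigr) = \dim\colsp(A_1) + \dim\colsp(A_2) - \dim\colsp(A) = \rho(X) + \rho(Y) - \rho(M(A)).
\]
Hence a $q$-separation of $A$ is exactly a $q$-separation of $M(A)$, so $A$ and $M(A)$ have the same connectivity. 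When $A$ is graphic with underlying connected digraph $G$ we have $M(A) = M(G)$, the cycle matroid; when $A$ is cographic we have $M(A) = M^*(G)$, the bond matroid whose circuits are the bonds of $G$ (cf.\ \Cref{lem:circuits_graphic_cographic}). Since Tutte connectivity is invariant under matroid duality, in both cases the connectivity of $A$ equals that of $M(G)$.

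For the first two statements I would then invoke the standard translations between the connectivity of $M(G)$ and the structure of $G$, all contained in \cite[Chapter~8]{oxley_2006}: for a loopless connected graph $G$ with at least three vertices (which corresponds to $r \ge 2$ in the graphic normalization), $M(G)$ is $2$-connected if and only if $G$ is $2$-connected; and for $|E(G)| \ge 4$, $M(G)$ is $3$-connected if and only if $G$ is $3$-connected and simple. In the latter equivalence, simplicity is forced because two parallel edges of $G$ form a parallel pair of $M(G)$, which together with any two further edges yields a $2$-separation, while a cut vertex or a $2$-vertex cut gives a $1$- or $2$-separation directly; the converse direction is Tutte's theorem that $M(H)$ is $3$-connected for every simple $3$-connected graph $H$. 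Combined with the previous paragraph, this gives the claimed equivalences both in the graphic case and, via the duality $\lambda(M^*(G)) = \lambda(M(G))$, in the cographic case.

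For the last statement, fix a cographic $A$ with $M(A) = M^*(G)$. Deleting from $A$ all but one column of each parallel class produces a configuration $A'$ with $M(A')$ equal to the simplification of $M^*(G)$. By \Cref{lem:circuits_graphic_cographic}, the parallel classes of $M^*(G)$ are precisely the nontrivial series classes of $M(G)$, that is, the maximal paths of $G$ through degree-$2$ vertices; and since deleting an edge of $M^*(G)$ amounts to contracting it in $G$, reducing each such path to a single edge gives a digraph $G'$ with $M(A') = M^*(G')$ — in other words, $G'$ is obtained from $G$ by suppressing all degree-$2$ vertices, so $G$ is a subdivision of $G'$. Applying the $3$-connectivity criterion above to the cographic configuration $A'$, and using that a graph is a subdivision of a $3$-connected simple graph exactly when its degree-$2$ suppression is $3$-connected and simple (a $3$-connected simple graph has minimum degree at least $3$, hence is unchanged by suppression), we conclude that $A$ is almost $3$-connected if and only if $G$ is obtained from a $3$-connected simple graph by subdividing some of its edges.

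The conceptual content here is entirely standard, so the real work lies in the bookkeeping: tracking the degenerate small cases (graphs with very few vertices or edges, cycles, and the like, where ``$2$-connected'' and the various separation counts behave atypically), checking that the hypotheses $r \ge 2$ and $n \ge 4$ are precisely what is needed to avoid them, and verifying carefully that the parallel-reduced configuration $A'$ in the last step is literally $M^*$ of the degree-$2$-suppressed graph and not merely cographic over some graph. This last identification — parallel classes of the bond matroid $\leftrightarrow$ series classes of the cycle matroid $\leftrightarrow$ degree-$2$ paths of $G$, together with the fact that deletion in $M^*(G)$ is contraction in $G$ — is the one point that should be spelled out rather than cited.
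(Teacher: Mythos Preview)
The paper does not give a proof of this lemma at all; it simply states the result and points to \cite[Chapter~8]{oxley_2006} for the underlying matroid facts. Your proposal is correct and does exactly what the paper implicitly intends: you identify configuration connectivity with Tutte connectivity of the represented matroid, invoke the standard equivalences from Oxley for $2$- and $3$-connectivity of $M(G)$, and for the ``almost $3$-connected'' clause you spell out the duality between parallel classes of $M^*(G)$ and series classes of $M(G)$ (i.e., degree-$2$ paths), which is precisely the extra bookkeeping the paper leaves to the reader.
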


\subsection{Change of variables}\label{sec:cov}

Given a (feasible) cographic instance of \Cref{problemEquality}, we can assume without loss of generality that $A = \begin{bmatrix}\identity_{n-r} & -D^\intercal \end{bmatrix}$, where $D \in \{0,\pm 1\}^{r \times (n-r)}$ is a network matrix. Next, using \Cref{propReductionIP} and applying a translation mapping $z$ to $\zero$, we reduce to an instance of \Cref{problemMCC} (MCICP), with the same matrix $A$ (regarded as a vector configuration, $A$ is a cographic configuration) and $b = \zero$. Finally, by the results of \Cref{sec:1sum2sum}, we further reduce this instance to a polynomial number of instances on almost $3$-connected, cographic configurations. We point out that given the network matrix $D$ (or its transpose $D^\intercal$), one can efficiently find a corresponding directed graph $G$, see for instance~\cite{tutte_1960}.

%

Recall from \Cref{sec:MCIPP} that by letting $x = M^\intercal y$ where $M = \begin{bmatrix} N & B\end{bmatrix}$ is the incidence matrix of $G$ with the row corresponding to some $v_0 \in V(G)$ removed and $B$ is a basis of $M$, we can rewrite the cographic instance of \Cref{problemMCC} at hand almost as an instance of the maximal constrained integer potential problem. More precisely, we transform \eqref{eqMCC} for $A = \begin{bmatrix}\identity_{n-r} & -D^\intercal \end{bmatrix}$ into \eqref{eq:IP1-cographic}. By the following lemma, we see that circuits of $A$ bijectively correspond to docsets of $G$.

\begin{lemma} \label{lem:circuits_cographic_are_docsets}
A vector $x \in \R^n \cong \R^{E(G)}$ is a circuit of $A = \begin{bmatrix} \identity_{n-r} & -D^\intercal \end{bmatrix}$ if and only if $x = \pm M^\intercal \chi^S$ where $S \subseteq V(G-v_0)$ is a nontrivial docset of $G$, and the incidence vector is taken in $\R^{V(G-v_0)} \cong \R^r$.
\end{lemma}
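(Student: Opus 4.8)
The plan is to exploit the duality already packaged into the preceding lemmas. Since $A = \begin{bmatrix}\identity_{n-r} & -D^\intercal\end{bmatrix}$ is by construction the \emph{dual} configuration of the graphic configuration $\begin{bmatrix}D & \identity_r\end{bmatrix}$, the circuits of $A$ are by definition exactly the cocircuits of $\begin{bmatrix}D & \identity_r\end{bmatrix}$. By \Cref{lem:circuits_graphic_cographic} applied to that graphic configuration and its corresponding directed graph $G$, the cocircuits are precisely the signed incidence vectors in $\{0,\pm1\}^{E(G)}$ of minimal nontrivial undirected cuts (bonds) of $G$. So the whole task reduces to checking that, under the identification $\R^n \cong \R^{E(G)}$, the signed incidence vector of the bond $\delta(S)$ equals $\pm M^\intercal \chi^S$, where $M$ is the full incidence matrix of $G$, $\chi^S \in \R^{V(G)}$, and that $S$ may be taken to avoid $v_0$ (so that $\chi^S$ lives in $\R^{V(G-v_0)}\cong\R^r$), and that minimal nontrivial undirected cuts correspond exactly to nontrivial docsets.

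First I would record the identity $M^\intercal \chi^S = \chi^{\delta^{\mathrm{out}}(S)} - \chi^{\delta^{\mathrm{in}}(S)}$: indeed the $e=(v,w)$-coordinate of $M^\intercal \chi^S$ is $(\chi^v - \chi^w)^\intercal \chi^S = \chi^S(v) - \chi^S(w)$, which is $+1$ if $v\in S, w\notin S$ (edge leaves $S$), $-1$ if $w\in S, v\notin S$ (edge enters $S$), and $0$ otherwise. This is exactly the signed incidence vector of the cut $\delta(S)$. Next, the preceding text defines a bond as $\delta(S)$ with both $G[S]$ and $G[\overline S]$ nonempty and connected — which is precisely the definition of $S$ being a nontrivial docset. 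So $x$ is a cocircuit of $\begin{bmatrix}D & \identity_r\end{bmatrix}$ iff $x = \pm M^\intercal \chi^S$ for some nontrivial docset $S\subseteq V(G)$. Finally, to reduce $\R^{V(G)}$ to $\R^{V(G-v_0)}$: since $M^\intercal \one = \zero$ (every row of the full incidence matrix, i.e. every column of $M^\intercal$... more precisely $\sum_{v} \chi^v = \one$ and $M^\intercal \one = \sum_{(v,w)}(\text{entry})$ telescopes to $0$ because each edge contributes $\chi^v - \chi^w$ summed against $\one$), we have $M^\intercal \chi^S = - M^\intercal \chi^{\overline S} = M^\intercal(\chi^S - \one)\cdot(\pm1)$; so replacing $S$ by $\overline S$ if necessary we may assume $v_0\notin S$, and then $M^\intercal\chi^S$ only involves the rows of $M$ for $V(G-v_0)$, i.e. $x = \pm N'^\intercal \chi^S$ where the incidence vector is read in $\R^{V(G-v_0)}$. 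Since $G$ is $2$-connected (having been reduced to the almost $3$-connected cographic case), $\overline S$ is also connected, so this swap preserves the docset property.

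The only genuinely delicate point — and the one I'd spend the most care on — is keeping the bookkeeping between the two column orderings straight: $A$ has its columns indexed by $E(G)$ via the standardized form $\begin{bmatrix}\identity_{n-r} & -D^\intercal\end{bmatrix}$ being the dual of $\begin{bmatrix}D & \identity_r\end{bmatrix}$, and one must confirm the permutation matching edges of $G$ to coordinates of $x$ is the same one used when $G$ was extracted from $D$ (via e.g. \cite{tutte_1960}). Once that identification is fixed, everything is a one-line dictionary: cocircuits of graphic configuration $=$ bonds $=$ nontrivial docsets, and the signed incidence vector of a bond $=$ $\pm M^\intercal\chi^S$. The claimed bijection "circuits of $A \leftrightarrow$ docsets of $G$" then follows by combining this with the observation that the map $S \mapsto \pm M^\intercal\chi^S$ is injective up to sign on nontrivial docsets (distinct bonds have distinct edge sets, and $S$ is recovered from $\delta(S)$ together with the connectivity of the two sides, modulo the $S\leftrightarrow\overline S$ ambiguity).
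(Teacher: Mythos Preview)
Your proposal is correct and follows essentially the same route as the paper's proof: identify circuits of $A$ with cocircuits of the graphic configuration $\begin{bmatrix}D & \identity_r\end{bmatrix}$, invoke \Cref{lem:circuits_graphic_cographic} to recognise these as signed incidence vectors of bonds, and compute $M^\intercal\chi^S = \chi^{\delta^{\mathrm{out}}(S)} - \chi^{\delta^{\mathrm{in}}(S)}$. The paper compresses all of this into three lines via the isomorphism $\alpha^{-1}(y) = M^\intercal y$ from $\R^r$ to $\ker(A)$; your version spells out the same ingredients more explicitly. One small redundancy: the swap $S \leftrightarrow \overline S$ trivially preserves the docset property by definition (both sides connected), so you do not need $2$-connectedness there.
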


\begin{proof}
The map $\alpha : \ker(A) \to \R^r$ such that $\alpha(x) = B^{-\intercal} x_B$ for all $x = \smallmat{x_N\\ x_B} \in \ker(A)$ is an isomorphism whose inverse is given by $\alpha^{-1}(y) = M^\intercal y$ for all $y \in \R^r$. If $S$ is a nontrivial docset of $G$ with $v_0 \notin S$, then $\pm M^\intercal \chi^S = \pm \chi^{\delta^\mathrm{out}(S)} \mp \chi^{\delta^\mathrm{in}(S)}$, which are both circuits of $A$. Moreover, every circuit of $A$ can be obtained in this way.
\end{proof}


Recall also that the problem we will solve, namely the maximum constrained integer potential problem, can be obtained from \eqref{eq:IP1-cographic} by adding one extra column to $M^\intercal$ and one extra row to $y$, both corresponding to vertex $v_0$, in such a way that $M^\intercal$ becomes the transpose of the (full) incidence matrix of $G$. 
Notice that letting $y(v_0) := 0$ transforms back \eqref{eqMCIPP} into \eqref{eq:IP1-cographic}.

In order to ease the task of the reader, we state formally below the precise version of the MCIPP that we have to solve, including the condition on the weight of docsets. 

\begin{problem}\label{problemMCP}
    Let $k,\Delta \in \Z_{\ge 1}$ be constants.    
    Given a connected directed graph $G$, profit vector $p \in \Z^{V(G)}$ with $\sum_{v \in V(G)} p(v) = 0$, weight matrix $W \in \Z^{[k] \times V(G)}$ such that each row of $W$ sums up to $0$ and $\|W \chi^S\|_\infty \le \Delta$ for all docsets $S$, target vector $d \in \Z^k$, bounding vectors $\ell, u \in \Z^{E(G)}$, solve 
    \begin{equation}
        \label{eqMCP}
        \max \left\{p^\intercal y : \ell(v,w) \le y(v) - y(w) \le u(v,w) \, \forall (v,w) \in E(G), \, Wy = d, \, y \in \Z^{V(G)}\right\}\,.\tag{$\mathrm{IP}_3$}
    \end{equation}
\end{problem}

Using the results of \Cref{sec:1sum2sum} and \Cref{lem:connectivity}, we may assume without loss of generality that the input graph $G$ is a $3$-connected directed graph some of whose edges are subdivided.
In fact, we can also assume that no vertex of degree $2$ is a root.
This follows from \Cref{lem:cov-weights} and the fact that the edges $e$ incident to vertices of degree $2$ have $W(\cdot,e)=\zero$, see \Cref{lem:gadget}.
Moreover, we can still argue that \eqref{eqMCP} either is infeasible, or has an optimal solution $y$ that is a sum of at most $f_{\ref{propReductionIP}}(k,\Delta)$ incidence vectors of docsets of $G$. 
This follows from \Cref{propReductionIP} and \Cref{lem:circuits_cographic_are_docsets}.
Notice that for any solution $y$ to \Cref{problemMCP}, $y':=y + \one$ is a solution to \Cref{problemMCP} with $p^\intercal y'=p^\intercal y$.

The following lemma follows directly from the definition of the incidence matrix, and gives a concrete characterization of the profit vector $p$ and weight matrix $W$ in \Cref{problemMCP} after our change of variables.

\begin{lemma}\label{lem:cov-weights}
    Let $M\in\{0,\pm1\}^{V(G)\times E(G)}$ be the incidence matrix of a directed graph $G$ and $w\in\Z^{E(G)}$ a weight vector.
    Then for each $v\in V(G)$, 
    \begin{equation*}
        (M^\intercal w)(v)=\sum_{e\in\delta^\mathrm{out}(v)}w(e)-\sum_{e\in\delta^\mathrm{in}(v)}w(e).        
    \end{equation*}
\end{lemma}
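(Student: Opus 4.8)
The plan is to expand the matrix--vector product $M^\intercal w$ coordinate by coordinate and read off the coefficients directly from the definition of the incidence matrix fixed in \Cref{sec:cographic_circuit}. Recall that the column of $M$ corresponding to a directed edge $(a,b) \in E(G)$ is $\chi^a - \chi^b$; equivalently, the entry $M_{v,e}$ equals $+1$ when $e$ leaves $v$, equals $-1$ when $e$ enters $v$, and equals $0$ otherwise. Since $G$ has no loops, no edge both leaves and enters the same vertex, so for a fixed $v$ the sets $\delta^{\mathrm{out}}(v)$ and $\delta^{\mathrm{in}}(v)$ are disjoint and every edge in $\delta(v)$ lies in exactly one of them.

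First I would fix a vertex $v \in V(G)$ and write
\[
(M^\intercal w)(v) = \sum_{e \in E(G)} M_{v,e}\, w(e).
\]
Then I would split the sum according to the value of $M_{v,e}$: the terms with $M_{v,e} = 1$ are exactly those with $e \in \delta^{\mathrm{out}}(v)$, the terms with $M_{v,e} = -1$ are exactly those with $e \in \delta^{\mathrm{in}}(v)$, and all remaining terms vanish. This gives
\[
(M^\intercal w)(v) = \sum_{e \in \delta^{\mathrm{out}}(v)} w(e) - \sum_{e \in \delta^{\mathrm{in}}(v)} w(e),
\]
as claimed; since $v$ was arbitrary, the proof is complete.

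There is no real obstacle here: the statement is simply an unpacking of the sign convention for incidence matrices, and the only point requiring (minimal) care is that the no-loops assumption ensures $\delta^{\mathrm{out}}(v)$ and $\delta^{\mathrm{in}}(v)$ partition the edges incident to $v$, so that no cancellation or double-counting occurs in the split sum.
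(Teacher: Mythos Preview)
Your proof is correct and is exactly what the paper intends: the paper does not give a proof at all, merely stating that the lemma ``follows directly from the definition of the incidence matrix,'' and your argument is precisely that direct unpacking.
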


\subsection{Rooted $K_{2,t}$-models}\label{sec:pumpkins}

In this section we consider (vertex)-weighted undirected graphs $(G,a)$ without a docset of large weight, and study related properties of rooted $K_{2,t}$-minors in such graphs, where the roots are the vertices $v$ with $a(v) \neq 0$.
Note that the definition of docsets extends naturally to undirected graphs by requiring that for a set of vertices $S\subseteq V(G)$ both $G[S]$ and $G[\overline{S}]$ are connected.
We use the following notation.

\begin{definition}[maximum weight of a docset $\beta(G,a)$]
    Let $G$ be a connected graph and $a \in \Z^{V(G)}$ be vertex weights such that $\sum_{v \in V(G)} a(v) = 0$. For $S \subseteq V(G)$, we let $a(S) := \sum_{v \in S} a(v)$. We define 
    \[
    \beta(G,a) := \max \left\{ a(S) : S \subseteq V(G),\  S \text{ is a docset of } G\right\}\,.
    \]
\end{definition}

Notice that $a\big(\overline{S}\big) = -a(S)$, hence $\beta(G,a) \ge 0$. If $G$ is $2$-connected, then $\{v\}$ is a docset for every vertex $v$, which implies $\beta(G,a) \ge \max \left\{|a(v)| : v \in V(G)\right\}$. Every instance of \Cref{problemMCP} has $\beta(G,a) \leq \Delta$ for all weight vectors $a$ which are (the transpose of) a row of $W$.

The main result we prove is that no weighted graph $(G,a)$ with $\beta(G,a) \leq \Delta$ can contain a rooted $K_{2,t}$-minor for $t > 4\Delta$, taking as roots the vertices $v$ with $a(v) \neq 0$. 
We need the following technical lemma. 
Below, we use $N(X)$ to denote the open neighborhood of $X \subseteq V(G)$ in $G$, that is, we let $N(X) := \{v \in V(G-X) : \exists w \in X : vw \in E(G)\}$. 
For $X \subseteq V(G)$ and $v \in V(G)$ we denote by $X + v$ the set $X \cup \{v\}$.

\begin{lemma}\label{lemma:addition_bis}
    Let $G$ be a $2$-connected graph and let $X$, $Y$ be two docsets of $G$. If $X \subsetneq Y$, then there exists some vertex $v \in Y \setminus X$, such that $X + v$ is a docset.
\end{lemma}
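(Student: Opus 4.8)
The plan is to contract both $X$ and $\overline{Y}$ to single vertices and then run a short block--cut tree argument on what remains. First I would dispose of degeneracies: docsets are nonempty and proper, so $Y\setminus X\neq\varnothing$, and $|\overline{X}|\ge 2$ — otherwise $\overline{X}$ is a single vertex and $Y\supsetneq X$ would force $Y=V(G)$, which is not a docset. Then I would form $\bar{G}:=G/X/\overline{Y}$ (deleting loops), a graph on vertex set $\{x^*,y^*\}\cup(Y\setminus X)$, where $x^*$ and $y^*$ are the vertices obtained by contracting $X$ and $\overline{Y}$ respectively. The first substantive step is to check that $\bar{G}$ is $2$-connected. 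The tool is the observation that if $H$ is $2$-connected and $S\subseteq V(H)$ is such that $H[S]$ and $H[\overline{S}]$ are both connected and $|\overline{S}|\ge 2$, then $H/S$ is $2$-connected — indeed $H/S$ has at least three vertices, $(H/S)-s^*=H[\overline{S}]$ is connected, and for $w\notin S$ the graph $(H/S)-w=(H-w)/S$ is a contraction of a connected subgraph of the connected graph $H-w$, hence connected, so $H/S$ has no cut vertex. I would apply this twice: first with $H=G$ and $S=X$ (valid since $X$ is a docset and $|\overline{X}|\ge 2$), giving that $G/X$ is $2$-connected; then with $H=G/X$ and $S=\overline{Y}$, noting that $(G/X)[\overline{Y}]=G[\overline{Y}]$ is connected, its complement in $G/X$ equals $(G[Y])/X$ which is connected, and that complement has $1+|Y\setminus X|\ge 2$ vertices; this yields that $\bar{G}$ is $2$-connected.

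Next I would reformulate the target in terms of $\bar{G}$. For $v\in Y\setminus X$ one has $\overline{X+v}=\overline{X}\setminus\{v\}\supseteq\overline{Y}$, so $G[X+v]$ is connected exactly when $v$ has a neighbour in $X$, i.e.\ when $v$ is adjacent to $x^*$ in $\bar{G}$; and $G[\overline{X+v}]$ is connected exactly when its contraction $G[\overline{X}\setminus\{v\}]/\overline{Y}=\bar{G}-\{x^*,v\}$ is connected (using that $\overline{Y}$ induces a connected subgraph contained in $\overline{X}\setminus\{v\}$). Hence the lemma reduces to finding $v\in Y\setminus X$ that is adjacent to $x^*$ in $\bar{G}$ and for which $\bar{G}-\{x^*,v\}$ is connected.

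For this, I would set $L:=\bar{G}-x^*$ (which equals $G[\overline{X}]/\overline{Y}$, hence is connected) and argue by contradiction, assuming that every $v\in Y\setminus X$ adjacent to $x^*$ is a cut vertex of $L$. If $L$ consists of a single block, then (as $|V(L)|\ge 2$) it has no cut vertex, while $2$-connectedness of $\bar{G}$ gives $x^*$ a neighbour $v\neq y^*$, i.e.\ $v\in Y\setminus X$, a non-cut vertex of $L$ — contradiction. Otherwise the block--cut tree of $L$ has at least two leaf blocks. For a leaf block $B$ with cut vertex $c_B$, the set $B\setminus c_B$ is nonempty, consists of non-cut vertices of $L$, and in $\bar{G}$ these vertices have no neighbour outside $B\cup\{x^*\}$; so if none of them were adjacent to $x^*$, then $c_B$ would separate $B\setminus c_B$ from $x^*$ in $\bar{G}$, contradicting $2$-connectedness. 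Therefore some non-cut vertex $v'\in B\setminus c_B$ of $L$ is adjacent to $x^*$, and by our assumption $v'\notin Y\setminus X$, which forces $v'=y^*$. Thus $y^*\in B\setminus c_B$ for \emph{every} leaf block $B$; but a non-cut vertex of $L$ lies in a unique block, so $L$ would have only one leaf block — contradiction. Hence a suitable $v$ exists, and for it $X+v$ is a docset.

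I expect the main obstacle to be the very first point — that $\bar{G}$ remains $2$-connected after the two contractions — since contracting an arbitrary connected subgraph of a $2$-connected graph need \emph{not} preserve $2$-connectedness; the argument works only because $X$ and $Y$ being docsets makes $X$, $\overline{X}$, $\overline{Y}$ and $Y=X\cup(Y\setminus X)$ all induce connected subgraphs, which is exactly the hypothesis needed to invoke the observation above (twice). Everything else is a routine translation between docsets of $G$ and ``$x^*$-neighbours with connected deletion'' in $\bar{G}$, together with a standard block--cut tree argument.
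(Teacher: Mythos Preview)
Your proof is correct, but it takes a genuinely different route from the paper's. The paper argues directly in $G$: fix any $u\in\overline{Y}$ and choose $v\in N(X)\cap Y$ at \emph{maximum} distance from $u$ in $G[\overline{X}]$. Then $G[X+v]$ is trivially connected, and a short argument shows $G[\overline{X+v}]$ is connected: if it were not, the component $K$ containing $\overline{Y}$ would also contain every other $w\in N(X)\cap Y$ (since by maximality no shortest $u$--$w$ path in $G[\overline{X}]$ passes through $v$), while any other component $K'$ would have $N(K')\subseteq X+v$ and, by $2$-connectedness of $G$, must contain some $w\in N(X)\cap Y$ distinct from $v$---a contradiction.

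Your approach instead passes to the contracted graph $\bar{G}=G/X/\overline{Y}$, checks that it stays $2$-connected (using that both contracted sets and their complements are connected), reformulates the problem as finding a neighbour of $x^*$ whose removal from $\bar{G}-x^*$ keeps it connected, and concludes via a block--cut tree argument on $\bar{G}-x^*$. This is sound; the only slightly delicate steps---that $x^*$ has a neighbour other than $y^*$, and that contraction may create parallel edges---are handled by the $2$-connectedness of $\bar{G}$ (any vertex with a single neighbour would make that neighbour a cut vertex). The paper's proof is shorter and gives an explicit choice of $v$; yours is more structural and makes the role of $2$-connectedness very transparent, at the cost of the extra setup for the two contractions.
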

\begin{proof}
    Let $u \in \overline{Y}$ be arbitrary. 
    Since $Y$ is connected and $X$ is a proper subset of $Y$, $N(X) \cap Y$ is nonempty. 
    Let $v$ denote any vertex of $N(X) \cap Y$ whose distance from $u$ in $G[\overline{X}] = G-X$ is maximum. 
    Clearly, $G[X + v]$ is connected. We claim that $G[\overline{X + v}]$ is also connected. 
    
    For the sake of contradiction, assume that $G[\overline{X+v}]$ has several (connected) components. Since $G[\overline{Y}]$ is connected, it is contained in a unique component of $G[\overline{X+v}]$, say $K$. Let $w$ be any vertex of $N(X) \cap Y$ distinct from $v$. By choice of $v$, no shortest path from $u$ to $w$ in $G[\overline{X}]$ contains $v$. This proves that $w \in K$. 
    
    Let $K'$ be any component of $G[\overline{X+v}]$ distinct from $K$. Observe that $V(K') \subseteq Y$. Since $K'$ is a component of $G[\overline{X+v}]$, we have $N(K') \subseteq X+v$. Since $G$ is $2$-connected, $K'$ should have at least two neighbors, and at least one distinct from $v$. But this implies that $K'$ contains at least one vertex $w \in N(X)\cap Y$ distinct from $v$. By the argument in the previous paragraph, $w\in K$, hence we get a contradiction to the assumption that $K$ and $K'$ are different connected components in $G[\overline{X + v}]$. Hence, the claim is proved and $X + v$ is a docset, as required.
\end{proof}


\begin{definition}[model]
    Given a graph $H$, an {\em $H$-model $M$} in a graph $G$ consists of one connected vertex subset $M(v) \subseteq V(G)$ for each vertex $v\in V(H)$, called a {\em branch set}, and one edge $M(uv) \in E(G)$ with one endpoint in $M(u)$ and the other in $M(v)$ for each edge $uv \in E(H)$ such that all branch sets are pairwise vertex disjoint.  
Note that $G$ contains $H$ as a minor if and only if $G$ has an $H$-model. 
\end{definition}

We stress that in this section and Section \ref{sec:structure} we use $M$ to denote an $H$-model, whereas previously we used $M$ for a matrix. The meaning of $M$ will always be clear from context.

\begin{definition}[rooted $K_{2,t}$-model]
    Given a rooted graph $(G, R)$, a {\em rooted $K_{2,t}$-model} in $(G, R)$ is a model of $K_{2,t}$ in $G$ such that each of the $t$ branch sets corresponding to vertices on the ``$t$ side'' of $K_{2,t}$ contains a vertex from $R$. 
    (Such branch sets will be called \emph{central branch sets}.)
    Note that $(G, R)$ contains a rooted $K_{2,t}$-minor if and only if $(G, R)$ contains a rooted $K_{2,t}$-model. 
\end{definition}

In the proof below, we say that two vertex subsets $X, Y$ of a graph $G$ \emph{touch} if their distance in $G$ is at most $1$.

\begin{lemma}\label{lemma:pumpkin_bis}
    Let $(G,a)$ be a $2$-connected weighted graph, where $a \in \Z^{V(G)}$ satisfies $\sum_{v \in V(G)} a(v) = 0$. 
    Let $R = \{v\in V(G): a(v) \neq 0\}$. 
    If $(G, R)$ contains a rooted $K_{2,t}$-model, then $\beta(G,a)\ge t/4$. 
\end{lemma}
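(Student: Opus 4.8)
The plan is to start from a rooted $K_{2,t}$-model in $(G,R)$, with hubs' branch sets $A_1,A_2$ and central branch sets $C_1,\dots,C_t$, each $C_i$ containing a vertex $r_i$ with $a(r_i)\neq 0$. The idea is to build a single docset $S$ that ``captures'' many of the $r_i$'s on the same side of zero, i.e.\ many $r_i$ with $a(r_i)$ of one fixed sign; by pigeonhole at least $t/2$ of the central branch sets contain a root of a common sign, say positive (the negative case being symmetric after replacing $a$ by $-a$), so we may restrict attention to those and assume $t'\ge t/2$ of them. I would further like the docset $S$ to avoid all the \emph{other} roots so that $a(S)$ is a sum of at least $t'$ positive integers minus possibly some negatives that I control; the cleanest route is to make $S$ contain each chosen $C_i$ entirely (hence each $r_i$) and contain no vertex outside $A_1\cup A_2\cup\bigcup_i C_i$, so that the only roots in $S$ besides the $r_i$ are those lying in $A_1\cup A_2$ — but there can be many of those, so this needs a second pigeonhole/halving to fix signs there too, or an argument that $S$ can be taken to meet only a bounded portion of $A_1\cup A_2$. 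This is where the factor $4$ rather than $2$ comes from.

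Concretely, here is the sequence of steps I would carry out. First, reduce to $t'\ge t/2$ central branch sets whose roots all have the same sign, WLOG positive. Second, form $Y := A_1\cup A_2\cup\bigcup_{i=1}^{t'} C_i$: this set is connected (the two hubs are joined through every $C_i$, and in fact through any single one), and I claim $\overline Y$ is connected too — if not, one would need to argue using the remaining structure, but actually the safe move is \emph{not} to take all of $Y$ but to grow a docset inside it. So instead: pick any one docset to start from (e.g.\ $\{r_1\}$, which is a docset since $G$ is $2$-connected), and repeatedly apply Lemma~\ref{lemma:addition_bis} to enlarge it one vertex at a time, always staying inside a target docset. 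The target docset I use is one that contains all of $A_1\cup\bigcup_i C_i$ but is disjoint from $A_2$; such a set $Z$ is a docset provided $G[Z]$ and $G-Z$ are both connected, which holds because $A_1$ together with the $C_i$'s and the interior of a spanning-tree-like structure is connected, and $G-Z\supseteq A_2$ plus everything outside, still connected in a $2$-connected graph. Third, by Lemma~\ref{lemma:addition_bis}, starting from $\{r_1\}\subseteq Z$ I can find a chain $\{r_1\}=S_0\subsetneq S_1\subsetneq\cdots$ of docsets ending at $Z$; along this chain I stop the first moment $S_j$ contains all of $r_1,\dots,r_{t'}$. Fourth, estimate $a(S_j)$: it contains all $t'$ positive roots $r_i$, so those contribute at least $t'$; the only other roots it can contain are those in $A_1$ (since $S_j\subseteq Z$ and $Z\cap A_2=\varnothing$, and $Z$ avoids all roots outside $A_1\cup\bigcup C_i$ by construction — here I must choose $Z$ to avoid stray roots, which I can do since roots outside $A_1\cup A_2\cup\bigcup C_i$ need not be in $Z$). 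The roots inside $A_1$ contribute a value of absolute value at most $\beta(G,a)\le$ the max docset weight... no: I bound them by restricting $Z$ to only include a \emph{sub}-branch-set of $A_1$, or — cleaner — I observe $a(S_j\cap A_1) = a(S_j) - a(\text{roots }r_i\text{ in }S_j)$ and instead of bounding $A_1$'s contribution I split the whole construction symmetrically: do the analogous growth toward a docset $Z'$ avoiding $A_1$ and capturing $A_2$, giving $S_j'$; then one of $a(S_j),a(S_j')$ together with a third, "all of $A_1\cup A_2$" docset comparison forces $\beta\ge t'/2\ge t/4$.

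The main obstacle I expect is exactly this last bookkeeping: ensuring the docset I build contains the $t'$ chosen roots (contributing $\ge t'$) while its intersection with the unavoidable ``hub'' sets $A_1,A_2$ does not secretly contribute a large \emph{negative} amount that cancels the gain. The factor $4$ in the statement (rather than $2$) strongly suggests the intended argument pays a factor $2$ for the sign-pigeonhole on the $r_i$'s and another factor $2$ for isolating the contribution of one hub versus the other — e.g.\ by comparing $a(S)$ for a docset $S$ containing $A_1$ and the $r_i$'s but not $A_2$, with $a(S')$ for the symmetric choice, and noting $a(S)+a(S') $ or $a(S)-a(S'\setminus\text{stuff})$ isolates $2\sum_i a(r_i)\ge 2t'$, one of the two terms on the left being at most $\beta(G,a)$ in absolute value while the other is controlled by a further docset. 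I would expect to spend most of the write-up making precise which docsets are legitimate (using Lemma~\ref{lemma:addition_bis} to avoid ever checking connectivity of $\overline S$ by hand) and then a short arithmetic step combining two or three docset weight bounds to reach $\beta(G,a)\ge t/4$.
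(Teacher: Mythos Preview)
Your plan has a real gap. You pigeonhole on the signs of the \emph{individual} root weights $a(r_i)$, then build a docset $S_j\subseteq Z=A_1\cup\bigcup_i C_i$ containing $r_1,\dots,r_{t'}$, and assert that ``the only other roots it can contain are those in $A_1$''. That is false: each central branch set $C_i$ is only required to contain \emph{some} root, and may contain many further roots of either sign. Nothing in the growth via Lemma~\ref{lemma:addition_bis} prevents $S_j$ from swallowing those, so the positive contribution $+t'$ from the $r_i$'s can be completely cancelled inside the $C_i$'s before you ever touch $A_1$; the symmetric construction with $A_2$ does not help, for the same reason. A second, independent issue: you assert that $Z=A_1\cup\bigcup_i C_i$ is a docset because ``$G-Z\supseteq A_2$ plus everything outside, still connected in a $2$-connected graph'' --- $2$-connectivity gives no such guarantee; vertices outside the model can attach to a single $C_i$ and become separated from $A_2$ once $Z$ is removed.

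The paper's proof avoids both problems with two moves you are missing. First, it extends the model to a \emph{full} one covering all of $V(G)$; then $X^{+}:=B_{-1}\cup\bigcup_{a(B_i)>0}B_i$ and the analogous $X^{-}$ are automatically docsets, since their complements are connected through $B_0$. Second --- and this is the key idea --- it works with the total branch-set weights $a(B_i)$ rather than the individual $a(r_i)$: then $a(X^{+})-a(X^{-})=\sum_i|a(B_i)|$, so it suffices that at least $t/2$ of the $a(B_i)$ are nonzero integers. That last point is not free (a branch set can contain a root yet have total weight zero), and the paper secures it by choosing the full model to \emph{maximize} the number of central branch sets with $a(B_i)\ne 0$, then running a short exchange argument: inside a zero-weight $B_i$, Lemma~\ref{lemma:addition_bis} lets one peel off a nonzero-weight connected piece and push it into a neighbouring branch set, and maximality forces each such $B_i$ to be matched with a distinct nonzero-weight $B_j$.
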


\begin{proof}
    Notice that we may assume that $t\ge 4$ without loss of generality. 
    Notice further that any rooted $K_{2,t}$-model 
    in $(G, R)$ that avoids some vertex can be modified in order to cover the whole vertex set of $G$. 
    This follows easily from the connectivity of $G$. 
    We call such models \emph{full}. 
    Let $M$ denote a full rooted $K_{2,t}$-model 
    in $(G, R)$ that maximizes the number of nonzero weight central branch sets. 
    In other words, letting $V(K_{2,t}) := \{-1,0\} \cup \{1,\ldots,t\}$, we assume that $M$ maximizes 
    $$
        f(M) := |\{i \in [t] : a(M(i)) \neq 0\}|
    $$
    among all full rooted $K_{2,t}$-models in $(G, R)$.

    We claim that $f(M) \ge t/2$, that is, at least $t/2$ of the central branch sets have nonzero weight.
		
    For notational convenience, we let $B_i := M(i)$ (where $i \in \{-1,0\} \cup [t]$) denote the branch sets of $M$. Assuming that the claim holds, consider the docsets
    $$
    X^+ := B_{-1} \cup \bigcup_{i \in [t] : a(B_i) > 0} B_i \quad \text{and} \quad 
    X^- := B_{-1} \cup \bigcup_{i \in [t] : a(B_i) < 0} B_i\,.
    $$
    Notice that the claim implies $a(X^+) - a(X^-) \ge t/2$. Hence, $a(X^+) \ge t/4$ or $a(X^-) \le -t/4$. This implies that $\beta(G,a) \ge t/4$.

    Consider any central branch set $B_i$ of zero weight. 
    Since $M$ is full, $B_i$ is a docset. 
    We claim that $B_i$ has a partition into docsets $U_i, V_i$, which both are of nonzero weight. 
    Let $u_i \in B_i$ denote any vertex adjacent to some vertex of $B_{-1}$. 
    By applying iteratively \Cref{lemma:addition_bis} starting from docsets $\{u_i\}$ and $B_i$, we can find a docset $W_i \subsetneq B_i$ that contains $u_i$ and a unique root. 
    Thus, both $a(W_i) \neq 0$ and $a(B_i \setminus W_i) \neq 0$. Observe that $G[B_i \setminus W_i]$ decomposes into connected components, at least one of which has nonzero weight, which we denote by $V_i$. 
    Let $U_i := B_i \setminus V_i$. 
    By construction, $U_i$ and $V_i$ induce connected subgraphs and $a(V_i) \neq 0$ and thus $a(U_i) \neq 0$. 
    Observe that also $\overline{U_i}$ and $\overline{V_i}$ induce connected subgraphs. 
    Indeed, the set $\overline{V_i} \cap B_i$ is $U_i$ and therefore connected, the set $U_i$ contains $W_i$ and therefore is touching $B_{-1}$. 
    The set $\overline{U_i}$ induces a connected subgraph because $G[\overline{W_i}]$ is connected and hence $V_i$ is touching at least one $B_j$ for $j\neq i$ (note that $j$ can be also an index of a central branch set different from $B_i$).
    Thus, $U_i$ and $V_i$ are docsets.

    By maximality of $M$, $V_i$ touches neither $B_{-1}$ nor $B_0$ and hence $U_i$ touches both $B_{-1}$ and $B_0$. Otherwise for some $j \in \{-1,0\}$ we could replace $B_i$ by $U_i$ and $B_j$ by $B_j \cup V_i$ and strictly increase $f(M)$.
    
    Recall that $U_i$ is a docset, hence $V_i$ touches $B_j$ for some $j \in [t]$ different from $i$. If $a(V_i) + a(B_j) \neq 0$, we could merge $B_j$ and $V_i$ and contradict the maximality of $M$. In particular, all the central branch sets $B_j$ with $j \neq i$ touched by $V_i$ have nonzero weight.
      
    Assume that there is an index $i'\in [t]$ with $i'\neq i$, such that $a(B_i) = a(B_{i'}) = 0$ and $V_i$, $V_{i'}$ touch the same central branch set $B_j$ with $a(B_j) \neq 0$. By the argument given above, we have $a(V_i) + a(B_j) = 0$. This implies that $a(V_i) + a(V_{i'}) + a(B_j) = a(V_{i'}) \neq 0$, in which case we can move all the vertices of $V_i$ and $V_{i'}$ inside $B_j$ to strictly increase $f(M)$, which contradicts again the maximality of $M$.
    
    It follows that the number of central branch sets $B_j$ with nonzero weight are at least as numerous as the central branch sets $B_i$ with zero weight. 
    That is, $f(M) \ge t/2$. 
    This proves the claim. 

    Using the fact that $f(M) \ge t/2$ and that the model $M$ is full, it is easily seen that $\beta(G,a)\ge t/4$. The lemma follows.     
\end{proof}

We remark that the bound of $t/4$ in \Cref{lemma:pumpkin_bis} is tight, as shown in~\cite[Remark~4.39]{Kob23}.

\Cref{lem:4kDelta+1} from \Cref{sec:outline} follows directly from the previous lemma.

\begin{proof}[Proof of \Cref{lem:4kDelta+1}]
    Note that the set of roots $R$ in $G$ is defined as in \Cref{lemma:pumpkin_bis}.
    Assume to the contrary that $(G,R)$ contains a rooted $K_{2,t}$-minor, and consider a corresponding rooted $K_{2,t}$-model. 
    By the pigeonhole principle, there exists a row $w_i$, $i\in[k]$ of $W$, such that at least $4\Delta+1$ central branch sets of the model contain a vertex with a non-zero entry in $w_i$.
    By \Cref{lemma:pumpkin_bis}, this implies that $\beta(G,w_i)\ge\Delta+1$, a contradiction.
\end{proof}

We conclude with two simple results describing how the size of the largest rooted $K_{2,t}$-model in a graph can change by adding or contracting an edge.
\begin{lemma}\label{adding_an_edge}
    Let $(G,R)$ be a rooted graph, and let $e = uv$ where $u, v \in V(G)$. If $(G+e,R)$ contains a rooted $K_{2,t}$-model (with $t \ge 2$), then $(G, R)$ contains a rooted $K_{2,\lceil t/2 \rceil}$-model.
\end{lemma}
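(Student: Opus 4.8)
The plan is to take a rooted $K_{2,t}$-model $M$ in $G+e$ and analyse the few roles that the single new edge $e=uv$ can play in it. Write $V(K_{2,t})=\{-1,0\}\cup[t]$, with $-1$ and $0$ the two degree-$t$ vertices (the \emph{hubs}) and $1,\dots,t$ the central vertices, so that the branch sets $M(1),\dots,M(t)$ each meet $R$ and the model edges are $M(\{-1,i\})$ and $M(\{0,i\})$ for $i\in[t]$. Since the branch sets are pairwise disjoint, $e$ falls into exactly one of three situations: (a) $e$ is one of the $2t$ model edges; (b) both endpoints of $e$ lie in a single branch set $M(x)$; or (c) neither of these. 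In case (c), and in case (b) when $(G+e)[M(x)]-e$ is still connected, $M$ is already a rooted $K_{2,t}$-model of $G=(G+e)-e$; since a rooted $K_{2,m}$-model contains a rooted $K_{2,m'}$-model whenever $m'\le m$ (discard surplus central branch sets) and $t\ge\lceil t/2\rceil$, we are done.

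In case (a), say $e=M(\{j,i\})$ with $j\in\{-1,0\}$ and $i\in[t]$, I would simply delete the central vertex $i$, i.e.\ restrict $M$ to $V(K_{2,t})\setminus\{i\}$: every remaining model edge differs from $e$ and so lies in $G$; every remaining branch set is disjoint from $M(i)$, hence contains at most one endpoint of $e$, so $e$ is not internal to it and it stays connected in $G$; and every remaining central branch set keeps its root. This yields a rooted $K_{2,t-1}$-model of $G$. The same move handles case (b) when $x=i$ is a central vertex and $e$ disconnects $(G+e)[M(i)]$: here $e$ is internal to $M(i)$, hence neither a model edge incident to, nor internal to, any other branch set, so discarding $i$ again leaves a rooted $K_{2,t-1}$-model of $G$. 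Since $t\ge 2$ we have $t-1\ge\lceil t/2\rceil$, which is enough.

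The remaining case is case (b) with $x\in\{-1,0\}$ a hub, say $x=0$, such that $(G+e)[M(0)]-e=G[M(0)]$ is disconnected, with components $A$ and $B$ (each connected in $G$); this is precisely the case that forces the factor $2$. For each $i\in[t]$ the endpoint of $M(\{0,i\})$ lying in $M(0)$ is in $A$ or in $B$, which partitions $[t]$ as $I_A\sqcup I_B$; one of the two parts, say $I_A$, has $|I_A|\ge\lceil t/2\rceil$. I would then build a rooted $K_{2,|I_A|}$-model of $G$ by using $A$ as the branch set of hub $0$, keeping $M(-1)$, and keeping $M(i)$ for $i\in I_A$: the used model edges $M(\{-1,i\})$ and $M(\{0,i\})$ for $i\in I_A$ all differ from $e$ and have both ends present; the used branch sets other than $A$ are disjoint from $M(0)$, hence unaffected by deleting $e$; and the kept central branch sets keep their roots. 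This completes the case analysis. I do not expect a genuine obstacle: the argument is bookkeeping, and the only point that matters is the observation underlying the tight case — an edge internal to a hub branch set splits the central vertices into two groups and deleting it forces us to keep just one of them — which is exactly why $\lceil t/2\rceil$, rather than $t-1$, is the correct bound; the rest is checking, via disjointness of branch sets, that the retained model edges avoid $e$ and the retained branch sets have no internal bridge at $e$.
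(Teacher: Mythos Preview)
Your proof is correct and follows the same approach as the paper's own proof: a case analysis on how the edge $e$ sits in a rooted $K_{2,t}$-model of $G+e$ (as a model edge, inside a central branch set, or inside a hub branch set), with the hub case being the one that forces the $\lceil t/2\rceil$ bound. Your version is simply more detailed than the paper's terse argument, making explicit the checks that the retained branch sets stay connected in $G$ and the retained model edges avoid $e$.
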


\begin{proof}
    Let $M$ be a rooted $K_{2,t}$-model in $(G+e, R)$. If the vertices $u$ and $v$ are contained in different branch sets of the model, then $(G, R)$ contains a rooted $K_{2,t-1}$-model, and hence a rooted $K_{2,\lceil t/2 \rceil}$-model since $t \ge 2$. If $e$ is contained in one of the central branch sets $M(i)$ (with $i \in [t]$), then again $(G, R)$ contains a rooted $K_{2,t-1}$-model. Finally, suppose that $e$ is contained in one of the branch sets $M(-1)$ or $M(0)$. Without loss of generality, assume that $e$ is contained in $M(-1)$. Then $G[M(-1)]$ has at most two connected components and therefore $(G, R)$ contains a rooted $K_{2,\lceil t/2 \rceil}$-model. 
\end{proof}

The following lemma is straightforward. Below, we let $(G, R) \contract e$ denote the rooted graph obtained from $(G,R)$ by contracting edge $e = uv \in E(G)$: if none of $u$ or $v$ is a root then we keep the same roots, otherwise we remove $u$ and/or $v$ from the set of roots and replace them by the vertex arising from the contraction of $e$.

\begin{lemma}\label{contraction}
    Let $(G,R)$ be a rooted graph, and let $e \in E(G)$. If $(G, R) \contract e$ contains a rooted $K_{2,t}$-model, then $(G,R)$ also contains a rooted $K_{2,t}$-model.
\end{lemma}
 
\subsection{The algorithm}\label{sec:algorithm}
In this section, we describe how to solve instances of \Cref{problemMCP}, assuming the structural results from \Cref{sec:structure}.
We remark that the dynamic programming (DP) algorithm in this section is an adaptation of the DP algorithm in \Cref{sec:reduction_3-connected_bis}.
Here, we consider local instances for each bag of the tree-decomposition and each possible way a superposition of $f_{\ref{propReductionIP}}(k,\Delta)$ docsets of $G$ can intersect the roots within the bag, and then solving an IP with a TU constraint matrix for each fixed choice. The optimal solutions to the local instances are then composed recursively to an optimal solution of the whole instance.

\subsection*{Structural foundations}
We start by recalling the relevant definitions and result from \Cref{sec:outline} that underlie our approach.

\begin{definition}[tree-decomposition]\label{def:td}
A {\em tree-decomposition} of a graph $G$ is a pair $(T,\mathcal{B})$ where $T$ is a tree and $\mathcal{B}=\{B_t : t \in V(T)\}$ is a collection of subsets of $V(G)$, called \emph{bags}. Moreover, we require the following conditions:
\begin{enumerate}
    \item $\bigcup_{t \in V(T)} B_t = V(G)$,
    \item \label{def:td_edge} for every edge $e \in E(G)$, there is $t \in V(T)$, such that $e \subseteq B_t$,
    \item \label{def:td_connected} let $t, t', t''\in V(T)$ be such that $t'$ lies on the path between $t$ and $t''$ in $T$, then $B_t \cap B_{t''}\subseteq B_{t'}$. 
\end{enumerate}
\end{definition}

We always assume that the decomposition tree \(T\) is rooted at one of its nodes, called the \emph{root node} of $T$ (not to be confused with the root vertices of $G$).

\begin{definition}[$\ell$-special tree-decomposition]
Given a graph $G$ and an integer $\ell \ge 1$, we say that a tree-decomposition $(T, \mathcal{B})$ is $\ell$-special if
\begin{enumerate}
    \item for each $tt' \in E(T)$ we have $|B_t \cap B_{t'}| \le \ell$, and
    \item every node $t \in V(T)$ has at most $\ell$ children.
\end{enumerate}
\end{definition}

For any given node $t\in V(T)$, we define $T_t$ as the subtree of $T$ rooted at $t$, and denote by $G_t$ the subgraph of $G$ induced by $\bigcup_{u\in T_t} B_u$. If $t$ is not the root node, let $t'$ denote its parent. We define the {\em adhesion} along the edge $e = tt'$ as the intersection of the corresponding bags and let $\adh{t} := B_t\cap B_{t'}$. If $t$ is the root node, we let $\adh{t} := \varnothing$.


Our final definition concerns the possible intersections of docsets of $G$ with a subset of the roots.

\begin{definition}[docset profiles and superprofiles]
Let $\mathcal{S}(G)$ denote the collection of all docsets in $G$.
The \emph{docset profile} of a subset $R' \subseteq R$ is the collection of sets $\mathcal{P}(G,R'):=\{R'\cap S :  S\in \mathcal{S}(G)\}$. 
A \emph{docset superprofile} of $R'$ is a collection of subsets of $R'$ that contains the docset profile $\mathcal{P}(G,R')$.
\end{definition}



Consider an instance of \Cref{problemMCP} defined by a tuple $(p,\ell,u,G,W,d)$.
Recall that \Cref{SpecialDecomposition} (see \Cref{sec:outline}) states that, for each fixed $s \in \Z_{\ge 1}$, we can efficiently obtain an $f_{\ref{SpecialDecomposition}}(s)$-special tree-decomposition of a $3$-connected rooted graph $(G,R)$ with no rooted $K_{2,s}$-minor, together with a docset superprofile of $R_t := R \cap B_t$ for each $t \in V(T)$. 
We prove this in \Cref{sec:structure} and extend the result to subdivisions of $3$-connected graphs without roots of degree $2$, see \Cref{SpecialDecompositionExtended}.


\subsection*{The local instances}

We proceed to define the {\em local docset completion problem} that our algorithm will solve within the DP, see \Cref{def:LDCP} below.

Recall that since $G$ is $2$-connected, $\{v\}$ is a docset for every $v \in V(G)$, implying that $\|W\|_\infty \le \Delta$. Hence, if $y \in \Z^{V(G)}$ is the sum of (at most) $f_{\ref{propReductionIP}}(k,\Delta)$ incidence vectors of docsets, and $y' \in \Z^{V(G)}$ is obtained from $y$ by zeroing an arbitrary set of coordinates, we have $\|W y'\|_\infty\le n\Delta f_{\ref{propReductionIP}}(k,\Delta)$. In view of this, we let $\mathcal{D} := [-n \Delta f_{\ref{propReductionIP}}(k,\Delta), n \Delta f_{\ref{propReductionIP}}(k,\Delta)]^k \cap \Z^k$.
Thus $\mathcal{D}$ is a polynomial-size set containing the weight vector of any partial solution we consider in the context of the DP. Finally, we let $\varPhi:=[0, f_{\ref{propReductionIP}}(k,\Delta)] \cap \Z$. Every time we need to guess the value of a variable $y(v)$, we take the guess in $\varPhi$. \Cref{propReductionIP} and \Cref{lem:circuits_cographic_are_docsets} imply that this is safe. Observe that $\varPhi$ is a constant-size set.

\begin{definition}[local docset completion problem]\label{def:LDCP}
    Let $k,\Delta\in\Z_{\ge1}$ be constants, let $t\in V(T)$ be a fixed node of the decomposition tree and let $t_1$, \ldots, $t_q$ denote the children of $t$ (in case $t$ is a leaf, $q = 0$). Let $\ell_t$ and $u_t$ denote the respective restrictions of $\ell$ and $u$ to $E(G[B_t])$. Let $p_t$ and $W_t$ denote the restrictions of $p$ and $W$ to $B_t$, redefining $p_t(v) = W_t(i,v) := 0$ for all $v \in \adh{t}$ and all $i \in [k]$. Finally, let $Y_t := \adh{t} \cup \bigcup_{i \in [q]} \adh{t_i} \cup R_t$. The {\em local docset completion problem} with respect to node $t$ is as follows: given $\varphi_t \in \varPhi^{Y_t}$, solve
    \begin{equation}\label{eq:LDCP}
        \max\left\{p_t^\intercal y:\ell_t(v,w)\le y(v)-y(w)\le u_t(v,w)\ \forall (v,w) \in E(G[B_t]),\ y(v)=\varphi_t(v)\ \forall v\in Y_t,\ y\in \Z^{B_t}\right\}\,.
    \end{equation}
    We denote an instance of this problem by $I^0_t = I^0_t(\varphi_t)$, and define $F_t^0(\varphi_t):=\OPT(I^0_t)$ as the optimum value of~\eqref{eq:LDCP} (if $I^0_t$ is infeasible, then $\OPT(I^0_t) =-\infty$).
    For each feasible instance $I^0_t = I^0_t(\varphi_t)$, we let $y_t^0(\varphi_t) \in \Z^{B_t}$ denote any optimal solution of $I^0_t$. We call this solution an \emph{optimal local solution}.
\end{definition}

Notice that for every $t \in V(T)$ and $\varphi_t \in \varPhi^{Y_t}$, we can compute $F_t^0(\varphi_t)$ as well as an optimal local solution $y_t^0(\varphi_t)$ (in case the instance $I_0^t(\varphi_t)$ is feasible) in strongly polynomial time, since the constraint matrix of \eqref{eq:LDCP} is TU. Given a polynomial-size docset superprofile $\mathcal{X}_t$ for node $t$, we can enumerate all relevant guesses $\varphi_t \in \varPhi^{Y_t}$ in polynomial time and consider them one after the other. In this way, we obtain a complete set of local optimal solutions for each $t \in V(T)$.

Moreover, for every optimal solution $y^*$ of \eqref{eqMCP} and every $t \in V(T)$, it should be clear that the restriction of $y^*$ to $B_t$ is an optimal solution of the local docset completion problem in which $\varphi_t$ is the restriction of $y^*$ to $Y_t$. In other words, we can think of any optimal solution $y^*$ of \eqref{eqMCP} as being obtained by composing\footnote{Let $V$ be any finite set and $V_1, \ldots, V_\ell$ be subsets of $V$. We say that a vector $y \in \R^{V}$ is obtained by \emph{composing} vectors $y_1 \in \R^{V_1}, \ldots, y_\ell \in \R^{V_
\ell}$ if the restriction of $y$ to each $V_i$ is exactly $y_i$. In particular, this implies that the vectors $y_i$, $i \in [\ell]$ agree on their common coordinates.} optimal local solutions $y_t^0(\varphi_t)$ for some choice of $\varphi_t$ for $t \in V(T)$. Our DP finds an optimal way to compose the optimal local solutions in a recursive way, starting at the leaves of the decomposition tree and going up to the root. We formalize this below.

\subsection*{The dynamic program}

Our DP is based on the concept of a \emph{rooted solution}, which is a vector $y_t \in \Z^{V(G_t)}$ obtained by composing (optimal) local solutions $y^0_{t'}(\varphi_{t'})$ where $t'$ is a descendant of some fixed node $t$. Given a guess $\varphi_t^{\text{up}} \in \varPhi^{\adh{t}}$ on the variables corresponding to the vertices in the adhesion to the parent, and a target $d_t \in \mathcal{D}$ for the weight vector of the rooted solution, the DP seeks a maximum profit rooted solution $y_t$ that \emph{complies} with $\varphi_t^{\text{up}}$ and $d_t$, in the sense that $y_t(v) = \varphi_t^{\text{up}}(v)$ for all $v \in \adh{t}$ and the weight vector of $y_t$ with respect to the vertices of $V(G_t - \adh{t})$ is coordinate-wise equal to $d_t$.

\begin{definition}[DP for composing optimal local solutions]\label{def:DP_MCP}
    Let $t\in V(T)$ be a node of the rooted decomposition tree, and let $t_1$, \ldots, $t_q$ denote the children of $t$ (where $q = 0$ whenever $t$ is a leaf). Let $\mathcal{X}_t$ be a polynomial-size docset superprofile with respect to $R_t = R \cap B_t$.
    For any tuple $\mathfrak{S}=(S_i)_{i\in[f_{\ref{propReductionIP}}(k,\Delta)]}\in\mathcal{X}_t^{f_{\ref{propReductionIP}}(k,\Delta)}$, we let $\sigma^{\mathfrak{S}}:=\sum_{i=1}^{f_{\ref{propReductionIP}}(k,\Delta)}\chi^{S_i} \in \varPhi^{B_t}$ denote the sum of the corresponding incidence vectors.
    For any given $\varphi_t^{\text{up}}\in\varPhi^{\adh{t}}$ and $d_t\in \mathcal{D}$, we define



    \begin{align*}
        F_t(\varphi_t^{\text{up}},d_t):=\max\bigg\{F_t^0(\varphi_t)+\sum_{i\in[q]}F_{t_i}(\varphi_{t_i}^{up},d_{t_i}) :\ 
        & \varphi_t\in\varPhi^{Y_t},\ \varphi_{t_i}^{\text{up}} \in \varPhi^{\adh{t_i}}\ \forall i\in[q],\ d_{t_i}\in \mathcal{D}\ \forall i\in[q], \\[-1.5ex]
        &\varphi_t(v) = \varphi_t^{\text{up}}(v)\ \forall v \in \adh{t},\ \varphi_{t_i}^{\text{up}}(v) = \varphi_t(v)\ \forall v\in\adh{t_i}\ \forall i\in[q],\\
        &\exists \mathfrak{S} \in \mathcal{X}_t^{f_{\ref{propReductionIP}}(k,\Delta)} : \varphi_t(v)=\sigma^{\mathfrak{S}}(v)\ \forall v\in R_t,\  \\
        &W_t \sigma^{\mathfrak{S}} + \sum_{i \in [q]} d_{t_i} = d_t\bigg\}.
    \end{align*}
    Furthermore, we define $y_t(\varphi_t^{\text{up}},d_t)\in\Z^{V(G_t)}$ as the rooted solution obtained by composing $y_t^0(\varphi_t)$ and $y_{t_i}(\varphi_{t_i}^{\text{up}},d_{t_i})$ for $i\in[q]$, where $\varphi_t\in\varPhi^{Y_t}$, $\varphi_{t_i}^{\text{up}} \in \varPhi^{\adh{t_i}}$ and $d_{t_i} \in \mathcal{D}$ for $i\in[q]$ achieve the maximum.
\end{definition}

Notice that there is a constant number of choices for $\varphi^\text{up}_t \in \varPhi^{\adh{t}}$ and a polynomial number of choices for $d_t \in \mathcal{D}$. All in all, there are a polynomial number of table entries $F_t(\varphi_t^{\text{up}},d_t)$ for each $t \in V(T)$. For every $\varphi_t^{\text{up}} \in \varPhi^{\adh{t}}$ and $d_t \in \mathcal{D}$, we can compute $F_t(\varphi_t^{\text{up}},d_t)$ in strongly polynomial time, from the table entries for the children $t_1$, \ldots, $t_q$ (note that $q\le f_{\ref{SpecialDecomposition}}(s)$) and the polynomially many optimal values $F^0_t(\varphi_t)$, which can all be computed in strongly polynomial time before running the DP.


\begin{lemma} \label{lem:DP_technical}
The following holds for every $t \in V(T)$, $\varphi_t^{\text{up}} \in \varPhi^{\adh{t}}$ and $d_t \in \mathcal{D}$:
\begin{enumerate}
\item the profit of $y_t(\varphi_t^{\text{up}},d_t)$ with respect to vertices in $V(G_t - \adh{t})$ equals $F_t(\varphi_t^{\text{up}},d_t)$, 
\label{item:i_lem:DP_technical}
\item the weight vector of $y_t(\varphi_t^{\text{up}},d_t)$ with respect to vertices in $V(G_t - \adh{t})$ is equal to $d_t$, and 
\label{item:ii_lem:DP_technical}
\item $F_t(\varphi_t^{\text{up}},d_t)$ is the maximum profit of a rooted solution complying with $\varphi_t^{\text{up}}$ and $d_t$.
\label{item:iii_lem:DP_technical}
\end{enumerate}
\end{lemma}

\begin{proof}
All three results are easily proved by induction, starting from the leaves of $T$, using the fact that $p_t(v) = W_t(i,v) = 0$ for all $v \in \adh{t}$ and $i \in [k]$. We leave the details to the reader.
\end{proof}

We are now in position to prove the main algorithmic result of this paper.

\begin{proof}[Proof of \Cref{thm:main_IP}]
We showed in \Cref{overviewEquality} that any integer program \eqref{eqIPgeneral} on a totally $\Delta$-modular constraint matrix which becomes the transpose of a network matrix after removing a constant number of rows and columns can be efficiently reduced to an instance of \Cref{problemEquality} in which $A$ is the transpose of a network matrix.
Using the results of \Cref{sec:1sum2sum} we further reduce the problem to polynomially many instances of \Cref{problemMCP}, in which the graph is a subdivision of a $3$-connected graph whose roots all have degree at least $3$.


Consider any instance of \Cref{problemMCP}, defined by $I := (p,\ell,u,G,W,d)$, and let $\mathrm{OPT}$ denote the optimal value of instance $I$. Observe that, by \Cref{lemma:pumpkin_bis}, $G$ does not contain any rooted $K_{2,4k\Delta+1}$-model. Letting $s:=4k\Delta+1$, we apply \Cref{SpecialDecompositionExtended} in order to obtain a $f_{\ref{SpecialDecomposition}}(s)$-special tree-decomposition $(T, \mathcal{B})$ of $G$ and a collection $\{\mathcal{X}_t : t \in V(T)\}$ of docset superprofiles.

As observed before, we can compute the table entry $F_t(\varphi_t^{\text{up}},d_t)$ for all $t \in V(T)$, $\varphi_t^{\text{up}} \in \varPhi^{\adh{t}}$ and $d_t \in \mathcal{D}$, in strongly polynomial time. Let $t_0$ be the root of $T$ (remember that $\adh{t_0} = \varnothing$), and let $\varepsilon$ denote the null vector in $\Z^\varnothing$. We claim that $y_{t_0}(\varepsilon,d)$ is an optimal solution of $I$ whenever it is defined, and $I$ is infeasible whenever $y_{t_0}(\varepsilon,d)$ is not defined. We split the proof in two parts: (i) $y_{t_0}(\varepsilon, d)$ is feasible for \eqref{eqMCP} whenever it is defined, (ii) $F_{t_0}(\varepsilon, d) = \mathrm{OPT}$.

For (i), assume that $y_{t_0}(\varepsilon, d)$ is defined, that is, $F_{t_0}(\varepsilon, d) > -\infty$. For simplicity, let $y := y_{t_0}(\varepsilon, d)$. 

Observe that $y$ is obtained by composing optimal local solutions. Hence, there exist $\varphi_t \in \varPhi^{Y_t}$ for each $t \in V(T)$ such that $y$ is obtained by composing $y^0_t(\varphi_t)$ for $t \in V(T)$. Each optimal local solution $y^0_t(\varphi_t)$ is the restriction of $y$ to $B_t$. Moreover, each $y^0_t(\varphi_t)$ is in particular feasible for the corresponding instance of the local docset completion problem. Since each edge of $G$ is contained in $B_t$ for at least one $t \in V(T)$ by \Cref{def:td}.\ref{def:td_edge}, we see that $\ell(v,w) \le y(v)-y(w) \le u(v,w)$ for all $(v,w) \in E(G)$. 
By \Cref{lem:DP_technical}.\ref{item:ii_lem:DP_technical}, $Wy = d$. We conclude that $y$ is a feasible solution of \eqref{eqMCP}. Using \Cref{lem:DP_technical}.\ref{item:i_lem:DP_technical}, this implies $\mathrm{OPT} \ge F_{t_0}(\varepsilon, d)$. 

For (ii), it remains to prove that $F_{t_0}(\varepsilon, d) \ge \mathrm{OPT}$. Without loss of generality, assume that \eqref{eqMCP} is feasible and consider an optimal solution $\bar{y}$ that is the sum of at most $f_{\ref{propReductionIP}}(k,\Delta)$ incidence vectors of docsets of $G$. For each $t \in V(T)$, let $\bar{\varphi}_t \in \varPhi^{Y_t}$ denote the restriction of $\bar{y}$ to $Y_t$, let $\bar{\varphi}^\text{up}_t \in \varPhi^{\adh{t}}$ similarly denote the restriction of $\bar{y}$ to $\adh{t}$, and let $\bar{d}_t$ denote the weight vector of $\bar{y}$ restricted to the nodes of $V(G_t - \adh{t})$. Notice that, for every fixed node $t$, redefining $\bar{y}(v) := y^0_t(\bar{\varphi}_t)$ for $v \in B_t$ preserves both the feasibility and the optimality of $\bar{y}$. Now $\bar{y}$ is obtained by composing optimal local solutions, hence $\bar{y}$ is a rooted solution with respect to $t = t_0$ that complies with $\varepsilon$ and $d$. By \Cref{lem:DP_technical}, we conclude that $F_{t_0}(\varepsilon, d)$ is at least the profit of $\bar{y}$, that is, at least $\mathrm{OPT}$.
\end{proof}
\section{Structure of graphs without a rooted $K_{2,t}$-minor}\label{sec:structure}

This section has two main goals. 

First, we prove our decomposition theorem, see \Cref{thm:P1-P6_informal} in \Cref{sec:intro}, or \Cref{P1-P6} below for a precise statement. Roughly, our decomposition theorem states that every \(3\)-connected rooted graph \((G, R)\) without a rooted \(K_{2,t}\)-minor has a tree-decomposition $(T,\mathcal{B})$ in which every bag has a ``nice structure''. 

Second, we prove that given such a tree-decomposition, it is possible to find in polynomial time a collection $\mathcal{X}=\{\mathcal{X}_u : u \in V(T)\}$ of subsets of vertices of $G$ where each $\mathcal{X}_u$ is a docset superprofile of $R\cap B_u$, of polynomial size. This is the content of \Cref{SpecialDecomposition}.

Before stating \Cref{P1-P6}, we need to introduce some terminology and notations about graphs embedded in surfaces. 
A \emph{surface} $\surf$ is a non-empty compact connected Hausdorff topological space in which every point has a neighborhood that is homeomorphic to the plane~\cite{MT01}. Let $\surf(h,c)$ denote the surface obtained by removing $2h+c$ open disks with disjoint closures from the sphere and gluing $h$ cylinders and $c$ M\"obius strips onto the boundaries of these disks. As stated by the classification theorem of surfaces, every surface is topologically equivalent to $\surf (h,c)$, for some $h$ and $c$. The \emph{Euler genus} $\eg(\surf)$ of a surface $\surf \cong \surf(h,c)$ is $2h+c$. 
A \emph{curve} in $\surf$ is the image of a continuous function $f:[0,1]\rightarrow \surf$. The curve is \emph{simple} if $f$ is injective. The curve $e=f([0,1])$ \emph{connects} the endpoints $f(0)$ and $f(1)$.   
We say that a graph $H$ is \emph{embedded} in a surface $\surf$ if the vertices of $H$ are distinct points in $\surf$ and every edge $uv \in E(H)$ is a simple curve in $\surf$ which connects in $\surf$ the points which are the images of $u$ and $v$, such that its interior is disjoint from other edges and vertices.
We also consider embeddings in a disk, and the definitions above extend from surfaces to a disk.
For simplicity, we sometimes identify an embedded  graph with the corresponding subset of the surface when no confusion can occur. 

Consider a graph $H$ embedded in $\surf$. 
The {\em faces} of the embedded graph $H$ are the regions of $\surf - H$\footnote{That is, the equivalence classes of the equivalence relation where two points of $\surf - H$ are equivalent if there is a simple curve in $\surf - H$ connecting them.}.   
A curve $C$ in $\surf$ is said to be {\em $H$-normal} if it intersects  $H$ only at vertices.
The {\em distance in $\surf$} between two points $x,y\in \surf$ is the minimum of $|C\cap V(H)|$ over all $H$-normal curves $C$ linking $x$ to $y$. 
The distance in $\surf$ between two vertex subsets $A, B$ of $H$ is the minimum distance in $\surf$ between a vertex of $A$ and a vertex of $B$. 
A {\em noose} is a simple, closed, $H$-normal and non-contractible curve in $\surf$. For a surface $\surf$ that is not a sphere, the {\em facewidth} (also known as {\em representativity}) of the embedding of $H$ in $\surf$ is the minimum of $|C\cap V(H)|$ over all nooses $C$. (The facewidth of any graph embedded on the sphere is set to be infinity.) 

Let $(T,\mathcal{B})$ be a tree-decomposition of a graph $G$. 
Recall that the tree $T$ is assumed to be rooted at some node of $T$. 
With respect to this root, the \emph{weak torso} of a node $u\in V(T)$ is the graph $G[B_u]$ where we add edges between all (previously non-adjacent) pairs of vertices in each of the adhesion sets of $B_u\cap B_v$ where $v$ is a child of $u$. We denote the weak torso of $u$ as $G^{\#}[B_u]$.

\begin{theorem}\label{P1-P6}
For every integer \(t \ge 1\), there exists an integer \(\ell = \ell(t) \ge 1\) such that
the following holds.
Let $(G,R)$ be a $3$-connected rooted graph without a rooted $K_{2,t}$-minor. Then $G$ has a tree-decomposition $(T,\mathcal{B})$ with the following properties.
\begin{enumerate}
\item the adhesion of $(T,\mathcal{B})$ is at most \(\ell\), and
\item for every node $u\in V(T)$, all, but at most \(\ell\), children of \(u\) are leaves \(v\) with \(B_v \cap R \subseteq B_u\), and
\item for every node $u\in V(T)$, at least one of the following is true:
\begin{enumerate}
\item \(|B_u| \le \ell\), 
\item $u$ is a leaf of $T$ and $B_{u}\cap R \subseteq B_{u'}$, where $u'$ is the parent of $u$ in $T$, or 
\item \label{item:c}
there is a set $Z\subseteq B_u$ such that $|Z|\le \ell$, 
$G^{\#}[B_u]-Z$ is $3$-connected, does not contain a rooted $K_{2,t}$-minor with respect to the set of roots $B_u \cap R - Z$,
and has an embedding in a surface of Euler genus at most $\ell$ such that every face is bounded by a cycle of the graph, and all the vertices in $B_u \cap R - Z$ can be covered using at most $\ell$ facial cycles. 
\end{enumerate}
\end{enumerate}
Furthermore, for fixed $t$, the above tree-decomposition and the embeddings described in \ref{item:c} can be found in polynomial time given $(G,R)$ as input. 
\end{theorem}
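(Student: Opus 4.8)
The plan is to prove the theorem by a recursive decomposition of $G$, at each stage maintaining an induced subgraph $H \subseteq G$ together with an adhesion set $\adh{} \subseteq V(H)$ of bounded size along which $H$ is glued to the rest of $G$, and arguing that $H$ either is "boring" (and then a simple three-way split suffices) or is "$k$-interesting" (and then the deep structural machinery applies). Concretely, first I would fix a large enough threshold $k = k(t)$ and call $H$ \emph{$k$-interesting} if it contains a set $X$ of $k$ vertices that is highly linked internally (say, $k$-connected in the relevant sense) and also $k$-linked to $R_H := R \cap V(H)$; otherwise $H$ is \emph{$k$-boring}. The boring case is handled by a routine ``three-block'' argument: because no such linked set exists, one can find $B_0, B_1, B_2 \subseteq V(H)$ of bounded adhesion such that $B_0$ becomes a child bag satisfying (iii)(a) or (iii)(b), and the recursion continues inside $H[B_1]$ and $H[B_2]$, with a termination argument by induction on $|V(H)|$ ensuring only boundedly many non-leaf children are produced at each node (this gives clause (2)).

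For the interesting case, the plan is to leverage the excluded rooted minor to pin down the global structure. If $k$ is large compared to a parameter $h = h(t)$, a result from the graph minors machinery gives a \emph{wall} $W$ of height $h$ inside $H$ such that there are $h$ vertex-disjoint paths from $X$ (hence from $R_H$) to every transversal of $W$. I would then argue: since $(G,R)$—and hence $(H, R_H)$—has no rooted $K_{2,t}$-minor, $H$ cannot contain a large clique minor \emph{grasped} by $W$; indeed a $K_{t'}$-minor grasped by a wall that is well-linked to the roots can be converted into a properly rooted $K_{2,t}$-minor for $t' = t'(t)$ large. Applying the structure theorems of Kawarabayashi–Thomas–Wollan~\cite{KTW20} and Diestel–Kawarabayashi–Müller–Wollan~\cite{DiestelKMW12} to this "no grasped clique minor" situation yields a bounded apex set $Z_0 \subseteq V(H)$ such that $H - Z_0$ embeds in a bounded-genus surface $\surf$ with a bounded number of large vortices and arbitrarily many small vortices, and moreover a large subwall $W'$ of $W$ survives as a minor inside a disk of $\surf$, far from every large vortex. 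Using Thilikos–Wiederrecht~\cite{TW_arxiv_2022}, and again invoking the forbidden rooted $K_{2,t}$-minor, I would show that after deleting a further bounded set of vertices (added to $Z$) every large vortex either can be drawn on $\surf$ or collapses to a small vortex.

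At this point $H - Z$ is embedded on a bounded-genus surface with only small vortices (which can be absorbed into the surface at the cost of bounded genus), but is not necessarily $3$-connected. The plan here is to pass to the SPQR-tree of $H - Z$ and peel off the pieces: the large ``rigid'' component retains the wall $W'$ and thus has large face-width, while the remaining small components become further bags of bounded size or leaf bags, again charged by an induction so that clause (2) is respected. For the large rigid component I can finally apply Theorem \ref{BKMM}: it is $3$-connected, embeds on $\surf$ with large face-width, and has no rooted $K_{2,t}$-minor, so all of $R_H \setminus Z$ is covered by a bounded collection of facial cycles, establishing (iii)(c) (after possibly adding a few more vertices to $Z$ to make every face a cycle). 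Collecting the bags produced along the recursion gives the tree-decomposition, with adhesion bounded by $\ell$ (clause (1)) by construction; the polynomial-time claim follows since each structural ingredient—finding the linked set $X$ or certifying its absence, extracting the wall, applying the fixed-$t$ versions of the minor-structure algorithms, computing the SPQR-tree, and computing the surface embedding—has a known polynomial-time algorithm for fixed $t$, and the recursion depth is $O(|V(G)|)$.

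The main obstacle I expect is the step that controls the large vortices and, relatedly, the conversion between ``grasped $K_{t'}$-minor'' and ``rooted $K_{2,t}$-minor'': one has to argue, using only that every transversal of the wall is well-linked to $R_H$, that a clique minor grasped by the wall forces $t$ central branch sets each meeting $R_H$, and symmetrically that a large vortex not deletable down to a small one would itself create a rooted $K_{2,t}$-minor. Getting the quantitative bounds to close—choosing $h$ as a function of $t$, then $k$ as a function of $h$, then $\ell$ as a function of everything, while making sure the apex set $Z$ stays bounded through the vortex-taming and the SPQR surgery—is where the bookkeeping is genuinely delicate, and it is the part where I would expect to invoke the cited works~\cite{KTW20,DiestelKMW12,TW_arxiv_2022,boehme_2002,boehme_2008} as black boxes rather than reprove anything.
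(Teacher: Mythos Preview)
Your proposal is correct and follows essentially the same approach as the paper: the boring/interesting dichotomy, the wall plus forbidden-grasped-clique argument feeding into a near-embedding via \cite{KTW20,DiestelKMW12}, large-vortex taming via \cite{TW_arxiv_2022}, SPQR-based $3$-connectivity repair, and the facial-cycle cover via Theorem~\ref{BKMM} are exactly the paper's pipeline. One minor correction: small vortices cannot be ``absorbed into the surface at bounded genus cost'' since there may be unboundedly many of them; instead they become leaf bags of the star-decomposition and are replaced by virtual edges in the weak torso $G^{\#}[B_u]$, and it is this weak torso (not $H-Z$ itself) that is made $3$-connected and embedded, and to which Theorem~\ref{BKMM} is applied.
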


We restate \Cref{SpecialDecomposition} here, for the reader's convenience.  

\SpecialDecomposition*

Recall that a tree-decomposition $(T,\mathcal{B})$ of a graph $G$ is $\ell$-special for some integer $\ell \ge 1$ if $|B_u\cap B_v|\le \ell$ for each edge $uv\in E(T)$, and every node $u\in V(T)$ has at most $\ell$ children. 

\subsection{Building the special tree-decomposition}\label{sec:star-decomposition}

A \emph{star-decomposition} of a graph $H$ is a tree-decomposition $H$ where the tree is a star. 
Such a tree-decomposition is denoted \((B_0; B_1, \ldots, B_m)\), where \(B_0\) is the center bag, and \(B_1, \ldots, B_m\) are the leaf bags.

For a fixed rooted graph \((G, R)\) without a rooted \(K_{2, t}\)-minor, we construct the tree-decomposition of \(G\) as in \Cref{P1-P6} recursively, one node at a time. 

For any subgraph \(H \subseteq G\), we denote by \(\partial_G(H)\) the set of all vertices of \(H\) that are adjacent in \(G\) to a vertex not in \(H\). When the graph \(G\) is clear from the context, we drop the subscript \(G\).

Given a subgraph \(H \subseteq G\), we construct a tree-decomposition of $H$ recursively. 
Firstly, we find a star-decomposition \((B_0; B_1, \ldots, B_m)\) of $H$ and secondly we find the subtrees attached to the root bag $B_0$ by applying recursion to the subgraphs \(H_1, \ldots, H_m\), where \(H_i = H[B_i]\). 
To do so, we need to introduce some definitions and technical lemmas. 

A set of vertices \(X \subseteq V(H)\) is \emph{well-linked} in a graph \(H\) if for every \(X_1, X_2 \subseteq X\) there exist \(\min\{|X_1|, |X_2|\}\) vertex-disjoint \(X_1\)--\(X_2\) paths in \(H\).
Given a rooted graph \((G, R)\) and an integer \(k \ge 1\), a subgraph \(H \subseteq G\) is said to be \emph{\(k\)-interesting} if there exists a set \(X \subseteq V(H)\) such that
\begin{itemize}
  \item \(|X| = k\),
  \item \(\partial(H) \subseteq X\),
  \item \(X\) is well-linked in \(H\), and
  \item there exist \(k\) vertex-disjoint \(R_H\)--\(X\) paths in \(H\) where $R_H=R\cap V(H)$.
\end{itemize}
Such a set $X$ is called a {\em core} of $H$.  
In particular, for every \(k\)-interesting subgraph \(H\), we have \(|\partial(H)| \le k\). 
A subgraph \(H \subseteq G\) is \emph{\(k\)-boring} if \(|\partial(H)| \le k\) and \(H\) is not \(k\)-interesting. 

We will prove \Cref{P1-P6} using the following two lemmas.
\begin{lemma}\label{boring}
  Let \(k \ge 1\) be an integer, let \((G, R)\) be a rooted graph, and let \(H \subseteq G\) be a \(k\)-boring subgraph. Then there exists a star-decomposition \((B_0; B_1, B_2)\) of \(H\) with \(\partial(H) \subseteq B_0\) such that
  \begin{enumerate}
    \item \(|B_0| \le 2k\),
    \item \(B_i \neq V(H)\) for \(i \in \{1, 2\}\).
    \item \(|B_0 \cap B_1| \le k\), and
    \item either \(|B_0 \cap B_2| \le k\), or \(B_2 \cap R \subseteq B_0\).
  \end{enumerate}
  Furthermore, for fixed $t$, the star-decomposition can be found in polynomial time given $(G,R)$ as input. 
\end{lemma}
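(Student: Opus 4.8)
The plan is to read the star-decomposition off a suitable low-order separation of $H$, distinguishing two cases according to whether $\partial(H)$ can be extended to a well-linked set of size $k$. The degenerate situations where $|V(H)| \le 2k$ are handled at once by taking $B_0 := V(H)$ and $B_1, B_2$ to be two (distinct, proper) singleton subsets of $V(H)$ disjoint from $R$, and similarly the situation where $\partial(H) \cup R_H$ fits inside a set of at most $2k$ vertices is disposed of directly. So from now on I would assume $|V(H)| > 2k$, and in particular $|R_H| \ge k$.

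\emph{Case 1: there is a set $X$ with $\partial(H) \subseteq X \subseteq V(H)$, $|X| = k$, that is well-linked in $H$.} Since $X$ witnesses the first three bullets in the definition of $k$-interesting and $H$ is $k$-boring, the fourth bullet must fail: there are no $k$ vertex-disjoint $R_H$--$X$ paths in $H$. By Menger's theorem there is a set $S \subseteq V(H)$ with $|S| \le k-1$ meeting every $R_H$--$X$ path; hence no component of $H - S$ contains both a vertex of $R_H$ and a vertex of $X \setminus S$. Let $C$ be the union of the components of $H - S$ meeting $R_H$ and $D$ the union of the rest, so that $V(H)$ is the disjoint union of $S$, $C$ and $D$, with $C \cap X = \varnothing$ (hence $C \cap \partial(H) = \varnothing$) and $R_H \subseteq S \cup C$. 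Moreover $D \ne \varnothing$, as otherwise $X \setminus S \subseteq C$ would force $X \subseteq S$, contradicting $|X| = k > |S|$; and $C \ne \varnothing$ since $|R_H| \ge k > |S|$. I would then put $B_0 := S \cup \partial(H)$, $B_1 := S \cup C$, $B_2 := S \cup D$. Because there are no edges between $C$ and $D$, $(B_0; B_1, B_2)$ is a star-decomposition of $H$ with $B_1 \cap B_2 = S \subseteq B_0$; furthermore $\partial(H) \subseteq B_0$ and $|B_0| \le |S| + |\partial(H)| \le 2k-1$; $B_1, B_2 \ne V(H)$ because $D, C \ne \varnothing$; $|B_0 \cap B_1| = |S| \le k$ since $\partial(H) \cap C = \varnothing$; and $B_2 \cap R = S \cap R_H \subseteq B_0$, so the second alternative of property~(4) holds. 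The set $S$ is produced by a maximum-flow computation, so this is polynomial.

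\emph{Case 2: no such $X$ exists.} Here I would invoke the standard ``well-linked set or low-order separation'' dichotomy applied to the set $\partial(H)$ (of size at most $k$): since $\partial(H)$ does not extend to a well-linked set of size $k$, $H$ has a separation $(A,B)$ of order at most $k-1$ with $\partial(H) \subseteq B$ and both $A \setminus B$ and $B \setminus A$ nonempty. Writing $S := A \cap B$ and setting $B_1 := A$, $B_0 := S \cup \partial(H)$, we get $\partial(H) \subseteq B_0$, $|B_0| \le 2k-1$, $B_1 \ne V(H)$, and $B_0 \cap B_1 = S$ (because $\partial(H) \cap A \subseteq S$), so $|B_0 \cap B_1| \le k-1$. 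Taking $B_2 := B$ gives a valid star-decomposition; the only nontrivial requirement is property~(4) for $B_2$, which amounts to asking that the separation be chosen so that every root of $H$ lying in $B \setminus A$ already lies in $\partial(H)$ — equivalently, that the ``new'' roots $R_H \setminus \partial(H)$ all lie on the $A$-side or in $S$. I would obtain such a separation by combining the well-linkedness dichotomy with the Menger argument of Case~1: if the separation at hand places a new root in $B \setminus A$, then either $\partial(H)$ together with that root still has no well-linked extension inside $B$, in which case I peel off a further low-order separation and merge the root-free part that is cut away into the central set, or such an extension exists inside $B$, in which case I re-run Case~1 on $H[B]$; finiteness of $H$ and the fact that $H$ is \emph{not} $k$-interesting (which caps how long the peeling can continue without producing a well-linked extension that is $k$-linked to $R_H$) ensure this collapses to three bags whose centre still has at most $2k$ vertices.

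The genuinely delicate point, and the one I expect to be the main obstacle, is exactly this last step of Case~2: converting ``$\partial(H)$ has no well-linked extension of size $k$'' into a \emph{single} separation of order at most $k-1$ that simultaneously (i) keeps $\partial(H)$ on one side, (ii) leaves at most $2k$ vertices in $B_0$, and (iii) controls the distribution of the roots on the heavy side, all while keeping both sides proper. Case~1 is essentially immediate once the well-linked set is available, and the polynomial-time claim follows throughout from standard maximum-flow subroutines for detecting well-linked sets, minimum separators, and the required separations.
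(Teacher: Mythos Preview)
Your Case~1 is essentially the paper's argument and is fine. The gap is entirely in Case~2, and it is a genuine one: the ``peeling'' fix you sketch is not an argument, and you yourself flag it as the main obstacle. But the obstacle is self-inflicted.

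The point you are missing is that in the non-well-linked case you do not need to control the roots at all; you can arrange that \emph{both} adhesions are at most $k$, so that property~(4) holds via its first alternative. Concretely: in Case~2, pick any single $X$ with $\partial(H)\subseteq X\subseteq V(H)$ and $|X|=k$ (no search over extensions is needed). By the case hypothesis, $X$ is not well-linked, so there exist $X_1,X_2\subseteq X$ and a separation $(B_1,B_2)$ of $H$ with $X_i\subseteq B_i$ and $|B_1\cap B_2|<\min\{|X_1|,|X_2|\}$. Now set $B_0:=X\cup(B_1\cap B_2)$, i.e.\ put \emph{all of $X$} into the centre, not just $\partial(H)$. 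Then $\partial(H)\subseteq X\subseteq B_0$, $|B_0|<2k$, and for each $i\in\{1,2\}$,
\[
B_0\cap B_i \;\subseteq\; (X\setminus X_{3-i})\cup(B_1\cap B_2),
\qquad
|B_0\cap B_i|\;\le\;|X\setminus X_{3-i}|+|B_1\cap B_2|\;<\;|X\setminus X_{3-i}|+|X_{3-i}|\;=\;k.
\]
So $|B_0\cap B_2|\le k$ and property~(4) holds with no reference to $R$ whatsoever. Your attempt to force $\partial(H)$ onto one side of the separation and then chase the roots on the other side is both unnecessary and, as stated, not justified: the ``standard dichotomy'' you invoke does not in general produce a separation with $\partial(H)$ entirely on one side (the witnessing separation for non-well-linkedness of $X$ typically splits $X$, and hence may split $\partial(H)$).

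Two smaller remarks. First, your handling of the degenerate cases is more complicated than needed and slightly off (e.g.\ singletons disjoint from $R$ need not exist); the paper simply takes $(V(H);\varnothing,\varnothing)$ when $|V(H)|\le 2k$. Second, the inference ``$|V(H)|>2k$, in particular $|R_H|\ge k$'' is false as written; you use $|R_H|\ge k$ only in Case~1 to ensure $C\neq\varnothing$, and that does need a separate justification (or a different way to see $B_1\neq V(H)$).
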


\begin{lemma}\label{interesting}
  For every integer \(t \ge 1\), there exist integers \(k = k(t), k'=k'(t) \ge 1\) such that for every \(3\)-connected rooted graph \((G, R)\) without a rooted \(K_{2,t}\)-minor, and
  every \(k\)-interesting subgraph \(H \subseteq G\) with a core \(X\), there exists a star-decomposition \((B_0; B_1, \ldots, B_m)\) of $H$ such that \(X \subseteq B_0\), for every \(i \in \{1, \ldots, m\}\), we have \(|B_0 \cap B_i| \le k\), \(B_i \neq V(H)\), and if \(i > k'\), then \(B_i \cap R \subseteq B_0\), and there is a set \(U \subseteq B_0\) such that
  \begin{enumerate}
    \item \(|U| \le k'\), and
    \item \label{item:interesting_embeddding} \((H^\#[B_0] - U, R \cap B_0 - U)\) is \(3\)-connected, has no rooted \(K_{2,t}\)-minor, and admits an embedding in a surface with Euler genus at most $k$ such that every face is bounded by a cycle, and the set \(R \cap B_0 - U\) can be covered using at most \(k'\) facial cycles.
  \end{enumerate}
  Furthermore, for fixed $t$, the star-decomposition and the embedding described in \ref{item:interesting_embeddding} can be found in polynomial time given $(G,R)$ as input. 
\end{lemma}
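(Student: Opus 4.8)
The plan is to implement the strategy outlined in \Cref{sec:proving_decomp}, combining the \(k\)-interestingness of \(H\) with the absence of a rooted \(K_{2,t}\)-minor and the graph-minors structure theorems. Fix \(t\) and choose auxiliary constants in the order \(t'\gg t\), then \(h\ge t'\), then \(k\gg h\), and finally \(k'\gg k\), each a function of \(t\) only; the precise dependencies are forced by the theorems invoked below. Since the core \(X\) is a well-linked set of size \(k\), the graph \(H\) has large tree-width, hence contains a wall \(W\) of height at least \(h\) by the grid/wall theorem. Using well-linkedness of \(X\) together with the \(k\) vertex-disjoint \(R_H\)--\(X\) paths from the definition of \(k\)-interestingness, a standard tangle/linkage argument lets us choose \(W\) attached simultaneously to \(X\) and to \(R_H\): there are \(h\) vertex-disjoint paths between any transversal set of \(W\) and \(X\), and \(h\) vertex-disjoint paths between any transversal set of \(W\) and \(R_H\).

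First I would show that \(H\) has no \(K_{t'}\)-minor grasped by \(W\). If it did, then since the model is grasped its branch sets link into \(W\); composing this linkage with the \(h\) disjoint \(R_H\)-to-transversal paths, and using that \(K_{t'}\) is complete so that every hub-to-centre edge is already present, one routes a distinct root into each of \(t\) branch sets and designates two further branch sets as hubs, producing a rooted \(K_{2,t}\)-model in \(H\) --- contradicting the hypothesis once \(t'\) and \(h\) are large enough in terms of \(t\). With a height-\(h\) wall but no grasped \(K_{t'}\)-minor, I would then apply the local structure theorem for graphs excluding a clique minor near a wall (Kawarabayashi--Thomas--Wollan~\cite{KTW20}) in the well-linked refinement of Diestel--Kawarabayashi--M\"uller--Wollan~\cite{DiestelKMW12}: there is a bounded set \(Z_0\) and a large subwall \(W_1\) of \(W\) such that the part of \(H-Z_0\) around \(W_1\) embeds in a surface \(\surf\) of bounded Euler genus with boundedly many \emph{large} vortices and arbitrarily many \emph{small} (bounded-width) vortices, with \(W_1\) drawn in a disk of \(\surf\) far from all large vortices. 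The large vortices are then removed using Thilikos--Wiederrecht~\cite{TW_arxiv_2022,TW_FOCS_2022}: a large vortex of unbounded depth contains a grid-like subdivision that, joined to the roots through the disk of \(W_1\) via the \(R_H\)-to-transversal linkage, would again yield a rooted \(K_{2,t}\)-model, so after deleting a further bounded vertex set (added to \(Z\)) each large vortex is either redrawn in \(\surf\) or becomes small. Let \(Z\supseteq Z_0\) denote the final bounded apex set.

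It remains to organize the resulting structure --- a bounded apex set \(Z\), a bounded-genus embedding, and only small vortices --- into the required star-decomposition and apply \Cref{BKMM}. I would let \(B_0\) collect \(Z\), the core \(X\), and the part of \(H-Z\) carrying the bounded-genus embedding together with the bounded-size boundaries of the small vortices, and let the small vortices (and any further pieces attached through bounded separators) be the leaf bags \(B_1,\dots,B_m\), so that \(B_i\neq V(H)\) and \(|B_0\cap B_i|\le k\). All but at most \(k'\) of these leaf bags satisfy \(B_i\cap R\subseteq B_0\): if more of them carried a root outside \(B_0\), one could use the large subwall \(W_1\) inside \(\surf\) to build two disjoint comb-like branch sets each reaching \(t\) root-carrying pieces, i.e.\ a rooted \(K_{2,t}\)-model; reorder so the exceptional bags are \(B_1,\dots,B_{k'}\). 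One then checks that the weak torso \(H^{\#}[B_0]-Z\), after the vortex-boundary cliques have been added, still embeds in a surface of bounded Euler genus and inherits the absence of a rooted \(K_{2,t}\)-minor (here \Cref{adding_an_edge} and \Cref{contraction} track the minor operations), and one restores \(3\)-connectivity --- via the SPQR tree of \(H-Z\) and the \(3\)-connectivity of \(G\) --- keeping a large subwall alive, so that the embedding has large face-width and hence every face is bounded by a cycle. Applying \Cref{BKMM} to \(H^{\#}[B_0]-U\), with \(U\) absorbing \(Z\cap B_0\) and the bounded number of vertices discarded in the \(3\)-connectivity step, gives a cover of \(R\cap B_0-U\) by at most \(k'\) facial cycles. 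All the ingredients --- the grid/wall theorem, the local structure theorem, the vortex surgery, and SPQR trees --- have polynomial-time algorithms for fixed \(t\), yielding the algorithmic conclusion.

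The hard part is the structural core together with its bookkeeping: one must keep a large, sufficiently robust subwall alive through every reduction (deletion of \(Z_0\), taming the large vortices, restoring \(3\)-connectivity) so that at each stage the linkage from the wall to \(R_H\) can be re-derived and used to exclude the offending grid-like or clique-like substructures via the no-rooted-\(K_{2,t}\)-minor hypothesis, and one must control both the Euler genus and the no-rooted-\(K_{2,t}\)-minor property of the weak torso after the vortex-boundary cliques are added. By contrast, the \(k\)-boring case (\Cref{boring}) is elementary and is handled separately.
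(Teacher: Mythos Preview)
Your proposal is essentially the paper's approach: well-linkedness gives a wall linked both to \(X\) and to \(R_H\); the rooted-\(K_{2,t}\) hypothesis forbids a grasped \(K_{t'}\)-minor; KTW/DKMW yield a good near-embedding; Thilikos--Wiederrecht kill the large vortices; SPQR restores \(3\)-connectivity; and \Cref{BKMM} supplies the face cover. One point where your sketch diverges from the paper and is too optimistic: your ``comb-like'' argument for bounding the number of root-carrying leaf bags presumes the roots in those bags are reachable through the surface, but small vortices with \(|\Omega|\le 2\) need not be --- the paper instead uses the \(3\)-connectivity of \(G\) to force such vortices to send edges to the apex set \(Z\) and then pigeonholes on \(Z\) (\Cref{Omega2}), while the \(|\Omega|=3\) case is folded into the BKMM application by placing an auxiliary rooted vertex inside each such triangle (\Cref{BKMMappl}).
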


Before proving these two lemmas, let us show that they easily imply \Cref{P1-P6}. 

\begin{proof}[Proof of \Cref{P1-P6}]
  Fix an integer \(t \ge 1\), and let \(k=k(t)\) and $k'=k'(t)$ be as in \Cref{interesting}. We prove the theorem for \(\ell = \max \{ 2k, k'\}\).
  Fix a $3$-connected rooted graph \((G, R)\) that does not contain a rooted \(K_{2, t}\)-minor. 
  
  We will prove the following claim: For every subgraph
  \(H \subseteq G\) with \(|\partial(H)| \le k\), there exists a tree-decomposition \((T, \mathcal{B})\) of $H$ such that \(\partial(H)\) is contained in the root bag, and
  \begin{enumerate}
  \item the adhesion of $(T,\mathcal{B})$ is at most \(\ell\), and
  \item for every node $u\in V(T)$, all but at most \(\ell\) children of \(u\) are leaves \(v\) with \(B_v \cap R \subseteq B_u\), and
  \item for every node $u\in V(T)$, at least one of the following is true:
  \begin{enumerate}
  \item \(|B_u| \le \ell\), 
  \item $u$ is a leaf of $T$ and $B_{u}\cap R \subseteq B_{u'}$, where $u'$ is the parent of $u$ in $T$, or 
  \item there is a set $Z\subseteq B_u$ such that $|Z|\le \ell$, $H^{\#}[B_u]-Z$ is $3$-connected, does not contain a rooted $K_{2,t}$-minor with respect to the set of roots $B_u \cap R - Z$, and has an embedding in a surface of Euler genus at most $\ell$ such that every face is bounded by a cycle of the graph, and all the vertices in $B_u \cap R - Z$ can be covered using at most $\ell$ facial cycles.
  \end{enumerate}
  \end{enumerate}

  Since \(\partial_G(G) = \emptyset\), the theorem follows from the claim with $H=G$.  

  We prove the claim by induction on \(|V(H)|\). If \(|V(H)| = 0\), the theorem is satisfied by a tree-decomposition with only one, empty bag.
  For the induction step suppose that \(|V(H)| \ge 1\), and let \((B_0; B_1, \ldots, B_m)\) be a star-decomposition of \(H\) obtained by applying \Cref{boring} or \Cref{interesting}, depending on whether \(H\) is \(k\)-boring or \(k\)-interesting. 
  Let $H_i \coloneqq H[B_i]$ for each $i\in \{0, 1, \dots, m\}$.  
  Note that we can remove from $B_0$ the set \(\cup_{i\in [m]} (B_0\cap B_i-\partial(H_i))\) while preserving the properties of the star-decomposition. This way we get that $\partial(H_i)=B_0\cap B_i$ for each $i\in [m]$, since \(\partial(H) \subseteq B_0\), 
  and thus
  either \(|\partial(H_i)| = |B_0 \cap B_i| \le 2k\) and \(B_i \cap R \subseteq B_0\), or \(|\partial(H_i)| = |B_0 \cap B_i| \le k\). 
  For each \(i \in [m]\) with  \(|\partial(H_i)| = |B_0 \cap B_i| \le 2k\) and \(B_i \cap R \subseteq B_0\), let simply \((T_i, \mathcal{B}_i)\) be the tree-decomposition of $H_i$ consisting of the unique bag $B_i$. 
  For each \(i \in [m]\) with \(|\partial(H_i)| = |B_0 \cap B_i| \le k\), apply the induction hypothesis to \(H_i = H[B_i]\)
  to obtain a tree-decomposition \((T_i, \mathcal{B}_i)\) of $H_i$.  
  Let \((T, \mathcal{B})\) be the tree-decomposition of \(H\) obtained from the disjoint union of these tree-decompositions by adding a new root node \(u\) whose children are the roots of \(T_1, \ldots, T_m\), and whose bag is the center bag \(B_0\) of the star-decomposition.
  It is easily checked that the resulting tree-decomposition of $H$ satisfies the claim.
\end{proof}

A \emph{separation} in a graph $G$ is an ordered pair $(A,B)$ of vertex subsets of $G$ such that $A\cup B=V(G)$ and there are no edges with one endpoint in $A \setminus B$ and the other in $B\setminus A$. 
The \emph{order} of a separation is $|A\cap B|$. 
We conclude this subsection with the proof of \Cref{boring}. 

\begin{proof}[Proof of \Cref{boring}]
  Let \(H\) be a \(k\)-boring subgraph of \((G, R)\).
  If \(|V(H)| \leq 2k\), then the lemma is satisfied by the star-decomposition \((V(H); \emptyset, \emptyset)\).    
  So we assume that \(|V(H)| \ge 2k+1\). Let \(X\) be any set with
  \(\partial(H) \subseteq X \subseteq V(H)\) and \(|X| = k\).

  Suppose that \(X\) is not well-linked in $H$, and let \(X_1, X_2 \subseteq X\) be such that there are no \(\min\{|X_1|, |X_2|\}\) vertex-disjoint \(X_1\)--\(X_2\) paths in \(H\).
  By Menger's Theorem, there exists a separation \((B_1, B_2)\) of \(H\) with \(X_i \subseteq B_i\) for $i\in \{1,2\}$ and \(|B_1 \cap B_2| < \min\{|X_1|, |X_2|\}\). Let \(B_0 = X \cup (B_1 \cap B_2)\), and observe that
  \begin{itemize}
    \item \(|B_0| \le |X| + |B_1 \cap B_2| < 2k\), and
    \item for $i\in \{1,2\}$, \(|B_0 \cap B_i| \le |X \setminus X_{3-i}| + |B_1 \cap B_2| < |X \setminus X_{3-i}| + |X_{3-i}| \le |X| = k\),
  \end{itemize} 
  so \((B_0; B_1, B_2)\) satisfies the lemma.

  Now suppose that \(X\) is well-linked in $H$.
  Since \(H\) is \(k\)-boring, this implies that there are no \(k\) vertex-disjoint \(R_H\)--\(X\) paths in \(H\), where $R_H=R\cap V(H)$. By Menger's theorem,
  there exists a separation \((B_1, B_2)\) of \(H\) with \(R_H \subseteq B_1\), \(X \subseteq B_2\), and \(|B_1 \cap B_2| < k\). Let \(B_0 = X \cup (B_1 \cap B_2)\), and observe that
  \begin{itemize}
    \item \(|B_0| \le |X| + |B_1 \cap B_2| < 2k\), and
    \item \(|B_0 \cap B_1| = |B_1 \cap B_2| < k\), and
    \item \(B_2 \cap R = B_2 \cap R_H \subseteq B_1 \cap B_2 \subseteq B_0\).
  \end{itemize}
  Thus, \((B_0; B_1, B_2)\) satisfies the lemma.

  Finally, we remark that in both cases above, the resulting star-decomposition of $H$ is easily computed in polynomial time. 
\end{proof}

The following subsections are devoted to the proof of~\Cref{interesting}.

\subsection{Finding a large wall}

Let $(G,R)$ be a rooted graph and let $k\ge 1$ be an integer. 
Suppose that $H$ is a $k$-interesting subgraph of $(G, R)$ and let $X\subseteq V(H)$ be a core of $H$. Thus, $|X|=k$, $\partial(H)\subseteq X$, $X$ is well-linked in $H$, and there are $k$ vertex-disjoint paths between $R_H=R\cap V(H)$ and $X$ in $H$.   
The goal of this section is to show that $H$ must contain a large \emph{wall} as a subgraph and that there are many vertex-disjoint paths from $R_H$ to any \emph{transversal} set of vertices in the wall. 
We proceed with the necessary definitions. 

Given an integer $h\geq 2$, an {\em elementary wall of height $h$} is the graph obtained from the $2h \times h$ grid with vertex set $[2h] \times [h]$ by removing all edges with endpoints $(2i-1,2j-1)$ and $(2i-1,2j)$ for all $i\in [h]$ and $j\in [\lfloor h/2 \rfloor]$, all edges with endpoints $(2i,2j)$ and $(2i,2j+1)$ for all $i\in [h]$ and $j\in [\lfloor (h-1)/2 \rfloor]$, and removing the two vertices of degree at most $1$ in the resulting graph.

In an elementary wall $W$ of height $h$, there is a unique set of $h$ vertex-disjoint paths linking the bottom row (vertices of the form $(i,1)$) to the top row (vertices of the form $(i,h)$). 
These paths are called the {\em vertical paths} of $W$. We enumerate the vertical paths as $Q_1, \dots, Q_h$ so that the first coordinates of their vertices are increasing. There is also a unique set of $h$ vertex-disjoint paths linking $Q_1$ to $Q_h$, called the {\em horizontal paths} of $W$. 

A subdivision of an elementary wall of height $h$ is called a {\em wall of height $h$}. Vertical paths, and horizontal paths of a wall are defined as expected, as the subdivided version of their counterparts in the elementary wall. 

Let $W$ be a wall. Let $U$ be the set of vertices of degree $3$ in $W$, we call the vertices in $U$ \emph{branch vertices}. A \emph{transversal set of vertices in $W$} is an inclusion-wise maximal subset of $U$ such that no two vertices are in the same row or column of $W$. 
Observe that such a set necessarily has size $h$, where $h$ is the height of $W$. 
One way to obtain a transversal set in $W$ is to choose all the vertices on the ``diagonal'' of $W$. 


\begin{lemma}\label{well_linked_wall}
For every integer $h\ge 1$ there is an integer $k=k_{\ref{well_linked_wall}}(h)\ge 1$ so that the following holds. 
For every rooted graph $(G,R)$, every $k$-interesting subgraph $H$ of $G$, and every core $X$ of $H$, there exists a wall $W$ of height $h$ in $H$ such that, for every transversal set $Y$ of $W$, there are $h$ vertex-disjoint $X$--$Y$ paths and $h$ vertex-disjoint $R_H$--$Y$ paths in $H$, where $R_H = R \cap V(H)$.    
\end{lemma}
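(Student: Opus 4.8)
The plan is to invoke a quantitative version of the Grid/Wall Theorem together with the well-linkedness of the core $X$ and the existence of $k$ disjoint $R_H$–$X$ paths, and then to ``transport'' connectivity from $X$ to an arbitrary transversal set of the wall. First I would recall that a well-linked set $X$ of size $k$ in $H$ forces $H$ to have large treewidth: since $X$ is well-linked, $X$ has order of magnitude $k$ as a bramble-type obstruction, so $\mathrm{tw}(H) = \Omega(k)$. By the polynomial-time version of the Grid Minor / Wall Theorem (Robertson–Seymour, with the polynomial bounds of Chekuri–Chuzhoy, or the cruder original bound since $t$—and hence $h$—is fixed), if $k = k_{\ref{well_linked_wall}}(h)$ is chosen large enough then $H$ contains a wall $W'$ of height $h'$ for some $h'$ that is a large function of $h$; moreover this wall can be found in polynomial time. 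The extra slack $h' \gg h$ is what will let us pass from $W'$ to a subwall $W$ of height exactly $h$ that is additionally ``well-connected'' to $X$ and to $R_H$.

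The key step is to make $W$ simultaneously $h$-linked to both $X$ and $R_H$. For this I would use the standard linkage argument inside a large wall: if $W'$ has height $h' \ge h + 2k$ (say), then for any vertex set $Z$ with $|Z| \le k$ there are $\min\{|Z|, h'\}$ vertex-disjoint $Z$–$W'$ paths in $H$ ending on distinct branch vertices of $W'$ — this is because a wall of height $h'$ together with the fact that $\partial(H) \subseteq X$ and $X$ is well-linked yields that $W'$ (minus a few rows/columns) cannot be separated from $X$ by fewer than $k$ vertices; applying Menger gives $k$ disjoint $X$–$W'$ paths. Routing these $k$ paths into $W'$, we may re-route them so that they enter $W'$ on $k$ distinct vertical paths and distinct horizontal paths, and then delete the $O(k)$ rows and columns they use up, leaving a subwall $W$ of height $h' - O(k) \ge h$; by the symmetry of the wall, the $h$ vertical and $h$ horizontal paths of $W$ still give an $h$-flow from $X$ to any transversal set $Y$ of $W$ (each branch vertex of a transversal set lies on its own vertical path, and these paths can be extended along the deleted strips to reach the endpoints of the $X$–$W'$ linkage). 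The same argument applied with $R_H$ in place of $X$ — using the hypothesis that there are $k$ disjoint $R_H$–$X$ paths in $H$, which in particular means $R_H$ cannot be separated from $X$, hence from $W'$, by fewer than $k$ vertices — yields $h$ disjoint $R_H$–$Y$ paths. One does the two linkage arguments in sequence, each time discarding $O(k)$ boundary rows/columns, so one needs $h' \ge h + O(k)$ and correspondingly $k_{\ref{well_linked_wall}}(h)$ large enough that the initial wall $W'$ has height at least this.

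The main obstacle I anticipate is the bookkeeping in the re-routing step: after finding $k$ disjoint $X$–$W'$ paths, they may enter $W'$ at arbitrary (possibly repeated, possibly non-branch) vertices and in a tangled way, so one must argue carefully that they can be rerouted to terminate on $k$ branch vertices lying in pairwise distinct rows \emph{and} columns, and that deleting the affected strips of $W'$ leaves a genuine subwall whose transversal sets are still $h$-connected to the $X$-endpoints \emph{through the deleted material}. This is the classical but delicate ``wall is a good router'' lemma; I would handle it by first passing to a large ``flat'' subwall, using the fact that any $h$ disjoint paths from outside into a wall of height $\ge h + c$ can be pushed to end on the outer boundary cycle in a non-crossing (hence matchable to a transversal) pattern after cutting $c$ layers, and then citing the corresponding statement from the graph-minors literature rather than reproving it. Once both linkages are in hand with a common final subwall $W$, the conclusion is immediate for every transversal set $Y$ because all transversal sets of a fixed wall are ``interchangeable'' up to rerouting along rows and columns, so the $h$-flows to $X$ and to $R_H$ established for one transversal set transfer to all of them.
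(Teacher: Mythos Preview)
Your plan has a genuine gap at the step where you claim that the wall $W'$ produced by the Grid Minor Theorem ``cannot be separated from $X$ by fewer than $k$ vertices''. Well-linkedness of $X$ (together with $\partial(H)\subseteq X$) only tells you that $H$ has large treewidth; it does \emph{not} tell you that an arbitrary large wall sitting somewhere in $H$ is highly connected to $X$. A simple counterexample: let $H$ consist of two large grids $G_1,G_2$ joined by a single edge, and let $X$ be a well-linked set entirely inside $G_1$. Then $X$ is well-linked in $H$, $\mathrm{tw}(H)$ is huge, but the Grid Minor Theorem may hand you a wall $W'\subseteq G_2$, from which only one disjoint path to $X$ exists. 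The lemma only asserts that \emph{some} wall works, so this does not contradict the statement---but it does break your argument, since you have no mechanism for selecting the ``correct'' wall.

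The paper fixes exactly this point, and in doing so replaces all of your rerouting bookkeeping by a two-line Menger argument. From the well-linked set $X$ it builds a tangle $\mathcal{T}_X$ of order $\Theta(k)$ in $H$ (for every small-order separation, the side containing most of $X$ is the big side). It then invokes a lemma of Kawarabayashi--Thomas--Wollan that produces a wall $W$ of height $h$ whose induced tangle $\mathcal{T}_W$ is a \emph{restriction} of $\mathcal{T}_X$. This single alignment of tangles is the missing idea: if some transversal $Y$ of $W$ had fewer than $h$ disjoint paths to $X$ (resp.\ to $R_H$), Menger would give a separation of order $<h$ that $\mathcal{T}_X$ and $\mathcal{T}_W$ orient oppositely, contradicting $\mathcal{T}_W\subseteq\mathcal{T}_X$. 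No subwall extraction, no rerouting of linkages, and no ``transversal sets are interchangeable'' argument is needed.
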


For the proof of the lemma we need a few definitions and a lemma from \cite{KTW20}. 
Let $s\in \mathbb{N}$. 
An \emph{$s$-tangle} in a graph $G$ is a collection $\mathcal{T}$ of (ordered) separations of $G$ such that,
\begin{itemize}
\item for every separation $(A,B)$ of $G$ of order at most $s-1$, 
exactly one of $(A,B), (B,A)$ is included in $\mathcal{T}$, and 
\item there are no three separations $(A_1,B_1),(A_2,B_2),(A_3,B_3)\in \mathcal{T}$ with $G[A_1]\cup G[A_2] \cup G[A_3]=G$. 
\end{itemize}
The parameter $s$ is called the \emph{order} of the tangle.

Let $W$ be a wall of height $h$ in a graph $G$. 
The tangle $\mathcal{T}_W$ induced by $W$ is the collection of separations $(A,B)$ of $G$ of order at most $h-1$ such that $B\setminus A$ contains the vertex set of both a horizontal and vertical path of $W$. 
A tangle $\mathcal{T}'$ is a {\em restriction} of $\mathcal{T}$ if $\mathcal{T}'\subseteq \mathcal{T}$. 

\begin{lemma}[Lemma 14.6 in~\cite{KTW20}] \label{KTWtangle}
Let $f_{\text{wall}}$ be a function such that for every positive integer $s$, every graph with tree-width at least $f_{\text{wall}}(s)$ contains an $s$-wall as a subgraph.
Let $h\ge 1$ be an integer. Let $G$ be a graph and let $\mathcal{T}$ be a tangle of order at least $3f_{\text{wall}}(6h^2)+1$ in $G$. Then there exists a wall $W$ of height $h$ in $G$ such that $\mathcal{T}_W$ is a restriction of $\mathcal{T}$.
\end{lemma}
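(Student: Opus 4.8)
The plan is to deduce the lemma from three ingredients that are either given or classical: the hypothesis that large tree-width forces a large wall (encoded in $f_{\text{wall}}$), the duality between tangles and tree-width, and the fact that a large wall sitting ``deep inside'' a tangle induces a sub-tangle of it. The quadratic blow-up in the height ($6h^2$ rather than $h$) and the factor $3$ in the order of $\mathcal{T}$ provide exactly the slack needed to clean up the wall one first extracts.

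First, I would \emph{localize the tree-width at $\mathcal{T}$}. Write $m := f_{\text{wall}}(6h^2)$, so $\mathcal{T}$ has order at least $3m+1$. Using the tangle--tree-width duality (Robertson--Seymour; see also Reed), a tangle of order $3m+1$ cannot be ``confined to a single bag'' of a tree-decomposition of width $3m$; unpacking this produces a subgraph $H \subseteq G$ together with a family of separations $(A_i,B_i) \in \mathcal{T}$ of order at most $m$ such that $H$ lies on the large side $B_i \setminus A_i$ of each of them simultaneously, and such that $H$ itself has tree-width at least $m$. Intuitively, $H$ witnesses that the part of $G$ pointed to by $\mathcal{T}$ is genuinely complicated. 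Then the defining property of $f_{\text{wall}}$ applied to $H$ yields a wall $W'$ of height $6h^2$ inside $H$, hence inside $G$, and by construction $W'$ lies entirely on the $\mathcal{T}$-side of every separation $(A,B) \in \mathcal{T}$ of order at most $m$.

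Second, I would \emph{pass from $W'$ to a sub-wall $W$ of height $h$ whose induced tangle $\mathcal{T}_W$ is a restriction of $\mathcal{T}$}. Recall that a wall of height $h$ induces a tangle of order $h$: a separation $(A,B)$ of order at most $h-1$ cannot meet all $h$ vertical paths nor all $h$ horizontal paths of the wall, and the connectivity of the wall forces exactly one of $A \setminus B$, $B \setminus A$ to contain a full horizontal path together with a full vertical path; that side determines membership in $\mathcal{T}_W$. To force $\mathcal{T}_W \subseteq \mathcal{T}$: any $(A,B) \in \mathcal{T}_W$ has order at most $h-1 \le m$, so $\mathcal{T}$ already decides the pair $\{(A,B),(B,A)\}$. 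If $\mathcal{T}$ chose $(B,A)$, then, since $W' \supseteq W$ lies on the $\mathcal{T}$-side of every $\mathcal{T}$-separation of order at most $m$, the whole of $W'$ would lie in $A \setminus B$ up to the at most $h-1$ vertices of $A \cap B$; but $W'$ has height $6h^2$ and cannot be separated by fewer than $h$ vertices from the full horizontal-and-vertical path of $W$ contained in $B \setminus A$ --- this is where the quadratic slack is spent, by routing many disjoint paths across $W'$. The contradiction shows $(A,B) \in \mathcal{T}$, and choosing $W \subseteq W'$ to be a sub-wall on an $h \times h$ block of bricks of $W'$ finishes the proof.

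The main obstacle is the second step: producing \emph{a} wall of height $h$ is immediate from the tree-width bound, but one must guarantee that the wall kept is ``the same'' combinatorial object as the tangle, so that $\mathcal{T}_W \subseteq \mathcal{T}$. The constants $6$ and $h^2$ are precisely what make the counting in that contradiction go through, and the real bookkeeping is reconciling ``order as measured by $\mathcal{T}$'' (up to $3m$) with ``order as measured by $\mathcal{T}_W$'' (up to $h$); this is exactly why one routes through the intermediate parameter $6h^2$ and absorbs the factor-$3$ loss when localizing the tree-width at $\mathcal{T}$ in the first step.
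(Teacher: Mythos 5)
The paper does not prove this lemma; it is a black-box citation of Lemma~14.6 from~\cite{KTW20}, so there is no in-paper argument to compare yours against, and I evaluate your proposal on its own merits.

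There is a genuine gap in Step~1 that also propagates into Step~2. You assert that tangle--tree-width duality produces a subgraph $H$ with $\mathrm{tw}(H)\ge m$ lying on the $\mathcal{T}$-large side $B\setminus A$ of a family of order-$\le m$ separations $(A,B)\in\mathcal{T}$, and in Step~2 you upgrade this to the claim that $W'$ lies inside $B$ for \emph{every} $(A,B)\in\mathcal{T}$ of order at most $m$. No subgraph of tree-width $\ge m$ can have this property in general. For any vertex $v$ with $\deg_G(v)\le m$, the pair $\bigl(N_G[v],\,V(G)\setminus\{v\}\bigr)$ is a separation of order $\deg_G(v)\le m$, and by the third tangle axiom at most two of the pairs $\bigl(V(G)\setminus\{v\},\,N_G[v]\bigr)$ can belong to $\mathcal{T}$ (if three did, the three small sides $G-v_1$, $G-v_2$, $G-v_3$ would together cover $G$). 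Hence for all but at most two vertices $v$ of degree $\le m$ we have $\bigl(N_G[v],\,V(G)\setminus\{v\}\bigr)\in\mathcal{T}$, and each such $v$ lies strictly in the small side. Thus $V(H)$ (and $V(W')$) would have to avoid essentially all vertices of degree $\le m$; if $G$ is itself a (subdivided) wall --- the prototypical high-tree-width graph, where every degree is at most $3$ --- you are left with at most two vertices, nowhere near tree-width $m$. The underlying difficulty your Step~1 glosses over is precisely the content of the lemma: a wall furnished by the Wall Theorem applied to $\mathrm{tw}(G)$ need not lie ``where $\mathcal{T}$ points'', and the known proofs extract the wall \emph{guided by} $\mathcal{T}$ (for instance via the bramble or haven canonically associated to a tangle, which is where the factor $3$ in $3f_{\text{wall}}(\cdot)+1$ comes from), so that the induced wall-tangle is a restriction of $\mathcal{T}$ by construction rather than by a post-hoc comparison. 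Your Step~2 counting could plausibly be salvaged atop such a tangle-guided extraction, but as written the argument does not go through.
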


Note that such a function $f_{\text{wall}}$ exists by the Grid Minor Theorem of Robertson and Seymour~\cite{RS86}, and is computable (see~\cite{CT21} for the current best known bound).

We may now turn to the proof of \Cref{well_linked_wall}. 

\begin{proof}[Proof of \Cref{well_linked_wall}]
Let $h\ge 1$ and let $k=6\cdot (3f_{\text{wall}}(6h^2)+1)$. 
Let $H$ be a $k$-interesting subgraph of a rooted graph $(G, R)$. 
Let $X\subseteq V(H)$ be a core of $H$. 
We define a $k/6$-tangle $\mathcal{T}_X$ in $H$ with respect to $X$: 
For every separation $(A,B)$ of $H$  
of order at most $(k/6-1)$ we add either $(A,B)$ or $(B, A)$ to $\mathcal{T}_X$ as follows. 
Since $X$ is well-linked in $H$, either $A\setminus B$ or $B \setminus A$ contains at most $k/6-1$ elements from $X$. We add $(A,B)$ to $\mathcal{T}_X$ if $|(B\setminus A)\cap X|\ge k/6$, otherwise we add $(B,A)$ to $\mathcal{T}_X$. Note that for each separation $(A,B)\in \mathcal{T}_X$, $|A\cap X|\le k/3-2$, 
and thus it is easily checked that $\mathcal{T}_X$ is a tangle.  

By \Cref{KTWtangle}, there is a wall $W$ of height $h$ in $H$ such that the tangle $\mathcal{T}_W$ induced by $W$ is a restriction of $\mathcal{T}_X$. 

Let $Y$ be some transversal set of $W$.  
Thus, $|Y|=h$. First, suppose that there are no $h$ vertex-disjoint $X$--$Y$ paths in $H$. Then, by Menger's Theorem, there is a separation $(A,B)$ in $H$ such that $|A\cap B|\le h-1$, $Y\subseteq A$ and $X\subseteq B$. Note that $h\le k/6$ and hence $(A,B)\in \mathcal{T}_X$. 
Since $|A\cap B|\le h-1$, there is a horizontal path of $W$ and a vertical path of $W$ that have an empty intersection with $A\cap B$. Each of these two paths contains some vertex in $Y$ (otherwise we get a contradiction to the maximality of $Y$ in the definition of a transversal in a wall), and thus they must be contained in $A\setminus B$. Hence, $(B,A)\in \mathcal{T}_W$. But this contradicts the fact that $\mathcal{T}_W$ is a restriction of $\mathcal{T}_X$. 
Therefore, we conclude that there are $h$ vertex-disjoint $X$--$Y$ paths in $H$.

Assume now that there are no $h$ vertex-disjoint $R_H$--$Y$ paths in $H$. 
Then, similarly as before, let $(A,B)$ be a separation in $H$ such that $|A\cap B|\le h-1$, $Y\subseteq A$ and $R_H\subseteq B$. 
For the same reason as before, we must have $(B,A)\in \mathcal{T}_W$, and thus also $(B,A)\in \mathcal{T}_X$ since $\mathcal{T}_W$ is a restriction of $\mathcal{T}_X$. 
By the definition of $\mathcal{T}_X$, this implies that $|(B\setminus A)\cap X|\le k/6-1$. 
Since we also have $|A\cap B|\le h-1 \le k/6-1$, 
it follows that $((B\setminus A)\cap X) \cup (A\cap B)$ has size at most $k/3-2 < k$ and intrsects all $R_H$--$X$ paths in $H$, which contradicts the existence of $k$ vertex-disjoint $R_H$--$X$ paths in $H$. 
Therefore, we conclude that there are $h$ vertex-disjoint $R_H$--$Y$ paths in $H$, as desired.
\end{proof}

\subsection{Excluding a complete graph minor grasped by a wall}


Given a wall $W$ in $G$, and an $H$-model $M$ in $G$ for a graph $H$ with $|V(H)|=s$, we say that the wall $W$ {\em grasps} the model $M$ if for every $v\in V(H)$ there are $s$ distinct horizontal paths $R^v_1, \dots, R^v_s$ of $W$ and $s$ distinct vertical paths $Q^v_1, \dots, Q^v_s$ of $W$ such that $V(R^v_i) \cap V(Q^v_i) \subseteq M(v)$ for each $i\in [s]$. 

\begin{lemma}\label{minor_grasped}
Let $h,t\in \mathbb{N}$ be such that $h\ge 4t+2$. Let $(G,R)$ be a rooted graph with $|R|\ge h$ and having no rooted $K_{2,t}$-minor. Assume that $G$ contains an $h$-wall $W$ such that, for every transversal set $Y$ of $W$, there are $h$ vertex-disjoint $R$--$Y$ paths in $G$.  
Then $G$ does not contain a $K_{4t+2}$-model grasped by the wall $W$. 
\end{lemma}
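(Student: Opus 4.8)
The plan is to argue by contradiction: suppose $G$ contains a $K_{4t+2}$-model $M = \{M(v_1), \dots, M(v_{4t+2})\}$ grasped by the wall $W$. The central idea is that a grasped $K_{4t+2}$-model inside $W$, together with the $h$ disjoint $R$--$Y$ paths guaranteed for every transversal $Y$, allows us to route $t$ disjoint paths from distinct roots into $t$ of the branch sets while simultaneously keeping two of the remaining branch sets connected and adjacent to all of those $t$ — thus producing a rooted $K_{2,t}$-model, contradicting the hypothesis. So first I would fix $2$ of the $4t+2$ branch sets to play the role of the ``$2$-side'' of $K_{2,t}$; call them $A$ and $B$. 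These are connected (they are branch sets), and since the model is a $K_{4t+2}$-model, both $A$ and $B$ send an edge to each of the other $4t$ branch sets. The remaining $4t$ branch sets are the pool from which the $t$ ``central'' branch sets will be chosen, each of which must capture a root.

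Second, I would exploit the grasping condition to extract, for each of the $4t$ non-$A,B$ branch sets $M(v)$, a transversal set of $W$ avoiding $A \cup B$ that touches $M(v)$. Concretely, grasping gives for every vertex $v$ of $K_{4t+2}$ many horizontal and vertical paths of $W$ whose crossings lie in $M(v)$; since $h \ge 4t+2$ and there are only $4t+2$ branch sets, a counting/pigeonhole argument lets me pick, for the $4t$ relevant branch sets, disjoint collections of rows and columns of $W$ so that I obtain a single transversal set $Y$ of $W$ with $|Y| = h$ containing one vertex from each of those $4t$ branch sets and disjoint from $A$ and $B$. (Here I will need $h \ge 4t + 2$ to have enough rows/columns left over after discarding those used by $A$ and $B$; this is exactly where the hypothesis $h \ge 4t+2$ is used, and it is the step I expect to require the most care — making the disjointness bookkeeping between the grasping paths for different branch sets precise.) Then the hypothesis applied to this $Y$ yields $h$ vertex-disjoint $R$--$Y$ paths in $G$; in particular at least $4t$ of them end in the $4t$ branch-set representatives, hence (after truncating each path at its first entry into a branch set, and discarding paths whose interior meets $A$ or $B$ — of which there are at most $2$, since the paths are disjoint and $A$, $B$ each contribute at most... here I would instead observe $A,B$ are connected sets not forced to be small, so I truncate paths at first contact with $A\cup B\cup\bigcup M(v)$ and keep only those ending in a branch set) I get disjoint paths linking many roots to distinct branch sets.

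Third, I assemble the rooted $K_{2,t}$-model. From the previous step I have $\ge 4t - (\text{losses})$ branch sets each reachable from $R$ by a disjoint path avoiding $A$, $B$, and the other branch sets; since I started with $4t$ and the losses (paths hitting $A\cup B$, shared endpoints, etc.) are bounded by a constant multiple smaller than $3t$, at least $t$ of them survive — call the surviving branch sets $M(v_1), \dots, M(v_t)$ with attached root-paths $P_1, \dots, P_t$. Define central branch set $i$ to be $M(v_i) \cup P_i$; these are connected, pairwise disjoint, each contains a root (the endpoint of $P_i$), and each is adjacent to both $A$ and $B$ via the $K_{4t+2}$-model edges. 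Taking $A$ and $B$ as the two branch sets of the small side, this is precisely a rooted $K_{2,t}$-model in $(G,R)$, the desired contradiction. The main obstacle, as flagged, is the second step: organizing the grasping paths of $W$ so that the induced transversal set hits the right branch sets while staying disjoint from $A$ and $B$, and bounding all the losses tightly enough that $t$ central branch sets remain — this is where the precise constant $4t+2$ (rather than, say, $t+c$) is needed, since each of the $2$ chosen ``poles'' and the bounded overlap between root-paths and poles can cost a bounded number of candidate branch sets, and a factor-$4$ slack comfortably absorbs this.
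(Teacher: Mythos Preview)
Your overall contradiction strategy is the same as the paper's, and your second step (using grasping to place transversal points inside selected branch sets, then extending to a full transversal and invoking the $R$--$Y$ linkage) is exactly right. But the argument breaks down at two linked points.

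First, fixing $A$ and $B$ in advance is fatal. You need the central branch sets together with their root-paths to be disjoint from $A$ and $B$, and you try to arrange this by making the transversal, and then the $R$--$Y$ paths, avoid $A\cup B$. As you yourself notice mid-proof, $A$ and $B$ are arbitrary connected subsets of $G$ --- they are not confined to a few rows and columns of $W$, so you cannot build a transversal disjoint from them, and there is no bound whatsoever on how many of the (vertex-disjoint) $R$--$Y$ paths meet $A\cup B$. Your fallback ``truncate at first contact with $A\cup B\cup\bigcup M(v)$'' therefore gives no control: arbitrarily many truncated paths may terminate in $A$ or in $B$, and among those terminating in some $M(v_i)$, arbitrarily many may land in the same $M(v_i)$. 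The sentence ``losses are bounded by a constant multiple smaller than $3t$'' is exactly the content of the lemma, and it is not argued.

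The paper's proof avoids this by \emph{not} pre-selecting the two poles. It routes only $2t$ (not $4t$) disjoint $R$-paths into $2t$ distinct branch sets --- note that with $s=4t+2$ grasping pairs per branch set, the greedy row/column selection you sketch works for $2t$ targets but already fails for $4t$, since $2(4t)-2\ge s$. It then optimizes over all such ``feasible path systems'', classifying branch sets as good/bad/ugly/free and minimizing a lexicographic potential $(i(\mathcal P)+b(\mathcal P),\,b(\mathcal P))$. The minimization forces $u(\mathcal P)\le g(\mathcal P)$ and $b(\mathcal P)\le g(\mathcal P)$, hence $g(\mathcal P)\ge t$ and, crucially, at least two of the $4t+2$ branch sets are \emph{free} (untouched by every path). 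Those two free sets become the poles of the rooted $K_{2,t}$ after the fact. This post-hoc choice of poles, together with the path-system optimization, is the missing idea.
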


\begin{proof}
Assume to the contrary that $G$ contains a $K_{4t+2}$-model $M$ grasped by the wall $W$. 
First, we show that one can find a set $Y'$ of $2t$ branch vertices of $W$ contained in distinct branch sets of $M$ and such that each horizontal path of $W$ and each vertical path of $W$ contains at most one vertex in $Y'$.
This can be done iteratively, marking the new paths found in each step. Let $s=4t+2$, and consider $2t$ distinct vertices $v_1, \dots, v_{2t}$ of $K_{4t+2}$.  
For $j = 1, 2, \dots, 2t$, consider $s$ distinct horizontal paths $R^{v_j}_1, \dots, R^{v_j}_s$ of $W$ and $s$ distinct vertical paths $Q^{v_j}_1, \dots, Q^{v_j}_s$ of $W$ such that $V(R^{v_j}_i) \cap V(Q^{v_j}_i) \subseteq M(v_j)$ for each $i\in [s]$. Since at most $2j-2$ of these paths were marked previously, we can find $i\in [s]$ such that neither $R^{v_j}$ nor $Q^{v_j}$ is marked. 
We mark these two paths and choose a branch vertex of $W$ in $V(R^{v_j}_i) \cap V(Q^{v_j}_i)$. 
Finally, we let $Y'$ be the set of chosen branch vertices.

Next, extend the set $Y'$ to a transversal set $Y$ in $W$. By our assumptions, there are $h$ vertex-disjoint $R$--$Y$ paths in $G$, and thus in particular there are $2t$ vertex-disjoint $R$--$Y'$ paths in $G$.

In what follows, we call {\em feasible path system} a collection of $2t$ vertex-disjoint paths in $G$ such that all the paths have one endpoint (called its {\em source}) in $R$, and their other endpoint (called its {\em destination}) in distinct branch sets of $M$. 
Note that there is at least one feasible path system in $G$, since there are $2t$ vertex-disjoint $R$--$Y'$ paths in $G$.  
A path $P$ in a feasible path system ${\cal P}$ is {\em good} if the branch set of $M$ containing the destination of $P$ avoids all the other paths in ${\cal P}$, and is {\em bad} otherwise. 
With respect to ${\cal P}$, we classify branch sets of $M$ into four categories: A branch set $B$ of $M$ is 
\begin{itemize}
  \item {\em good} if $B$ contains the destination of a {\em good} path of ${\cal P}$; 
  \item {\em bad} if $B$ contains the destination of a {\em bad} path of ${\cal P}$; 
  \item {\em ugly} if $B$ intersects some path in ${\cal P}$ but $B$ contains no destination of paths in ${\cal P}$; 
  \item {\em free} if $B$ intersects no path in ${\cal P}$. 
\end{itemize}
We let $g({\cal P}), b({\cal P}), u({\cal P}), f({\cal P})$ denote the number of branch sets in $M$ that are respectively good, bad, ugly, and free with respect to ${\cal P}$. 
Finally, we let $i({\cal P})$ denote the sum over every path $P\in {\cal P}$ of the number of branch sets of $M$ intersected by $P$. 
(The letter $i$ stands for `intersection'.) 

To each feasible path system ${\cal P}$ in $G$ we associate a corresponding {\em vector} with two entries, defined as follows:
\begin{equation}
  \label{eq:valueofP}
  (i({\cal P}) + b({\cal P}), b({\cal P})). 
\end{equation}
Now, choose a feasible path system ${\cal P}$ in $G$ whose vector is 
lexicographically minimum. 
Let us make a few observations about ${\cal P}$. 
First, we claim that 
\begin{equation}
  \label{eq:ugly_vs_good}
  u({\cal P}) \leq g({\cal P}).  
\end{equation}
To see this, observe first that no bad path $P$ of ${\cal P}$ intersects an ugly branch set $B$ of $M$, since otherwise we could shorten $P$ so that it ends in $B$, which would improve the vector of ${\cal P}$.    
Thus, ugly branch sets of $M$ only intersects good paths of ${\cal P}$. 
If a good path $P$ of ${\cal P}$ intersects two ugly branch sets $B_1, B_2$ of $M$ in this order from its source, then one could shorten $P$ so that it ends in $B_1$. 
This possibly increases $b({\cal P})$ by $1$ (since $P$ might become bad) but decreases $i({\cal P})$ by at least $2$ (since $P$ no longer intersects $B_2$ nor the good branch set that contained its destination), thus resulting in a feasible path system with a lexicographically smaller vector, a contradiction. 
Hence, every good path intersects at most one ugly branch set, and \eqref{eq:ugly_vs_good} follows. 

Observe that 
\begin{equation}
  \label{eq:free}
  f({\cal P}) \geq 2  
\end{equation}
since $b({\cal P}) + g({\cal P}) + u({\cal P}) + f({\cal P})=4t+2$ and  $b({\cal P}) + g({\cal P}) + u({\cal P}) 
= 2t + u({\cal P}) \leq 2t + g({\cal P}) \leq 4t$ by \eqref{eq:ugly_vs_good}. 

Next, we claim that 
\begin{equation}
  \label{eq:bad_vs_good}
  b({\cal P}) \leq g({\cal P}).  
\end{equation}
To show this, we are going to associate to each bad branch set $B$ of $M$ a corresponding good path of ${\cal P}$, in an injective manner. 
Since the number of good paths equals to the number of good branch sets, this will imply \eqref{eq:bad_vs_good}.  
So, suppose $B$ is a bad branch set of $M$. 
Let $F$ be a free branch set of $M$, which exists by \eqref{eq:free}, and let $vw$ be an edge linking $B$ and $F$ in $G$, with $v$ in $B$.  
Consider a spanning tree $T$ of $G[B]$ and let $P$ be the path in ${\cal P}$ closest to $v$ in the tree $T$.  
(Possibly $v$ is in $P$.) 
If $P$ is bad, then one could reroute $P$ towards $v$ in the tree $B$ and use the edge $vw$ so that it ends in $F$. 
Then $P$ would become good, so $b({\cal P})$ would decrease by $1$, and $i({\cal P})$ would increase by at most $1$ (due to the intersection of $P$ with $F$). 
Overall, this would result in a lexicographically smaller vector for ${\cal P}$, which is a contradiction. 
Thus $P$ must be good. 
Moreover, if $B$ is not the last bad branch set intersected by $P$ on its way to its destination, then performing the same rerouting gives again a contradiction, so $B$ is the last such branch set. 
Hence, we may associate $B$ to the good path $P$, and this way $P$ will not be chosen by any other bad branch set, as desired.  
This shows \eqref{eq:bad_vs_good}. 

Since $b({\cal P}) + g({\cal P}) = 2t$, it follows from \eqref{eq:bad_vs_good} that one can find $t$ good branch sets in $M$. 
Using the union of these branch sets with the corresponding good paths in ${\cal P}$ plus two additional free branch sets of $M$  (which exist by \eqref{eq:free}), we find a rooted $K_{2,t}$-model in $(G, R)$. 
This final contradiction concludes the proof. 
\end{proof}

\subsection{The structure of graphs excluding a complete graph minor grasped by a wall}
\label{sec:structure of graphs excluding a minor}

We combine results in \cite{KTW20} and \cite{DiestelKMW12} to derive a theorem that describes the structure of graphs without a complete graph minor grasped by a wall. Before we do so, we introduce the necessary definitions. 

A {\em vortex} is a pair $V=(J,\Omega)$ where $J$ is a graph and $\Omega=\Omega(V)$ is a linearly ordered set $(u_1,u_2,\ldots,u_q)$ of a subset of vertices of $J$. 
With a slight abuse of notation, we will denote the unordered set of vertices $\{u_1,u_2,\ldots,u_q\}$ by $\Omega$ as well. 

Given a vortex $V=(J,\Omega)$ and a vertex subset $Z$, we denote by $V-Z$ the vortex obtained by deleting the vertices in $Z$ from $J$ and \(\Omega\). 
If $\mathcal{V}$ is a set of vortices, we let $\mathcal{V}-Z$ denote the set of vortices $V-Z$ with $V\in \mathcal{V}$ such that $V-Z$ has at least one vertex. 

Let $\surf$ be a surface and let $D$ be a closed disk in the surface. We denote by $\bd(D)$ the boundary of $D$.

Let $V=(J,\Omega)$ be a vortex with $\Omega=(u_1,u_2,\ldots,u_q)$. 
A {\em linear decomposition} of $V$ is a collection of sets $(X_1,X_2,\ldots, X_q)$ such that 
\begin{itemize}
\item for each $i\in [q]$, $X_i \subseteq V(J)$ and $u_i\in X_i$, and moreover $\cup_{i=1}^q X_i = V(J)$; 
\item for every $uv\in E(J)$, there exists $i\in [q]$ such that $\{u,v\}\subseteq X_i$, and
\item for every $x\in V(J)$, the set $\{i:x\in X_i\}$ is an interval in $[q]$. 
\end{itemize}
The {\em adhesion} of the linear decomposition is $\max(|X_i\cap X_{i+1}|: 1\le i \le q-1)$. 

Let $\alpha_0,\alpha_1,\alpha_2\in \mathbb{N}$, and let $\surf$ be a surface. A graph $G$ is {\em $(\alpha_0,\alpha_1,\alpha_2)$-nearly embeddable in $\surf$} if there exist $Z\subseteq V(G)$ with $|Z|\le \alpha_0$ and an integer $\alpha'\le \alpha_1$ such that $G-Z$ can be written as the union of $p+1$ edge-disjoint graphs $G_0,G_1,\ldots,G_p$ with the following properties:
\begin{enumerate}
\item For each $i\in [p]$, $V_i:=(G_i,\Omega_i)$ is a vortex where the set $\Omega_i$ is $V(G_i\cap G_0)$ (its linear ordering is specified in \ref{item:ordering} below). For all $1\le i < j \le p$, $G_i\cap G_j\subseteq G_0$.
\item The vortices $V_1,\ldots,V_{\alpha'}$ have a linear decomposition of adhesion at most $\alpha_2$. Let $\mathcal{V}$ be the collection of those vortices.
\item The vortices $V_{\alpha'+1},\ldots,V_p$ satisfy $|\Omega_i|\le 3$ for each $\alpha'+1\le i \le p$. 
Let $\mathcal{W}$ be the collection of those vortices.
\item \label{item:ordering}
There are closed disks $D_1,\ldots,D_p$ in $\surf$ with disjoint interiors and an embedding $\sigma: G_0 \hookrightarrow \surf - \cup_{i=1}^p \inter(D_i)$ such that $\sigma(G_0)\cap \bd(D_i)=\sigma(\Omega_i)$ for each $i\in [p]$ and the linear ordering of $\Omega_i$ is compatible with the cyclic ordering of $\sigma(\Omega_i)$.
\end{enumerate} 
We call the tuple $(\sigma,G_0,Z,\mathcal{V},\mathcal{W})$ an {\em $(\alpha_0,\alpha_1,\alpha_2)$-near embedding} of $G$.\footnote{We note that $\sigma$ admits a polynomial-size combinatorial description, see~\cite{MT01}.}. 
We will denote by $D(V_i)$ the disk $D_i$ corresponding to vortex $V_i$ in the above definition. We call vortices in $\mathcal{V}$ {\em large vortices} and vortices in $\mathcal{W}$ {\em small vortices}. 
Given $\alpha \in \mathbb{N}$, a tuple $(\sigma,G_0,Z,\mathcal{V},\mathcal{W})$ is said to be an {\em $\alpha$-near embedding} if it is an $(\alpha,\alpha,\alpha)$-near embedding.
We note that if the constants are clear from the context, we sometimes omit them and simply write {\em near embedding}. 

Given a near embedding $(\sigma,G_0,Z,\mathcal{V},\mathcal{W})$ of a graph $G$ in a surface $\surf$, we define a corresponding graph $G_0'$, obtained from $G_0$ by adding an edge $vw$ for every pair of non-adjacent vertices $v,w$ in $G_0$ that are in a common small vortex $V\in \mathcal{W}$. 
These extra edges are drawn without crossings, in the disks accommodating the corresponding vortices. 
We will refer to these edges as the {\it virtual edges} of $G_0'$. 
If $H'$ and $H$ are subgraphs of $G_0'$ and $G-Z$, respectively, such that $H$ is obtained from $H'$ by replacing each virtual edge $uv$ of $H'$ with a $u$--$v$ path contained in some vortex in $\mathcal{W}$, in such a way that all the paths are internally vertex disjoint, then we call $H$ a {\it lift} of $H'$ (with respect to the near embedding $(\sigma,G_0,Z,\mathcal{V},\mathcal{W})$), and say that $H'$ {\it can be lifted to} $H$. 

A cycle $C$ in a graph $H$ embedded in a surface $\surf$ is {\em flat} if $C$ bounds a disk in $\surf$. 
A wall $W$ in $H$ is {\em flat} if the boundary cycle of $W$ (which is defined in the obvious way) bounds a closed disk $D(W)$ with the wall $W$ drawn inside it.  

    
Vertex-disjoint cycles $C_1, \dots, C_s$ of $H$ are {\em concentric} if they bound closed disks $D_1, \dots, D_s$ in $\Sigma$ with $D_1 \supseteq \cdots \supseteq D_s$ in $\surf$. 
These cycles {\em enclose} a vertex subset $\Omega$ if $\Omega \subseteq D_s$. 
They {\em tightly enclose} $\Omega$ if moreover, for every $i\in[s]$ and every point $v\in \bd(D_i)$, there is a vertex $w\in \Omega$ at distance at most $s-i+2$ from $v$ in $\surf$ with respect to $H$. 
(See the beginning of \Cref{sec:structure} for the definition of `distance'.) 

In the context of a near-embedding $(\sigma,G_0,Z,\mathcal{V},\mathcal{W})$ of a graph $G$ in a surface $\surf$, concentric cycles $C_1, \dots, C_s$ in $G'_0$ {\em (tightly) enclose} a vortex $V\in \mathcal{V}$ if they (tightly) enclose $\Omega(V)$. 

For integers $3\leq s \leq h$, an $(\alpha_0,\alpha_1,\alpha_2)$-near embedding $(\sigma,G_0,Z,\mathcal{V},\mathcal{W})$ of a graph $G$ in a surface $\surf$ is {\em $(s, h)$-good} if the following properties are satisfied. 
\begin{enumerate}
\item \label{P:flat}
$G'_0$ contains a flat wall $W'_0$ of height $h$. 
\item \label{P:facewidth}
If $\surf$ is not the sphere, then the facewidth of $G'_0$ in $\surf$ is at least $s$. 
\item \label{P:concentric}
For every vortex $V\in \mathcal{V}$ there are $s$ concentric cycles $C_1(V), \dots, C_s(V)$ in $G'_0$ tightly enclosing $V$ and bounding closed disks $D_1(V) \supseteq \cdots \supseteq D_{s}(V)$, such that $D_{s}(V)$ contains $\Omega(V)$ and $D(W'_0)$ does not intersect $D_1(V)$. 
For distinct $V, V'\in \mathcal{V}$, the disks $D_1(V)$ and $D_1(V')$ are disjoint. 
\end{enumerate}
We call the above wall $W'_0$ a \emph{good wall} with respect to the $(s,h)$-good $(\alpha_0,\alpha_1,\alpha_2)$-near embedding. 
Also, we let $G(D_i(V))$ denote be the subgraph of $G$ contained in the disk $D_i(V)$ for each $i\in [s]$.

By combining the main result of~\cite{KTW20} (Theorem 2.11) and results in~\cite{DiestelKMW12}, it is possible to deduce the following theorem. 
We remark that some extra properties of the embedding could be derived as well from these two papers, we only state here the properties that we will need.  

\begin{theorem}[{\cite[Theorem 2.11]{KTW20}, \cite{DiestelKMW12}}]\label{GMST}
Let $t',s,h$ be positive integers with $3 \leq s \leq h$. Then there exist $h'=h_{\ref{GMST}}'(t',s,h)$, $\alpha_0=\alpha_0(t',s)$, and $\alpha=\alpha(t')$ such that the following holds. Let $G$ be a graph and let $W$ be a wall of height $h'$ in $G$. Then either $G$ has a $K_{t'}$-model grasped by $W$, or $G$ has an $(s,h)$-good $(\alpha_0,\alpha,\alpha)$-near embedding $(\sigma,G_0,Z,\mathcal{V},\mathcal{W})$ in a surface $\surf$ of Euler genus at most $\alpha$, such that $G'_0$ contains a good wall $W'_0$ with respect to the embedding that can be lifted to a subwall $W_0$ of $W$.

Furthermore, for some computable function $T$, there is an algorithm with running time $T(t',s,h)\cdot n^{O(1)}$ that, given an $n$-vertex graph $G$ and a wall $W$ as above, finds one of the two structures guaranteed by the two outcomes of the theorem. 
\end{theorem}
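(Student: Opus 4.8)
The plan is to derive Theorem~\ref{GMST} by feeding the wall $W$ into the local structure theorem for graphs with a large wall and no grasped clique minor, and then upgrading the resulting near embedding to an $(s,h)$-good one by the flat‑wall and concentric‑cycles machinery of Diestel, Kawarabayashi, M\"uller and Wollan. Concretely, I would first fix $h'$ to be a sufficiently large function of $t',s,h$ (its precise value to be read off from the later steps), and apply \cite[Theorem~2.11]{KTW20} to the pair $(G,W)$. This produces the dichotomy: either $G$ has a $K_{t'}$-model grasped by $W$ — in which case we are done — or $G$ admits a near embedding $(\sigma,G_0,Z,\mathcal{V},\mathcal{W})$ in a surface $\surf$ with $\eg(\surf)$ bounded by a function of $t'$ and all of whose parameters are bounded by a function of $t'$, together with a subwall $W_1$ of $W$, of height $h_1$ much smaller than $h'$ but still huge, that is ``controlled'' by the embedding (appears as a flat wall of $G'_0$ after lifting).

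In the near‑embedding case, I would clean up the embedding in two stages, each absorbing a bounded number of extra vertices into $Z$. First, \emph{facewidth}: if $\surf$ is not the sphere and the facewidth of $\sigma(G'_0)$ is below $s$, there is a $G'_0$-normal noose meeting the graph in fewer than $s$ vertices; cutting along it either lowers the Euler genus (re-embed and repeat, which can happen only finitely often) or separates off a disk that is discarded into $Z$. Since $W_1$ is gigantic compared with $s$, after adding $O(s)$ vertices to $Z$ and passing to a surface of no larger Euler genus we may assume the facewidth is at least $s$ while keeping a large subwall $W_2$ of $W_1$ intact; this is property~\ref{P:facewidth}. Second, \emph{flat subwall}: because $W_2$ still has height much larger than $h$, than the number $\alpha'$ of large vortices, and than the genus, the grasp/flatness arguments of \cite{DiestelKMW12} extract from $W_2$ a flat wall $W'_0$ of height $h$ in $G'_0$ whose enclosing disk $D(W'_0)$ is disjoint from the attachment disks $D(V)$, $V\in\mathcal{V}$ (there are few of them); this is property~\ref{P:flat}.

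Then, for each large vortex $V\in\mathcal{V}$, I would produce the $s$ concentric cycles $C_1(V),\dots,C_s(V)$ tightly enclosing $\Omega(V)$: since the facewidth is at least $s$ there is no short noose through $\Omega(V)$, and the local connectivity/density around $D(V)$ guaranteed by the structure theorem yields, by the argument of \cite{DiestelKMW12}, nested cycles hugging the vortex at every ``radius'' $1,\dots,s$, with the metric relation between distance in $\surf$ and index demanded by tight enclosure. Moving a bounded number of further vertices into $Z$ if needed, we may take the outermost disks $D_1(V)$ pairwise disjoint and, using the choice above, disjoint from $D(W'_0)$; this is property~\ref{P:concentric}. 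At this point the near embedding is $(s,h)$-good. Finally, $W'_0$ is a flat wall of $G'_0$ obtained from (a subwall of) $W$, so replacing its virtual edges by internally disjoint paths through small vortices lifts it to a subwall $W_0$ of $W$, as required. For the algorithmic statement, \cite[Theorem~2.11]{KTW20} already comes with a $T(t',s,h)\cdot n^{O(1)}$ algorithm, and every cleanup step (finding short nooses, re-embedding on a lower-genus surface, carving out a flat subwall, building the concentric cycles) is implementable in polynomial time via Menger/flow routines and standard surface algorithmics, so composition yields the claimed running time.

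The main obstacle is the \emph{bookkeeping of parameters}: $h'$ must be chosen large enough that after the facewidth cleanup has consumed part of the wall, a flat subwall has been carved out avoiding all $\alpha'$ vortex disks, and $s$ concentric cycles have been reserved around each large vortex, a flat wall of height $h$ genuinely survives — this forces $h'$ to be a large explicit function of $t',s,h$, and one must chase the dependencies through each step so that $|Z|$ stays bounded by $\alpha_0=\alpha_0(t',s)$ and the genus bound by $\alpha=\alpha(t')$ at the end. The technically deepest point, inherited from \cite{DiestelKMW12}, is verifying the \emph{tight} enclosure condition: the concentric cycles must be chosen so close to each vortex that every point of $\bd(D_i(V))$ lies within distance $s-i+2$ of $\Omega(V)$ in $\surf$, which requires selecting near‑shortest separating curves at the respective radii and analysing carefully how the vortex's attachment vertices are spread along the boundary of its disk.
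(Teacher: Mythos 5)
Your proposal follows the same route the paper takes: invoke \cite[Theorem~2.11]{KTW20} for the grasped-clique/near-embedding dichotomy, then upgrade the near embedding to an $(s,h)$-good one using the machinery of \cite{DiestelKMW12} (facewidth cleanup via short nooses, carving out a flat subwall, constructing the concentric cycles, absorbing boundedly many vertices into $Z$ at each stage). The paper itself provides no independent proof of \Cref{GMST} --- it states it as deducible ``by combining'' \cite{KTW20} and \cite{DiestelKMW12} and remarks that only the properties needed later are listed --- so your sketch spells out precisely the combination the paper leaves implicit, with the correct parameter dependence ($\alpha$, governing genus/vortex count/adhesion, depending only on $t'$; $\alpha_0$, the apex bound, accruing $s$-dependent contributions from the cleanup).
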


Using this theorem, we prove the following lemma. 

\begin{lemma}\label{embedding_H}
Let $t, s, h$ be positive integers with $3 \leq s \leq h$. 
Let $t'= 4t+2$, $h'=h_{\ref{GMST}}'(t',s,h)$, $k=k_{\ref{well_linked_wall}}(h')$, and let $\alpha_0=\alpha_0(t',s)$ and $\alpha=\alpha(t')$ be as in \Cref{GMST}.  
Then, for every  rooted graph $(G,R)$ without a rooted $K_{2,t}$-minor and every $k$-interesting subgraph $H$ of $(G, R)$ with core $X$,  
\begin{enumerate}
\item \label{item:near_embeddding}
there exists an $(s,h)$-good $(\alpha_0,\alpha,\alpha)$-near embedding $(\sigma,H_0,Z,\mathcal{V},\mathcal{W})$ of $H$ in a surface $\surf$ of Euler genus at most $\alpha$, and
\item \label{item:Hprime_0}
$H'_0$ contains a good wall $W'_0$ with respect to the embedding that can be lifted to a wall $W_0$ in $H$, 
and such that for every transversal $Y'$ of $W'_0$ there are $h$ vertex-disjoint $X$--$Y'$ paths in $H$, and there are $h$ vertex-disjoint $R_H$--$Y'$ paths in $H$, where $R_H = R \cap V(H)$. 
\end{enumerate}
Furthermore, for some function $T'$, there is an algorithm with running time $T'(t,s,h)\cdot n^{O(1)}$ that, given an $n$-vertex rooted graph $(G, R)$ and a $k$-interesting subgraph $H$ of $G$ with core $X$ as above, finds this embedding and the wall $W'_0$.  
\end{lemma}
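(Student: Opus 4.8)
The plan is to assemble the statement from the three previous ingredients in sequence: \Cref{well_linked_wall} to produce a large wall inside $H$ with good connectivity to both $X$ and $R_H$, then \Cref{minor_grasped} to rule out a large grasped clique minor, and finally \Cref{GMST} to convert the ``no grasped clique minor'' conclusion into the desired near embedding. Concretely, set $t' := 4t+2$ and let $h' := h_{\ref{GMST}}'(t',s,h)$ be the wall height demanded by \Cref{GMST}; then let $k := k_{\ref{well_linked_wall}}(h')$ be the interestingness threshold that \Cref{well_linked_wall} needs in order to guarantee a wall of height $h'$. First I would apply \Cref{well_linked_wall} to the $k$-interesting subgraph $H$ with core $X$: this yields a wall $W$ of height $h'$ in $H$ such that for every transversal set $Y$ of $W$ there are $h'$ vertex-disjoint $X$--$Y$ paths and $h'$ vertex-disjoint $R_H$--$Y$ paths in $H$.

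Next I would invoke \Cref{minor_grasped}. Since $h' \ge 4t+2$ (which holds because $h_{\ref{GMST}}'(t',s,h) \ge h \ge 3 \ge \cdots$; more carefully, one should note $h' \ge t' = 4t+2$, which is built into the statement of \Cref{GMST}), and since $|R_H| \ge h'$ — this follows from the $h'$ vertex-disjoint $R_H$--$Y$ paths, whose endpoints in $R_H$ are distinct — and $(G,R)$, hence $H$ with root set $R_H$, has no rooted $K_{2,t}$-minor (a rooted minor of $H$ is a rooted minor of $G$), \Cref{minor_grasped} applies to $H$ and $W$ and tells us that $H$ contains no $K_{t'}$-model grasped by $W$. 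Now I apply the algorithmic version of \Cref{GMST} to $H$ and the wall $W$ of height $h'$: since the first outcome (a $K_{t'}$-model grasped by $W$) is excluded, we are in the second outcome, so $H$ has an $(s,h)$-good $(\alpha_0,\alpha,\alpha)$-near embedding $(\sigma,H_0,Z,\mathcal{V},\mathcal{W})$ in a surface $\surf$ of Euler genus at most $\alpha$, where $\alpha_0 = \alpha_0(t',s)$ and $\alpha = \alpha(t')$, such that $H'_0$ contains a good wall $W'_0$ that can be lifted to a subwall $W_0$ of $W$. This gives item~\ref{item:near_embeddding} and the first half of item~\ref{item:Hprime_0} directly.

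For the remaining claim in item~\ref{item:Hprime_0} — that for every transversal $Y'$ of $W'_0$ there are $h$ vertex-disjoint $X$--$Y'$ paths and $h$ vertex-disjoint $R_H$--$Y'$ paths — the key observation is that $W_0$ is a height-$h$ subwall of $W$, so a transversal $Y'$ of $W'_0$ lifts, through the lift relating $W'_0$ and $W_0$, to a set $Y'_0$ of $h$ branch vertices of $W_0$, each lying in a distinct row and distinct column of $W$ (branch vertices of a subwall are branch vertices of the ambient wall, occupying distinct rows and columns). Extend $Y'_0$ to a full transversal $Y$ of $W$ of size $h'$; then the $h'$ vertex-disjoint $X$--$Y$ paths (resp.\ $R_H$--$Y$ paths) guaranteed by \Cref{well_linked_wall}, restricted to those that end in $Y'_0$ — there are at least $h' - (h' - h) = h$ such, after possibly rerouting endpoints along $W$ — give the required $h$ vertex-disjoint paths to $Y'_0$, which then extend to $Y'$ along internally disjoint vortex paths. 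The algorithmic statement follows by chaining the polynomial-time subroutines of \Cref{well_linked_wall} (implicit in its constructive Menger/tangle arguments) and the $T(t',s,h)\cdot n^{O(1)}$ algorithm of \Cref{GMST}, giving a total running time of the form $T'(t,s,h)\cdot n^{O(1)}$.

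The main obstacle I anticipate is the bookkeeping in the last paragraph: making precise how a transversal of the \emph{lifted} wall $W'_0$ inside $H'_0$ corresponds to branch vertices of the \emph{subwall} $W_0$ inside $H$ and hence to rows/columns of the original wall $W$, and then showing the connectivity from $X$ and from $R_H$ survives this correspondence (the subtlety being that the $h'$ disjoint paths from \Cref{well_linked_wall} land on a transversal $Y$ that need not be the one induced by $W'_0$, so a linkage/rerouting argument along the rows and columns of $W$ is required). The surface-theoretic parts are entirely black-boxed through \Cref{GMST}, and the non-existence of the grasped clique minor is immediate from \Cref{minor_grasped}, so no new structural work is needed there.
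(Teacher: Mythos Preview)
Your proposal is correct and follows the same three-step approach as the paper: apply \Cref{well_linked_wall} to get the wall $W$ of height $h'$, use \Cref{minor_grasped} to exclude a grasped $K_{t'}$-model, then invoke \Cref{GMST} for the near embedding. The final bookkeeping is simpler than you fear: the branch vertices of $W'_0$ coincide with those of its lift $W_0$ (lifting only replaces virtual edges by paths), and since $W_0$ is a subwall of $W$ these are branch vertices of $W$ in distinct rows and columns; hence $Y'$ itself extends directly to a transversal $Y$ of $W$, and of the $h'$ disjoint $X$--$Y$ (resp.\ $R_H$--$Y$) paths exactly $h$ end in $Y'$, with no rerouting or vortex-path extension needed.
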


\begin{proof}
Let $(G,R)$ be a rooted graph without a rooted $K_{2,t}$-minor and let 
$H$ be a $k$-interesting subgraph of $(G, R)$ with core $X$. 
By \Cref{well_linked_wall}, there is a wall $W$ of height $h'$ in $H$ such that for every transversal set $Y$ of $W$ there are $h'$ vertex-disjoint $X$--$Y$ paths in $H$ and $h'$ vertex-disjoint $R_H$--$Y$ paths in $H$, where $R_H = R \cap V(H)$.  

It follows from \Cref{minor_grasped} that $H$ does not contain a $K_{t'}$-model grasped by $W$. 
\Cref{GMST} implies then that $H$ has an $(s,h)$-good $(\alpha_0,\alpha,\alpha)$-near embedding $(\sigma,H_0,Z,\mathcal{V},\mathcal{W})$ in a surface $\surf$ of Euler genus at most $\alpha$. This shows property \ref{item:near_embeddding}. 

Moreover, by \Cref{GMST}, $H'_0$ contains a good wall $W'_0$ with respect to the embedding that can be lifted to a subwall $W_0$ of $W$. 
Finally, suppose $Y'$ is a transversal set of $W'_0$. 
Since $W'_0$ has height $h$, it follows that $|Y'| = h$.  
Since $W'_0$ can be lifted to the subwall $W_0$ of $W$, it follows that $Y'$ can be extended to a transversal set $Y$ of $W$. 
We know that there are $h'$ vertex-disjoint $X$--$Y$ paths in $H$ and $h'$ vertex-disjoint $R_H$--$Y$ paths in $H$. 
Keeping only the paths that have an endpoint in $Y'$ give the desired paths in property \ref{item:Hprime_0}. 
\end{proof}

\subsection{Discarding large vortices}

For an integer $q$, the \emph{shallow vortex grid of order $q$} is the graph obtained from the Cartesian product of a cycle $u_1,u_2,\ldots,u_{4q}$ with a path $v_1,v_2,\ldots,v_q$ by adding the edges $(u_{4(i-1)+1},v_1)(u_{4(i-1)+3},v_1)$ and $(u_{4(i-1)+2},v_1)(u_{4(i-1)+4},v_1)$ for every $i\in [q]$. 
For  $i\in [q]$, the cycle $(u_1,v_i),(u_2,v_i),\ldots, (u_{4q}, v_i)$ is called the {\em $i$-th cycle} of the shallow vortex grid. 
A \emph{sliced shallow vortex grid of order $q$} is defined as above with the difference that instead of taking the Cartesian product of a cycle and a path, we take the Cartesian product of a path with a path.

Let $G$ be a graph and assume that there is an $(s,h)$-good near-embedding $(\sigma,G_0,Z,\mathcal{V},\mathcal{W})$ of $G$ in a surface $\surf$. Let $V=(J,\Omega)\in \mathcal{V}$ be a large vortex of the near-embedding.  
We say that a model of a shallow vortex grid of order $q$ \emph{surrounds} $V$ if for each $i\in [q-1]$, the union of the vertex sets of the branch sets in the model that correspond to vertices of the $i$-th cycle separates that for the $q$-th cycle from $\Omega$ in $G$. 

Thilikos and Wiederrecht~\cite{TW_arxiv_2022} developed a useful tool that, given a near embedded graph, allows either to `remove' a large vortex from the near embedding, or to find a shallow vortex grid minor of large order in the graph. 
This is Lemma~31 in~\cite{TW_arxiv_2022}. 
While this is not stated explicitly in~\cite{TW_arxiv_2022}, 
it can be checked that, in the second case, the proof in~\cite{TW_arxiv_2022} produces a model of the shallow vortex grid that surrounds the vortex, which will be important for our purposes. 
(The authors are grateful to Sebastian Wiederrecht for helpful discussions regarding this matter.) 
One then can check that applying Lemma~31 from~\cite{TW_arxiv_2022} with this extra ``surrounding'' property to the good near embeddings considered in this paper gives the following result.

\begin{theorem}[{Corollary from \cite[Lemma~31]{TW_arxiv_2022}}]
\label{killing_a_vortex}
For every positive integers $q, \alpha_2$ with $q\le \alpha_2$, there exists a positive integer $s'=s'_{\ref{killing_a_vortex}}(q)$ such that for every positive integers $s\ge s'$ and $h$ with $3\leq s \leq h$ the following holds. 
Let $G$ be a graph that has an $(s,h)$-good $(\alpha_0,\alpha_1,\alpha_2)$-near embedding $(\sigma,G_0,Z,\mathcal{V},\mathcal{W})$ in a surface $\surf$ of Euler genus at most $\alpha$ for some integers $\alpha_0,\alpha_1,\alpha$.
Let $V=(J,\Omega)\in \mathcal{V}$ and let $G_V=G(D_1(V))$, where $D_1(V)$ is defined as in \ref{P:concentric} in the definition of $(s,h)$-good embeddings. 
Then at least one of the following two outcomes holds.
\begin{enumerate}[label=(\alph*)]
\item \label{item:outcome_a}
There is a vertex subset $S$ of $G_V$ with $|S|\le 12 \alpha_2 q$ such that there is an $(s-|S|,h)$-good $(\alpha_0+|S|,\alpha_1-1,\alpha_2)$-near embedding $(\widetilde{\sigma},\widetilde{G}_0,Z\cup S,\mathcal{V}\setminus \{V\},\widetilde{\mathcal{W}})$ of $G$ in $\surf$ such that the embedding $\widetilde{\sigma}$ is an extension of $\sigma$ to the graph $J - S$ and with $\mathcal{W}\subseteq \widetilde{\mathcal{W}}$.

\item \label{item:outcome_b}
$G_V$ contains a model of a shallow vortex grid  of order $q$ that surrounds $V$.  
\end{enumerate}
\end{theorem}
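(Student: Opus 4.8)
The plan is to obtain \Cref{killing_a_vortex} as a specialization of \cite[Lemma~31]{TW_arxiv_2022}, the only real work being (i) fixing the parameters so that the hypotheses of that lemma are met, (ii) checking that in its first outcome the resulting tuple still satisfies the three conditions defining an $(s,h)$-good near embedding with the claimed degraded parameters, and (iii) extracting from its proof that in the second outcome the shallow vortex grid can be taken to surround $V$. First I would recall what \cite[Lemma~31]{TW_arxiv_2022} provides: applied to a near embedding and one distinguished large vortex $V$ — of adhesion at most $\alpha_2$ — and for $s$ above a threshold depending only on the order $q$ of the sought grid, it returns either a set $S$ of at most $12\alpha_2 q$ vertices drawn near $V$ whose deletion allows the remaining content of $V$ to be redrawn into the surface or into small vortices (thereby removing $V$ from the list of large vortices), or a shallow vortex grid of order $q$ as a minor of the subgraph of $G$ drawn in the disk around $V$. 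I would set $s' := s'_{\ref{killing_a_vortex}}(q)$ to be this threshold, and take $G_V = G(D_1(V))$ as the local subgraph to which the lemma is applied; this is the subgraph of $G$ drawn inside the outermost of the concentric disks guaranteed by Property~\ref{P:concentric}, which contains all of $\Omega(V)$ and all of $J$, and which by Property~\ref{P:concentric} is disjoint from $D(W'_0)$ and from $D_1(V')$ for every other large vortex $V'$.

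In the first outcome the parameter bookkeeping is immediate: $S$ joins the apex set, giving the bound $\alpha_0+|S|$ on apices; one large vortex disappears, giving the bound $\alpha_1-1$ on their number; the adhesion bound $\alpha_2$ of the surviving large vortices is untouched; and since the construction only ever adds small vortices one has $\mathcal{W}\subseteq\widetilde{\mathcal{W}}$. For $(s-|S|,h)$-goodness I would use that the surgery is confined to $G_V$, i.e.\ to the interior of $D_1(V)$: the flat wall $W'_0$ of Property~\ref{P:flat} lies in $D(W'_0)$, which misses $D_1(V)$, so it persists verbatim in $\widetilde{G}'_0$ and Property~\ref{P:flat} still holds with height $h$; Property~\ref{P:facewidth} survives with $s$ replaced by $s-|S|$ since deleting $|S|$ vertices decreases the facewidth by at most $|S|$ (a standard fact about vertex deletion in embedded graphs), and adding the redrawn edges and vertices of $J-S$ inside $\surf$ cannot decrease it further; and for each surviving large vortex $V'$ the concentric cycles $C_1(V'),\dots,C_s(V')$ are drawn inside $D_1(V')$, hence are untouched and still tightly enclose $V'$ while avoiding $D(W'_0)$, so Property~\ref{P:concentric} holds for $s-|S|$. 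This yields outcome~\ref{item:outcome_a}.

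The crux is the second outcome, namely upgrading ``$G_V$ has a shallow vortex grid of order $q$ as a minor'' to ``$G_V$ has a model of a shallow vortex grid of order $q$ that \emph{surrounds} $V$'', where surrounding means that for each $i\in[q-1]$ the union of the branch sets realizing the $i$-th cycle separates the $q$-th cycle from $\Omega(V)$ in $G$. Here I would trace the construction in the proof of \cite[Lemma~31]{TW_arxiv_2022}: that proof assembles the grid from a family of nested separating curves around the vortex (which, in our good-embedding setting, one may take inside $D_1(V)$ and compatible with the concentric disks $D_1(V)\supseteq\cdots\supseteq D_s(V)$) together with transversal paths threading through the vortex along its linear decomposition, so that the $i$-th cycle of the produced grid arises from a closed walk encircling $\Omega(V)$ and the nesting of these walks is inherited from the nesting of the disks they bound; this is precisely the surrounding property. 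The main obstacle will be to make this last step rigorous without reproving \cite[Lemma~31]{TW_arxiv_2022} — one must isolate from their argument the essentially topological statement that the cyclic direction of the constructed grid ``winds around'' the vortex and that its successive cycles are nested with respect to $\Omega(V)$ — which is why the present result is phrased as a corollary and relies on a direct inspection of (and discussion about) the proof in \cite{TW_arxiv_2022}. Once this is in hand, the dichotomy of \cite[Lemma~31]{TW_arxiv_2022} applies since $s\ge s'\ge s'_{\ref{killing_a_vortex}}(q)$ and $q\le\alpha_2$, and one of the two stated outcomes must hold, completing the proof.
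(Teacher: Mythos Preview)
Your proposal is correct and matches the paper's treatment exactly: the paper does not give a proof either, but simply states the result as a corollary of \cite[Lemma~31]{TW_arxiv_2022}, noting (as you do) that the ``surrounding'' property in outcome~\ref{item:outcome_b} is not explicit there but can be extracted by inspecting their construction, and that the parameter bookkeeping for outcome~\ref{item:outcome_a} is routine. Your sketch of the parameter degradation for $(s,h)$-goodness and the locality argument for Properties~\ref{P:flat}--\ref{P:concentric} is a faithful expansion of what the paper leaves implicit.
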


The following lemma shows that a sliced shallow vortex grid of order $t$ with roots in the bottom row of the grid contains a rooted $K_{2,t}$-model.

\begin{lemma}\label{K2t_in_SVGM}
Let $S$ be a sliced shallow vortex grid of order $t$, that is, the Cartesian product of a path $u_1,u_2,\ldots,u_{4t}$ with a path $v_1,v_2,\ldots,v_{t}$ plus the additional edges on the outerface of the grid. 
Let \(R = \{u_1,u_2,\ldots,u_{4t}\} \times \{v_{t}\}\). Then $(S, R)$ contains a rooted $K_{2,t}$-model. 
\end{lemma}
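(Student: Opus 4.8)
The plan is to exhibit an explicit rooted $K_{2,t}$-model in the sliced shallow vortex grid $S$ of order $t$. Write the vertices of $S$ as pairs $(u_i,v_j)$ with $i\in[4t]$ and $j\in[t]$, so that row $j$ is the path $(u_1,v_j),\dots,(u_{4t},v_j)$, each column $i$ is the path $(u_i,v_1),\dots,(u_i,v_t)$, and additionally the ``bottom'' row $j=1$ carries the extra cross edges of the form $\{(u_{4(i-1)+1},v_1),(u_{4(i-1)+3},v_1)\}$ and $\{(u_{4(i-1)+2},v_1),(u_{4(i-1)+4},v_1)\}$ (these will not even be needed). The roots are $R=\{(u_1,v_t),\dots,(u_{4t},v_t)\}$, i.e.\ the entire top row. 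First I would take the two ``$2$-side'' branch sets of the model to be $A_{-1}$ and $A_0$, chosen as two long horizontal segments of the top row: split the $4t$ top-row vertices into two halves and let $A_{-1}:=\{(u_i,v_t): 1\le i\le 2t\}$ and $A_0:=\{(u_i,v_t): 2t< i\le 4t\}$ — wait, this would consume all the roots in the two big sides. Instead, I take $A_{-1}$ and $A_0$ to be two of the \emph{columns}, say $A_{-1}$ is column $1$ (all of $\{(u_1,v_j):j\in[t]\}$) and $A_0$ is column $4t$; each is a connected path, and crucially neither is forced to contain a top-row root that we need to reserve, since we can simply declare column $1$ and column $4t$ to own roots $(u_1,v_t)$ and $(u_{4t},v_t)$ while the remaining $4t-2\ge t$ top-row vertices stay available as distinct roots for central branch sets.

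Next, for the central side I need $t$ pairwise-disjoint connected branch sets, each adjacent to both column $1$ and column $4t$ and each containing at least one root. The natural choice is to use $t$ of the horizontal rows. Concretely, pick rows $j=1,2,\dots,t-1$ together with the top row $j=t$, and for $j\in\{1,\dots,t-1\}$ let the $j$-th central branch set be the ``interior'' part of row $j$, namely $C_j:=\{(u_i,v_j): 2\le i\le 4t-1\}$; each $C_j$ is a path, is adjacent to column $1$ via the edge $(u_1,v_j)(u_2,v_j)$ and to column $4t$ via $(u_{4t-1},v_j)(u_{4t},v_j)$, and the branch sets $C_1,\dots,C_{t-1}$ are pairwise disjoint and disjoint from columns $1$ and $4t$. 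But these $C_j$ contain no root (the roots live only in the top row). To fix this, I instead route each $C_j$ up to the top row: take one unused top-row vertex $(u_{m_j},v_t)$ for each $j\in[t]$, with the $m_j$ distinct and all in $\{2,\dots,4t-1\}$ (there are $4t-2\ge t$ choices), and let the $j$-th central branch set be the union of the interior of row $j$ with the vertical segment of column $m_j$ from $v_j$ up to $v_t$: $C_j := \{(u_i,v_j): 2\le i\le 4t-1\}\ \cup\ \{(u_{m_j},v_{j'}): j\le j'\le t\}$. For $j=t$ the first set already lies in the top row and we just take $C_t:=\{(u_i,v_t):2\le i\le 4t-1\}$ minus whatever is needed to keep the columns $1,4t$ roots, but more simply we reindex so that exactly $t$ of the rows are used and the vertical ``stalks'' are chosen with distinct $m_j$ so that the stalk of row $j$ does not pass through any row $j'<j$'s interior in a conflicting way. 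Since the stalk for row $j$ only occupies column $m_j$ in rows $j,j+1,\dots,t$, two stalks with distinct columns never collide, and a stalk for row $j$ meets the interior of row $j'$ (for $j'>j$) only at the single vertex $(u_{m_j},v_{j'})$ — which I therefore must delete from $C_{j'}$; since each $C_{j'}$ is a path and we delete at most $t$ interior vertices from a path of length $\approx 4t$, the remainder still stays connected provided we are slightly careful, or alternatively choose the $m_j$ to be clustered near one end so that deletions happen at the boundary. The cleanest formalization: choose $m_j := 4t-j$ for $j\in[t]$ (so $m_1>m_2>\dots>m_t$, all in $\{3t,\dots,4t-1\}\subseteq\{2,\dots,4t-1\}$), truncate each row-$j$ interior to $\{(u_i,v_j): 2\le i\le m_j\}$, and attach the stalk at its right end $u_{m_j}$; then row-$j$'s branch set is a single path, the stalks are in distinct columns, and stalk $j$ meets row $j'$'s (shorter) interior only possibly at its right endpoint, which one checks does not disconnect anything.

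I then verify the three model conditions: (1) all $t+2$ branch sets $A_{-1},A_0,C_1,\dots,C_t$ are connected — each is a path or a ``staircase'' path; (2) they are pairwise disjoint — columns $1$ and $4t$ are disjoint from all truncated-row interiors (which live in columns $2,\dots,4t-1$) and from all stalks (columns $3t,\dots,4t-1$, none equal to $1$ or $4t$), and the $C_j$ are mutually disjoint by the column-disjointness of stalks and the nested truncation of row interiors; (3) the required $K_{2,t}$ edges exist — $A_{-1}=$ column $1$ is joined to each $C_j$ by the grid edge $(u_1,v_j)$–$(u_2,v_j)$ (the left endpoint of $C_j$'s row part), $A_0=$ column $4t$ is joined to each $C_j$ by the edge $(u_{4t-1},v_t)$–$(u_{4t},v_t)$ after noting the right end of each staircase reaches the top row at column $m_j<4t$, so more precisely I route $A_0$ as column $4t$ which is adjacent to the top-row vertex $(u_{4t-1},v_t)$; I simply make sure $(u_{4t-1},v_t)$ lies in some $C_j$ or adjust $A_0$ to be row-adjacent — this is the one fiddly adjacency and I would handle it by also adding the top-row segment $\{(u_i,v_t): m_t\le i\le 4t-1\}$ into $C_t$ so that $C_t$ touches column $4t$; and finally each $C_j$ contains the root $(u_{m_j},v_t)$ by construction, with the $m_j$ distinct, while $A_{-1},A_0$ are irrelevant to the root condition.

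\textbf{Main obstacle.} The only real difficulty is bookkeeping: making the $t$ central branch sets simultaneously (i) pairwise disjoint, (ii) each connected, (iii) each adjacent to both of the two side branch sets, and (iv) each containing its own distinct root from the top row, all inside a grid whose roots sit entirely on one row. The staircase construction above resolves this, and I expect the verification to be a short finite check; the height/width bound $4t$ (rather than, say, $t$) is exactly what gives enough room for $t$ distinct vertical stalks plus the two side columns plus the truncation slack, so no quantitative issue arises. An alternative, possibly cleaner, write-up would contract the grid to a $K_{2,t}$ minor directly: contract column $1$ to a single vertex $a_{-1}$, column $4t$ to $a_0$, and for each $j\in[t]$ contract the union of row $j$ with the stalk to column $m_j$ into a single vertex whose image in the contracted graph is a root (since it contains the top-row root $(u_{m_j},v_t)$), giving a properly rooted $K_{2,t}$; I would present whichever is shorter.
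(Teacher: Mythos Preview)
Your proposal has a genuine gap, and it is not just bookkeeping. You explicitly write that the cross edges ``will not even be needed,'' but in fact they are essential to the lemma: without the crosses, the sliced shallow vortex grid is simply the $4t\times t$ grid, which is planar with the entire top row $R$ on the outer face. Adding a new vertex $v^*$ in that outer face adjacent to every root keeps the graph planar; any rooted $K_{2,t}$-model (for $t\ge 3$) would then yield a $K_{3,3}$-minor via $\{v^*\},A_{-1},A_0$ and three of the central branch sets, contradicting planarity. So no construction that ignores the crosses can succeed for $t\ge 3$.

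Concretely, this obstruction is exactly where your construction breaks: with $A_0=$ column $4t$, your truncated row-plus-stalk sets $C_j$ for $2\le j\le t-1$ reach only up to column $m_j=4t-j\le 4t-2$, and hence are not adjacent to column $4t$; your proposed patch (``add the top-row segment \ldots into $C_t$'') fixes only $C_t$, and any attempt to extend all $C_j$ along the top row to touch column $4t-1$ forces them to overlap. The paper's proof takes a genuinely different route: it uses the cross edges in row $v_1$ to build two disjoint \emph{intertwining} paths $P_1,P_2$ living in $\{u_1,\dots,u_{4t}\}\times\{v_1,v_2\}$ with the property that in row $v_2$ the columns alternate between $P_1$ and $P_2$ in blocks of four. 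The central branch sets are then the $4$-column-wide vertical strips $B_j=\{u_{4j-3},\dots,u_{4j}\}\times\{v_3,\dots,v_t\}$, each of which is trivially connected, contains four roots, and---thanks to the intertwining---is adjacent in row $v_2$ to both $P_1$ and $P_2$.
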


\begin{proof}
Using the additional edges on the boundary, we find two disjoint intertwining paths \(P_1\) and \(P_2\) with vertices in \(\{u_1,u_2,\ldots,u_{4t}\} \times \{v_1, v_2\}\) 
so that \(P_1\) contains all vertices \((u_i, v_2)\) with \(i \bmod 8 \in \{1, 2, 7, 8\}\), and \(P_2\) contains all vertices \((u_i, v_2)\) with \(i \bmod 8 \in \{3, 4, 5, 6\}\). 
For each \(j \in [t]\), let \(B_j = \{u_{4j-3}, u_{4j-2}, u_{4j-1}, u_{4j}\} \times \{v_3, \ldots, v_{t}\}\). Then the desired rooted \(K_{2, t}\)-model has branch sets 
\(V(P_i)\) for \(i \in \{1, 2\}\), and \(B_{i}\) for \(i \in [t]\).
\end{proof}

\begin{lemma}\label{PathsInGrid2}
  Let \(t, m, n\) be positive integers with \(\min\{m, n\} \ge 3t\), and let \(J\) be the \(n \times m\) grid. Let \(X\) denote the set of all \(t^2\) vertices with coordinates \((i, j)\) such that \(1 \le i, j \le t\), and let \(Y\) be any set of \(4t^2\) vertices of \(J\). Then there exist \(2t-1\) vertex-disjoint \(X\)--\(Y\) paths in the grid.  
\end{lemma}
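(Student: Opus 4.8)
The plan is to prove this via Menger's theorem, in the same spirit as the (commented-out) \Cref{PathsInGrid}, but carefully tracking the asymmetry between $X$ being a small $t\times t$ square in the corner and $Y$ being an arbitrary set of $4t^2$ vertices. Suppose for contradiction that there are at most $2t-2$ vertex-disjoint $X$--$Y$ paths in $J$. By Menger's theorem there is a vertex set $S$ with $|S| \le 2t-2$ such that $J - S$ has no component meeting both $X$ and $Y$.

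First I would exploit the structure of $X$: since $X$ occupies the $t\times t$ square $\{1,\dots,t\}\times\{1,\dots,t\}$, the rows $1,\dots,t$ and columns $1,\dots,t$ all meet $X$. As $|S|\le 2t-2 < 2t$, among these $t$ rows and $t$ columns at most $2t-2$ are hit by $S$, so there remain at least two "lines" (rows or columns among the first $t$) that are disjoint from $S$ and meet $X$; in particular there is at least one row $\rho$ with $1\le\rho\le t$ disjoint from $S$, and at least one column $\gamma$ with $1\le\gamma\le t$ disjoint from $S$. Note $\rho\cup\gamma$ is connected (they cross at $(\gamma,\rho)$ — using whatever coordinate convention the paper fixes) and meets $X$.

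Next I would handle $Y$. Since $|Y|=4t^2$, the number of rows meeting $Y$ times the number of columns meeting $Y$ is at least $4t^2$, so either $Y$ meets at least $2t$ distinct rows, or $Y$ meets at least $2t$ distinct columns. Since $|S|\le 2t-2$, in either case there is a line $\lambda$ (a full row or a full column of $J$) disjoint from $S$ that meets $Y$. Now the main point: I need to connect $\rho\cup\gamma$ to $\lambda$ within $J-S$ to get a single component meeting both $X$ and $Y$, contradicting the choice of $S$. If $\lambda$ is a row, it crosses the column $\gamma$ (which runs the full height $\ge 3t$ of the grid), and since both $\lambda$ and $\gamma$ avoid $S$, their union is connected and contained in $J-S$; thus $\rho\cup\gamma\cup\lambda$ is a connected subgraph of $J-S$ meeting both $X$ and $Y$. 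If instead $\lambda$ is a column, it crosses the row $\rho$ and the same argument applies with $\rho$ in place of $\gamma$. Either way we reach a contradiction, so at least $2t-1$ vertex-disjoint $X$--$Y$ paths exist.

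The only subtlety — and the step I would be most careful about — is the bookkeeping on which lines survive deleting $S$: one must make sure that the "small" side $X$ still contributes a surviving row \emph{and} a surviving column (not just one line of a single type), since to bridge to an arbitrary surviving line of $Y$ we may need whichever orientation is perpendicular to it. The counting $|S|\le 2t-2$ against the $t$ rows plus $t$ columns of the square is exactly tight enough to guarantee one of each. The hypothesis $\min\{m,n\}\ge 3t$ is used to ensure the perpendicular line actually spans far enough to intersect $\lambda$ (indeed $3t$ is more than enough; $t$ would already suffice since both relevant lines lie among the first $t$ coordinates on the $X$-side, but any grid large enough works), and it also guarantees the grid is large enough that $4t^2$ vertices and a $t\times t$ square can coexist.
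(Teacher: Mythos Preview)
Your argument has a genuine gap in the bookkeeping on the $X$-side. You claim that among the $t$ rows and $t$ columns through the $t\times t$ square, at most $|S|\le 2t-2$ are hit by $S$, and hence at least one row $\rho\le t$ \emph{and} one column $\gamma\le t$ survive. But a single vertex of $S$ lying inside the square hits one of those rows \emph{and} one of those columns simultaneously, so $2t-2$ vertices can kill up to $4t-4$ of the $2t$ lines. Concretely, taking $S=\{(i,i):i\in[t]\}$ (the diagonal of the square, of size $t\le 2t-2$ for $t\ge 2$) hits every one of the first $t$ rows and every one of the first $t$ columns; no surviving $\rho$ or $\gamma$ exists. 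Your own remark that ``the counting $|S|\le 2t-2$ against the $t$ rows plus $t$ columns of the square is exactly tight enough to guarantee one of each'' is therefore incorrect, and your later comment that $\min\{m,n\}\ge t$ would already suffice is a symptom of the same error.

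The paper's proof circumvents this by replacing the $2t$ full lines through $X$ by $2t-1$ \emph{pairwise vertex-disjoint} partial paths emanating from the square: the $t$ paths $P_i'=\{i\}\times\{t,\ldots,m\}$ and the $t-1$ paths $P_j''=\{t,\ldots,n\}\times\{j\}$. Disjointness forces each vertex of $S$ to kill at most one of them, so with $|S|\le 2t-2$ one path $P_X$ survives. Because $P_X$ is only partial, one then needs bridging lines $Q'$ (a full column) and $Q''$ (a full row) disjoint from $X\cup S$ to connect $P_X$ to $P_Y$; this is exactly where $\min\{m,n\}\ge 3t$ is used (there are $\ge 3t$ columns, $t$ meet $X$, at most $2t-2$ meet $S$, leaving at least two). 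Your $Y$-side argument is fine and matches the paper.
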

\begin{proof}
  Suppose towards a contradiction, that the lemma does not hold. Then, by Menger's Theorem, there exists a set of vertices \(S\) with \(|S| < 2t-1\) such that after removing \(S\) from the grid, no component intersects both \(X\) and \(Y\).
  For each \(i \in [t]\), let \(P_i'\) denote the path induced by the vertices of the grid with coordinates in the set \(\{i\} \times \{t, \ldots, m\}\),
  and for each \(j \in [t-1]\), let \(P_j''\) denote the path induced by the vertices of the grid with coordinates in the set \(\{t, \ldots, n\} \times \{j\}\).
  Note that the paths \(P_i'\) and \(P_j''\) are all pairwise vertex-disjoint, and each of them intersects \(X\).
  Since \(|S| < 2t-1\), there exists \(P_X \in \{P_1', \ldots, P_t', P_{1}'', \ldots, P_{t-1}''\}\) disjoint from \(S\).

  The number of vertices in the set \(Y\) is at most the number of rows of the grid intersecting \(Y\) times the number of columns of the grid intersecting \(Y\).
  As \(|Y| = (2t)^2\), there exists a row or column \(P_Y\) that intersects \(Y\) and that is disjoint from \(S\). 

  Since \(\min\{m, n\} \ge 3t\), and the set \(X\) intersects \(t\) rows and \(t\) columns, there exist a column \(Q'\) and a row \(Q''\) of the grid, that are disjoint from \(X \cup S\).
  Thus, \(P_X \cup P_Y \cup Q' \cup Q''\) is a connected subgraph of the grid which is disjoint from \(S\), and intersects both \(X\) and \(Y\), a contradiction.
\end{proof}

Note that the definition of a transversal set in a wall naturally extends to a transversal set in a grid. For a model $M$ of a shallow vortex grid in some graph $G$, let a \emph{transversal set of $M$} is a set $Y$ of vertices chosen such that no two vertices are taken from the same branch set of $M$, and the branch sets which contain vertices from $Y$ correspond to a transversal set of the grid we obtain by contracting the branch sets of $M$.

\begin{lemma}\label{vortex_free_decomposition}
Let $t, s, h$ be positive integers with $3 \leq s \leq h$ and $h\ge 3t^2$, and let 
$t'=4t+2$. 
Then there exist positive integers $s'=s'(t, s)$ and $k=k(t,s,h)$ such that, letting  $\alpha_0=\alpha_0(t',s')$ and $\alpha=\alpha(t')$ 
be as in \Cref{GMST}, the following holds.  
For every rooted graph $(G,R)$ without a rooted $K_{2,t}$-minor, 
and every $k$-interesting subgraph $H$ of $(G, R)$ with core $X$, 
\begin{enumerate}
\item there exists an $(s,h)$-good $(\alpha_0,0,\alpha)$-near embedding $(\sigma,H_0,Z,\varnothing,\mathcal{W})$ of $H$ in a surface $\surf$ of Euler genus at most $\alpha$, and
\item $H'_0$ contains a good wall $W'_0$ with respect to the embedding that can be lifted to a wall $W_0$ in $H$ and such that for every transversal $Y'$ of $W'_0$ there are $h$ vertex-disjoint $X$--$Y'$ paths in $H$.  
\end{enumerate}
Furthermore, for some function $T''$, there is an algorithm with running time $T''(t,s,h)\cdot n^{O(1)}$ that, given an $n$-vertex rooted graph $(G, R)$ and a $k$-interesting subgraph $H$ of $G$ with core $X$ as above, finds this embedding and the wall $W'_0$.  
\end{lemma}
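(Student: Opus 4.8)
The plan is to take the structure produced by \Cref{embedding_H} and iteratively eliminate the large vortices one at a time using \Cref{killing_a_vortex}, showing that the second outcome of that theorem is impossible because it would produce a rooted $K_{2,t}$-minor. First I would fix the parameters carefully: set $q := t$, let $s' := s'_{\ref{killing_a_vortex}}(t)$ (actually we need a bit more room since $s$ decreases by at most $12\alpha q$ at each of at most $\alpha$ elimination steps, so take $s'$ large enough that even after all eliminations the remaining facewidth/concentricity parameter is still at least $s'_{\ref{killing_a_vortex}}(t)$ and still at least $s$); let $\alpha_0,\alpha$ be as in \Cref{GMST} for $t' = 4t+2$ and this inflated $s'$; and finally choose $h'' := h'_{\ref{GMST}}$-type parameters and $k := k(t,s,h)$ large enough that \Cref{embedding_H} applies with target good-wall height $h$ and target concentric-cycle parameter equal to the inflated $s'$. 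I would then invoke \Cref{embedding_H} to obtain an $(s_0,h)$-good $(\alpha_0,\alpha,\alpha)$-near embedding of $H$ in a surface $\surf$ of Euler genus at most $\alpha$, together with the good wall $W_0'$ in $H_0'$ lifting to a wall $W_0$ in $H$ with the stated path systems to $X$ and to $R_H$.

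Next I would run the following loop. As long as the current near embedding has at least one large vortex $V = (J,\Omega)$, apply \Cref{killing_a_vortex} with $q = t$ to $V$. I claim outcome \ref{item:outcome_b} cannot occur. Indeed, outcome \ref{item:outcome_b} gives a model of a shallow vortex grid of order $t$ inside $G_V = G(D_1(V))$ that surrounds $V$. Refine this: a shallow vortex grid of order $t$ contains a sliced shallow vortex grid of order $t$ as a subgraph (take $t$ consecutive columns of the cyclic product), and by \Cref{K2t_in_SVGM} the sliced shallow vortex grid of order $t$, with $R$ the bottom path of $4t$ vertices, contains a rooted $K_{2,t}$-model — here ``rooted'' means the $t$ central branch sets each meet the bottom row. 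The one thing to check is that this bottom row can actually be matched to roots of $(G,R)$: because $W_0'$ is far from $D_1(V)$ (property \ref{P:concentric} of the $(s,h)$-good embedding: $D(W_0')$ does not meet $D_1(V)$, and $D_{s}(V)$ tightly encloses $\Omega(V)$), the $h$ vertex-disjoint $R_H$--(transversal of $W_0'$) paths guaranteed by \Cref{embedding_H} can be rerouted/used to connect $h \ge 3t^2 \ge 4t^2$ vertices of $R_H$ into $G_V$ through the $s$ concentric cycles around $V$; since the bottom row of the grid model lies ``innermost'' near $\Omega(V)$ while the $R_H$-paths enter from the ``outside'', by Menger / \Cref{PathsInGrid2} applied to the grid-like structure we can link $R_H$ to the $4t^2$ boundary vertices of the relevant sub-grid by $2t-1 \ge t$ disjoint paths, attach these to distinct central branch sets, and thereby obtain a rooted $K_{2,t}$-model in $(G,R)$ — contradiction. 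Hence outcome \ref{item:outcome_a} must hold, and we replace the near embedding by the new one, which has one fewer large vortex, an $\alpha_1$-parameter decreased by $1$, and $|Z|$ increased by at most $12\alpha t$, while (by the choice of $s'$) remaining $(s,h)$-good with a good wall that still lifts appropriately (the good wall survives because $S \subseteq G_V$ lies near $V$, hence away from $D(W_0')$; the path systems to $X$ and to $R_H$ can be re-extracted since we only deleted boundedly many vertices — strictly, we keep the $X$-paths, which is all the conclusion requires). After at most $\alpha$ iterations all large vortices are gone, $Z$ has grown by at most $12\alpha^2 t$, and we have an $(s,h)$-good $(\alpha_0 + 12\alpha^2 t, 0, \alpha)$-near embedding with $\mathcal{V} = \varnothing$; rename $\alpha_0$ to absorb the additive term (this is where the final constant $\alpha_0 = \alpha_0(t',s')$ in the statement must be taken to already include this slack).

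The algorithmic ``furthermore'' follows by noting that \Cref{embedding_H} is constructive with running time $T'(t,s,h)\cdot n^{O(1)}$, and \Cref{killing_a_vortex} (via \cite[Lemma 31]{TW_arxiv_2022}) is constructive as well; the loop runs at most $\alpha = O_t(1)$ times and each iteration either returns a witnessing rooted $K_{2,t}$-model (which contradicts the hypothesis, so this branch never fires) or an explicit new near embedding, all in $T''(t,s,h)\cdot n^{O(1)}$ time for an appropriate $T''$. The main obstacle I anticipate is making the contradiction in the excluded case fully rigorous: namely, carefully verifying that the ``surrounding'' property of the shallow vortex grid model, combined with the concentric cycles tightly enclosing $V$ and the disjoint $R_H$-paths entering from outside $D_1(V)$, really does let one route $t$ disjoint paths from distinct roots to distinct central branch sets of the grid's internal $K_{2,t}$-model without interfering with the two ``degree-$t$'' branch sets $P_1, P_2$ of that model. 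This is a planarity-plus-Menger argument inside the disk $D_1(V)$, and getting the counting ($h \ge 3t^2$, the $4t^2$ versus $2t-1$ bookkeeping in \Cref{PathsInGrid2}, and the fact that the grid has $\min\{m,n\}\ge 3t$) to line up with the height of the grid model produced by \Cref{killing_a_vortex} is the delicate part; everything else is bounded bookkeeping on the near-embedding parameters.
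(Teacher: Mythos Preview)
Your high-level strategy (start from \Cref{embedding_H}, then iteratively kill large vortices via \Cref{killing_a_vortex}) matches the paper's, but there is a genuine gap in the core step. You claim that outcome~\ref{item:outcome_b} of \Cref{killing_a_vortex} is \emph{always} contradictory, because the shallow vortex grid model can be upgraded to a rooted $K_{2,t}$-model using the $R_H$--(transversal of $W_0'$) paths. This does not work: those paths end in the wall $W_0'$, and by property~\ref{P:concentric} the disk $D(W_0')$ is disjoint from $D_1(V)$, so there is no reason any of these paths enter $G_V$ at all. The roots $R_H$ may simply lie far from the vortex, and then the grid model surrounding $V$ carries no roots and gives no rooted minor. (Your sentence ``$h \ge 3t^2 \ge 4t^2$'' is also arithmetically wrong, and your choice $q=t$ is far too small for the rerouting argument you sketch.)

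The paper handles this via a dichotomy you are missing. For each large vortex $V$ it considers two conditions: (i) a shallow vortex grid model of order $q=2t(4t^2+1)$ surrounds $V$, and (ii) there are $p=4t^2$ vertex-disjoint paths from $R_H$ to $p$ distinct branch sets of that model. If both hold, a delicate argument (choosing a path system minimizing intersections with branch sets, finding a free sliced sub-grid of order $2t$, then applying \Cref{PathsInGrid2} and \Cref{K2t_in_SVGM}) produces a rooted $K_{2,t}$-minor. If (i) fails, \Cref{killing_a_vortex} gives outcome~\ref{item:outcome_a} and $V$ is removed as you describe. But if (i) holds and (ii) fails, the paper uses a completely different mechanism: it shows (via Menger and the fact that the $R_H$--$Y'$ paths exist while $R_H$--grid paths do not) that there is a separation $(A,B)$ of $H$ of order $<3p$ with $\Omega(V)\subseteq A$ and a transversal of the wall in $B$; then $A\cap B$ is added to the apex set and the whole $A$-side becomes a single small vortex $(A,\varnothing)\in\mathcal{W}$. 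Your plan has no analogue of this third branch, so as written it cannot eliminate a vortex that is surrounded by a grid model but poorly connected to $R_H$.
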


\begin{proof}
Let 
\begin{align*}
p&=4t^2\\
q&=2t(p+1). 
\end{align*}
It can be checked that the constant $\alpha = \alpha(t')$ from \Cref{GMST} is (much) larger than $q$; in particular 
\[
\alpha \geq q/2, 
\]
which will be used in the proof.  
Let also
\begin{align*}
\beta&=12q\alpha^2+3p\alpha \\
s'&= s+s'_{\ref{killing_a_vortex}}(q)+\beta \\
h''&= h+s'_{\ref{killing_a_vortex}}(q)+\beta \\
h'&=h_{\ref{GMST}}'(t',s',h'') \\ 
k&=k_{\ref{well_linked_wall}}(h'). 
\end{align*}

Observe that $s' \leq h''$ holds since $s\leq h$ (indeed, ensuring that his holds is the only reason for adding the term $s'_{\ref{killing_a_vortex}}(q)$ in the definition of $h''$). 
With these constants, \Cref{embedding_H} implies that 
\begin{enumerate}
\item there exists an $(s',h'')$-good $(\alpha_0,\alpha,\alpha)$-near embedding $(\sigma,H_0,Z,\mathcal{V},\mathcal{W})$ of $H$ in a surface $\surf$ of Euler genus at most $\alpha$, and
\item $H'_0$ contains a good wall $W'_0$ with respect to the embedding that can be lifted to a wall $W_0$ in $H$, 
and such that for every transversal $Y'$ of $W'_0$ there are $h''$ vertex-disjoint $X$--$Y'$ paths in $H$, and there are $h''$ vertex-disjoint $R_H$--$Y'$ paths in $H$, where $R_H = R \cap V(H)$. 
\end{enumerate}

If $\mathcal{V}=\emptyset$ then the above near embedding is the required embedding. Indeed, any $(s',h'')$-good embedding is also $(s,h)$-good since $s'\ge s$ and $h''\ge h$. 
Hence, we may assume that $\mathcal{V} \neq \emptyset$. 

The main idea of the proof below is to show that there is no shallow vortex grid model surrounding a vortex $V\in \mathcal{V}$ such that there are many vertex-disjoint paths between $R_H$ and $\Omega$. 
This is because, if such a structure existed, then we could find a rooted $K_{2,t}$-minor in $(H, R_H)$, and thus also in $(G, R)$, which is a contradiction to our assumption about $(G, R)$. 
Using the non-existence of such models, we use \Cref{killing_a_vortex} to find a small set of vertices, the removal of which ``kills'' the vortex $V$. 
We then iterate over all remaining vortices in $\mathcal{V}$. 

First, assume that there is some vortex $V\in \mathcal{V}$ such that 
\begin{enumerate}
  \item \label{item:SVGM}
  there is a shallow vortex grid model $M$ of order $q$ in $H$ that surrounds $V$, and
  \item \label{item:disjoint_paths}
  there are $p$ vertex-disjoint paths in $H$ between $R_H$ and $p$ distinct branch sets of $M$.
\end{enumerate} 
We will show that the existence of this model and these $p$ paths together imply that $(G, R)$ has a rooted $K_{2,t}$-minor, a contradiction.

In what follows, we use a terminology similar to the one in the proof of \Cref{minor_grasped}. 
We call a {\em feasible path system} a collection of $p$ vertex-disjoint paths in $H$ such that all the paths have one endpoint (called its {\em source}) in $R_H$, and their other endpoint (called its {\em destination}) in distinct branch sets of $M$. 
Note that there is at least one feasible path system in $H$, by our assumption \ref{item:disjoint_paths} above. 
A path $P$ in a feasible path system ${\cal P}$ is {\em good} if the branch set of $M$ containing the destination of $P$ avoids all the other paths in ${\cal P}$, and is {\em bad} otherwise. 
With respect to ${\cal P}$, we classify branch sets of $M$ into four categories: A branch set $B$ of $M$ is 
\begin{itemize}
  \item {\em good} if $B$ contains the destination of a {\em good} path of ${\cal P}$; 
  \item {\em bad} if $B$ contains the destination of a {\em bad} path of ${\cal P}$; 
  \item {\em ugly} if $B$ intersects some path in ${\cal P}$ but $B$ contains no destination of paths in ${\cal P}$; 
  \item {\em free} if $B$ intersects no path in ${\cal P}$. 
\end{itemize}
We let $g({\cal P}), b({\cal P}), u({\cal P}), f({\cal P})$ denote the number of branch sets in $M$ that are respectively good, bad, ugly, and free with respect to ${\cal P}$. 
Finally, we let $i({\cal P})$ denote the sum over every path $P\in {\cal P}$ of the number of branch sets of $M$ intersected by $P$. 

Let ${\cal P}$ be a feasible path system that minimizes $i({\cal P})$. Observe that this implies  
\[
  u({\cal P})=0,
\] 
since otherwise some path $P$ in ${\cal P}$ could be shortened so as to end in an ugly branch set, which would decrease $i({\cal P})$. 
Thus,  
\[
  b({\cal P})+g({\cal P})=|{\cal P}|=p.
\]
The shallow vortex grid of order \(q=2t(p+1)\) contains \(p+1\) disjoint copies of the sliced shallow vortex grid of order \(2t\),  
so one of these copies, which we denote by \(S\), has the property that all branch sets \(M(u)\) with \(u \in V(S)\) are free. 
In our shallow vortex grid of order \(q\), we find a \(4q \times q\) grid $J$ such that the vertex set of \(S\) is in the corner of the grid; 
that is, for every \(i\in [8t]\) and \(j\in [2t]\), the vertex of \(S\) corresponding to \((u_i, v_j)\) coincides with the vertex of \(J\) with coordinates \((i, j)\) (where $u_i, v_j$ refer to the notations from \Cref{K2t_in_SVGM}).  
Let \(S'\) be a sliced shallow vortex grid of order \(t\) contained in \(S\), which contains the same corner of \(J\) as \(S\), and thus, is induced by the vertices \((u_i, v_j)\) of \(S\) with \(i\leq 4t\) and \(j \le t\). 
We will build a rooted \(K_{2,t}\)-model in $(G, R)$ 
by rerouting some paths in \(\cal P\) so that they hit \(S'\) in a way that allows applying \Cref{K2t_in_SVGM}.

Let \(U\) denote the set of all \(p=4t^2\) vertices \(u \in V(J)\) such that the branch set \(M(u)\) contains the destination of a path of \(\cal P\) (i.e., such that \(M(u)\) is good or bad).

We will now focus on paths in the grid \(J\), and then we will use the model \(M\) to build a rooted \(K_{2, t}\)-minor in $(G, R)$. 
By \Cref{PathsInGrid2}, there exists a set \(\mathcal{Q}\) of \(2t-1\) disjoint \(U\)--\(V(S')\) paths in \(J\). 
Recall that the sliced shallow vortex grid \(S'\) has order \(t\), and that its vertex set resides in the corner of \(J\) and is disjoint from \(U\), so each path in \(\mathcal{Q}\) has its destination in one of the \(2t-1\) 
vertices of \(S'\) that are adjacent to vertices outside \(S'\) in \(J\). Since \(J\) is bipartite, we can find \(Q_1, \ldots, Q_t \in \mathcal{Q}\) such that if \(w_c\) denotes the end of \(Q_c\) in \(U\), then the vertices \(w_1, \ldots, w_t\) are pairwise nonadjacent in \(J\).

Let \(s^*\) denote the vertex of \(S\) that corresponds to the corner \((u_{8t}, v_{2t})\), and 
for each \(c \in [t]\), let \(s_c'\) denote the end of \(Q_c\) in \(V(S')\), and let \(s_c\) be the vertex of \(Q_c\) belonging to \(V(S) \setminus \{s^*\}\) that is closest to \(w_c\) on \(Q_c\) (so that the \(s_c\)--\(w_c\) subpath of \(Q_c\)
intersects \(V(S) \setminus \{s^*\}\) only in \(s_c\)).
Thus, each vertex \(s_c\) corresponds to some vertex \((u_i, v_j)\) with either $i=8t$ or $j=2t$. 
See \Cref{fig:lemma63} for an illustration. 
 \begin{figure}
    \centering
    \includegraphics[width=0.6\textwidth]{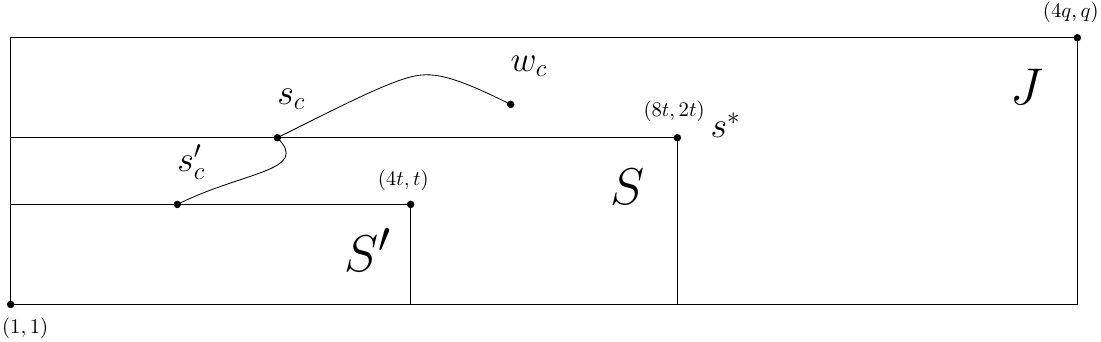}
    \caption{An illustration of $J,S,S'$ and a path $Q_c$.}
    \label{fig:lemma63}
  \end{figure}

Let \(R_{S'}\) denote the set of the vertices of \(S'\) corresponding to the vertices \((u_i, v_j)\) with \(j=t\).
After reordering the paths \(Q_1, \ldots, Q_t\) and rerouting them using the buffer zone of \(S\), we may assume that each \(s_c'\) is the vertex corresponding to the vertex \((u_c, v_t) \in R_{S'}\) of \(S'\). 

Let \(c \in [t]\), and let \(w_c'\) denote the vertex of \(Q_c\) adjacent to its end \(w_c\). Thus, the graph \(H\) contains an edge between \(M(w_c)\) and \(M(w_c')\). 
Since \(w_c \in U\), the branch set \(M(w_c)\) intersects at least one path from \(\cal P\). Let us pick a path \(P_c \in \cal P\) such that the subgraph of \(H\) induced by \(M(w_c)\) contains a path \(P'_c\) such that one end of \(P'_c\) is in \(P_c\), the other end of \(P'_c\) is a vertex adjacent to \(M(w_c')\), and no internal vertex of \(P'_c\) is in any of the paths in \(\cal P\).  

Since the vertices \(w_1, \ldots, w_t\) are pairwise non-adjacent in \(J\), the paths \(P_1, \ldots, P_t\) are pairwise distinct: Indeed, if some \(P_{c_1}\) and \(P_{c_2}\) with \(c_1 \neq c_2\) were equal, then we could reroute that path to either \(M(w_{c_1}')\) or \(M(w_{c_2}')\) to make $i({\cal P})$ smaller.

Let \(Q_1^*, \ldots, Q_t^*\) be vertex-disjoint paths in \(H\) such that each \(Q_c^*\) is an \(M(s_c')\)--\(R_H\) path contained in \(\bigcup_{w \in V(Q_c)} M(w) \cup V(P_c') \cup V(P_c)\). 
By \Cref{K2t_in_SVGM}, there exists a rooted \(K_{2,t}\)-model \(M'\) in \((S', R_{S'})\). We can then transform it into a rooted \(K_{2, t}\)-model \(M''\) in \((G, R)\), where for each \(a \in V(K_{2, t})\),
the branch set \(M''(a)\) is defined as the union of all branch sets \(M(u)\) of the shallow vortex grid model with \(u \in M'(a)\), and the union over all \(c \in [t]\) with \(s_c' \in M'(a)\) of the vertex sets of the paths \(Q_c^*\). The existence of this rooted \(K_{2,t}\)-model is a contradiction.

Therefore, we may assume that for each vortex $V\in \mathcal{V}$, at least one of the two conditions \ref{item:SVGM} and \ref{item:disjoint_paths} above fails to hold.  

Let $V\in \mathcal{V}$.  
If \ref{item:SVGM} does not hold for $V$, then we simply update the current near embedding using \ref{item:outcome_a} in \Cref{killing_a_vortex}, allowing thus to get rid off $V$, at the price of a small increase in the size of the apex set (at most $12q\alpha$), and a small decrease in the first parameter for the goodness of the near embedding, which is perfectly fine as $s'$ was chosen large enough at the beginning to have room for up to $\alpha$ such reductions. 
This way, we may remove iteratively all vortices  $V\in \mathcal{V}$ for which \ref{item:SVGM} does not hold, updating the near embedding each time.

Next, assume that there is a vortex $V=(J,\Omega)\in \mathcal{V}$ for which condition \ref{item:SVGM} holds but not condition \ref{item:disjoint_paths}. 
As we explain below, in this case there is a transversal set $Y'$ of $\widetilde{W}_0$, where $\widetilde{W}_0$ is the subwall of $W_0$ obtained after the previous steps of the near embedding updates, 
such that there is a separation $(A,B)$ of $H$ where $\Omega\subseteq A$, $Y'\subseteq B$ and $|A\cap B|<3p$. In this case we update the near embedding as follows. 
We add $A\cap B$ to the apex set, remove $V$ from $\mathcal{V}$ and add $(A,\emptyset)$ to $\mathcal{W}$. 
Again, this results in a small increase of the apex set, and a decrease of at most $3p$ in the first parameter for the goodness of the near embedding, which we have budgeted for thanks to the initial choice of $s'$. 
Proceeding iteratively this way with every remaining vortex in $\mathcal{V}$, we eventually get rid of all of them, as desired. 

Hence, it only remains to show the existence of the transversal set $Y'$ of $\widetilde{W}_0$ and the separation $(A,B)$ of $H$ mentioned above.  
Assume to the contrary that they do not exist, that is, for each transversal set $Y'$ in $\widetilde{W}_0$ there is no such separation $(A,B)$ of $H$. 
We will show that condition \ref{item:disjoint_paths} then holds for $V$, which is a contradiction. 
Let $M$ be a model of a shallow vortex grid of order $q$ that surrounds $V$, as in \ref{item:SVGM}. 
Let $Y'$ be a transversal set in $\widetilde{W}_0$ and let $\mathcal{P}'$ be a collection of at least $3p$ vertex-disjoint paths in $H$ between $Y'$ and $\Omega$ (they exist by Menger's theorem and our assumption that there is no separation between them of a smaller size). 
Using the facts that $D(\widetilde{W}'_0)$ does not intersect $D_1(V)$ and that $M$ surrounds $V$, we deduce that each path $P\in \mathcal{P}'$ intersects the union of the vertex sets of the branch sets in the model that correspond to vertices of the $i$-th cycle for each $i\in [q]$, we refer to this collection as the \emph{$i$-th cycle of the model of $M$} or simply \emph{a cycle of $M$}. 
Indeed, if $P$ avoids some cycle of $M$, then this cycle would separate $P$ from $\Omega$, in contradiction to the definition of $P$.  

Let $Y''$ be some transversal set in $M$.  
Next, we show that there are $3p$ vertex-disjoint paths between $Y'$ and $Y''$. 
Arguing by contradiction, assume otherwise, and let $(A,B)$ be a separation of $H$ such that  $Y'\subseteq B, Y''\subseteq A$ and $|A\cap B|\le 3p-1$. As $|\mathcal{P}'|\ge 3p$ and $M$ is of order $q>3p$, there is a path $P\in \mathcal{P}'$ and a cycle $C$ of $M$ that are missed by $A\cap B$. 
The path $P$ contains a vertex from $Y'$, 
the cycle $C$ contains a vertex from $Y''$, 
and $P$ intersects $C$. 
We deduce that $A\cap B$ does not separate $Y'$ from $Y''$ in $H$, a contradiction.  

Finally, we show that there are $p$ vertex-disjoint paths between $R_H$ and $Y''$, so that \ref{item:disjoint_paths} holds. Assume that this is not the case, and let $(A,B)$ be a separation of $H$ such that $Y''\subseteq A, R_H\subseteq B$ and $|A\cap B|<p$. This implies that $|A\cap Y'|\ge 2p$ but this contradicts the fact that there are at least $|Y'|\ge 3p$ vertex-disjoint paths between $R_H$ and $Y'$. 
\end{proof}

\subsection{Further refinement of the embedding}
For two near embeddings \((\sigma, G_0, A, \mathcal{V}, \mathcal{W})\) and \((\tilde{\sigma}, \tilde{G_0}, \tilde{A}, \tilde{\mathcal{V}}, \tilde{\mathcal{W}})\) of a graph \(G\) in a surface \(\Sigma\), we say that \((\tilde{\sigma}, \tilde{G_0}, \tilde{A}, \tilde{\mathcal{V}}, \tilde{\mathcal{W}})\) is \emph{finer} than \((\sigma, G_0, A, \mathcal{V}, \mathcal{W})\) if \(G_0 \subseteq \tilde{G_0}\), \(\tilde{\sigma}\) extends \(\sigma\), \(A = \tilde{A}\), \(\mathcal{V} = \tilde{\mathcal{V}}\), and for every \(\tilde{V} \in \tilde{\mathcal{W}}\), there is \(V \in {\mathcal{W}}\) with \(\tilde{V} \subseteq V\); 
moreover, it is  \emph{strictly finer}  if the  two near embeddings are not the same. 
A small vortex \((J, \Omega)\in \mathcal{W}\) is \emph{properly attached} if \(J\) is connected, every component of \(J - \Omega\) is adjacent to all vertices in \(\Omega\), and either \(|\Omega| \le 2\), or \(V(J) \setminus \Omega \neq \emptyset\) and for every three distinct vertices \(x, y, z \in \Omega\) and \(w \in V(J) \setminus \Omega\), there are an \(x\)--\(y\) path and a \(z\)--\(w\) path in \(J\) that are vertex disjoint. Note that our notion of proper attachment is stronger than the one in~\cite{DiestelKMW12}.

\begin{lemma}\label{ProperAttachments}
  For every near embedding \((\sigma, G_0, A, \mathcal{V}, \mathcal{W})\) of a graph \(G\) in a surface \(\Sigma\), there exists a finer near embedding \((\tilde{\sigma}, \tilde{G_0}, \tilde{A}, \tilde{\mathcal{V}}, \tilde{\mathcal{W}})\) of \(G\) in \(\Sigma\) such that every small vortex \(\tilde{V} \in \tilde{\mathcal{W}}\) is properly attached.
  Furthermore, the finer near embedding can be found in polynomial time given $G$ and \((\sigma, G_0, A, \mathcal{V}, \mathcal{W})\) as input. 
\end{lemma}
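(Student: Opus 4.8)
The plan is to start from the given near embedding and repeatedly replace it by a finer one --- always keeping the apex set $A$ and the family $\mathcal V$ of large vortices fixed, and only ever moving structure \emph{out of} small vortices into $G_0$ or splitting a small vortex into smaller ones inside sub-disks of its own disk --- until every small vortex is properly attached. It therefore suffices to explain how to ``repair'' one small vortex $V=(J,\Omega)$ that is not properly attached, in such a way that any new small vortices created have strictly smaller attachment set $\Omega$ (or the same size but are immediately resolved), while $G_0$ only grows. Since $|\Omega|\le 3$ for a small vortex, the recursion has depth at most three in $|\Omega|$; at each level only polynomially many pieces are produced (at most one per component of $J-\Omega$, plus the pieces cut off by the separations used below); and $G_0$ can be enlarged at most $|E(G)|$ times. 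Thus a routine bookkeeping bounds the number of basic steps polynomially, and it remains to check that each step is itself polynomial-time and preserves the near-embedding axioms (disjoint-interior disks, $\Omega_i$ on $\bd(D_i)$ in the right cyclic order, etc.), which is routine.

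\textbf{The easy defects.} Fix $V=(J,\Omega)$ with disk $D=D(V)$, $\Omega$ on $\bd(D)$, $|\Omega|\le 3$. First, if $J$ is disconnected, every component disjoint from $\Omega$ is disjoint from $G_0$ and from all other vortices, so it can be placed as a new small vortex with empty attachment set in a fresh disk carved out of a face of $\sigma(G_0)$; afterwards $J$ is connected, and if $\Omega=\varnothing$ the lone remaining component is already properly attached. Second, if some component $C$ of $J-\Omega$ is adjacent only to a proper subset $\Omega_C\subsetneq\Omega$ (so $|\Omega_C|\le 2$), split off $(J[C\cup\Omega_C]+\text{edges inside }\Omega_C,\ \Omega_C)$ as a new small vortex, housed in a thin sub-disk of $D$ whose boundary meets $\sigma(G_0)$ exactly in $\Omega_C$; such sub-disks exist because with $|\Omega|\le 3$ one can cut $D$ into finitely many sectors, each incident to a prescribed $\le 2$-subset of $\Omega$, leaving a central region still incident to all of $\Omega$. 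Iterating clears all such components, and the new vortices have smaller $\Omega$, so they are handled recursively (a $|\Omega|\le 1$ vortex is properly attached once it is connected, a $|\Omega|=2$ vortex once it is connected and every component of $J-\Omega$ sees both endpoints). If at the end $V(J)=\Omega$, then $J$ is a subgraph of the triangle on $\Omega$, which can be drawn without crossings in $D$; we add these edges to $G_0$ and delete $V$. So we may now assume $|\Omega|=3$, $\Omega=\{x,y,z\}$, $J$ connected, every component of $J-\Omega$ adjacent to all of $\Omega$, and $V(J)\setminus\Omega\neq\varnothing$; note that these conditions already force $J-\{a,b\}$ to be connected for every $\{a,b\}\subseteq\Omega$ (a component of $J-\{a,b\}$ missing the third vertex would be a union of components of $J-\Omega$, contradicting that each of those is adjacent to the third vertex), so for every $z\in\Omega$ and every $w\in V(J)\setminus\Omega$ there is a $z$--$w$ path avoiding $\Omega\setminus\{z\}$.

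\textbf{The path condition.} The only obstruction left to proper attachment of $V$ is then: for some $z\in\Omega$, $w\in V(J)\setminus\Omega$, there is no $z$--$w$ path disjoint from an $x$--$y$ path (with $\{x,y\}=\Omega\setminus\{z\}$), i.e.\ $J$ has no two vertex-disjoint paths linking $\{z,w\}$ and $\{x,y\}$. Here I would invoke the structural theory of the two disjoint paths problem (Seymour, Shiloach, Thomassen; see also Robertson--Seymour): either $J$ has a separation $(R_1,R_2)$ of order at most $3$ that splits both $\Omega\cup\{w\}$ nontrivially and along which the two sides inherit attachment sets of size at most $3$ --- in which case we split $V$ accordingly, putting $(R_1\cap R_2)\setminus\Omega$ into $G_0$ (drawn in $D$ at the interface) and the two sides into sub-disks of $D$, which is exactly a ``$\le 3$-cut reduction'' and shrinks $J$ --- or, once no such reduction applies, the graph $J$ is planar with $z,x,w,y$ on the boundary of a common face in this cyclic order. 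In the latter case $\Omega$ lies on a common face of a planar embedding of $J$, so (reflecting $D$ if the cyclic order of $\Omega$ on $\bd(D)$ is the opposite one) all of $J$ can be redrawn inside $D$; we add $J$ to $G_0$ and delete $V$. Each of the subtasks --- detecting disconnection, finding the needed separations, the two-disjoint-paths test, and planarity-with-terminals-on-a-face --- is polynomial-time, and every vertex/edge absorbed into $G_0$ stays there, so the whole procedure runs in polynomial time.

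\textbf{Main obstacle.} I expect the delicate point to be the last step: organizing the reductions so that the two-disjoint-paths structure theorem genuinely applies with the clean ``planar with the four terminals on a common face'' outcome, verifying that in that outcome the planar piece can always be reinserted into $D$ compatibly both with the fixed positions of $\Omega$ on $\bd(D)$ and with the disjoint-interiors requirement on the disk family, and pinning down a measure (some lexicographic combination of $|E(G)|-|E(G_0)|$, the attachment-set sizes, and the total number of vertices inside small vortices) that is genuinely monotone under splitting --- the subtlety being that splitting increases the number of small vortices but is offset by the cut vertices migrating into $G_0$ and by the drop in $|\Omega|$.
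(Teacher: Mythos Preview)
Your overall strategy---iterate refinements until every small vortex is properly attached, handling disconnection, deficient-attachment components, and the case $V(J)=\Omega$ first---is exactly the paper's. The gap is in the path-condition case. Your stated dichotomy (either a $\le 3$-separation splits $\Omega\cup\{w\}$ nontrivially with both sides inheriting attachment sets of size at most $3$, or $J$ is planar with $z,x,w,y$ on a common face) does not hold. Counterexample scheme: take any $4$-connected planar graph $H$ with $x,w,y,z$ on one face in this cyclic order and three further vertices $a,b,c$ lying on another face, and glue a $K_5$ onto $\{a,b,c\}$. In the resulting $J$ the disjoint $x$--$y$ and $z$--$w$ paths still fail (the $K_5$ contributes only a triangle on $\{a,b,c\}$, which sits inside a face of $H$), the unique $\le 3$-separation of $J$ is at $\{a,b,c\}$ with all four terminals on the $H$-side, yet $J$ is nonplanar. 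The reduction used in the Two Disjoint Paths structure theorem is along $\le 3$-separations with \emph{all} terminals on one side (the terminal-free side becomes a new small vortex), not separations that split them; only after exhausting those does the remaining torso become planar with the terminals cofacial. Relatedly, you cannot place separator vertices ``into $G_0$, drawn in $D$ at the interface'' before the torso's planar embedding is known, since their positions inside $D$ are dictated by that embedding.

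The paper bypasses this by invoking the Two Disjoint Paths theorem in the following packaged form: if the disjoint paths do not exist, then $J$ itself admits a \emph{near embedding} in a disk (no apices, no large vortices, but possibly with small vortices of its own) with $x,w,y,z$ on the boundary in that cyclic order. The nonplanar flaps behind $\le 3$-cuts are already the small vortices of that disk near-embedding, so substituting it for $D(V)$ yields a strictly finer near embedding of $G$ in a single step. For termination the paper simply picks at the outset a near embedding admitting no strictly finer one (this exists since ``strictly finer'' is well-founded) and argues by contradiction that every small vortex must then be properly attached, which also dissolves your lexicographic-measure worry.
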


To prove \Cref{ProperAttachments} we need the following theorem.

\begin{theorem}[Two Disjoint Paths Theorem~\cite{jung_1970,RS90,seymour_1980a,shiloach_1980,thomassen_1980}]\label{TwoPaths}
  Let \(x,y,z,w\) be distinct vertices in a graph \(J\). Then either there exist an \(x\)--\(y\) path and a \(z\)--\(w\) path in \(J\) that are vertex disjoint, or there is a near embedding \((\sigma^*, G_0^*, A^*, \mathcal{V}^*, \mathcal{W}^*)\) of \(J\) into a disk such that \(A^* = \emptyset\),  \(\mathcal{V}^*=\emptyset\), and the vertices \(x, w, y, z\) are mapped by \(\sigma^*\) into the boundary of the disk in that cyclic order. 
  Furthermore, there is a polynomial-time algorithm that finds either the two disjoint paths or the near embedding. 
\end{theorem}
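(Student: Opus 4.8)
The plan is to obtain \Cref{TwoPaths} from the classical two-commodity linkage theorem of Jung, Seymour, Shiloach and Thomassen, essentially just translating its conclusion into the near-embedding vocabulary of this paper. Recall the classical statement in, say, Thomassen's or Seymour's form: for a graph $J$ and pairwise distinct terminals $x,y,z,w$, either there are vertex-disjoint $x$--$y$ and $z$--$w$ paths, or $J$ can be obtained from a planar graph $J_0$ drawn in a closed disk $D$, with $x,w,y,z$ on $\bd(D)$ in this cyclic order, by repeatedly substituting, for some triple $v_1,v_2,v_3$ of vertices of the current graph lying on a common face and avoiding $\{x,y,z,w\}$, a connected graph attached exactly at $\{v_1,v_2,v_3\}$. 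Each substituted piece, together with its at most three attachment vertices, is precisely a \emph{small vortex} $(J_i,\Omega_i)$ with $|\Omega_i|\le 3$ in the sense of this paper, its attachment vertices drawn on the boundary of a disk $D_i$ with $\inter(D_i)$ disjoint from $J_0$; and the drawing of $J_0$ in $D\setminus\bigcup_i \inter(D_i)$ is exactly an embedding $\sigma^*$ of $G_0^*:=J_0$. Hence the classical structure \emph{is} a near embedding $(\sigma^*,G_0^*,A^*,\mathcal V^*,\mathcal W^*)$ of $J$ into a disk with $A^*=\varnothing$, $\mathcal V^*=\varnothing$, only small vortices, and $x,w,y,z$ on the boundary in the required cyclic order. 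This gives the ``either/or'' part, and the algorithmic part follows from the polynomial-time algorithms of Shiloach and of Thomassen, which already output either the linkage or this reduced planar structure.

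For a self-contained argument I would instead induct on $|V(J)|+|E(J)|$. The only substantive implication is: if no disjoint $x$--$y$ and $z$--$w$ paths exist, then the near embedding exists. (The converse—that such a near embedding blocks the linkage, by a planar-crossing/parity argument in the disk with the interleaved boundary order $x,w,y,z$; note a path through a small vortex uses at most three attachment vertices on the boundary of its subdisk and so can be pushed onto that boundary without changing crossing parity—makes the two outcomes almost exclusive, but is not needed here.) First come the connectivity reductions: suppress non-terminal vertices of degree $\le 2$; if $J$ is disconnected or has a cut vertex, route through, respectively summarise, the block-tree (a block carrying at most one terminal collapses, and the cyclic order on $\bd(D)$ is pieced together along the tree); if $J$ has a $2$-cut $\{a,b\}$ one side of which carries at most two terminals, replace that side by the edge $ab$ and recurse; if $J$ has a $3$-cut with a terminal-free side, that side becomes a small vortex of size $\le 3$ and we recurse on the rest. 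After these we may assume $J$ is internally $4$-connected relative to $\{x,y,z,w\}$. In the core case, add a new vertex $v$ adjacent to $x,y,z,w$: if $J+v$ is nonplanar, a $K_5$- or $K_{3,3}$-model yields disjoint $x$--$y$ and $z$--$w$ paths in $J$, a contradiction; if $J+v$ is planar, its essentially unique embedding places $x,y,z,w$ on the face incident to $v$, and if their cyclic order there is not the interleaved order $x,w,y,z$, then internal $4$-connectivity together with Menger's theorem lets us route the two disjoint paths, again a contradiction; so the order is $x,w,y,z$, and deleting $v$ gives the required disk embedding.

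The main obstacle I expect is the internally-$4$-connected core case: showing that a non-interleaved cyclic order of the four terminals on $v$'s face always admits the linkage (a careful routing argument exploiting $4$-connectivity), and extracting an $(x\text{--}y,\,z\text{--}w)$-linkage from a $K_5$- or $K_{3,3}$-model with the terminals in the correct roles. A secondary, more clerical obstacle is verifying that the composed structure is a genuine near embedding in the sense used here—only small vortices, each of size at most $3$, no apices and no large vortices—and that no $3$-cut reduction ever traps a terminal, so that $x,w,y,z$ really end up on $\bd(D)$ in the prescribed cyclic order; and that re-inflating the small vortices, respectively lifting the routed paths back through the $2$-cut replacements and block-tree summaries, can be done in polynomial time, which yields the algorithmic statement.
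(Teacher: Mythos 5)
The paper does not prove \Cref{TwoPaths}: it states it as a known classical theorem and cites Jung, Robertson--Seymour, Seymour, Shiloach, and Thomassen for it, using it only as a black box in the proof of \Cref{ProperAttachments}. So there is no ``paper's own proof'' to compare against; the relevant question is whether your proposal correctly accounts for how the classical two disjoint paths theorem yields exactly the stated near-embedding form. Your first paragraph does this correctly and is the essential content: the classical conclusion---a planar graph drawn in a disk with $x,w,y,z$ on the boundary in that cyclic order, together with connected pieces attached at $\le 3$ vertices on a common face---is, almost by definition, a near embedding $(\sigma^*,G_0^*,\varnothing,\varnothing,\mathcal{W}^*)$ into a disk with only small vortices, and the algorithmic statement comes from Shiloach's and Thomassen's algorithms.

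Your second, self-contained inductive sketch is along the lines of the Seymour/Thomassen argument, but it compresses the genuine difficulty. The reductions along $1$-, $2$-, and $3$-cuts are standard and fine. However, the core case is not correctly dispatched by appealing to Kuratowski: nonplanarity of $J+v$ does not by itself produce the $(x,y;z,w)$-linkage. What one actually needs is the full Seymour/Thomassen theorem that an internally $4$-connected graph either has the linkage or is planar with the four terminals on one face in the interleaved order; the implication ``internally $4$-connected and no linkage $\Rightarrow$ planar with terminals on a face in the order $x,w,y,z$'' is the theorem, and the argument that establishes it (e.g.\ Seymour's or Thomassen's case analysis, or Robertson--Seymour's proof via graph minors) is substantially harder than a $K_5/K_{3,3}$ extraction. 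You flag this honestly in your third paragraph, so the proposal is not wrong, but as written the second paragraph would not be accepted as a proof without importing the main lemma of the cited papers. For the purposes of this paper, your first paragraph---translation plus citation---is exactly the right level of detail and matches what the paper does.
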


We now turn to the proof of \Cref{ProperAttachments}. 

\begin{proof}[Proof of \Cref{ProperAttachments}] 
  Let \((\tilde{\sigma}, \tilde{G_0}, \tilde{A}, \tilde{\mathcal{V}}, \tilde{\mathcal{W}})\) be a near embedding of \(G\) in \(\Sigma\) that is finer than \((\sigma, G_0, A, \mathcal{V}, \mathcal{W})\), and such that there is no near embedding of \(G\) in \(\Sigma\) that is strictly finer than this one. 
  (Observe that there is such a near embedding, since there cannot be an infinite sequence of near embeddings of \(G\) in \(\Sigma\) such that each one is strictly finer than the previous one.)

  We claim that \((\tilde{\sigma}, \tilde{G_0}, \tilde{A}, \tilde{\mathcal{V}}, \tilde{\mathcal{W}})\) satisfies the lemma. 
  Suppose towards a contradiction that some vortex \((J, \Omega) \in \tilde{\mathcal{W}}\) is not properly attached.

  If \(J\) is disconnected, say \(J\) is the disjoint union of nonempty graphs \(J_1\) and \(J_2\), we can obtain a strictly finer near embedding by replacing \((J, \Omega)\)
  by two small vortices \((J_1, \Omega_1)\) and \((J_2, \Omega_2)\) where each \(\Omega_i\) is the restriction of \(\Omega\) to \(\Omega \cap V(J_i)\), a contradiction.  
  (Observe that we can find a disk in $\Sigma$ accommodating \((J_i, \Omega_i)\) for $i=1,2$ since $|\Omega_i|\leq 2$.)

  If \(V(J) \setminus \Omega = \emptyset\), then we obtain a strictly finer near embedding by removing the small vortex \((J, \Omega)\) from \(\tilde{\mathcal{V}}\), replacing
  \(G_0\) by \(G_0 \cup J\), and extending \(\tilde{\sigma}\) to an embedding of \(G_0 \cup J\) by drawing the edges of \(J\) along the boundary of the disk \(\Delta\) corresponding to \((J, \Omega)\), which is a contradiction.
  Thus, \(V(J) \setminus \Omega \neq \emptyset.\)

  If some connected component $C$ of $J - \Omega$ is only adjacent to a strict subset $\Omega_1$ of the vertices in $\Omega$, 
  then letting $J_1 = J[V(C) \cup \Omega_1]$, we can obtain a strictly finer near embedding by replacing \((J, \Omega)\)
  by two small vortices \((J_1, \Omega_1)\) and \((J - V(C), \Omega)\), a contradiction.  
  Thus, every connected component of $J - \Omega$ is adjacent to all vertices in $\Omega$.

  Finally, suppose that there are distinct vertices \(x,y,z \in \Omega\) and \(w\in V(J)\setminus\Omega\) such that there do not exist an \(x\)--\(y\) path and a \(z\)--\(w\) path in \(J\) that are vertex disjoint.
  Then, by \Cref{TwoPaths}, there exists a near embedding of \(J\) in a disk with no apices and no large vortices such that the vertices \(x,w,y,z\) appear on the boundary of the disk in that cyclic order.
   In such case we can make the near embedding strictly finer by replacing the disk \(\Delta_i\) representing \((J, \Omega)\) with the near embedding of \(J\) in the disk.

   We remark that this proof can be turned into a polynomial-time algorithm by iteratively applying each of the improvement steps above, and observing that the total number of improvements that can be made is bounded by a polynomial in $|V(G)|$. 
 \end{proof}

\subsection{Ensuring \(3\)-connectivity}
In this subsection, we describe how to transform a given near embedding \((\sigma, G_0, A, \mathcal{V}, \mathcal{W})\) of a graph $G$ in surface $\Sigma$ with a large flat wall \(W\) 
so as to make  \(G_0'\) \(3\)-connected while keeping many properties of the original near embedding (in particular, we want that a large subwall of \(W\) is still flat). 
We use the notion of SPQR trees that has been introduced by Di Battista \& Tamassia~\cite{battista_1989}. We remark that there is a linear-time construction algorithm due to Gutwenger \& Mutzel~\cite{gutwenger_2000}.

In what follows, \emph{multigraphs} may have parallel edges between pairs of vertices, but no loops.
Let \(G_1\) and \(G_2\) be multigraphs such that \(V(G_1) \cap V(G_2) = \{x, y\}\) for some distinct vertices \(x\) and \(y\), and each of \(G_1\) and \(G_2\) has at least one edge between \(x\) and \(y\). Then, the \emph{\(2\)-sum} of \(G_1\) and \(G_2\) is the graph \(G\) obtained by removing one edge between \(x\) and \(y\) from each of \(G_1\) and \(G_2\), and taking the union of the resulting subgraphs. A \emph{dipole} is a multigraph with \(2\) vertices (and any number of parallel edges between them). For instance, the graph \(K_{2, t}\) can be obtained by \(2\)-summing a dipole with \(t\) edges with \(t\) triangles.

A multigraph is \emph{\(2\)-connected} if every edge lies on a cycle (possibly of length \(2\)). For every \(2\)-connected multigraph \(G\), the \emph{SPQR tree} of \(G\) is a canonical representation of \(G\) as the \(2\)-sum of multigraphs of three types: dipoles, cycles (without parallel edges) and \(3\)-connected graphs (without parallel edges). We do not provide a full definition of the SPQR tree, and only describe its properties required in our proof (see \cite{battista_1989,gutwenger_2000} for more background). The SPQR tree of \(G\) is uniquely defined. It is a tree \(T\) in which every node \(u\) is associated with a graph \(G_u\), and has one of four types
\begin{itemize}
  \item if \(u\) is an \emph{S node}, \(G_u\) is a cycle,
  \item if \(u\) is a \emph{P node}, \(G_u\) is a dipole with at least three edges,
  \item if \(u\) is a \emph{Q node}, \(G_u\) is a dipole with two edges, and
  \item if \(u\) is an \emph{R node}, \(G_u\) is a \(3\)-connected graph. 
\end{itemize}
For each edge \(uv \in E(T)\), the graphs \(G_u\) and \(G_v\) share two common vertices that are adjacent in both graphs, and thus their \(2\)-sum is well defined. 
We can contract any edge \(u v\) of $T$ into a new vertex \(w\) and associate with it the graph \(G_w\) which is the \(2\)-sum of \(G_u\) and \(G_v\).
If we contract the edges of \(T\) in any order, then regardless the order in which we contract the edges, the graph \(G_r\) associated with the node \(r\) into which we contracted the tree is equal to \(G\). Since a dipole with \(2\) edges is a neutral element for \(2\)-sum, if the SPQR tree has a \(Q\) node, then it is the only node, and \(G\) is a dipole with \(2\) edges. 

\begin{theorem}[Mohar~\cite{mohar_1997}]\label{thm:SPQRfacewidth}
Let \(\Sigma\) be a surface with strictly positive Euler genus, and let \(G\) be a \(2\)-connected graph that can embedded in \(\Sigma\) with facewidth \(w \ge 3\).
Then, there is exactly one node \(u\) in the SPQR tree of \(G\) such that \(G_u\) is not planar. Further, \(G_u\) can be embedded in \(\Sigma\) with facewidth \(w\).
\end{theorem}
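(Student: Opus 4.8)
The plan is to read everything off the SPQR tree of $G$ and use one structural fact about high representativity. Recall that for a $2$-connected graph $G$ the SPQR tree has S, P, Q and R nodes, $G$ is recovered by iteratively $2$-summing along the tree edges (each tree edge carrying a virtual edge $xy$ whose ends form a $2$-cut of $G$), and the graphs $G_u$ attached to S, P and Q nodes are cycles and dipoles, hence planar. So any non-planar $G_u$ is an R node. Fix the given embedding of $G$ in $\Sigma$ with face-width $w\ge 3$; note first that face-width $\ge 1$ already forces this embedding to be cellular, i.e.\ every face is a disk. The one nontrivial ingredient I would isolate is the following representativity lemma: \emph{if a graph is cellularly embedded in a non-spherical surface with face-width $\ge 3$ and $\{x,y\}$ is a $2$-cut, then all but at most one of the bridges at $\{x,y\}$ (each bridge regarded together with the virtual edge $xy$ as an embedded graph) is drawn inside a closed disk of $\Sigma$; in particular all but one of them are planar.} Morally, two bridges that each ``use'' the topology of $\Sigma$ let one route a noose through $x$ and $y$ separating them, and such a noose is non-contractible, contradicting $w\ge 3$.

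Granting the lemma, I would prove the two assertions as follows. For \emph{existence} of a non-planar node: if every $G_u$ were planar, then $G$, being an iterated $2$-sum of planar graphs glued along edges (each $2$-sum can be performed keeping the glued edge on a common face of both pieces), would itself be planar; but a planar graph admits no embedding in a non-spherical surface with face-width $\ge 3$ (planar graphs have face-width at most $2$ in every such embedding, a standard consequence of representativity theory), a contradiction. For \emph{uniqueness}: suppose $G_u$ and $G_v$ are non-planar for distinct nodes $u\ne v$. The tree edge on the $u$--$v$ path closest to $u$ induces a $2$-cut $\{x,y\}$ of $G$ whose two sides contain, as topological minors (realizing each virtual edge of $G_u$, resp.\ $G_v$, by a path in the relevant piece of the tree), the $3$-connected non-planar graphs $G_u$, resp.\ $G_v$; hence the two sides of this $2$-cut are both non-planar. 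Each side is a union of bridges at $\{x,y\}$, so the lemma forces one of the two sides to lie in a disk and thus be planar, a contradiction. Hence there is exactly one non-planar node $u$, an R node.

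For the \emph{furthermore}, I would apply the lemma along every tree edge incident with the path between $u$ and any other node: every piece of the SPQR tree hanging off $u$ is a bridge at the corresponding $2$-cut on the side away from $u$, and since the $u$-side is the only non-planar side, that bridge is drawn in a closed disk $\Delta$ of $\Sigma$. Contracting each such disk piece down to its virtual edge $xy$, drawn inside $\Delta$, turns the embedding of $G$ into an embedding of $G_u$ in the same surface $\Sigma$. Since every modification takes place inside a disk, the face-width is unchanged: any non-contractible noose of $G$ can be rerouted, inside each disk $\Delta$, to run alongside the virtual edge and so meet only (a subset of) the two ends $x,y$ in place of the internal vertices it used before, giving a non-contractible noose of the $G_u$-embedding of no greater length, so the face-width of $G_u$ in $\Sigma$ is at most $w$; conversely any non-contractible noose of the $G_u$-embedding can be pushed off the planar pieces sitting inside those disks to give a noose of $G$ of no greater length, so the face-width is at least $w$. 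Thus $G_u$ embeds in $\Sigma$ with face-width exactly $w$.

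The main obstacle I expect is making the representativity lemma fully rigorous: constructing the $\{x,y\}$-separating noose, verifying its non-contractibility (using that all faces of $G$ are disks), and handling the bookkeeping around virtual edges---which bridge, if any, carries the genus, and whether the $2$-sum edge is present as a real edge of $G$ or only virtual. Once that lemma is in hand, the existence, uniqueness, and face-width-preservation arguments above are routine.
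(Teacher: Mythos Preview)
The paper does not prove this theorem; it is quoted from Mohar~\cite{mohar_1997} and used as a black box in the proof of \Cref{ensuring_3c}. So there is no ``paper's own proof'' to compare against.

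That said, your plan is essentially the standard route to this result and is sound in outline. The key structural fact you isolate---that under face-width $\ge 3$ every $2$-cut $\{x,y\}$ has at most one ``non-disk'' side---is exactly the right lever, and the existence/uniqueness arguments you build on it are correct. One small slip: in your face-width argument you have the two directions labelled the wrong way round. Rerouting a minimum non-contractible noose of $G$ through the disk pieces (replacing any interior vertices of a piece by at most its two attachment vertices $x,y$) yields a non-contractible noose for the $G_u$-embedding of length $\le w$, hence $\mathrm{fw}(G_u)\le w$; conversely, a minimum non-contractible noose for $G_u$ can be perturbed off each virtual edge into an adjacent face and then lives in the $G$-embedding as well (since the disk pieces sit inside faces of that perturbed noose), giving $\mathrm{fw}(G_u)\ge w$. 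Your text asserts the right conclusions but the supporting sentences are swapped. The obstacle you flag---making the ``one non-disk side'' lemma precise, in particular showing the $\{x,y\}$-separating noose is non-contractible---is indeed the only place where real work is needed; this is where Mohar's paper (and the Robertson--Vitray theory it builds on) does the heavy lifting.
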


\begin{lemma}\label{ensuring_3c}
Let \(r \ge 3\), let \(G\) be a graph, let \((\sigma, G_0, Z, \mathcal{V}, \mathcal{W})\) be an $(\alpha,0,\alpha)$-near embedding of \(G\) in a surface \(\Sigma\) with all small vortices properly attached, let \(W_0'\) be an \((r+2)\)-wall in \(G_0'\) that can be lifted to an \((r+2)\)-wall \(W\) in \(G - Z\). Then, there is an $(\alpha,0,\alpha)$-near embedding \((\tilde{\sigma}, \tilde{G_0}, \tilde{Z}, \tilde{\mathcal{V}}, \tilde{\mathcal{W}})\) of \(G\) in a surface \(\tilde{\Sigma}\) with \(\eg(\tilde{\Sigma}) \le \eg(\Sigma)\) such that the facewidth of \(\tilde{\sigma}\) is not smaller than the facewidth of \(\sigma\), all small vortices are properly attached,  \(\tilde{G_0}'\) contains an \(r\)-wall \(\tilde{W}_0'\) that can be lifted to an \(r\)-subwall \(\tilde{W}\) of \(W\), and \(\tilde{G_0}'\) is \(3\)-connected.
Furthermore,  the near embedding can be found in polynomial time given $G$ and \((\sigma, G_0, Z, \mathcal{V}, \mathcal{W})\) as input. 
\end{lemma}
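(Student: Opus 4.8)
The plan is to work with the SPQR tree of $G_0'$ (which is $2$-connected because $G_0$ carries a large wall, hence we may assume $2$-connectivity; if not, a further pre-processing splitting off blocks is needed, but the wall lives in a single block). First I would locate the node $u$ of the SPQR tree of $G_0'$ whose associated graph $G_u$ ``carries'' the wall $W_0'$. Since $W_0'$ is $3$-connected as an abstract graph, its edge set cannot be split by any $2$-separation of $G_0'$, so all of $W_0'$ is contained in a single node graph $G_u$; moreover that node must be an R node, i.e.\ $G_u$ is $3$-connected. The idea is to take $\tilde G_0$ to be (essentially) the part of $G$ sitting ``inside'' $G_u$: concretely, every edge $e=xy$ of $G_u$ that is not an edge of $G_0'$ is a virtual edge corresponding to a subtree $T_e$ of the SPQR tree hanging off $u$ through the $2$-separation $\{x,y\}$; we absorb the part of $G$ represented by $T_e$ either into an apex set, into a small vortex, or re-embed it, depending on its structure. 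The new base graph $\tilde G_0'$ will then be exactly $G_u$, which is $3$-connected, as required.

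The key steps, in order: (1) Compute the SPQR tree of $G_0'$ (linear time, \cite{gutwenger_2000}), identify the unique R node $u$ containing $W_0'$, and set $\tilde G_0' := G_u$. (2) Control the surface: if $\Sigma$ is the sphere, $G_0'$ is planar and we are essentially done (planar $3$-connected pieces embed on the sphere). If $\eg(\Sigma)>0$ and the facewidth is $\ge 3$, then by \Cref{thm:SPQRfacewidth} the unique non-planar SPQR node embeds in $\Sigma$ with the same facewidth, and by the previous step this node must be $u$ (the wall forces large facewidth through $u$); so $\tilde G_0' = G_u$ embeds in $\tilde\Sigma := \Sigma$ with facewidth not smaller than before, and all other SPQR pieces are planar. (3) For each virtual edge $e=xy$ of $G_u$, consider the ``pendant'' graph $J_e \subseteq G - Z$ glued along $\{x,y\}$ (together with the part living in old small vortices attached there). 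Here I would do a case analysis: if $J_e$ together with its attachment is small in an appropriate sense, or if $x,y$ can be separated from everything by a bounded cut, it becomes (part of) a properly attached small vortex with boundary $\{x,y\}$ (size $2$, so automatically properly attached); if some piece is genuinely ``thick'' it can be drawn into a face of $G_u$'s embedding near the edge $e$, extending $\tilde\sigma$; pieces that were part of large vortices stay as large vortices (we keep $\mathcal V$ essentially unchanged, only re-rooting). Crucially the apex count stays $\le\alpha$ because $\alpha_1 = 0$ means there were no special apices being spent, and we do not create new ones in this step. (4) Apply \Cref{ProperAttachments} once more at the end to make every small vortex properly attached. (5) Track the wall: the wall $W_0'$ lies in $G_u = \tilde G_0'$; any virtual edge of $G_u$ used by $W_0'$ corresponds to an actual path through a pendant piece, so $W_0'$ lifts to a wall in $G-\tilde Z$; shrinking from height $r+2$ to height $r$ gives room to delete the (at most two) boundary rows/columns that might interact with re-embedded pendant pieces or with $\tilde Z$, yielding the flat $r$-subwall $\tilde W_0'$ and its lift $\tilde W \subseteq W$.

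The main obstacle I expect is step (3): carefully specifying, for each pendant SPQR subtree hanging off $u$, whether it should be turned into a small vortex, re-embedded into a face, or left inside a large vortex, and checking that these operations simultaneously (a) keep $\eg(\tilde\Sigma)\le\eg(\Sigma)$, (b) do not decrease facewidth — one must argue that a re-embedded planar pendant piece can be placed inside a single face of the embedding of $G_u$ so that no new short noose is created — and (c) keep the large-vortex structure and its concentric-cycle guarantees intact, since the disks $D_1(V)$ for $V\in\mathcal V$ must still be disjoint from the (possibly re-embedded) material and from $D(\tilde W_0')$. Getting the bookkeeping of $\tilde Z$ right so that $|\tilde Z|\le\alpha$ while $\tilde G_0'$ is genuinely $3$-connected (no hidden degree-$2$ vertices from the $2$-sum decomposition, which S nodes would introduce — but S nodes are planar and absorbed, not kept) is the delicate part; the height drop from $r+2$ to $r$ is precisely the slack budgeted for cleaning up the wall boundary after all these surgeries.
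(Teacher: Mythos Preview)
Your core idea matches the paper: use the SPQR tree of $G_0'$, locate the R node $u_0$ carrying the wall, set $\tilde G_0' := G_{u_0}$, and invoke Mohar's theorem (\Cref{thm:SPQRfacewidth}) to embed $G_{u_0}$ in a surface of no larger genus with no smaller facewidth (or in the sphere if $G_{u_0}$ is planar). The paper also does the preliminary reductions to connected and then $2$-connected $G_0'$ that you allude to.

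However, your step~(3) is substantially overcomplicated, and some of your concerns are misplaced. Since the hypothesis is an $(\alpha,0,\alpha)$-near embedding, we have $\mathcal V=\varnothing$: there are \emph{no} large vortices, so all your worries about preserving the $D_1(V)$ disks and ``re-rooting'' large vortices are vacuous. More importantly, the paper does \emph{not} perform any case analysis on pendant subtrees, does not re-embed any pendant piece into a face, and does not touch the apex set ($\tilde Z=Z$). Each connected component $T'$ of $T-u_0$ is simply packaged, together with the old small vortices attached inside it, into a \emph{single} new small vortex with $\Omega'$ equal to the two-vertex separator $V(G_{u_0})\cap V(J_0')$. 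That is all. The facewidth guarantee comes entirely from a \emph{fresh} embedding of $G_{u_0}$ supplied by \Cref{thm:SPQRfacewidth}; you never place pendant material inside faces of that embedding, so your concern~(b) about ``new short nooses'' from re-embedded planar pieces does not arise. Your step~(4) (re-applying \Cref{ProperAttachments}) is reasonable, and your instinct about the $r+2\to r$ slack is roughly right, though the mechanism is simpler than you describe: passing to $G_{u_0}$ only suppresses degree-$2$ vertices of $W_0'$ lying in pendant pieces, so an $r$-wall survives directly.
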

\begin{proof}
  First, suppose that \(G_0'\) is not connected, and let \(J_0'\) be a connected component of \(G_0'\) distinct from the one containing \(W_0'\).
  Let \(\mathcal{W}'\) denote the set of all small vortices \((J, \Omega) \in \mathcal{W}\) with \(\Omega \cap V(J_0') \neq \emptyset\) (and thus \(\Omega \subseteq V(J_0')\)),
  and let \(J_1\) denote the union of \(J_0' \cap G_0\) and all small vortices in \(\mathcal{W}'\). Then, after replacing \(G_0\) with \(G_0 - V(J_0')\) and \(\mathcal{W}\) with \((\mathcal{W} \setminus \mathcal{W}') \cup \{(J_1,\emptyset)\}\), we obtain a near embedding, where \(J_0'\) is no longer a connected component of \(G_0'\). Hence, after removing similarly all connected components of \(G_0'\) which do not contain \(W_0'\), we may assume that \(G_0'\) is connected.

  Similarly, we argue that we may assume \(G_0'\) to be \(2\)-connected.
  Suppose that \(G_0'\) is not \(2\)-connected.
  Let \(G_1'\) be the block of \(G_0'\) containing \(W_0'\), and let \(x\) be a cut-vertex of \(G_0'\) contained in \(G_1'\).
  Let \(J_0'\) denote the subgraph of \(G_0'\) induced by \(x\) and all connected components of \(G_0' - V(G_1')\) that are adjacent to \(x\).
  Let \(\mathcal{W}'\) denote the set of all small vortices \((J, \Omega) \in \mathcal{W}\) with \(\emptyset \neq \Omega \subseteq V(J_0')\), and
  let \(J_1\) denote the union of \(J_0' \cap G_0\) and all small vortices in \(\mathcal{W}'\).
  Then, after replacing \(G_0\) with \(G_0 - (V(J_0') \setminus \{x\})\) and \(\mathcal{W}\) with \((\mathcal{W} \setminus \mathcal{W}') \cup \{(J_1,\{x\})\}\), we obtain a near embedding, where \(x\) is no longer a cut-vertex of \(G_0'\). Hence, we may assume that \(G_0'\) is \(2\)-connected.

  Let \(T\) be the SPQR tree of \(G_0'\). With each edge \(u_1 u_2 \in E(T)\), we associate a separation \((A_1, A_2)\) of \(G_0'\), defined as follows. For each \(i \in \{1, 2\}\), let \(T_i\) denote the connected component of \(T - u_1 u_2\) that contains \(u_i\), and let \(A_i = \bigcup_{u \in V(T_i)} V(G_u)\) 
  (where $G_u$ denotes the graph corresponding to node $u$ in the SPQR tree of \(G_0'\)). 
  Since \(W_0'\) is a subdivision of a \(3\)-connected graph, for some \(i \in \{1, 2\}\), the set of vertices of \(W_0'\) in \(A_i\) is either empty or induces a path, and \(A_{3-i}\) contains all branch vertices of \(W_0'\). We refer to \(A_i\) as the \emph{small side} and to \(A_{3-i}\) as the \emph{big side}. Let us orient every edge
  \(u_1 u_2\) towards the vertex \(u_i\) such that \(A_i\) is the big side of the separation, and let \(u_0\) be a sink in this orientation. Hence, \(V(G_{u_0})\) contains all branch vertices of \(W_0'\).
  Moreover, the graph \(G_{u_0}\) contains a graph obtained from \(W_0'\) by suppressing some degree-\(2\) vertices. In particular, \(G_{u_0}\) contains an \(r\)-wall (which can be lifted to a subwall of \(W\)), so \(u_0\) must be an R node of the SPQR tree.

  To complete the proof, we will construct a near embedding with \(\tilde{G_0} = G_0 \cap G_{u_0}\). This choice uniquely determines the small vortices, and we will have \(G_0' = G_{u_0}\), so \(G_0'\) will be \(3\)-connected. By \Cref{thm:SPQRfacewidth}, either \(G_u\) has an embedding in \(\Sigma\) with the same facewidth as \(\sigma\), or \(G_u\) is planar (and thus has an embedding in the sphere with infinite facewidth). In both cases, there exists a surface \(\tilde{\Sigma}\) with Euler genus not greater than the one of \(\Sigma\) and an embedding \(\tilde{\sigma}\) of \(G_u\) in \(\tilde{\Sigma}\) with facewidth not smaller than the one of \(\sigma\). Let \(\tilde{G_0} = G_0 \cap G_{u_0}\), let \(\tilde{Z} = Z\), let \(\tilde{\mathcal{V}} = \mathcal{V} = \emptyset\), and let \(\tilde{\mathcal{W}}\) be defined as follows: for each small vortex \((J, \Omega) \in \mathcal{W}\) with \(\Omega \subseteq V(G_{u_0})\), include it in \(\tilde{\mathcal{W}}\). Next, turn each connected component \(T'\) of \(T - u_0\) into a small vortex \((J', \Omega')\) defined as follows: Let \(J'_0\) denote the subgraph of \(G_0\) induced by all vertices in \(\bigcup_{u \in V(T')} V(G_u)\), and let \(J'\) denote the union of \(J_0'\) and all small vortices \((J, \Omega) \in \mathcal{W}\) with \(\Omega \subseteq V(J_0')\). Finally, let \(\Omega'\) denote the only cyclic ordering of the two-element set \(V(G_{u_0}) \cap V(J_0')\). 
  This defines the set of small vortices \(\tilde{\mathcal{W}}\).
  Note that neither the Euler genus nor $Z$ were increased during the construction.
  Therefore, we have \(\tilde{G_0} = G_{u_0}\), so \((\tilde{\sigma}, \tilde{G_0}, \tilde{Z}, \tilde{\mathcal{V}}, \tilde{\mathcal{W}})\) is an $(\alpha,0,\alpha)$-near embedding satisfying the lemma.
\end{proof}

\subsection{Constraining the roots}\label{sec:3-connectivity}

In this section, we finally prove \Cref{interesting}. By \Cref{vortex_free_decomposition}, 
for every rooted graph $(G,R)$ without a rooted $K_{2,t}$-minor and every $k$-interesting subgraph $H$ of $(G, R)$ with core $X$, there exists an \((s, h)\)-good \((\alpha, 0, \alpha)\)-near embedding \((\sigma, G_0, A, \mathcal{V}, \mathcal{W})\) in a surface of Euler genus at most \(\alpha\) with a flat wall \(W'\) of height \(h\) in \(G_0'\) that can be lifted to a wall \(W_0'\) in \(G\), and there are \(h\) vertex-disjoint paths between \(X\) and any transversal set of \(W_0'\). It remains to show that
\begin{enumerate}
  \item\label{g0'rc} only a bounded number of small vortices in \(\mathcal{W}\) contain a root from the set \(R\) that is not in \(G_0'\), and 
  \item\label{g0'fc} the roots in \(G_0'\) have a small face cover.
\end{enumerate}
First, we will prove that \ref{g0'rc} holds for the small vortices that have at most two vertices in \(G_0'\), and then we will use \Cref{BKMM} to simultanously handle the small vortices with three vertices in \(G_0'\) and prove \ref{g0'fc}.

\begin{lemma}\label{EasterMondayLemma}
  Let \(t \ge 2\) be an integer, let \(G\) be a \(3\)-connected graph, and let \(D \subseteq E(G)\) be a set of edges with \(|D| \ge 3t -3\). Then there exists a subset \(D' \subseteq D\) with \(|D'|=t\) such that \(G - D'\) is connected.
\end{lemma}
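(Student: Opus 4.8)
The plan is to reformulate the statement as follows: $G - D'$ is connected for some $D' \subseteq D$ with $|D'| = t$ as soon as $D$ contains ``enough redundant edges'', and to make this quantitative by a spanning-forest argument combined with the edge-connectivity of $G$. First I would record that, since $G$ is $3$-connected, it is $3$-edge-connected, so $|\delta_G(S)| \ge 3$ for every proper nonempty $S \subseteq V(G)$, where $\delta_G(S)$ denotes the set of edges of $G$ with exactly one end in $S$.

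Next I would bound the number $c$ of connected components of $G - D$, with vertex sets $C_1, \dots, C_c$. If $c \ge 2$, then each $C_i$ is a proper nonempty subset of $V(G)$, so $|\delta_G(C_i)| \ge 3$; every edge lying in some $\delta_G(C_i)$ joins two distinct components of $G - D$ and hence belongs to $D$, and each such edge is counted exactly twice when summing over $i$. Therefore $3c \le \sum_{i=1}^{c} |\delta_G(C_i)| = 2\,|\{e \in D : e \text{ joins two components of } G-D\}| \le 2|D|$, so $c \le \tfrac{2}{3}|D|$; and trivially $c = 1$ when $G - D$ is connected. In either case one checks, using $|D| \ge 3t-3$ and $t \ge 2$, that $|D| - c + 1 \ge t$: when $c \ge 2$ this is at least $|D| - \tfrac{2}{3}|D| + 1 = \tfrac{1}{3}|D| + 1 \ge (t-1) + 1 = t$, and when $c = 1$ it equals $|D| \ge 3t - 3 \ge t$.

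Then I would pick a spanning forest $F$ of $G - D$ and extend it to a spanning tree $T$ of $G$. Since $F$ is already a maximal forest of $G - D$, every edge added to $F$ to form $T$ must lie in $D$, and exactly $c - 1$ edges are added; as $F$ contains no edge of $D$, this gives $|T \cap D| = c - 1$, hence $|D \setminus T| = |D| - c + 1 \ge t$. Choosing any $D' \subseteq D \setminus T$ with $|D'| = t$, we have $T \subseteq G - D'$, so $G - D'$ is connected, which is exactly what we want.

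I expect this argument to be largely routine; the only point requiring care is the component count in the second step, and in particular verifying that the hypothesis $|D| \ge 3t-3$ is precisely strong enough to force $|D \setminus T| \ge t$ (the constant $3t-3$ is what makes the inequality $\tfrac{1}{3}|D| + 1 \ge t$ go through). One could instead phrase everything through the cographic matroid $M^*(G)$ — the computation above amounts to $r_{M^*(G)}(D) = |D| - c(G-D) + 1 \ge t$, so $D$ contains a coindependent set of size $t$, whose deletion leaves a spanning connected subgraph — but the spanning-forest formulation is self-contained and avoids invoking matroid duality.
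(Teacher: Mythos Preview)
Your proof is correct and follows essentially the same approach as the paper's: both bound the number of components of \(G-D\) by \(\tfrac{2}{3}|D|\) via the \(3\)-edge-connectivity of \(G\), then set aside \(c-1\) edges of \(D\) that reconnect the components (your \(T\cap D\) is exactly the paper's set \(D_0\)) and take \(D'\) from the remaining at least \(t\) edges of \(D\). The only cosmetic difference is that the paper phrases the reconnecting step as choosing \(D_0\subseteq D\) with \((G-D)+D_0\) connected, whereas you phrase it via extending a spanning forest of \(G-D\) to a spanning tree of \(G\); these are the same construction.
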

\begin{proof}
  We assume that \(G - D\) is disconnected since otherwise the lemma is satisfied by any set \(D' \subseteq D\) with \(|D'| = t\).
  Let \(c \ge 2\) denote the number of components of \(G - D\). Since \(G\) is \(3\)-connected, each component of \(G - D\) is linked to the remaining components with at least three edges from \(D\), so \(|D| \ge \frac32c\). Combining this with the assumption \(|D| \ge 3t-3\), we deduce that \(|D| \ge \frac23(\frac32c) + \frac13(3t-3) = c + t-1\). 
  Let \(D_0 \subseteq D\) be a set of \(c-1\) edges such that \((G - D) + D_0\) is connected. We have \(|D \setminus D_0| = |D| - |D_0| \ge (c+t-1)-(c-1)=t\).
  Let \(D' \subseteq D \setminus D_0\) be any subset with \(|D'| = t\). Then \(G -D'\) is a supergraph of the graph \((G - D) + D_0\), so \(D'\) satisfies the lemma.
\end{proof}

\begin{lemma}\label{Omega2}
  Let \((G, R)\) be a \(3\)-connected rooted graph without a rooted \(K_{2, t}\)-minor, let \(H\) be a \(k\)-interesting subgraph, and let \((\sigma,H_0,Z,\mathcal{V},\mathcal{W})\) be an $(\alpha,0,\alpha)$-near embedding of \(H\) with \(H_0'\) \(3\)-connected and all small vortices properly attached. Then there are at most \(k+(t-1)\binom{\alpha}{2}+(3t-4)\alpha\) small vortices \((J, \Omega)\in \mathcal{W}\) with \(|\Omega| \le 2\) and \((V(J) - \Omega) \cap R \neq \emptyset\).
\end{lemma}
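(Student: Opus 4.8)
The statement asserts a bound on the number of small vortices $(J,\Omega)$ with $|\Omega|\le 2$ that contain a root outside $G_0'$. The strategy is to separate the small vortices according to $|\Omega|\in\{0,1,2\}$ and to bound each type. For $|\Omega|=0$: since each such vortex is properly attached, $J$ is connected; but a vortex with $\Omega=\varnothing$ sitting in its own disk that is connected to the rest of $H$ only through $Z$ (the apex set). So a vortex with $|\Omega|=0$ that contains a root must attach to the rest of $H$ via vertices of $Z$. Using $3$-connectivity of $H$ (inherited from $(G,R)$ being $3$-connected), and $|Z|\le\alpha$, only few such vortices can exist — this is where the term involving $\alpha$ enters, roughly $k$ of them because of the core $X$, and the rest are controlled by how $Z$ can attach. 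For $|\Omega|=1$: a properly attached vortex $(J,\{z\})$ with a root in $V(J)-\Omega$ means $z$ is a cut vertex of $H-Z$; but $H_0'$ is $3$-connected, so in $H$ the vertex $z$ together with a bounded subset of $Z$ forms a cut. This again bounds the count in terms of $k$ and $\alpha$ (the term $(3t-4)\alpha$ — via \Cref{EasterMondayLemma} applied to the edges leaving these vortices through $Z$). For $|\Omega|=2$: a properly attached vortex $(J,\{x,y\})$ with a root in $V(J)-\Omega$ means $\{x,y\}$ is a $2$-cut of $H-Z$; since $H_0'$ is $3$-connected this forces $\{x,y\}$ together with $Z$ to separate $H$, and there are at most $\binom{\alpha}{2}$ relevant pairs among $Z$ plus the interaction with the two vortex-attachment vertices; each such pair can "host" at most $t-1$ vortices with a root, else we would build a rooted $K_{2,t}$-minor. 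This last point is the crux.

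**The rooted $K_{2,t}$ argument.** The heart of the proof is showing that no "pair" can be the attachment set of $t$ or more small vortices each containing a root. Suppose $t$ small vortices $(J_1,\Omega_1),\dots,(J_t,\Omega_t)$ with $|\Omega_i|\le 2$ each have a root $r_i\in V(J_i)-\Omega_i$, and suppose that the union $\Omega_1\cup\dots\cup\Omega_t$ together with a bounded piece of $Z$ is "small" (at most two vertices once we fix the combinatorial type). Then: take two vertices $a,b$ as branch sets $M(-1),M(0)$ (these will be two of the attachment vertices, or two vertices of $Z$, chosen so every $J_i$ is adjacent to both — this is exactly what proper attachment of the vortex buys us, since every component of $J_i-\Omega_i$ is adjacent to all of $\Omega_i$, and $|\Omega_i|\le 2$ with a third connection through $Z$); take $M(i)$ to be the component of $J_i-\Omega_i$ containing $r_i$. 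These branch sets are pairwise disjoint (distinct vortices share only $G_0'$ vertices) and each is connected and contains a root, and each is adjacent to both $a$ and $b$. That yields a rooted $K_{2,t}$-model in $(G,R)$, a contradiction. So each combinatorial type of attachment hosts at most $t-1$ root-containing vortices; summing over the at most $\binom{\alpha}{2}$ pairs inside $Z$ gives $(t-1)\binom{\alpha}{2}$, over the at most $(3t-4)\alpha$ configurations involving one vortex-vertex and $Z$-vertices (via \Cref{EasterMondayLemma}) gives that term, and the core $X$ with $|X|=k$ absorbs the remaining $k$.

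**Order of execution.** First I would fix the $(\alpha,0,\alpha)$-near embedding with $H_0'$ $3$-connected and all small vortices properly attached, available by \Cref{vortex_free_decomposition} combined with \Cref{ProperAttachments} and \Cref{ensuring_3c}. Second, I would observe that for each small vortex $(J,\Omega)$ with $|\Omega|\le 2$ and a root in $V(J)-\Omega$, the set $\Omega\cup Z$ together with (possibly) one more vertex of $J$ disconnects that root from the large flat wall $W_0'$, and that because $H-Z$ is essentially embedded with $H_0'$ $3$-connected, this disconnecting set must be "anchored" in $Z$ in one of boundedly many ways. Third, I would carefully enumerate these ways and, for each, use \Cref{EasterMondayLemma} (applied to the $\le 3$ edges per vortex going into $Z$) to bound how many vortices use it, except that when a single vertex-pair serves as attachment for many vortices I invoke the rooted $K_{2,t}$-model argument above to cap the count at $t-1$. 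Finally, I would add up: $k$ from the core, $(t-1)\binom{\alpha}{2}$ from $Z$-pairs, $(3t-4)\alpha$ from the mixed configurations. The main obstacle will be the bookkeeping in the second and third steps — precisely identifying all the combinatorial types of how $\Omega\cup Z$ can sit around a root-containing vortex and matching each to exactly one term of the claimed bound, while making sure the proper-attachment hypothesis is strong enough (it is, by design — the paper's notion of proper attachment was made stronger than in \cite{DiestelKMW12} exactly so that every component of $J-\Omega$ reaches all of $\Omega$, which is what feeds the $K_{2,t}$-model).
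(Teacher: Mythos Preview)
Your high-level strategy is right --- partition the offending vortices, and for each class either count directly or build a rooted $K_{2,t}$-model from $t$ of them --- but your case analysis has the roles swapped, and you miss two key ingredients of the paper's argument.

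First, the term $k$ does not come from ``the core $X$'' in any vague sense. It comes from the vortices $(J,\Omega)$ with $(V(J)-\Omega)\cap\partial(H)\neq\emptyset$: since $|\partial(H)|\le k$ and the sets $V(J)-\Omega$ are pairwise disjoint, there are at most $k$ of these. This class must be set aside first, because only for the remaining vortices (those with $(V(J)-\Omega)\cap\partial(H)=\emptyset$) can you invoke $3$-connectivity of $G$ to force the root-containing component of $J-\Omega$ to have neighbours in $Z$: without this, the component might attach to $G-H$ directly via $\partial(H)$ and you get no control over $Z$.

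Second, you have the two remaining cases backwards. When $|\Omega|\le 1$, $3$-connectivity of $G$ forces the root-containing component to have at least \emph{two} neighbours in $Z$; pigeonholing over the $\binom{\alpha}{2}$ pairs $\{z_1,z_2\}\subseteq Z$ and taking $\{z_1\}$, $\{z_2\}$ as the two degree-$t$ branch sets gives the $(t-1)\binom{\alpha}{2}$ bound. When $|\Omega|=2$, you only get \emph{one} neighbour $z\in Z$; pigeonholing over $\alpha$ choices of $z$ gives $3t-3$ vortices adjacent to a common $z$, but now the second degree-$t$ branch set cannot be a single vertex --- it must be the large connected piece $H'=H_0\cup(\text{remaining vortices})$. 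The whole point of \Cref{EasterMondayLemma} is to select $t$ of the $3t-3$ virtual edges (each $\Omega_i$ is an edge of $H_0'$) so that $H_0'$ minus these edges stays connected, which in turn makes $H'$ connected. Your proposal never identifies this large branch set and applies \Cref{EasterMondayLemma} in the wrong case; as written, your $|\Omega|=2$ argument would need the pairs $\Omega$ to lie in $Z$, but they lie in $V(H_0)$, of which there are unboundedly many.
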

\begin{proof}
  Partition the set of all small vortices \((J, \Omega)\in \mathcal{W}\) with \(|\Omega| \le 2\) and \((V(J) - \Omega) \cap R \neq \emptyset\) into three sets \(\mathcal{W}_{\partial}\), \(\mathcal{W}_{\le1}\), and \(\mathcal{W}_{2}\), by assigning \((J, \Omega)\) to \(\mathcal{W}_{\partial}\) if \((V(J) - \Omega) \cap \partial(H) \neq \emptyset\), to \(\mathcal{W}_{\le1}\) if \((V(J) - \Omega) \cap \partial(H) = \emptyset\) and \(|\Omega| \le 1\), and to \(\mathcal{W}_2\) if \((V(J) - \Omega) \cap \partial(H) = \emptyset\) and \(|\Omega| = 2\).
  Since \(H\) is \(k\)-interesting, we have \(|\partial(H)| \le k\), and since the graphs \(J - \Omega\) are pairwise disjoint for \((J, \Omega) \in \mathcal{W}\), we have \(|\mathcal{W}_{\partial}| \le k\).

  Next, we claim that \(|\mathcal{W}_{\le 1}| \le (t-1)\binom{\alpha}{2}\). Suppose to the contrary that \(|\mathcal{W}_{\le 1}| \ge t\binom{\alpha}{2}\). 
  For each \((J, \Omega)\in \mathcal{W}_{\le 1}\), choose a component \(C(J, \Omega)\) of \(J - \Omega\) that intersects \(R\). Since \(G\) is \(3\)-connected, \(|\Omega| \le 1\), and \((V(J) - \Omega) \cap \partial(H) = \emptyset\), the component \(C(J, \Omega)\) must be adjacent to at least two vertices in \(Z\). We have \(|Z| \le \alpha\), so by the pigeonhole principle, there exist \(t\) small vortices \((J_1, \Omega_1), \ldots, (J_t, \Omega_t) \in \mathcal{W}_{\le 1}\) and distinct vertices \(z_1, z_2 \in Z\) such that \(C(J_i, \Omega_i)\) is adjacent to both \(z_1\) and \(z_2\) for each \(i \in [t]\). Therefore, \(K_{2, t}\) is a rooted minor of \((G, R)\), as witnessed by the model with branch sets \(\{z_i\}\) for \(i \in \{1, 2\}\), and \(V(C(J_j, \Omega_j))\) for \(j \in [t]\), a contradiction.

  Finally, we show that \(\mathcal{W}_2\) has size at most $(3t-4)\alpha$.
  For each  \((J, \Omega) \in \mathcal{W}_2\), choose a component \(C(J, \Omega)\) of \(J - \Omega\) that intersects the root set \(R\). 
  Since \((J, \Omega)\) is properly attached, \(C(J, \Omega)\) is adjacent to both vertices in \(\Omega\).
  Since \(G\) is \(3\)-connected and \((V(J) - \Omega) \cap \partial(H) = \emptyset\), the component \(C(J, \Omega)\) is adjacent to a vertex in \(Z\).
  Suppose towards a contradiction that \(|\mathcal{W}_2| > (3t-4)\alpha\). 
  By the pigeonhole principle, there exist a subset \(\mathcal{W}'_2 \subseteq \mathcal{W}_2\) and a vertex \(z \in Z\) such that \(|\mathcal{W}'_2| \ge 3t-3\) and for each \((J, \Omega) \in \mathcal{W}'_2\), the component \(C(J, \Omega)\) is adjacent to \(z\). 
  Let \(D\) denote the set of all edges \(xy \in E(H_0')\) such that \(\Omega = \{x, y\}\) for some \((J, \Omega) \in \mathcal{W}'_2\). 
  By \Cref{EasterMondayLemma} (note that we can assume that $t\ge2$ without loss of generality), there exists a subset \(D' \subseteq D\) with \(|D'| = t\), such that \(H_0' - D'\) is connected.
  Let \((J_1, \Omega_1), \ldots, (J_t, \Omega_t)\) be distinct small vortices in \(\mathcal{W}'_2\) such that each \(\Omega_i\) consists of the endpoints of a different edge in \(D'\).
  Let \(H'\) denote the union of $H_0$ with all small vortices \((J, \Omega) \in \mathcal{W}\) except those with \(\Omega = \{x, y\}\) for some \(xy\in D'\), or $\Omega = \emptyset$. 
  Since the small vortices are properly attached, the graph \(H'\) is connected analogously to \(H_0' - D'\). 
  In this case we obtained a rooted model of \(K_{2,t}\) in \((G, R)\), with branch sets \(\{z\}\), \(V(H')\), and \(V(C(J_{i}, \Omega_{i}))\) for \(i \in [t]\), contradiction.
\end{proof}

\begin{lemma}\label{BKMMappl}
  Let \((G, R)\) be a \(3\)-connected rooted graph without a rooted \(K_{2,t}\)-minor, let \(H\) be a \(k\)-interesting subgraph, and let \((\sigma,H_0,Z,\mathcal{V},\mathcal{W})\) be a near embedding of \(H\) in a surface \(\Sigma\) with Euler genus at most \(g\) such that \(H_0'\) is \(3\)-connected and all small vortices are properly attached. Suppose further that the facewidth of \(\sigma\) is at least \(f_{\ref{BKMM}}(g, t)\). Then, there are at most \(f_{\ref{BKMM}}(g, t)\) small vortices \((J, \Omega) \in \mathcal{W}\) such that \(|\Omega| = 3\) and \((V(J) - \Omega) \cap R \neq \emptyset\),
  and there is a set of at most \(f_{\ref{BKMM}}(g, t)\) facial cycles in \(\sigma\) that cover the set \(R \cap V(H_0)\).

  Furthermore, for fixed $t$, the set of facial cycles can be found in polynomial time given $(G,R)$, $H$, and \((\sigma,H_0,Z,\mathcal{V},\mathcal{W})\) as input. 
\end{lemma}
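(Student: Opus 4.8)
The plan is to reduce \Cref{BKMMappl} to a single application of \Cref{BKMM}, after massaging $H_0'$ into an auxiliary $3$-connected surface-embedded graph whose roots encode both $R\cap V(H_0)$ and a distinct ``marker'' for each small vortex of adhesion $3$ that carries a root. Write $\sigma'$ for the embedding of $H_0'$ in $\Sigma$ extending $\sigma$ by drawing the virtual edges inside the vortex disks, and let $\mathcal{W}_3^R$ be the set of small vortices $(J,\Omega)\in\mathcal{W}$ with $|\Omega|=3$ and $(V(J)-\Omega)\cap R\neq\varnothing$. For each $V=(J_V,\Omega_V)\in\mathcal{W}_3^R$ fix a component $C_V$ of $J_V-\Omega_V$ meeting $R$; proper attachment gives that $C_V$ is adjacent in $J_V$ to all three vertices of $\Omega_V$ and to no other vertex of $J_V$. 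I then build $(\hat G,\hat R)$ from $H_0'$: inside each disk $D(V)$, $V\in\mathcal{W}_3^R$, insert a new root vertex $a_V$ joined to the three vertices of $\Omega_V$ and drawn so that it is enclosed by a short cycle lying inside $D(V)$ (in the generic case $\Omega_V$ is independent in $H_0$, so the three virtual edges of $V$ already bound a triangular face of $H_0'$ and $a_V$ may be placed there; otherwise one pads $D(V)$ with a constant-size $3$-connected gadget through $\Omega_V$). Set $\hat R:=(R\cap V(H_0))\cup\{a_V:V\in\mathcal{W}_3^R\}$. Since the $D(V)$ have pairwise disjoint interiors, $\hat G$ embeds in $\Sigma$ (Euler genus at most $g$), $\hat G$ is $3$-connected (a $3$-connected gadget is attached along $\ge 3$ vertices of the $3$-connected graph $H_0'$), and the face-width of this embedding is at least that of $\sigma'$, which is at least that of $\sigma$ (being normal to a supergraph is a stronger condition, and inserting $a_V$ inside $D(V)$ cannot lower the face-width since any noose through $a_V$ reroutes inside $D(V)$); so the face-width still exceeds the value assumed in the hypothesis. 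Finally, by construction every face of $\hat G$ incident to $a_V$ lies inside $D(V)$, hence no face of $\hat G$ is incident to two distinct vertices $a_V,a_{V'}$.

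The key claim is that $(\hat G,\hat R)$ has no rooted $K_{2,s}$-minor, where $s=s(t)$ is a suitable function of $t$. Granting it, \Cref{BKMM} applied to $(\hat G,\hat R)$ produces a family $\mathcal F$ of at most $f:=f_{\ref{BKMM}}(g,s)$ facial cycles of $\hat G$ covering $\hat R$ (absorbing the dependence of $s$ on $t$ into $f_{\ref{BKMM}}$, this is the bound in the statement). Each $a_V$ lies on some cycle of $\mathcal F$, each cycle of $\mathcal F$ is incident to at most one of the $a_V$'s, and $\hat R\supseteq\{a_V:V\in\mathcal{W}_3^R\}$, so $|\mathcal{W}_3^R|\le|\mathcal F|\le f$, which is the first conclusion. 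For the second, replace each cycle of $\mathcal F$ lying inside some $D(V)$ by the facial cycle of $H_0'$ through $\Omega_V$ that resides in $D(V)$ (the central triangle, or short cycle, of the gadget), and keep the remaining cycles unchanged---these are already facial cycles of $H_0'$, that is, of $\sigma'$. As every vertex of $R\cap V(H_0)$ is distinct from all $a_V$'s and lies on some cycle of $\mathcal F$, it lies on the corresponding facial cycle of $\sigma'$; hence $R\cap V(H_0)$ is covered by at most $f$ facial cycles of $\sigma'$. A facial cycle of $\sigma'$ that uses virtual edges can, if one insists, be traded for the face of $H_0$ accommodating the vortex it lies in, yielding facial cycles of $\sigma$ proper; but for the way \Cref{BKMMappl} is used in the proof of \Cref{interesting}---where the virtual edges reappear as adhesion-clique edges of the weak torso---working with $\sigma'$ and $H_0'$ is exactly what is required.

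It remains to establish the claim, and this is the main obstacle. Given a rooted $K_{2,s}$-model of $(\hat G,\hat R)$ with hubs $A_0,A_1$ and central branch sets $B_1,\dots,B_s$, I transform it into a rooted $K_{2,t}$-model of $(G,R)$, contradicting the hypothesis on $(G,R)$. Every virtual edge, and every $a_V$-gadget, used by the model lives in a small vortex of adhesion at most $3$; since $K_{2,s}$ is triangle-free, the $\le 3$ interface vertices of such a vortex are spread over at most two model branch sets that are adjacent in the model, and at most two of its virtual edges are used. Using that every component of $J-\Omega$ is adjacent to all of $\Omega$ and that the vortex is properly attached, each such vortex can therefore be ``realised'' in $G$ by enlarging at most one branch set by (part of) a single component of $J-\Omega$, through which the few affected model edges are routed; as distinct vortices have disjoint interiors, the enlarged branch sets stay pairwise disjoint, connected, and keep the required adjacencies. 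For a central branch set $B_i$ whose root is some $a_V$, delete the gadget-vertices of $V$ from $B_i$ and add $V(C_V)$: the remaining pieces of $B_i$ each meet $\Omega_V$, $C_V$ is adjacent to all of $\Omega_V$, so connectivity is restored; $V(C_V)$ is disjoint from $V(H_0)$ and from the other $C_{V'}$'s; the edges to $A_0,A_1$ survive through $C_V$'s adjacency to $\Omega_V$; and $B_i$ now contains a genuine root of $R$. Central branch sets rooted in $R\cap V(H_0)$ already contain a root of $R$. Discarding surplus spokes yields a rooted $K_{2,t}$-model of $(G,R)$ once $s$ is taken large enough as a function of $t$ to absorb this bookkeeping, which is the contradiction. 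The procedure is effective: since the near embedding is part of the input, $\hat G$ and $\sigma'$ are computed in polynomial time, $\hat G$ has $O(n)$ faces, and---\Cref{BKMM} guaranteeing a cover of size $\le f$---one simply searches over all $n^{O(f)}$ families of at most $f$ facial cycles, which is polynomial for fixed $t$, finishing with the translation to $\sigma'$ described above.
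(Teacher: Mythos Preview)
Your approach is essentially the paper's: build an auxiliary graph from $H_0'$ by inserting one marker vertex per rooted $3$-vortex, show this rooted graph excludes a rooted $K_{2,*}$-minor by lifting back to $(G,R)$, apply \Cref{BKMM}, and read off both conclusions. The paper does this more directly: it simply places $x_V$ inside the triangular face of $H_0'$ on $\Omega_V$ (so no separate $3$-connected gadget is needed---$H_0'$ already has a triangle on $\Omega_V$ drawn in $D(V)$ by definition of virtual edges), and it asserts that a rooted $K_{2,t}$-model in $(H_0'',R')$ lifts to one in $(G,R)$ with the \emph{same} $t$, giving exactly the bound $f_{\ref{BKMM}}(g,t)$ in the statement.

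Two points in your write-up need attention. First, the claim that the three interface vertices of a $3$-vortex are ``spread over at most two model branch sets'' is false: since $K_{2,s}$ is only triangle-free, $\Omega_V$ can meet three branch sets (e.g.\ one vertex in each hub and one in a spoke), and with $x_V$ present a fourth branch set can occur. What triangle-freeness buys you is that at most two of the three virtual edges on $\Omega_V$ are model edges, and then one uses that a single component of $J_V-\Omega_V$ is adjacent to all of $\Omega_V$ (together with the two-disjoint-paths property of proper attachment when $x_V$ is also involved) to realise the at most two needed adjacencies. Second, your hedge to $s=s(t)$ and the parenthetical ``absorbing the dependence of $s$ on $t$ into $f_{\ref{BKMM}}$'' does not deliver the stated bound $f_{\ref{BKMM}}(g,t)$: $f_{\ref{BKMM}}$ is a fixed function from \Cref{BKMM}, not a placeholder. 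The paper avoids this by arguing (tersely) that the lift loses no spokes, i.e.\ $s=t$; that is what you should aim for.
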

\begin{proof}
  Let \(H_0''\) be obtained from \(H_0'\) by adding, for each \(V = (J, \Omega) \in \mathcal{W}\) with \(|\Omega|=3\) and \((V(J) - \Omega) \cap R \neq \emptyset\), a new vertex \(x_V\) that is adjacent to all vertices in \(\Omega\), and extend the embedding \(\sigma\) of \(H_0'\) to an embedding \(\sigma'\) of \(H_0''\) by embedding each \(x_V\) in the interior of the triangular face containing the three vertices in \(\Omega\). Note that \(H_0''\) is still \(3\)-connected, and the facewidth of \(\sigma'\) is at least \(f_{\ref{BKMM}}(g, t)\). Let \(R'\) be the union of \(R \cap V(H_0)\) and the set of all
  new vertices \(x_V\). Since the small vortices are properly attached, we can see that there is no rooted \(K_{2,t}\)-minor in \((H_0'', R')\) (otherwise we could lift it to a rooted $K_{2,t}$-minor in $(G,R)$). By \Cref{BKMM}, there is a set of at most \(f_{\ref{BKMM}}(g, t)\) facial cycles in \(\sigma'\) that cover all vertices in \(R'\). Since each face of \(\sigma'\) contains at most one vertex \(x_V\), we conclude that there are at most \(f_{\ref{BKMM}}(g, t)\) vertices \(x_V\), and thus there are at most \(f_{\ref{BKMM}}(g, t)\) small vortices \((J, \Omega) \in \mathcal{W}\) with \(|\Omega| = 3\) and \((V(J) - \Omega) \cap R \neq \emptyset\). We can also transform the collection of at most \(f_{\ref{BKMM}}(g, t)\) facial cycles in \(\sigma'\) covering \(R'\) into a collection of at most \(f_{\ref{BKMM}}(g, t)\) facial cycles in \(\sigma\) covering \(R \cap V(H_0)\) by replacing any facial cycle containing a vertex \(x_V\) with \(V = (J, \Omega)\) by the facial cycle \(H_0'[\Omega]\).
\end{proof}

Recall that a {\em block} of a graph $G$ is an inclusionwise maximal subgraph of $G$ that is $2$-connected, or an edge, or a vertex. 
An embedding of a graph $G$ in a surface $\surf$ is {\em cellular} if the closure of every face is homeomorphic to a disk. A graph $G$ is {\em cellularly embedded} in $\surf$ if the embedding is cellular. 
 
\begin{theorem}[{Robertson and Vitray~\cite{RV90}, \cite[Proposition 5.5.2]{MT01}}]\label{thm:cycle_faces}
Let $\surf$ be a surface with $\eg(\surf) > 0$ and let $G$ be a graph that is embedded in $\surf$ with facewidth at least $2$. Then there is precisely one block $Q$ of $G$ that contains a noncontractible cycle. Moreover, $Q$ is cellularly embedded in $\surf$ and all its faces are bounded by cycles. Each block $Q'$ of $G$ distinct from $Q$ is a planar subgraph of $G$ contained in the closure of some face of $Q$. Finally, the facewidth of $Q$ is equal to the facewidth of $G$.
\end{theorem}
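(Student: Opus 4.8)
This is a classical result; the plan is to reconstruct the proof of \cite[Proposition~5.5.2]{MT01} by combining the block structure of $G$ with the fact that, under the hypothesis, noncontractible nooses must meet $G$ in at least two vertices. Write $\rho(\cdot,\surf)$ for the facewidth. Two elementary observations will be used repeatedly. First, every cycle of $G$ lies inside a single block, since a cycle is $2$-connected. Second (the \emph{face fact}): if $H$ is a connected nonplanar subgraph of $G$, then every face of $H$ in $\surf$ is an open disk; indeed, if some face $f$ of $H$ is not a disk it contains a simple closed curve $\gamma$ essential in $f$, and either $\gamma$ is contractible in $\surf$, whence the disk it bounds together with the connectedness of $H$ forces $H$ to be planar, or $\gamma$ is noncontractible in $\surf$, hence a noncontractible noose inside the open face $f$ of $H$. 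Using the face fact I would first establish \emph{existence}: if $G$ had no noncontractible cycle, then (applying the face fact component by component, and noting that a curve inside a face avoiding $G$ is ruled out by $\rho(G,\surf)\ge 2$) the embedding of $G$ would be cellular with all facial walks null-homotopic, forcing $\eg(\surf)=0$ by Euler's formula and the standard presentation of $\pi_1(\surf)$; contradiction. So $G$ has a noncontractible cycle, lying in some block $Q$, which is therefore $2$-connected.

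For \emph{uniqueness}, suppose blocks $Q_1 \ne Q_2$ both contain noncontractible cycles. Pick a cut vertex $v$ on the $Q_1$--$Q_2$ path in the block-cut tree and split $G = G_1\cup G_2$ with $V(G_1)\cap V(G_2) = \{v\}$ and $Q_i\subseteq G_i$. Since $G_2-v$ is connected and disjoint from $G_1$, it lies in a single face $f$ of $G_1$, so $Q_2 \subseteq f\cup\{v\}$ and $Q_2\cap G_1\subseteq\{v\}$. The aim is now to convert the noncontractible cycle $C_2$ of $Q_2$ into a noncontractible noose confined to $\overline f$ that meets $V(G)$ in at most the single vertex $v$: this would contradict $\rho(G,\surf)\ge 2$. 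I expect this to be the technical heart of the proof --- one must push a curve tracing $C_2$ off all of $G$ while staying inside the face $f$ of $G_1$ and without creating new intersections with $G_2$, and the homotopy bookkeeping (in particular when $C_2$ uses $v$) is exactly where \cite{MT01} does real work, and where the argument is genuinely delicate. Granting this, $Q$ is the unique block of $G$ containing a noncontractible cycle.

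It then follows that every other block $Q'$ of $G$ has all its cycles contractible. By the block structure, any component of $G-Q$ is attached to $Q$ at a single cut vertex $v$ (otherwise $Q$ would not be a maximal $2$-connected subgraph), hence is drawn in the closure of one face $f$ of $Q$ with $v$ on its boundary; in particular $Q'\subseteq\overline f$. Applying the face fact to $Q$ (connected and nonplanar) shows every face of $Q$ is an open disk, so $Q$ is cellularly embedded; if some facial walk of $Q$ repeated a vertex $u$, joining the two appearances of $u$ by an arc through that disk face would give either a cut vertex of $Q$ (impossible, as $Q$ is $2$-connected) or a noncontractible noose through $u$ alone --- which, after rerouting the arc inside the disk to skirt the finitely many hanging pieces rooted at cut vertices on the face boundary, meets $V(G)$ in one vertex, again contradicting $\rho(G,\surf)\ge 2$. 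Hence every face of $Q$ is bounded by a cycle, so each $\overline f$ is a closed disk, and consequently every block $Q'\ne Q$ is planar.

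Finally I would prove $\rho(Q,\surf) = \rho(G,\surf)$. The inequality $\rho(Q,\surf)\le\rho(G,\surf)$ is immediate from $V(Q)\subseteq V(G)$, since any closed curve meets $V(Q)$ in at most as many points as it meets $V(G)$. For the reverse, take a noncontractible noose $\gamma$ with $|\gamma\cap V(Q)| = \rho(Q,\surf)$. In each disk face of $Q$ that $\gamma$ crosses, the corresponding subarc of $\gamma$ runs between two boundary vertices; since the portion of $G-Q$ lying in that face consists of pieces hanging from finitely many cut vertices of $Q$ on the facial cycle, the subarc can be replaced, within the closed disk and with the same endpoints (hence without changing the homotopy class of $\gamma$), by an arc hugging the facial cycle and detouring around those cut vertices, thereby avoiding all of $G-Q$. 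The resulting noncontractible noose meets $V(G)$ in exactly $|\gamma\cap V(Q)| = \rho(Q,\surf)$ vertices, so $\rho(G,\surf)\le\rho(Q,\surf)$, and the proof is complete.
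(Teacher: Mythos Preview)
The paper does not prove this theorem: it is quoted as a known result of Robertson--Vitray and Mohar--Thomassen and used as a black box (to conclude that the embedding in the proof of \Cref{interesting} is cellular with cycle-bounded faces). There is therefore no in-paper proof to compare your attempt against.

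As a standalone reconstruction, your outline follows the classical argument and you correctly flag the uniqueness step as the crux. Two small comments. First, your ``face fact'' is stated for an \emph{abstractly} nonplanar $H$, but in the uniqueness step you need it for a block $Q_1$ that merely contains a noncontractible cycle; such a block can be abstractly planar (a single noncontractible cycle, say), so your face fact does not apply to it as stated. The standard fix is to prove a version that says: for connected $H$, either every face of $H$ is a disk, or some face of $H$ contains a noncontractible (in $\surf$) noose disjoint from $H$ --- and then combine this with the rerouting-around-pendant-pieces trick you already use later to upgrade ``disjoint from $Q_1$'' to ``meeting $V(G)$ in at most one vertex''. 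This lets you conclude directly that every face of $Q_1$ is a disk, whence $C_2$ sits in a closed disk and is contractible, avoiding the homotopy bookkeeping you were worried about. Second, in the inequality $\rho(Q,\surf)\le\rho(G,\surf)$, the point is not that ``any closed curve meets $V(Q)$ in at most as many points as $V(G)$'' but that every $G$-normal noose is automatically $Q$-normal; your phrasing obscures that the normality condition is what is being transferred.
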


A corollary of this result is that if $G$ is a $2$-connected graph that is embedded in $\surf$ with facewidth at least $2$, then all the faces of the embedding are bounded by cycles.

\begin{proof}[Proof of \Cref{interesting}]
  Let \(\alpha = \alpha(t)\) be as in \Cref{vortex_free_decomposition}, and let \(h = \max\{ 2\alpha+8, 3t^2\}\).
  Let \(w = f_{\ref{BKMM}}(\alpha, t)\).
  Let \(k=k(t,w,3t^2)\) (as in \Cref{vortex_free_decomposition}), and let \(k'=k+(t-1)\binom{\alpha}2+(3t-4)\alpha+w\).
  Let \((G, R)\) be a rooted graph without a rooted \(K_{2,t}\)-minor, and let \(H\) be a \(k\)-interesting subgraph.
  By \Cref{vortex_free_decomposition}, \(H\) admits a \((w, h+2)\)-good \((\alpha, 0, \alpha)\)-near embedding \((\sigma,H_0,Z,\mathcal{V},\mathcal{W})\) (thus, \(\mathcal{V} = \emptyset\)), and there is a good \((h+2)\)-wall \(W_0'\) in \(H_0'\) such that any transversal set of \(W_0'\) is linked to \(X\) in \(H\) with \(h\) disjoint paths.
  By \Cref{ProperAttachments}, we may assume that all vortices are properly attached, and by \Cref{ensuring_3c}, we may alter the near embedding and the wall so that \(H_0'\) is \(3\)-connected and \(W_0'\) is a good \(h\)-wall, but the facewidth is still at least \(w\), and the small vortices are still properly attached.
  Note that by \Cref{thm:cycle_faces}, the embedding \(\sigma\) is cellular, and thus each face is bounded by a cycle.
  By \Cref{Omega2}, there are at most \(k'-w\) small vortices \((J, \Omega) \in \mathcal{W}\) with \(|\Omega|\le2\) and \((V(J) - \Omega) \cap R \neq \emptyset\).
  By \Cref{BKMMappl}, there are at most \(w\) small vortices \((J, \Omega) \in \mathcal{W}\) with \(|\Omega|=3\) and \((V(J) - \Omega) \cap R \neq \emptyset\), and there is a set of \(w\) facial cycles which cover all vertices in \(V(H_0) \cap R\).

  Let \((J_1, \Omega_1), \ldots, (J_m, \Omega_m)\) denote the small vertices in \(\mathcal{W}\), listed in an order such that for any \(i \in [m]\), if \(i > 2k \ge k+(t+1)\binom{\alpha}2+3(t+1)\alpha + w\), then \((V(J_i) - \Omega_i) \cap R = \emptyset\). We define a star-decomposition \((B_0; B_1, \ldots, B_m)\) by letting \(B_0 = V(H_0) \cup X\), and \(B_i = V(J_i) \cup Z\) for \(i \in [m]\). Now, let \(U = Z \cup (X \setminus V(H_0))\). Then, \(|U| \le |Z| + |X| \le \alpha + k \le k'\), and we have \(H^\#[B_0] - U = H_0'\), so it remains to verify that \(|B_0 \cap B_i| \le k\) for each \(i \in [m]\). We have \(B_0 \cap B_i \subseteq (V(J_i) \cap (V(H_0) \cup X))\cup Z\) and \(|V(J_i) \cap V(H_0)| \le 3\) and \(|Z| \le \alpha\), so it suffices to bound \(|V(J_i) \cap X|\).

  Fix \(i \in [m]\).
  Let \(P_1, \ldots, P_h\) be disjoint paths in \(G\) such that each \(P_j\) is between a vertex \(x_j \in X\) and a vertex \(y_j \in V(W_0')\) such that \(\{y_1, \ldots, y_h\}\) is transversal set in \(W_0'\). Since \(W_0'\) is a good wall, \(J_i - \Omega_i\) contains at most one vertex \(y_i\). The set \(Z \cup \Omega_i\) has at most \(\alpha + 3\) elements, and separates \(V(J_i)\) from \(V(H) \setminus V(J_i)\).
  Since at most \(\alpha+3\) of the paths \(P_1, \ldots, P_h\) intersect \(Z \cup \Omega_i\), we may assume that the paths \(P_{\alpha+4}, \ldots, P_h\) are disjoint from \(Z \cup \Omega_i\), and thus either contained in \(J_i - \Omega_i\), or disjoint from \(V(J_i) \cup \Omega_i\). Furthermore, since \(W_0'\) is a good wall, \(J_i - \Omega_i\) contains at most one vertex \(y_i\), so at most one of the paths \(P_j\) is contained in \(J_i - \Omega_i\).
  Thus, we may assume that the paths \(P_{\alpha+5}, \ldots, P_h\) are disjoint from \(J_i\). In particular, none of the \(\alpha+4\) vertices \(x_{\alpha+5}, \ldots, x_h\) 
  belongs to \(J_i\). Once again, the set \(Z \cup \Omega_i\) has at most \(\alpha+3\) elements and separates \(V(J_i)\) from \(\{x_{\alpha+5}, \ldots, x_h\}\), so by well-linkedness of \(X\), we have
  \(|V(J_i) \cap X| \le \alpha+3\), and thus \(|B_0 \cap B_i| \le 2\alpha+6 \le k\), as required.
\end{proof}

\subsection{Obtaining the docset superprofiles}\label{sec:superprofiles}
The goal of this subsection is to prove \Cref{SpecialDecomposition}, which is our main structural result.
The proof consists of two main steps:
First we modify the tree-decomposition from \Cref{P1-P6} to be $\ell$-special.
Second we explicitly construct a docset superprofile for each bag of the resulting tree-decomposition.

%
For the proof of \Cref{SpecialDecomposition} we need the following results. 

\begin{theorem}[\cite{ringel_1965,bouchet_1978}]
  The Euler genus of a surface that embeds $K_{m,n}$ is at least $\left\lceil\frac{(m-2)(n-2)}{2}\right\rceil$ if $m\ge 2$ and $n\ge 2$. 
\end{theorem}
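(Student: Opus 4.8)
The plan is to prove this by combining Euler's formula with the fact that $K_{m,n}$ is triangle-free. First I would reduce to cellular embeddings: if $K_{m,n}$ embeds in a surface $\surf$ with $\eg(\surf)=g$, then since $K_{m,n}$ is connected there is a surface $\surf_0$ with $\eg(\surf_0)\le g$ in which $K_{m,n}$ embeds cellularly (a standard fact about embeddings of connected graphs, see~\cite{MT01}). It therefore suffices to lower-bound $\eg(\surf_0)$, and then $\eg(\surf)\ge\eg(\surf_0)$ gives the claim.

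For the cellular embedding in $\surf_0$, I would apply Euler's formula $V-E+F=2-\eg(\surf_0)$, with $V=m+n$ and $E=mn$. To bound $F$ from above, I would use that $m,n\ge 2$ implies $K_{m,n}$ is $2$-connected, so every facial walk of the cellular embedding is a cycle; since $K_{m,n}$ is bipartite, every such cycle has even length, hence length at least $4$. As the sum of the lengths of the facial cycles equals $2E$ (each of the $2E$ darts lies on exactly one facial walk), this yields $4F\le 2E$, i.e.\ $F\le mn/2$. Substituting into Euler's formula gives $2-\eg(\surf_0)\le (m+n)-mn+mn/2$, which rearranges to $\eg(\surf_0)\ge (mn-2m-2n+4)/2=(m-2)(n-2)/2$; since the Euler genus is an integer, one may take the ceiling, finishing the proof.

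The only mildly delicate point is the justification that every face of the cellular embedding is bounded by a cycle, so that the girth bound applies to the facial walks. I would handle this via the standard consequence of $2$-connectivity of $K_{m,n}$ (as in~\cite{MT01}); alternatively, one can argue directly that a facial walk of length at most $3$ would force a loop, a pair of parallel edges, or a triangle — none of which occurs in the simple bipartite graph $K_{m,n}$, once one uses that $K_{m,n}$ is bridgeless (for $m,n\ge 2$) to exclude length-$2$ facial walks. Everything else is a routine calculation, so I do not anticipate a real obstacle here.
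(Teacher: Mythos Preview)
The paper does not prove this theorem; it is quoted as a known result from~\cite{ringel_1965,bouchet_1978} and used as a black box, so there is no ``paper's own proof'' to compare against. Your Euler-formula argument is the standard derivation of this lower bound and is correct.

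One small correction worth making: your primary justification that ``every face is bounded by a cycle because $K_{m,n}$ is $2$-connected'' is not valid on surfaces of positive genus. Two-connectivity guarantees this on the sphere, but on higher-genus surfaces a $2$-connected graph can have facial walks that revisit vertices (indeed, the paper itself invokes face-width $\ge 2$, not just $2$-connectivity, when it needs faces bounded by cycles, see \Cref{thm:cycle_faces}). Fortunately your alternative argument is what is actually needed and is sound: the inequality $4F\le 2E$ only requires that every facial \emph{walk} has length at least $4$, and this follows because $K_{m,n}$ is simple (no facial walk of length $1$ or $2$ from loops or parallel edges), bridgeless for $m,n\ge 2$ (no facial walk of length $2$ traversing one edge twice), and bipartite (no facial walk of length $3$, since consecutive edges of a facial walk are distinct and hence a length-$3$ facial walk would be a triangle). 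With that fix the proof goes through exactly as you wrote.
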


The above theorem in particular gives a lower bound on the Euler genus of a surface where $K_{3,m}$, $m\in \mathbb{N}$, can be embedded.

\begin{corollary}\label{embeddingK3k}
    Let $m \in \mathbb{N}$ and let $\surf$ be a surface of Euler genus $g:=\eg(\surf)$.
    Then $K_{3,m}$ cannot be embedded in $\surf$ for $m> 2g+2$. 
\end{corollary}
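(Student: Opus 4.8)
The statement to prove is \Cref{embeddingK3k}: $K_{3,m}$ cannot be embedded in a surface $\surf$ of Euler genus $g$ when $m > 2g+2$.

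\medskip

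The plan is to apply the preceding theorem (the Ringel--Bouchet bound on the Euler genus of surfaces embedding $K_{m,n}$) directly with the parameters $m = 3$ and $n = m$ (renaming to avoid the clash, think of it as $K_{3,n}$ with $n$ the quantity we call $m$ in the corollary). First I would observe that if $K_{3,m}$ embeds in $\surf$, then by the theorem we must have
\[
g = \eg(\surf) \ge \left\lceil \frac{(3-2)(m-2)}{2} \right\rceil = \left\lceil \frac{m-2}{2} \right\rceil,
\]
valid since $3 \ge 2$ and we may assume $m \ge 2$ (for $m \le 2$ the graph $K_{3,m}$ is planar and the bound $m > 2g+2$ is vacuous anyway, as $2g+2 \ge 2$). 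Then I would contrapose: assuming $m > 2g+2$, i.e. $m \ge 2g+3$, we get $\lceil (m-2)/2 \rceil \ge \lceil (2g+1)/2 \rceil = g+1 > g$, contradicting the displayed inequality. Hence no such embedding exists.

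\medskip

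The only minor care needed is the ceiling arithmetic: from $m - 2 \ge 2g+1$ we have $(m-2)/2 \ge g + 1/2$, so $\lceil (m-2)/2 \rceil \ge \lceil g + 1/2 \rceil = g+1$, which strictly exceeds $g$. This is the entirety of the argument; there is no real obstacle here — the corollary is an immediate numerical consequence of the quoted genus formula for complete bipartite graphs. I would write it as a two- or three-line proof.

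\begin{proof}[Proof of \Cref{embeddingK3k}]
    Suppose for contradiction that $K_{3,m}$ embeds in $\surf$, and note that we may assume $m \ge 2$ since otherwise $m \le 2 \le 2g+2$ and there is nothing to prove. By the preceding theorem applied with the pair $(3, m)$, the Euler genus of $\surf$ satisfies
    \[
        g = \eg(\surf) \ge \left\lceil \frac{(3-2)(m-2)}{2} \right\rceil = \left\lceil \frac{m-2}{2} \right\rceil.
    \]
    On the other hand, the hypothesis $m > 2g+2$ gives $m - 2 \ge 2g+1$, hence $\frac{m-2}{2} \ge g + \frac12$ and therefore $\left\lceil \frac{m-2}{2} \right\rceil \ge g+1 > g$, a contradiction. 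Thus $K_{3,m}$ does not embed in $\surf$.
\end{proof}
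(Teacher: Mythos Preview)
Your proof is correct and is exactly the intended argument: the paper does not even spell out a proof, treating the corollary as immediate from the Ringel--Bouchet genus bound, and your two-line derivation is precisely that immediate deduction.
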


\begin{lemma}\label{interlacing_intervals}
  Let $G$ be a graph embedded in a surface $\surf$ of Euler genus $g$ and let $F$ be a face bounded by a cycle of $G$ in the embedding. 
  Let $a_1,b_1,a_2,b_2,\ldots, a_{m},b_{m}$ be some subset of vertices of $F$ ordered in a cyclic order. 
  Then, if $m>g+1$, there are no two vertex-disjoint connected subgraphs $A,B \subseteq G$ such that $\{a_1,a_2,\ldots,a_{m}\}\subseteq A$ and $\{b_1,b_2,\ldots,b_{m}\}\subseteq B$.
\end{lemma}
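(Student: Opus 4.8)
The plan is to set up an auxiliary embedded structure that captures $A$, $B$, and the cyclic order of the vertices on $F$, and then derive a $K_{3,m}$-minor whose existence contradicts \Cref{embeddingK3k}. First I would fix the face $F$ bounded by a cycle $C$ of $G$, and consider the embedding of $G$ together with the disk $\overline F$ that $F$ bounds. I want to add a structure inside this disk that ``reads off'' the alternating pattern $a_1,b_1,a_2,b_2,\dots,a_m,b_m$. Concretely, pick a point $z_A$ and a point $z_B$ in the interior of $F$, and draw $m$ internally disjoint arcs from $z_A$ to the vertices $a_1,\dots,a_m$ on $\bd F$, and $m$ internally disjoint arcs from $z_B$ to the vertices $b_1,\dots,b_m$. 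The obstruction to doing this without crossings is exactly the interlacing of the $a_i$'s and $b_i$'s around $\bd F$: since the two point-stars $z_A$ and $z_B$ both live in a disk and their feet alternate on the boundary circle, at least $2m-1$ (or with a more careful count, $m-1$) crossings are forced; I would make this count precise, as each ``alternation'' between an $a$-foot and a $b$-foot forces one crossing between an $a$-arc and a $b$-arc.

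Next I would turn each forced crossing into a handle or crosshandle. The standard trick is: wherever two arcs cross, we can reroute one of them through a small handle attached to the surface, raising the Euler genus by at most $1$ per crossing but removing the crossing. Doing this for all the forced crossings yields a surface $\surf'$ with $\eg(\surf') \le g + (\text{number of crossings})$ and an \emph{embedded} (crossing-free) drawing in $\surf'$ of the graph consisting of $C$, the two stars centred at $z_A,z_B$, and — by restricting $G$ appropriately — the connected subgraphs $A$ and $B$. Contracting $A$ to a single vertex, $B$ to a single vertex, and using $z_A$, $z_B$ as the two further vertices, together with $C$ providing the connections to the feet $a_i$ and $b_i$, I would extract a $K_{4,m}$-model, or at the very least a $K_{3,m}$-model with parts $\{z_A, z_B, A\text{-vertex}\}$ (noting that $B$ connects to all $b_i$'s which lie on $C$, so $A$, $z_A$, $z_B$ each reach all $m$ of the ``$b$-side'' branch vertices, after a little rerouting along $C$). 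The precise bookkeeping of which vertices form the size-$3$ side versus the size-$m$ side is where I would spend care.

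The key quantitative point is that I must keep the number of forced crossings — hence the genus increase — strictly below $2(m-2)-2g$ or whatever threshold makes $K_{3,m}$ non-embeddable; comparing with \Cref{embeddingK3k}, I need the resulting surface $\surf'$ to have $\eg(\surf') \le 2m-3$ while embedding $K_{3,m}$, i.e. a contradiction once $m > g+1$ gives enough slack. The hypothesis $m > g+1$ is exactly what is needed: with $m$ alternating pairs we force on the order of $m-1$ crossings, pushing the genus up to about $g + m - 1$, and we then need $K_{3,m}$ to embed in a surface of genus roughly $g+m-1 < 2m-2$, which holds precisely when $m > g+1$. I expect the main obstacle to be the crossing-counting step: making rigorous the claim that two point-stars in a disk with interlacing feet force at least (a function linear in $m$, with the right constant) crossings, and then controlling the genus increase tightly enough. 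An alternative, possibly cleaner route I would consider as a fallback is purely combinatorial: use the rotation system of the embedding restricted to $C \cup A \cup B$, count face-boundary walks via Euler's formula, and show directly that the number of faces forced by the alternating pattern violates $|V| - |E| + |F| = 2 - \eg(\surf)$; this avoids explicit handle surgery but requires the same essential interlacing estimate.
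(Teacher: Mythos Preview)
Your proposed model does not assemble into a $K_{3,m}$. With two centres $z_A,z_B$ inside $F$ you get $z_A$ adjacent to the $a_i$'s and $z_B$ adjacent to the $b_j$'s, but no single set of $m$ vertices is adjacent to all three of $z_A$, $z_B$, and (the contraction of) $A$: the $b_j$'s are missed by $z_A$, the $a_i$'s lie inside $A$, and the arcs of $C$ between $a_i$ and $b_i$ are adjacent to $A$ and $B$ but not to $z_A$ or $z_B$. So the ``extract a $K_{4,m}$ or at least $K_{3,m}$'' step has a real hole, independently of how carefully you count crossings. (As a side remark, a handle raises the Euler genus by $2$, not $1$, which would further erode your quantitative margin.)

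The paper avoids all of this by using \emph{one} centre rather than two, at the cost of doubling the size of the other side. Between every pair of \emph{consecutive} vertices $a_i,b_j$ on the facial cycle it inserts an auxiliary vertex $c$, adjacent to $a_i$ and $b_j$; there are $2m$ such vertices. It then places a single vertex $v$ inside $F$ and joins $v$ to each $c$. All these new edges are drawable crossing-free in the disk $F$, so the construction stays in $\Sigma$ --- no handle surgery at all. Now each $c$ is adjacent to $A$ (through $a_i$), to $B$ (through $b_j$), and to $v$; this is a $K_{3,2m}$-model with branch sets $\{v\}$, $V(A)$, $V(B)$ on the small side. By \Cref{embeddingK3k} this forces $2m\le 2g+2$, i.e.\ $m\le g+1$, contradicting $m>g+1$. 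The point you were missing is that the auxiliary ``middle'' vertices on the face boundary already see both $A$ and $B$, so a single star through them suffices for the third neighbour.
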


\begin{proof}
  Assume to the contrary that we have two vertex-disjoint connected subgraphs  $A,B$ satisfying the above condition.
  We claim that $K_{3,2m}$ can then be embedded in $\surf$, in contradiction to \Cref{embeddingK3k}.
  This can be seen as follows. 
  For any pair of consecutive vertices $a_i$, $b_j$ in the cyclic order given by the face $F$, we add a vertex $c_{i+j}$ ($c_{1}$ if $i=1$ and $j=m$), and edges $(a_i,c_{i+j})$ and $(c_{i+j},b_j)$. 
  We denote this set of $2m$ auxiliary vertices by $C$.
  Place one more auxiliary vertex $v$ inside $F$ and connect it to each of the vertices in $C$. 
  This can be done without introducing edge intersections, since all $a_i$ and $b_i$, $i\in[m]$ are incident to $F$.
  We obtain a model of $K_{3,2m}$ where the vertices in $C$ are the vertices of degree $3$ and $\{v\},V(A),V(B)$ are the branch sets of vertices of degree $2m$ (see \Cref{fig:K3m}), implying that $K_{3,2m}$ can be embedded in $\surf$, as claimed. 
  \begin{figure}
    \centering
    \includegraphics[width=0.45\textwidth]{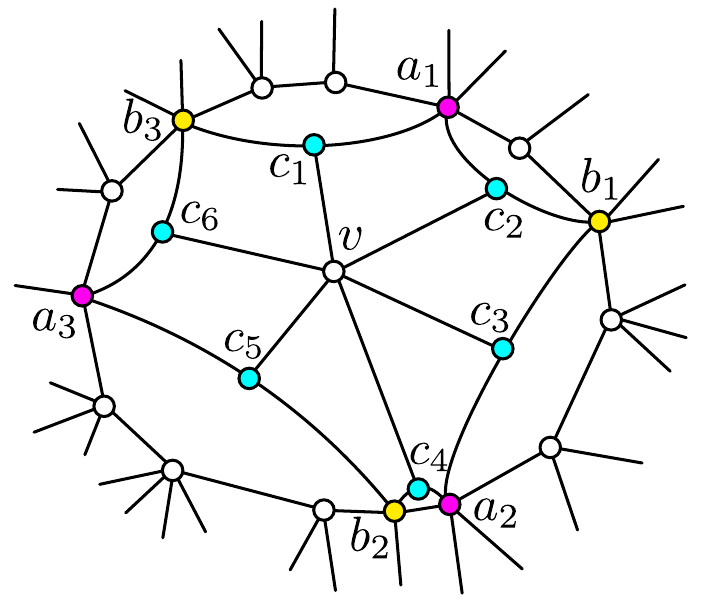}
    \caption{A $K_{3,6}$-minor as in the proof of \Cref{interlacing_intervals}. The auxiliary vertices and edges are drawn within the face $F$ without crossing.}
    \label{fig:K3m}
  \end{figure}
\end{proof}



\begin{proof}[Proof of \Cref{SpecialDecomposition}]
Let $\ell=\ell(t)$ and let $(T',\mathcal{B}')$, where $\mathcal{B}' = \{B'_u : u \in V(T')\}$, be the tree-decomposition we get from \Cref{P1-P6}, that is, a tree-decomposition satisfying the following properties.
\begin{enumerate}
\item the adhesion of $(T',\mathcal{B}')$ is at most \(\ell\), and
\item for every node $u\in V(T')$, all but at most \(\ell\) children of \(u\) are leaves \(v\) with \(B'_v \cap R \subseteq B'_u\), and
\item \label{third_type} for every node $u\in V(T')$, at least one of the following is true
\begin{enumerate}
\item \(|B'_u| \le \ell\), 
\item $u$ is a leaf of $T'$ and $B'_u\cap R \subseteq B'_{u'}$, where $u'$ is the parent of $u$ in $T'$, or 
\item \label{set_Z} there is a set $Z\subseteq B'_u$ such that $|Z|\le \ell$, $G^{\#}[B'_u]-Z$ is $3$-connected, does not contain a rooted $K_{2,t}$-minor with respect to the set of roots $B'_u \cap R - Z$ and has an embedding in a surface of Euler genus at most $\ell$ such that every face is bounded by a cycle of the graph, and all the vertices in $B'_u \cap R - Z$ can be covered using at most $\ell$ facial cycles. 
\end{enumerate}
\end{enumerate}

We can turn such a tree-decomposition into an $\ell$-special tree-decomposition by the following simple procedure. 
We traverse the rooted tree bottom-up and for every node $u\in V(T')$, we add to $B'_u$ all the vertices in the bags $B'_v$ where $v$ is a leaf child of $u$ in $T'$ and $B'_v\cap R\subseteq B'_u$. 
We denote the resulting tree-decomposition by $(T,\mathcal{B})$, where $\mathcal{B} = \{B_u : u \in V(T)\}$. 
Let $u\in V(T)$ be a node of the tree-decomposition $(T,\mathcal{B})$ and let $R_u:=B_u\cap R$ be the set of roots contained in $B_u$. 

If $|R_u|\le\ell$ then we obtain a docset superprofile of $R_u$ by taking all the possible subsets of $R_u$. 
More precisely, $\mathcal{X}_u:=\{R' :  R'\subseteq R_u\}\supseteq \mathcal{P}(G,R_u)$. 
In this case, $|\mathcal{X}_u|\le 2^{\ell}$, which is an upper bound that depends only on $t$ and is independent of the number of vertices in $G$. 


Otherwise, $|R_u| > \ell$, and $B_u$ was obtained from a bag $B'_{u'}$ of type \ref{third_type}.\ref{set_Z} by adding bags of children of $u'$ without additional roots.
Let $Z\subseteq B'_{u'}$ and let $\mathcal{F}$ be the set of faces that cover the elements in $R_u-Z$. 
We obtain a docset superprofile in this case by considering collections of roots in $R_u\cap Z$ together with collections of roots contained in a bounded number of intervals chosen from each $F\in\mathcal{F}$. 
Formally, let $F\in \mathcal{F}$ and let $v_1,v_2,\dots,v_f$ be a cyclic ordering of the vertices on the cycle $C_F$ bounding $F$.
For $i,j\in [f]$, we define an interval $I_{i,j}$ on $C_F$ as the set of consecutive vertices on $C_F$ from $v_i$ to $v_j$ with respect to the cyclic ordering.
That is, $I_{i,j}:=\{v_i,v_{i+1},\dots,v_j\}$ (potentially containing $\dots,v_f,v_1,\dots$), and $I_{i,i-1}:=\emptyset$ for all $i\in[f]$.
Let $X,Y\subseteq V(G)$ be disjoint. 
We call the tuple $\{X,Y\}$ of vertex sets {\em $k$-interlacing} with respect to $F$ if there are at least $k$ pairs $i_1,i_2\in[f]$, where $i_1\neq i_2$ and $v_{i_1}\in X$, $v_{i_2}\in Y$, and $\{v_j:j\in I_{i_1+1,i_2-1}\}\cap(X\cup Y)=\emptyset$.
We first show a statement on the properties of $k$-interlacing sets under partitions.\newline

\begin{claim}
\label{claim:interlacing}
Let $\{X,Y\}$ be $k$-interlacing with respect to some face $F$, and let $p,q\in\N$. 
If we partition $X$ into $X_1,X_2,\dots,X_p$ and $Y$ into $Y_1,Y_2,\dots,Y_q$, then there is some $i\in[p]$ and $j\in[q]$ such that $\{X_i,Y_j\}$ is at least $\left(\frac{k}{pq}\right)$-interlacing with respect to $F$.
\end{claim}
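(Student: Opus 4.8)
The plan is to prove \Cref{claim:interlacing} by a simple double-counting (pigeonhole) argument on the ``interlacing pairs''. First I would fix the cyclic ordering $v_1,\dots,v_f$ of the cycle $C_F$ bounding $F$ and define precisely the set $\mathcal{I}(X,Y)$ of \emph{interlacing pairs} of $\{X,Y\}$ with respect to $F$: these are the ordered pairs $(i_1,i_2)\in[f]^2$ with $v_{i_1}\in X$, $v_{i_2}\in Y$, $i_1\neq i_2$, and $\{v_j : j\in I_{i_1+1,i_2-1}\}\cap(X\cup Y)=\varnothing$, i.e.\ $v_{i_2}$ is the first vertex of $X\cup Y$ strictly after $v_{i_1}$ in the cyclic order, and it lies in $Y$. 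By hypothesis, $|\mathcal{I}(X,Y)|\ge k$.

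The key observation is that every interlacing pair of $\{X,Y\}$ is also an interlacing pair of exactly one pair $\{X_i,Y_j\}$ in the refinement: if $(i_1,i_2)\in\mathcal{I}(X,Y)$ then $v_{i_1}$ lies in a unique block $X_i$ (since $X_1,\dots,X_p$ partition $X$) and $v_{i_2}$ lies in a unique block $Y_j$; moreover the ``no vertex of $X\cup Y$ strictly between'' condition is vacuously inherited because $X_i\cup Y_j\subseteq X\cup Y$, so $(i_1,i_2)$ continues to witness that $\{X_i,Y_j\}$ is interlacing. (A small subtlety: one should check $i_1\neq i_2$ is preserved, which is automatic, and that two distinct blocks cannot both claim the pair, which follows from the blocks being disjoint.) Hence
\[
\sum_{i\in[p]}\sum_{j\in[q]} |\mathcal{I}(X_i,Y_j)| \;\ge\; |\mathcal{I}(X,Y)| \;\ge\; k,
\]
and since there are $pq$ summands, by the pigeonhole principle there exist $i\in[p]$ and $j\in[q]$ with $|\mathcal{I}(X_i,Y_j)|\ge k/(pq)$, which says exactly that $\{X_i,Y_j\}$ is $\lceil k/(pq)\rceil$-interlacing (and in particular at least $(k/pq)$-interlacing) with respect to $F$.

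I do not expect a genuine obstacle here; the only thing requiring a little care is making the definition of ``interlacing pair'' match the quantified statement in \Cref{claim:interlacing} exactly, so that the inheritance of interlacing pairs under refinement is literally true rather than just morally true. In particular I would phrase the definition so that an interlacing pair is determined by its first coordinate $i_1$ (the ``next'' element of $X\cup Y$ after $v_{i_1}$ is forced), which makes the counting and the disjointness-of-blocks argument completely transparent. Once the definition is pinned down, the proof is the two-line pigeonhole computation above.
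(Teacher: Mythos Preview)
Your proposal is correct and is exactly the paper's argument spelled out in more detail: the paper's proof simply observes that each of the $k$ certifying pairs lands in a unique bucket $(i,j)\in[p]\times[q]$ and applies pigeonhole. Your explicit verification that an interlacing pair for $\{X,Y\}$ remains an interlacing pair for the corresponding $\{X_i,Y_j\}$ (since $X_i\cup Y_j\subseteq X\cup Y$) is the one step the paper leaves implicit, and you handle it correctly.
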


\begin{proof}[Proof of the claim]
Each of the $k$ pairs of vertices from $X,Y$ that certify that $\{X,Y\}$ is $k$-interlacing corresponds to a tuple $(i,j)$ of indices of subsets $X_i$, $Y_j$.
There are at most $pq$ different tuples of this form.
The claim follows by the pigeonhole principle.
\end{proof}



We proceed with the definition of the docset superprofile for the bag $B_u$. 
For each face $F\in\mathcal{F}$, let $R_F:=R_u\cap V(F)$.
Let $\ell':=(\ell+2) \cdot (2^{\ell^3+1} \cdot t \cdot \ell^2)^2 \cdot \ell^4$, and
\begin{equation*}
  \mathcal{X}_u:=\left\{R' :  R'\subseteq R_u,\ \{R'\cap F,(R_u-R')\cap F\}\text{ is not $\ell'$-interlacing w.r.t. }F\ \text{ for any } F\in\mathcal{F}\right\}.
\end{equation*}
Note that we allow all combinations on the set of roots that are in $Z$.
Since the intersection between the roots and each face corresponds to a bounded number of intervals, the size of $\mathcal{X}_u$ is bounded.
We claim that $\mathcal{X}_u\supseteq \mathcal{P}(G,R_u)$.
Arguing by contradiction, suppose this is not the case. 
Then, there is a docset $S$ in $G$ and a face $F\in \mathcal{F}$ such that $\{S\cap R_F,\overline{S}\cap R_F\}$ is $\ell'$-interlacing.


Observe that $(G[B_u], R_u)$ does not contain a rooted $K_{2,t}$-minor, since $(G,R)$ has no rooted $K_{2,t}$-minor. 
We perform three modifications to the graph $G[B_u]$.
Firstly we add each edge $e=xy$ where $x,y\in Z$ and $e$ is not in the graph yet. 
In this way, the graph induced on $Z$ is a clique. 
Secondly, we add each edge $e=xy$ such that $x,y \in B_u\cap B_w$ where $w$ is the parent of $u$ in $T$, and $e$ is not in the graph yet. 
In this way, the graph induced on $B_u\cap B_w$ is also a clique. 
We denote the second set of edges we add as $M$. 
Thirdly, for each child $v$ of $u$ in $T$, we add each edge $e=xy$ such that $x,y \in B_u\cap B_v$, and $e$ is not in the graph yet.
In this way, the graph induced on $B_u\cap B_v$ is a clique for each child $v$.

Let $H$ be the resulting graph. 
Since $G[B_u]$ does not contain a rooted $K_{2,t}$-minor, by \Cref{adding_an_edge} and the facts that $|Z|\le \ell$, $|B_u\cap B_w|\le \ell$, $|B_u\cap B_v|\le \ell$, and there is at most $\ell$ children, $(H, R_u)$ does not contain a rooted $K_{2,t'}$-minor for $t':= 2^{\ell^3}\cdot t$.

Note that the docset $S$ restricted to $H$ is a docset in $H$. 
Let $X$ and $Y$ be the corresponding partition of $V(H)$, such that $H[X]$ and $H[Y]$ are connected.
After the removal of the edges in $M$, the subgraphs $H[X]$ and $H[Y]$ decompose into at most $\ell^2$ connected components.
By \Cref{claim:interlacing} there are sets $X',Y'$ such that $H[X']-M$ and $H[Y']-M$ are connected and $\{X'\cap R_F,Y'\cap R_F\}$ are $((\ell+2) \cdot (2^{\ell^3+1} \cdot t \cdot \ell^2)^2)$-interlacing.
In addition, observe that $H[B'_{u'}]-M-Z$ is a subgraph of $G^{\#}[B'_{u'}]-Z$.
The difference is that we do not add a clique in the adhesion to children of $u'$ that were merged in order to obtain $u$.
Hence we can efficiently find an embedding on the same surface as in \ref{third_type}.\ref{set_Z}.


Consider the collection of connected components of $H[X']-Z$. Let $X_1,X_2,\ldots,X_p$ be the vertex sets those connected components that intersect $R_F$. 
Define $Y_1,Y_2,\ldots,Y_q$ similarly, replacing $X'$ with $Y'$. 

Let $t'':=2^{\ell^3} \cdot t \cdot \ell^2$. 
Assume first that either $p\ge t''$ or $q\ge t''$, say without loss of generality $p\ge t''$. 
In this case we find a model of a rooted $K_{2,t'}$ in $(H, R_u)$ as follows. 
The set $X'\cap Z$ is one of the branch sets of the vertices of degree $t'$. 
The $t'$ branch sets of the vertices of degree $2$ will be modeled by $t'$ well-chosen sets among the $X_i$s, as we explain below. 
Observe first that indeed, $H[X'\cap Z]$ is connected (since $Z$ is a clique in $H$), each $X_i$ contains a root (by definition) and each $X_i$ sends an edge to $X'\cap Z$, since $H[X']$ is connected.
Furthermore, each $X_i$ intersects $C_F$. 
By the connectivity properties of this cycle, each $X_i$ has at least one neighbor in $C_F\cap Y$. Choose one arbitrarily.
Recall that $H[Y]-M$ decomposes into at most $\ell^2$ connected components. 
Thus, by the pigeonhole principle there is one connected component of $H[Y]-M$ that is adjacent to at least $t''/\ell^2=t'$ of the $X_i$s. 
Choosing those $X_i$s for the branch sets of the vertices of degree $2$ and the vertex set of that connected component of $H[Y]-M$ for the last branch set completes the description of our rooted model of $K_{2,t'}$ in $(H, R_u)$, 
This is a contradiction with the fact that $(H, R_u)$ does not contain a rooted $K_{2,t'}$-minor. 

Therefore, $p+q<2t''$. 
Note that $((\ell+2) \cdot (2^{\ell^3+1} \cdot t \cdot \ell^2)^2)/(2t'')^2=\ell+2$.
Hence by \Cref{claim:interlacing}, there are $i\in [p]$ and $j\in [q]$ such that $\{X_i\cap R_F,Y_j\cap R_F\}$ are $\ell+2$-interlacing with respect to $F$.
By \Cref{interlacing_intervals}, this implies a contradiction with the Euler genus of the surface embedding of $G^{\#}[B'_u]-Z$ (if the model uses connectivity through some child, this can be captured by the virtual edges in $G^{\#}[B'_u]-Z$).
\end{proof}

\subsection{Extending to subdivisions}\label{sec:subdivisions}
We extend \Cref{SpecialDecomposition} to subdivisions of rooted $3$-connected graphs $(G,R)$, where roots have degree at least $3$.
Let $t\in\Z_{\ge0}$ be such that $(G,R)$ does not contain a rooted $K_{2,t}$-minor. 
We denote by $(G',R)$ the rooted graph obtained from $(G,R)$ by contracting each subdivided edge to a single edge.
Note that by \Cref{contraction}, $(G',R)$ does not contain a rooted $K_{2,t}$-minor. 
Furthermore, the sets of roots coincide since we never remove a vertex from the set of roots, and never contract an edge between two roots.

\begin{lemma}\label{lem:decomposition}
    Let $\ell\in\Z$. 
    Given an $\ell$-special tree-decomposition $(T',\mathcal{B}')$ of $G'$, where $\mathcal{B}' = \{B'_u : u \in V(T')\}$, we can efficiently compute an $\ell$-special tree-decomposition $(T,\mathcal{B})$ of $G$, where $\mathcal{B} = \{B_u : u \in V(T)\}$, such that $T=T'$ and $R\cap B_u=R\cap B'_u$ for each $u\in V(T)$.
\end{lemma}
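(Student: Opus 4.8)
The statement asks to lift an $\ell$-special tree-decomposition of the contracted graph $G'$ back to one of the subdivided graph $G$, keeping the same tree and the same root sets in every bag. The natural approach is to take each bag $B'_u$ and re-insert, into the appropriate bag, the subdivision vertices that were suppressed when passing from $G$ to $G'$. The subtle point is that a subdivided edge $e'=xy$ of $G'$ corresponds in $G$ to an internally-disjoint $x$--$y$ path $P_e$ whose internal vertices are the subdivision vertices; these internal vertices have degree $2$ in $G$, and (by hypothesis, since $G$ is a subdivision of a $3$-connected graph whose roots have degree $\ge 3$) they are never roots. So re-inserting them will not disturb the root sets.

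\textbf{Key steps.} First, for each edge $e'=xy\in E(G')$ that was a subdivided edge, by condition (ii) of the tree-decomposition $(T',\mathcal B')$ there is some node $u=u(e')\in V(T')$ with $\{x,y\}\subseteq B'_u$; fix one such node. Second, define $B_u := B'_u \cup \bigcup_{e'\,:\,u(e')=u} V(P_{e'})$, where $V(P_{e'})$ denotes the internal (subdivision) vertices of the path $P_{e'}$ in $G$ corresponding to $e'$; set $B_u := B'_u$ for the (non-subdivided) rest, and keep $T:=T'$ rooted the same way. Third, verify the three tree-decomposition axioms: every vertex of $G$ appears in some bag (old vertices from $B'_u$, subdivision vertices from the construction); every edge of $G$ is covered — edges of $G$ that are also edges of $G'$ are covered as before, and the edges of each path $P_{e'}$ all lie inside the single bag $B_{u(e')}$; and the connectivity axiom holds because each subdivision vertex lies in exactly one bag, so its set of bags is trivially a (one-node) subtree, while old vertices keep exactly the same set of bags as in $(T',\mathcal B')$. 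Fourth, check that $(T,\mathcal B)$ is still $\ell$-special: the adhesions $B_u\cap B_v$ for $uv\in E(T)$ are unchanged because we only added degree-$2$ vertices each to a single bag — a subdivision vertex is in exactly one bag, hence never in any adhesion — so $|B_u\cap B_v| = |B'_u\cap B'_v|\le\ell$; and the tree $T=T'$ is unchanged, so every node still has at most $\ell$ children. Finally, $R\cap B_u = R\cap B'_u$ since the only vertices added are subdivision vertices, which are not in $R$.

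\textbf{Main obstacle.} There is no real obstacle here — the lemma is essentially bookkeeping. The one place requiring a little care is the claim that subdivision vertices are not roots and have degree $2$, which relies on the standing assumption (stated just before the lemma, in \Cref{sec:subdivisions}) that $(G,R)$ is a subdivision of a rooted $3$-connected graph in which every root has degree at least $3$; this is exactly what guarantees that contracting subdivided edges does not remove or merge roots and that the re-insertion does not perturb $R\cap B_u$. A second minor point is that one must make a consistent choice of the node $u(e')$ for each subdivided edge $e'$; any choice works, and this is where condition (ii) of the tree-decomposition definition (edge coverage) is invoked. Everything else is a direct check of the axioms, and the "efficiently computable" clause is immediate since the construction is a single pass over the bags and the subdivided edges.
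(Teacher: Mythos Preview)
Your proposal is correct and follows essentially the same approach as the paper: insert the internal vertices of each subdivided path into a single bag containing both endpoints, then verify the axioms and the $\ell$-special conditions. The only cosmetic difference is that the paper fixes a specific choice of bag---the least common ancestor, in the rooted tree $T'$, of the subtree of nodes whose bags contain both endpoints---whereas you correctly observe that any bag containing both endpoints works; your argument that each subdivision vertex lands in exactly one bag (hence adhesions are unchanged) is exactly the point that makes the verification go through.
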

\begin{proof}
    Let $F\subseteq E(G')$ denote the edges of $G'$ that correspond to subdivided edges of $G$.
    Each $e\in F$ induces a path in $G$, which we denote by $P(e):=v_0,e_1,v_1,\dots,v_p,e_{p+1},v_{p+1}$, such that each $v_i$ with $i\in[p]$ has degree $2$ in $G$, and $v_0,v_{p+1}$ have degrees at least $3$ in $G$. 

    Let $u_0$ be the root of the decomposition tree $T'$, and let $vv'=e\in F$.
    By property \ref{def:td_edge} of \Cref{def:td}, there is a node $u\in V(T')$, such that $v,v'\in B'_u$.
    By property \ref{def:td_connected} of \Cref{def:td}, the bags of $(T',\mathcal{B}')$ that contain both $v$ and $v'$ correspond to a subtree of $T'$, which we denote by $T'_e$.
    Let $u'$ denote the least common ancestor of $T'_e$ with respect to $u_0$.
    Then we insert the inner nodes $v_i$ for $i\in[p]$ of $P(e)$ into $B'_{u'}$, in order to obtain a collection of subsets of $V(G)$, denoted by $\mathcal{B}$.
    It is easy to see that the resulting pair $(T,\mathcal{B})$ is an $\ell$-special tree-decomposition of $G$ having the desired properties.
\end{proof}
\begin{lemma}\label{lem:profiles}
    Let $R'\subseteq R$ be a subset of the roots.
    Then, $\mathcal{P}(G,R')\subseteq\mathcal{P}(G',R')$.
\end{lemma}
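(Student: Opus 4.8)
\textbf{Proof plan for \Cref{lem:profiles}.}
The goal is to show that every set of the form $R' \cap S$, where $S$ is a docset of $G$, also arises as $R' \cap S'$ for some docset $S'$ of $G'$; here $G'$ is obtained from $G$ by contracting every subdivided edge. The natural candidate for $S'$ is the image of $S$ under the contraction map $\pi : V(G) \to V(G')$, but this needs care because a subdivision vertex $v_i$ of a path $P(e) = v_0, e_1, v_1, \dots, v_p, e_{p+1}, v_{p+1}$ is mapped to the same vertex of $G'$ as the endpoints $v_0, v_{p+1}$. Since roots all have degree at least $3$, none of the internal subdivision vertices $v_1, \dots, v_p$ is a root, so the contraction does not merge roots and $R$ sits unchanged inside $V(G')$. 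Thus for the statement it suffices to produce, for each docset $S$ of $G$, a docset $S'$ of $G'$ with $S' \cap R = S \cap R$.

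The plan is to define $S'$ directly from $S$ by modifying $S$ along each subdivided path so that it becomes ``$\pi$-saturated,'' i.e. for each subdivided edge $e$, the path $P(e)$ is either entirely inside the chosen side or split into a prefix and a suffix consistent with where its two endpoints $v_0, v_{p+1}$ lie. Concretely: for each $e \in F$, if $v_0$ and $v_{p+1}$ lie on the same side of the partition $(S, \overline S)$ restricted to $\{v_0, v_{p+1}\}$, I would put all of $v_1, \dots, v_p$ on that side; if they lie on opposite sides, the internal vertices of $P(e)$ form a path in $G[S]$ followed by a path in $G[\overline S]$ (possibly empty), and since $S$ is already connected and co-connected, one can check this ``straightening'' can be done without destroying connectivity of either side — rerouting an internal segment to agree with an endpoint only merges it into an already-connected component that touches that endpoint. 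After this straightening, define $S' := \pi(S_{\text{straightened}}) \subseteq V(G')$; because the path $P(e)$ is now entirely on one side or cleanly split, the contracted edge $e$ of $G'$ is placed consistently, and $G'[S']$, $G'[\overline{S'}]$ are obtained from the straightened $G[S], G[\overline S]$ by contracting edges, hence remain connected. Since we never moved a root (the $v_i$ are not roots), $S' \cap R = S \cap R$, which gives $R' \cap S' = R' \cap S$ and proves the inclusion.

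The step I expect to be the main obstacle is verifying that the straightening preserves both connectivity of $G[S]$ and connectivity of $G[\overline S]$ simultaneously. The delicate case is when a subdivided path is used by $S$ to connect parts of $G[S]$ and by $\overline S$ to connect parts of $G[\overline S]$ at the same time: one must argue that after straightening, the role previously played by the internal vertices of $P(e)$ is subsumed by the presence of the endpoint $v_0$ (resp. $v_{p+1}$) on that side, so no component gets disconnected. This is essentially a bookkeeping argument about how a degree-$2$ path can contribute to connectivity, and it should go through cleanly, but it is the place where the hypothesis that $G'$ is obtained from $G$ by subdividing edges (and that roots avoid the degree-$2$ vertices) is actually used. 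I would also note explicitly that this lemma, combined with \Cref{lem:decomposition} and \Cref{SpecialDecomposition} applied to $(G', R)$, yields the promised extension \Cref{SpecialDecompositionExtended} to subdivisions, since a docset superprofile for $(G',R)$ along the transferred tree-decomposition is, by \Cref{lem:profiles}, also a docset superprofile for $(G,R)$.
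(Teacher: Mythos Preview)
Your approach is correct but takes an unnecessary detour compared to the paper. The paper's proof is a one-liner: given a docset $S$ of $G$, set $S' := S \cap V(G')$ and observe that this is a docset of $G'$ with $S' \cap R' = S \cap R'$ (since $R' \subseteq R \subseteq V(G')$). No preprocessing of $S$ is required.

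The ``straightening'' step you propose is in fact vacuous. For any docset $S$ of $G$ and any subdivided path $P(e) = v_0, v_1, \dots, v_{p+1}$, the intersection $S \cap \{v_0,\dots,v_{p+1}\}$ is automatically a prefix (or suffix): if, say, $v_0 \in S$, $v_{p+1} \in \overline{S}$ and some internal $v_i \in \overline{S}$, $v_{i+1} \in S$ with $i < p$, then $v_{i+1}$ has both neighbours $v_i, v_{i+2}$ and one of them is in $\overline S$ only if the pattern continues; any alternation isolates an internal vertex in $G[S]$ or $G[\overline S]$, contradicting that both sides are connected. The same reasoning shows that if $v_0, v_{p+1}$ lie on the same side, all internal vertices lie on that side too (barring the degenerate case where the \emph{entire} opposite side is contained in the interior of one subdivided path). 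So the ``delicate case'' you flag --- a subdivided path simultaneously carrying connectivity for both sides --- cannot occur, and your anticipated main obstacle does not exist.

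Once you see this, connectivity of $G'[S']$ is immediate: any $u$--$v$ path in $G[S]$ between $u,v \in S'$ visits a sequence of branch vertices of $G$, all in $S'$, and any two consecutive such vertices are joined by a subdivided path in $G$, hence by an edge in $G'$. The same holds for $\overline{S'} = \overline{S} \cap V(G')$. Your final $S'$ coincides with the paper's (straightening does not move any vertex of $V(G')$), so both arguments yield the same object; the paper just gets there without the intermediate modification.
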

\begin{proof}
    Consider an element $P\in\mathcal{P}(G,R')$, as well as $S\in\mathcal{S}(G)$ a docset of $G$ with $S\cap R'=P$.
    Observe that the restriction of $S$ to $G'$ is a docset of $G'$.
    Thus $P=S\cap R'\in\mathcal{P}(G',R')$.
\end{proof}

\begin{proposition}\label{SpecialDecompositionExtended}
  Let $(G,R)$ be a subdivision of a $3$-connected rooted graph, such that each $v\in R$ has degree at least $3$, and $(G,R)$ has no rooted $K_{2,t}$-minor.
  Then the statement of \Cref{SpecialDecomposition} applies, that is, there is a polynomial-time algorithm that outputs an $f_{\ref{SpecialDecomposition}}(t)$-special tree-decomposition $(T,\mathcal{B})$ of $G$ and a collection $\{\mathcal{X}_u:u\in V(T)\}$ where each $\mathcal{X}_u$ is a docset superprofile of $R\cap B_u$ in $G$ of size polynomial in $|V(G)|$. 
\end{proposition}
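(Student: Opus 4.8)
The plan is to reduce \Cref{SpecialDecompositionExtended} directly to \Cref{SpecialDecomposition} applied to the contracted graph $(G',R)$, and then transport both the tree-decomposition and the docset superprofiles back to $G$. First I would apply \Cref{SpecialDecomposition} to $(G',R)$: since $(G',R)$ is $3$-connected and, by \Cref{contraction}, has no rooted $K_{2,t}$-minor, we obtain in polynomial time an $f_{\ref{SpecialDecomposition}}(t)$-special tree-decomposition $(T',\mathcal{B}')$ of $G'$ together with a collection $\{\mathcal{X}'_u : u \in V(T')\}$ where each $\mathcal{X}'_u$ is a docset superprofile of $R\cap B'_u$ in $G'$ of polynomial size.

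Next I would invoke \Cref{lem:decomposition} with $\ell = f_{\ref{SpecialDecomposition}}(t)$ to turn $(T',\mathcal{B}')$ into an $f_{\ref{SpecialDecomposition}}(t)$-special tree-decomposition $(T,\mathcal{B})$ of $G$ with $T = T'$ and $R\cap B_u = R\cap B'_u$ for every node $u$. For the superprofiles, I would simply set $\mathcal{X}_u := \mathcal{X}'_u$ for each $u \in V(T)$. The key point is that this is still a valid docset superprofile \emph{in $G$}: by \Cref{lem:profiles} we have $\mathcal{P}(G, R\cap B_u) = \mathcal{P}(G, R\cap B'_u) \subseteq \mathcal{P}(G', R\cap B'_u) \subseteq \mathcal{X}'_u = \mathcal{X}_u$, where the first equality uses $R\cap B_u = R\cap B'_u$. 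The size bound and polynomial running time are inherited from \Cref{SpecialDecomposition} and the efficiency statements in \Cref{contraction}-style contractions and \Cref{lem:decomposition}.

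There is essentially no genuine obstacle here, since all the work has been front-loaded into the auxiliary lemmas; the only things to check carefully are that the contraction preserving degree-$\ge 3$ roots and $3$-connectivity is well-defined (no edge is contracted between two roots, and every degree-$2$ non-root vertex of a subdivided edge gets suppressed, so $(G',R)$ is genuinely the $3$-connected graph being subdivided), and that the $\ell$-specialness is preserved — but the latter is exactly the content of \Cref{lem:decomposition}. The mild subtlety worth a sentence in the write-up is that a docset superprofile of $R\cap B_u$ in $G'$ need not a priori be one in $G$, which is why \Cref{lem:profiles} (the inclusion $\mathcal{P}(G,R')\subseteq\mathcal{P}(G',R')$, obtained by restricting a docset of $G$ to $G'$) is needed rather than the reverse inclusion. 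I would therefore keep the proof to a few lines assembling these three ingredients.

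\begin{proof}[Proof of \Cref{SpecialDecompositionExtended}]
  Let $(G',R)$ be the rooted graph obtained from $(G,R)$ by contracting each subdivided edge to a single edge, so that $(G',R)$ is a $3$-connected rooted graph; by \Cref{contraction} it has no rooted $K_{2,t}$-minor. Apply \Cref{SpecialDecomposition} to $(G',R)$ to obtain, in polynomial time, an $f_{\ref{SpecialDecomposition}}(t)$-special tree-decomposition $(T',\mathcal{B}')$ of $G'$ with $\mathcal{B}' = \{B'_u : u \in V(T')\}$, together with a collection $\{\mathcal{X}'_u : u \in V(T')\}$ where each $\mathcal{X}'_u$ is a docset superprofile of $R\cap B'_u$ in $G'$ of size polynomial in $|V(G')| \le |V(G)|$.

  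By \Cref{lem:decomposition} applied with $\ell = f_{\ref{SpecialDecomposition}}(t)$, we can efficiently compute an $f_{\ref{SpecialDecomposition}}(t)$-special tree-decomposition $(T,\mathcal{B})$ of $G$ with $\mathcal{B} = \{B_u : u \in V(T)\}$ such that $T = T'$ and $R\cap B_u = R\cap B'_u$ for every $u \in V(T)$. For each $u \in V(T)$, set $\mathcal{X}_u := \mathcal{X}'_u$. Then $\mathcal{X}_u$ has size polynomial in $|V(G)|$, and using $R\cap B_u = R\cap B'_u$ together with \Cref{lem:profiles} we obtain
  \[
    \mathcal{P}(G, R\cap B_u) \;=\; \mathcal{P}(G, R\cap B'_u) \;\subseteq\; \mathcal{P}(G', R\cap B'_u) \;\subseteq\; \mathcal{X}'_u \;=\; \mathcal{X}_u,
  \]
  so $\mathcal{X}_u$ is a docset superprofile of $R\cap B_u$ in $G$. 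All steps run in polynomial time, which completes the proof.
\end{proof}
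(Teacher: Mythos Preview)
Your proof is correct and follows essentially the same approach as the paper: apply \Cref{SpecialDecomposition} to the contracted graph $(G',R)$, then use \Cref{lem:decomposition} to lift the tree-decomposition to $G$ and \Cref{lem:profiles} to transfer the docset superprofiles. Your write-up is in fact more detailed than the paper's, which condenses the same three ingredients into two sentences.
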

\begin{proof}
  By \Cref{SpecialDecomposition}, we obtain an $f_{\ref{SpecialDecomposition}}(t)$-special tree-decomposition $(T, \mathcal{B})$ of $G'$ and a collection $\{\mathcal{X}_u : u \in V(T)\}$ where each $\mathcal{X}_u$ is a docset superprofile of $R_u$ in $G'$ (of size polynomial in $|V(G)|$).
  By \Cref{lem:decomposition}, and \Cref{lem:profiles}, both the tree-decomposition and the superprofiles extend to $G$.
\end{proof}
\section{Discussion} \label{sec:discussion}

In this paper, we initiated the study of integer programs (IPs) on constraint matrices that simultaneously have bounded subdeterminants and are nearly totally unimodular (TU), in the sense that they become TU after the deletion of a constant number of rows and columns. In view of the state of the art \cite{artmann_2017,FJWY25,naegele_2022,naegele_2023}, and also in view of the matroid minors project \cite{geelen_2014,geelen_2015}, this is a natural case of the conjecture that bounding all subdeterminants suffices to guarantee that an IP can be solved in polynomial time. 

We reduced to the case in which the linear constraints are $Ax = b$, $Wx = d$, $\ell \le x \le u$, where $A$ is a TU matrix and $W$ is an integer matrix with a constant number of rows, see \eqref{eqIPequality}. The combinatorial structure of $A$, more specifically the collection of \emph{circuits} of $A$, plays a crucial role in our analysis. We showed that bounding the subdeterminants of the original constraint matrix translates to bounding the weight of every circuit of $A$ with respect to any row of $W$. Moreover, we proved that some optimal solution $\bar{z}$ of the IP can be reached from some integer point $z$, that can be efficiently found through solving the LP relaxation, by adding (or augmenting on) a \emph{bounded} number of circuits of $A$.


We proved that we can reduce in a black-box fashion to the case where $A$ almost $3$-connected. Furthermore, we proved that an optimal augmentation can be found efficiently in the cographic case, that is, when the circuits of $A$ correspond to the minimal cuts of a graph. This leaves open the question of efficiently finding an optimal augmentation in case $A$ is a \emph{general} TU matrix. 

By Seymour's decomposition of regular matroids \cite{seymour_1980}, if $A$ is $4$-connected then there exists a graph $G$ such that the circuits of $A$ correspond either to the circuits of $G$ (graphic case), or to the minimal cuts of $G$ (cographic case). Since we solved the cographic case in the present paper, the graphic case seems to be the second and last main case to consider. We have preliminary observations concerning this. 

First, we expect that, in the graphic case, $G$ admits a tree-decomposition similar to that of \Cref{P1-P6}, replacing rooted $K_{2,t}$ minors with the dual class of obstructions. 
In view of this, the case where $G$ can be embedded in a fixed surface $\surf$ seems once again particularly interesting. If $\surf$ is orientable, then we can reduce the problem to the cographic case, via taking the dual of the directed graph $G$. If $\surf$ is non-orientable, then the dual of $G$ is no longer a directed graph, but rather a bidirected graph. Thus, we are naturally led to consider the case where $A = \begin{bmatrix} B &\identity \end{bmatrix}$ and $B$ is the transpose of a binet matrix. In fact, the ``right'' case to consider next toward the totally $\Delta$-modular IP conjecture after the cographic case might be ``nearly binet or transposed binet'' case. We remark that for the corresponding instances, large vortices are unavoidable in near-embeddings and have to be dealt with in the optimization phase. To achieve this, the ideas of \cite{FJWY25} should be very useful.

Second, we leave open the task of handling $3$-separations of $A$. 
Achieving this in a similar way as we dealt with $2$-separation seems challenging. (The fact that $3$-sums are much more difficult to handle than $1$- and $2$-sums is a recurring theme in recent work using Seymour's decomposition, see for instance~\cite{aprile_2022,artmann_2017}.) 
Despite our efforts, we are not aware of any family of gadgets that could be used to reduce the problem in a black-box fashion to the case where the $3$-separations of $A$ are ``under control''. Instead, we suggest to view $3$-separations of $A$ as small order separations of $\smallmat{A\\ W}$ and develop more systematic ways to handle small order separations in $\smallmat{A\\ W}$, if possible extending also the decomposition technique we developed for graphs forbidding a rooted $K_{2,t}$-minor.


Third, we observe that the graphic case of our problem contains instances of (the optimization version of) the famous \emph{red-blue matching problem} (also known as the \emph{exact matching problem}): given a bipartite graph $G$ whose edges are colored either red or blue, a positive integer $k$, and integer profits on the edges of $G$, find a maximum profit perfect matching of $G$ containing exactly $k$ red edges. The resulting IP reads $\max \{p^\intercal x : A x = \one,\ \zero \le x \le \one,\ \sum_{e\ \mathrm{red}} x(e) = k,\ x \in \Z^{E(G)}\}$, where $A \in \{0,1\}^{V(G) \times E(G)}$ is the incidence matrix of $G$. Bounding the subdeterminants of the constraint matrix of this IP by $\Delta$ (in absolute value) entails asking that every single augmentation changes the number of red edges in a (perfect) matching by at most $\Delta$. (More precisely, the constraint matrix of the IP has all its subdeterminants bounded by $\Delta$ if and only if the weight of every circuit in $G$ augmented with an extra vertex $v_0$ adjacent to all vertices through a zero-weight edge is at most $\Delta$. Rewriting the IP in inequality form allows to get rid of $v_0$, and consider a less restricted set of instances.) There is a randomized polynomial-time algorithm to check whether the IP is feasible, even without assuming that the subdeterminants are bounded, see \cite{mulmuley_1987}. We believe that there is a deterministic polynomial-time algorithm to solve the above IP to optimality, provided that the subdeterminants are bounded.

\section*{Acknowledgements}

We thank Sebastian Wiederrecht for helpful discussions regarding~\cite{TW_FOCS_2022, TW_arxiv_2022}. 
We are grateful to the anonymous reviewers for their careful reading and insightful comments, which have greatly improved the clarity and presentation of this paper. 

Manuel Aprile and Samuel Fiorini acknowledge funding from \emph{Fonds de la Recherche Scientifique} - FNRS through research project BD-DELTA (PDR 20222190, 2021--24). Samuel Fiorini was also funded by \emph{King Baudouin Foundation} through project BD-DELTA-2 (convention 2023-F2150080-233051, 2023--26). Gwenaël Joret and Michał Seweryn acknowledge funding from FNRS (PDR "Product structure of planar graphs").
Stefan Kober and Stefan Weltge were supported by the Deutsche Forschungsgemeinschaft (German Research Foundation) under the projects 277991500/GRK2201 and 451026932, respectively.
Yelena Yuditsky was supported by FNRS as a Postdoctoral Researcher. 

\bibliographystyle{alpha}
\bibliography{../../../references}




\end{document}